\documentclass{amsart}

\usepackage{amssymb, latexsym, amscd, graphicx, psfrag}
\usepackage[mathscr]{euscript}

\newcommand{\bA}{ \mathbb{A} }
\newcommand{\bC}{ {\mathbb{C}} }

\newcommand{\bP}{\mathbb{P}}

\newcommand{\bT}{\mathbb{T}}

\newcommand{\bZ}{\mathbb{Z}}

\newcommand{\cH}{\mathcal{H}}

\newcommand{\cM}{\mathcal{M}}
\newcommand{\cO}{\mathcal{O}}

\newcommand{\cS}{\mathcal{S}}

\newcommand{\cX}{\mathcal{X}}

\newcommand{\cW}{\mathcal{W}}

\newcommand{\Aut}{\mathrm{Aut}}
\newcommand{\CR}{ {\mathrm{CR}} }

\newcommand{\Res}{\mathrm{Res}}
\newcommand{\diag}{\operatorname{diag}}

\newcommand{\val}{ {\mathrm{val}} }

\newcommand{\bu}{\mathbf{u}}

\newcommand{\btau}{\boldsymbol{\tau}}

\newcommand{\Mbar}{\overline{\cM}}

\newtheorem{dummy}{dummy}[section]
\newtheorem{lemma}[dummy]{Lemma}
\newtheorem{theorem}[dummy]{Theorem}

\newtheorem{corollary}[dummy]{Corollary}
\newtheorem{proposition}[dummy]{Proposition}

\theoremstyle{definition}
\newtheorem{definition}[dummy]{Definition}

\theoremstyle{remark}
\newtheorem{remark}[dummy]{Remark}

\numberwithin{equation}{section}

\title[Mirror symmetry for binary dihedral CY3]{Involution-equivariant topological recursion and mirror symmetry for the affine binary dihedral Calabi--Yau threefold}
\author{Bohan Fang}
\address{Bohan Fang, Beijing International Center for Mathematical Research, Peking University, 5 Yiheyuan Road, Beijing 100871, China}
\email{bohanfang@gmail.com}

\author{Zhuoming Lan}
\address{Zhuoming Lan, Beijing International Center for Mathematical Research, Peking University, 5 Yiheyuan Road, Beijing 100871, China}
\email{2606399068@pku.edu.cn}

\author{Jingxiang Ma}
\address{Jingxiang Ma, University of Sheffield, School of Mathematical and Physical Sciences, Hounsfield Road, Sheffield S3 7RH, United Kingdom}
\email{jingxiangma0114@gmail.com}

\date{\today}

\begin{document}

\begin{abstract}
  We prove a closed-string remodeling statement for the affine binary
  dihedral Calabi--Yau orbifold threefold $\cX=[\bC^2/\Gamma\times\bC]$,
  where $\Gamma$ is a binary dihedral subgroup of $SU(2)$.  This target
  lies outside the toric setting of the
  Bouchard--Klemm--Mari\~{n}o--Pasquetti remodeling conjecture: the toric
  mirror curve is replaced by the type-$D_l$ logarithmic Toda curve of
  Brini--Ma--Strachan, and the Chekhov--Eynard--Orantin topological
  recursion is replaced by the $\bZ_2$-equivariant topological recursion
  of Giacchetto--Kramer--Lewa\'nski, run in the sign sector of the
  Toda-curve involution with the Prym kernel as its two-point input.  We
  identify the equivariant orbifold quantum cohomology Frobenius manifold
  of $\cX$ with the invariant Jacobian Frobenius structure of the Toda
  curve, and we prove that the B-model $R$-matrix, defined by regularized
  stationary phase, equals the A-side normalized canonical
  Givental--Teleman $R$-matrix on the smooth oscillatory chamber; this
  equality is anchored at the orbifold point through a semistable
  degeneration of the Toda curve.  Comparing the resulting
  Givental--Teleman and Dunin-Barkowski--Orantin--Shadrin--Spitz graph
  sums then identifies, after a parity-twisted leaf substitution, the
  sign-sector recursion with the descendant Gromov--Witten generating
  functions of $\cX$ in the stable range ($2g-2+n>0$ with $n>0$), and
  identifies the recursion free energies with the equivariant
  Gromov--Witten free energies of $\cX$ for $g\geq2$.
\end{abstract}

\maketitle

\tableofcontents


\section{Introduction}

\subsection{Background and motivation}

\subsubsection{Remodeling for toric Calabi--Yau threefolds}

Mirror symmetry relates the Gromov--Witten theory of a Calabi--Yau target to
complex geometry on a mirror space.  For noncompact targets one usually
works equivariantly, and the A-model is the equivariant Gromov--Witten (GW)
theory of the target.  On the B-side, the mirror often encodes the
higher-genus theory in a curve with a recursive structure.  This is the
point of view behind the Bouchard--Klemm--Mari\~{n}o--Pasquetti (BKMP) remodeling
conjecture \cite{bouchard2007remodelingbmodel,
bouchard2008topologicalopenstringsorbifolds}, based on the work of Mari\~no
\cite{marino2006openstringamplitudeslarge}, where the higher-genus
amplitudes in the toric Calabi--Yau threefold setting are read from the
Chekhov--Eynard--Orantin topological recursion
\cite{chekhov2006freeenergytopologicalexpansion,
eynard2007invariantsalgebraiccurvestopological}.

Eynard--Orantin proved the remodeling conjecture for smooth toric Calabi--Yau
threefolds \cite{eynard2012computationopengromovwitten}. Fang--Liu--Zong proved
it for the affine toric Calabi--Yau 3-orbifolds $[\bC^3/G]$
\cite{fang2013allgenusopenclosedmirror} and then for all semi-projective toric
Calabi--Yau 3-orbifolds \cite{fang2019remodelingconjecturetoriccalabiyau}.
The A-model is
written as a Givental--Teleman graph sum
\cite{givental2001semisimplefrobeniusstructureshigher,
teleman2012structure2dsemisimplefield}; the B-model is written as a graph sum
through the theorem of Dunin-Barkowski--Orantin--Shadrin--Spitz (DOSS)
\cite{duninbarkowski2012identificationgiventalformulaspectral}; the mirror
theorem and the $R$-matrix comparison then identify the graph weights.  Thus the
all-genus comparison is reduced to matching vertices, edges, and leaves.

\subsubsection{The non-toric binary-dihedral problem}

The target of this paper is the affine binary-dihedral Calabi--Yau orbifold
threefold
\[
  \cX=[\bC^2/\Gamma\times\bC],
\]
where $\Gamma$ is a binary dihedral subgroup of the special unitary group
$SU(2)$.  It is affine and Calabi--Yau after choosing the weights below, but it
is outside the usual toric mirror-curve setting.  Roughly speaking, the question is what replaces the
toric mirror curve and its ordinary topological recursion.

The answer used here comes from the type-$D_l$ logarithmic Toda curve.  The
binary-dihedral group gives rise to the type-$D_l$ case of the ADE McKay correspondence,
and the genus-zero Gromov--Witten theory of the associated surface is governed by the root-system and quantum
McKay results of Bryan--Gholampour and Hu
\cite{bryan2007rootsystemsquantumcohomology,
bryan2008quantummckaycorrespondencepolyhedral,
hu2012quantummckaycorrespondencesingularities}.  Brini--Ma--Strachan construct
the corresponding logarithmic Toda B-model and identify its
genus-zero Frobenius structure, in the normalization adapted to the surface
$\bC^2/\Gamma$, with the ADE genus-zero mirror
\cite{brini2025dubrovindualitymirrorsymmetry}.  Our paper adds the Calabi--Yau
third leg (the $\bC$-factor of $\cX$), the binary-dihedral orbifold
cohomological field theory (CohFT), and the all-genus graph comparison.

\subsection{The affine binary-dihedral Calabi--Yau orbifold}

Throughout the paper $\Gamma$ is a binary dihedral subgroup of the special
unitary group $SU(2)$ of order $4(l-2)$, with $l\geq4$, and
\[
  \cX=[\bC^2/\Gamma\times\bC].
\]
The torus $\bT=\bC^\times$ acts on the three factors with weights
$
  (\nu,\nu,-2\nu),
$
so the action is Calabi--Yau.  For this target,
equivariant localization \cite{graber1997localizationvirtualclasses} reduces
the GW theory of $\cX$ to the
inverse-Euler-twisted theory of $B\Gamma$; this is the setting of orbifold
GW theory, orbifold quantum Riemann--Roch, and the finite-group graph-sum formula
\cite{chen2001newcohomologytheoryorbifold,
chen2001orbifoldgromovwittentheory,
abramovich2006gromovwittentheorydelignemumford,
jarvis2002orbifoldquantumcohomologyclassifying,
coates2001quantumriemannrochlefschetz,
tseng2009orbifoldquantumriemannrochlefschetz,
lan2023twistedequivariantgromovwittentheory}.

At the orbifold point (the origin of the flat orbifold coordinates of
Section~\ref{sec:frobenius}) the Frobenius algebra is the center of the group algebra
$
  Z(\bC(\nu)[\Gamma]),
$
with the threefold-normalized Frobenius metric.  Its primitive idempotents are
indexed by irreducible representations of $\Gamma$, hence the rank is $l+1$, as
expected from the type-$D_l$ McKay data
\cite{jarvis2002orbifoldquantumcohomologyclassifying,
bryan2007rootsystemsquantumcohomology,
hu2012quantummckaycorrespondencesingularities}.
Section~\ref{sec:amodel} records this A-side data, including the normalized
canonical $R$-matrix and the scalar factor contributed by the third leg.

\subsection{The type-$D_l$ logarithmic Toda B-model}

\subsubsection{The Toda curve}

The B-model starts from the type-$D_l$ logarithmic Toda curve of
Brini--Ma--Strachan.  This construction builds on the extended affine Weyl
mirror symmetry of Brini--van Gemst, where the B-model is obtained from affine
relativistic Toda spectral curves \cite{brini2023mirrorsymmetryextendedaffine};
the underlying extended affine Weyl Frobenius manifolds are those of
Dubrovin--Zhang and Dubrovin--Strachan--Zhang--Zuo
\cite{dubrovin1996extendedaffineweylgroups,
dubrovin2015extendedaffineweylbcd}.
We use the Brini--Ma--Strachan construction as genus-zero input, but the names
used in this paper describe the objects themselves.  Thus the curve is the
type-$D_l$ logarithmic Toda curve, the primitive differential is denoted
$\phi_D$, and the Frobenius structure is the threefold-normalized invariant
Jacobian structure.

In the normalization of Section~\ref{sec:bmodel}, the compactified curve
$\bar C_\kappa\subset\bP^1_\mu\times\bP^1_\lambda$ is given by
\[
\begin{aligned}
  F(\mu,\lambda;\kappa)
  =
  \lambda\,\mu^{l-2}(\mu^2-1)^2
  -K\prod_{i=1}^l(\mu-\kappa_i)(\mu-\kappa_i^{-1})
  =0,
  \\
  \lambda(w)=K{\prod_{i=1}^l(w-c_i)\over w^2-4},
  \qquad
  w=\mu+\mu^{-1},
  \qquad
  c_i=\kappa_i+\kappa_i^{-1}.
\end{aligned}
\]
The superpotential used for topological recursion is
\[
  \hat x=-2\nu\log\lambda,
\]
and the oscillatory phase is $W=-\hat x=2\nu\log\lambda$.  Both this curve and
this phase convention are fixed throughout the paper.  The involution
\[
  \iota:\mu\longmapsto \mu^{-1}
\]
fixes $\lambda$ and $w$.

\subsubsection{The involution-equivariant sign sector}

The full smooth $\mu$-curve has two
lifts of each quotient critical point, while the A-model Frobenius algebra has
rank $l+1$.  Therefore the B-model comparison cannot use the ordinary
topological recursion on the full upstairs curve.  Instead the recursion is run
on quotient labels with values in the sign sector.

The two-point input is the Prym kernel
\[
  B^-=B^{\rm std}-(\operatorname{id}\times\iota)^*B^{\rm std}.
\]
Since $d\hat x$ is invariant and $\phi_D$ is anti-invariant, the periods which
enter the comparison lie in the sign sector.  Thus the involution $\iota$ forces
us to use the reduced local curve
$
  \cS^\iota_\kappa
$
of Section~\ref{sec:bgraphsum}, which collects this data.  Its stable outputs
$\omega^\iota_{g,n}$ pull back to $\iota$-anti-invariant forms in every external
variable.  We use the $\bZ_2$-equivariant topological recursion of
Giacchetto--Kramer--Lewa\'nski \cite{giacchetto2024newspinhurwitztheory}, applied to the involution $\iota$ and to the
sign sector of the type-$D_l$ logarithmic Toda curve.  After choosing local sign
trivializations, the DOSS theorem applies in this sign-reduced setting. 

\subsection{Statement of the main results and outline of the proof}

\subsubsection{Genus-zero mirror theorem}

We first identify the genus-zero Frobenius structures.  On the A-side we use
the equivariant orbifold quantum cohomology Frobenius manifold of $\cX$.  On the
B-side we use the invariant Jacobian Frobenius structure of the type-$D_l$
logarithmic Toda curve, after multiplying the surface metric and cubic tensor by
the Euler factor of the third torus weight.  The closed mirror map is the
centered orbifold-frame coordinate change of Definition~\ref{def:mirror-map}.

The genus-zero comparison is Theorem~\ref{thm:frob-iso}.  It says that, on the
Frobenius domain $\Omega_{\rm Frob}$,
\[
  \Phi_{\rm mir}:\cM^B|_{\Omega_{\rm Frob}}
  \longrightarrow
  \cM^{\cX}|_{x(\Omega_{\rm Frob})}
\]
is an isomorphism of semisimple Frobenius manifolds, preserving the unit,
product, and metric.  This isomorphism combines the Brini--Ma--Strachan
surface mirror theorem, the Bryan--Gholampour
and Hu ADE quantum McKay input, and the third-leg normalization of
Section~\ref{sec:amodel}
\cite{brini2025dubrovindualitymirrorsymmetry,
bryan2007rootsystemsquantumcohomology,
hu2012quantummckaycorrespondencesingularities}.

\subsubsection{The smooth-chamber $R$-matrix comparison}

The higher-genus comparison is controlled by $R$-matrices.  On the A-side the
$R$-matrix is the normalized canonical Givental--Teleman calibration of the
twisted $B\Gamma$ CohFT.  On the B-side the matrix $\hat R^B$ is defined by
regularized stationary-phase transforms of the second-kind forms built from the
Prym kernel.  The
comparison is made only on the smooth oscillatory chamber $\Omega_B$, where
thimbles and logarithm branches have been chosen.  This is the graph-sum
comparison used in toric remodeling and in the DOSS identification of topological recursion
with Givental graph sums
\cite{fang2019remodelingconjecturetoriccalabiyau,
duninbarkowski2012identificationgiventalformulaspectral}.

Theorem~\ref{thm:anchor} proves that, in the chosen normalized canonical frame,
\[
  \hat R^B(\kappa;z)
  =
  \hat R^\cX_{\rm can}(x(\kappa);z),
  \qquad
  \kappa\in\Omega_B.
\]
The orbifold point is used as boundary data for this equality, not as a point at
which a central-fiber B-side $R$-matrix is defined.  The proof of this equality
first shows that both matrices solve the
same Dubrovin equation \cite{dubrovin1994geometry2dtopologicalfield} after the
genus-zero mirror identification, leaving a
diagonal symplectic gauge.  The boundary calculation of Section~\ref{sec:anchor},
especially the parity-even flat-unit limit at the fixed labels, removes that
gauge.

\subsubsection{Stable descendant remodeling}

For descendants, the B-model graph sum must be compared with the shifted
CohFT ancestor graph sum.  The DOSS graph sum is written in dressed leaves
$\theta^k_\alpha$.  Section~\ref{sec:graphsum-remodeling} first rewrites those
leaves in terms of the undressed leaves $\widehat\theta^k_\alpha$, and then
makes the parity-twisted substitution
\[
  \widehat\theta^k_\alpha(q_j)
  \longmapsto
  (-1)^{k+1}\widetilde u_j^{k,\alpha}.
\]
This produces the B-side formal series
$\cW^{B,\Pi}_{g,n}$; the sign in this substitution is the one which cancels the
local graph parities.

The stable descendant theorem is Theorem~\ref{thm:formal-graphsum-remodeling}.
For $n>0$, $2g-2+n>0$, and $\kappa\in\Omega_B$,
\[
  \cW^{B,\Pi}_{g,n}(\widetilde\bu_1,\ldots,\widetilde\bu_n;\kappa)
  =
  \mathcal F^{\cX,{\rm an}}_{g,n}(\bu_1,\ldots,\bu_n;\kappa).
\]
This equality is first for polynomial descendant inputs and then in the
completed descendant-variable ring.  This is the binary-dihedral analogue of
descendant remodeling: the ordinary toric
mirror recursion is replaced by the
involution-equivariant sign-sector recursion
\cite{fang2025remodelingconjecturedescendants,
duninbarkowski2012identificationgiventalformulaspectral}.

\subsubsection{Higher-genus free energies}

The closed free-energy statement is separate from the leaf-replacement theorem,
because it has no ordinary external leaf.  Theorem~\ref{thm:formal-free-energy-remodeling}
uses the one-point graph comparison together with the topological-recursion
free-energy residue formula and the shifted-CohFT dilaton equation.  For every
$g\geq2$ and every $\kappa\in\Omega_B$,
\[
  F_g(\cS^\iota_\kappa)
  =
  F_g^{\mathrm{GW},{\rm an}}(\cX;\kappa).
\]
This free-energy comparison does not include the unstable genera $0$ and $1$
\cite{eynard2007invariantsalgebraiccurvestopological,
eynard2009geometricalinterpretationtopologicalrecursion}.

\subsubsection{Proof strategy}

The proof follows the graph-comparison pattern of Fang--Liu--Zong and of the
descendant remodeling theorem
\cite{fang2019remodelingconjecturetoriccalabiyau,
fang2025remodelingconjecturedescendants,
duninbarkowski2012identificationgiventalformulaspectral}.
Section~\ref{sec:amodel} gives the A-side
graph data: localization to an inverse-Euler-twisted theory of $B\Gamma$,
character diagonalization, quantum Riemann--Roch, the normalized canonical
frame, and the scalar third-leg factor.  Section~\ref{sec:bgraphsum} gives the
B-side graph data: quotient critical points, the Prym kernel, the reduced local
curve, the DOSS graph sum, and the stationary-phase definition of $\hat R^B$.

The new feature in the binary-dihedral case is the boundary analysis at the
orbifold point.  In
Section~\ref{sec:bmodel}, the Toda numerator of the type-$D_l$ curve collapses
to a cyclotomic factorization and the curve has a nonreduced flat limit; after
expanding the target at the two fixed fibers,
one obtains a reduced semistable curve with main and bubble components.  This geometry supplies the
labels on the normalization and the limiting kernel.  Section~\ref{sec:anchor} then
uses the associated thimble limits and flat-unit periods to fix the residual
diagonal gauge of the smooth-chamber $R$-matrix.

After Theorem~\ref{thm:anchor}, the remaining comparison is graph-theoretic.
The Frobenius isomorphism identifies vertex factors, the $R$-matrix comparison
identifies edge data, and the parity-twisted leaf substitution identifies the
ordinary leaves.  The local parity signs cancel the explicit DOSS sign
$(-1)^{g-1}$, so no residual sign appears in the final descendant and
free-energy statements.

\subsection{Some remarks}

Our B-model is not the ordinary topological recursion on the full
upstairs $\mu$-curve.  That recursion has the wrong label set for the
binary-dihedral Frobenius algebra.  The curve used here is the reduced local
curve on quotient labels, with the Prym kernel $B^-$ and $\iota$-anti-invariant
outputs.

The current work is not a full open-closed string BKMP theorem.  We do not introduce an
Aganagic--Vafa brane sector, and the main descendant theorem has ordinary
external leaves only after the parity-twisted substitution.  The closed
free-energy theorem covers $g\geq2$.

\subsection{Relation to previous work}

The strategy of the proof is close to that of Fang--Liu--Zong
\cite{fang2019remodelingconjecturetoriccalabiyau}.  We again compare an A-model
Givental graph sum with a B-model topological-recursion graph sum.  The
difference is that the toric mirror curve is replaced by the type-$D_l$
logarithmic Toda curve, and the ordinary recursion is replaced by the
involution-equivariant sign-sector recursion.

The genus-zero ADE input comes from the root-system and quantum McKay literature
and from the Brini--Ma--Strachan Toda mirror
\cite{bryan2007rootsystemsquantumcohomology,
bryan2008quantummckaycorrespondencepolyhedral,
hu2012quantummckaycorrespondencesingularities,
brini2025dubrovindualitymirrorsymmetry}.  The present paper supplies the
Calabi--Yau third leg, the binary-dihedral orbifold CohFT, the sign-sector
topological recursion, and the all-genus $R$-matrix comparison.

The sign-sector recursion used in this paper is the $\bZ_2$-equivariant
topological recursion of Giacchetto--Kramer--Lewa\'nski
\cite{giacchetto2024newspinhurwitztheory}.  In our setting the involution is
the Toda-curve involution $\iota$, and the sign representation is implemented
by the Prym kernel $B^-$.

There is related non-toric topological-string work on ADE and Toda-type
geometries, for example on Chern--Simons theory on spherical Seifert manifolds
and its integrable-systems description
\cite{borot2015chernsimonstheorysphericalseifert}.  The result here is
instead a GW-theoretic remodeling statement for
$[\bC^2/\Gamma\times\bC]$, proved by comparing the Givental--Teleman and DOSS
graph sums.

\subsection{Overview of the paper}

Section~\ref{sec:amodel} develops the equivariant orbifold GW theory of
$\cX$, reduces it to a twisted theory of $B\Gamma$, and records the A-model
$R$-matrix.  Section~\ref{sec:bmodel} introduces the type-$D_l$ logarithmic
Toda curve, the orbifold locus, and the semistable degeneration.
Section~\ref{sec:bgraphsum} constructs the reduced, involution-equivariant B-model
graph sum.  Section~\ref{sec:frobenius} proves the genus-zero Frobenius
isomorphism and the flat algebra extension through the pole-cancellation
strata.  Section~\ref{sec:anchor} proves the smooth-chamber $R$-matrix
comparison by orbifold-point gauge fixing.  Finally,
Section~\ref{sec:graphsum-remodeling} proves stable descendant remodeling and the
higher-genus free-energy comparison.

\subsection{AI-assisted development of this paper}
\label{subsec:rethlas-ai}

The mathematics of this paper was generated by a Rethlas-based system under a human provided strategy and initial input. Rethlas is an informal reasoning agent introduced by Ju et al.~\cite{ju2026automatedconjectureresolutionformal}. Based on Jihao Liu's suggestion and inspired by their Danus system~\cite{liu2026danusorchestratingmathematicalreasoning}, which was not available for our use at the time of this paper's writing, we designed an orchestrator to automatically generate the mathematical content by spawning Rethlas agents. Reasoning in natural language, the system produced a knowledge graph of Rethlas-generated facts leading to the main theorems. The system was largely automatic, except for some initial human input on the setup of the mirror curve, a hint about its degeneration and a general proof strategy.

From the generated knowledge graph, with the assistance of a Codex-based writing agent and GPT-5.5 Pro verification, we wrote the main content of this paper block by block (mainly section by section) with extensive human--AI interaction. Afterwards, the authors manually proofread and edited the entire paper for mathematical correctness.

\subsection*{Acknowledgments}

The first-named author would like to thank Chiu-Chu Melissa Liu, Song Yu, and Zhengyu Zong for valuable mathematical discussions. He is grateful to Bin Dong and Jihao Liu for their help in the Rethlas-based AI-assisted development of this paper, and to the AI4Math project at Peking University for providing an exciting and productive environment for AI--math interaction. In particular, our Rethlas orchestrator was inspired by the Danus system designed by Jihao Liu et al.~\cite{liu2026danusorchestratingmathematicalreasoning}. The first-named author also thanks Yin Wu for discussions on AI usage and agent harnesses. The second-named author thanks Xiaowen Hu and Zhengyu Zong for helpful mathematical discussions. The third-named author would like to thank Andrea Brini for helpful discussions.

The work of BF is partially supported by National Key R\&D Program of China 2023YFA1009803, NSFC 12125101, and NSFC 11890661. JM was partially supported by a PhD studentship from the EPSRC Doctoral Training Partnership EP/W524360/1.


\section{Equivariant orbifold Gromov--Witten theory}
\label{sec:amodel}

Throughout, $\Gamma\subset SU(2)$ is the binary dihedral group of order $4(l-2)$ with $l\geq 4$, and
\[
  \cX=[\bC^2/\Gamma\times\bC],
\]
where $\Gamma$ acts on the first factor through its defining two-dimensional representation on $V=\bC^2$ and
trivially on the last factor $\bC$. We let the torus $\bT=\bC^\times$ act with weights
$(\nu,\nu,-2\nu)$, so that the action is Calabi--Yau. 

\subsection{Reduction to a twisted theory of $B\Gamma$}

\begin{definition}\label{lem:loc-twisted-bg}
For insertions $\gamma_i\in H^*_{\CR,\bT}(\cX)\otimes_{\bC[\nu]}\bC(\nu)$, the $\bT$-equivariant orbifold GW theory of
$\cX=[\bC^2/\Gamma\times\bC]$ is the inverse-Euler
$(V_\nu\oplus\cO_{-2\nu})$-twisted GW theory of $B\Gamma$:
\begin{equation}\label{eq:amodel:localization}
  \left\langle \prod_i\tau_{a_i}(\gamma_i)\right\rangle^{\cX,\bT}_{g,n}
  =
  \int_{\overline{\cM}_{g,n}(B\Gamma)}
  \prod_i\bar\psi_i^{a_i}\operatorname{ev}_i^*\gamma_i\,
  \frac{e_\bT(R^1\pi_*f^*(V_\nu\oplus\cO_{-2\nu}))}{e_\bT(R^0\pi_*f^*(V_\nu\oplus\cO_{-2\nu}))}.
\end{equation}
\end{definition}

Let $\mathcal D^{B\Gamma}$ be the descendant potential of the
topological classifying-space theory in the conjugacy-sector variables $\mathbf u$. The localized
variables for $H^*_{\CR,\bT}(\cX)\otimes_{\bC[\nu]}\bC(\nu)$ are dual to the age-normalized sector classes
\[
  \overline{\mathbf 1}_{[h]}
  =\nu^{-\operatorname{age}_V(h)}\mathbf 1_{[h]},
  \qquad
  \operatorname{age}_V(1)=0,
  \quad
  \operatorname{age}_V(h)=1\quad(h\neq1).
\]
Thus, if $\widehat{\mathbf u}_{[h]}$ denotes the variable paired with
$\overline{\mathbf 1}_{[h]}$, the linear change from the classifying-space variables is
\[
  \widehat{\mathbf u}_{[h]}=\nu^{\operatorname{age}_V(h)}\mathbf u_{[h]}.
\]
In these variables, quantum Riemann--Roch gives
\begin{equation}\label{eq:amodel:qrr-potential}
  \mathcal D^{\cX}(\widehat{\mathbf u},\hbar)
  =
  \widehat{\hat R^{\cX}}_{\eta^\cX}\,
  \mathcal D^{B\Gamma}(\mathbf u,e_1\hbar),
  \qquad e_1=e_\bT(V_\nu\oplus\cO_{-2\nu})=-2\nu^3.
\end{equation}
The subscript on the quantized operator records the metric used for quantization: the symplectic loop
space is built from
\begin{equation}\label{eq:amodel:metric-scaling}
  \eta^\cX=e_1^{-1}\eta_{B\Gamma},
\end{equation}
or, equivalently after diagonalization, from the normalized canonical metric. Thus the degree-zero
part of the inverse Euler class consists exactly of the sector normalization, the Frobenius-form factor
$e_1^{-1}$ in \eqref{eq:amodel:metric-scaling}, and the genus-counting
rescaling $\hbar\mapsto e_1\hbar$ in \eqref{eq:amodel:qrr-potential}; the matrix
$\hat R^{\cX}$ below contains only the positive-degree part and has constant term $I$.

\subsection{The orbifold-point Frobenius algebra}

At the orbifold point the quantum product degenerates to the classical equivariant Chen--Ruan (CR)
product. We now identify the product and the Frobenius metric separately, since the metric scalar is
part of the normalization used later by the $R$-matrix.

\begin{lemma}\label{lem:frob}
Set
\[
  e_1=e_\bT(V_\nu\oplus\cO_{-2\nu})=-2\nu^3.
\]
After the standard equivariant age normalization of the sector classes, the orbifold-point Frobenius
algebra is
\begin{equation}\label{eq:amodel:orbifold-frobenius}
  \bigl(H^*_{\CR,\bT}(\cX)\otimes_{\bC[\nu]}\bC(\nu),\star,\eta^\cX\bigr)
  \cong
  \bigl(Z(\bC(\nu)[\Gamma]),\cdot,e_1^{-1}\eta_{B\Gamma}\bigr),
\end{equation}
where $\eta_{B\Gamma}$ is the classifying-space Frobenius form. In particular it is semisimple over
$\bC(\nu)$, with canonical idempotents indexed by $\mathrm{Irr}(\Gamma)$.
\end{lemma}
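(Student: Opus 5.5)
The plan is to compute the classical equivariant Chen--Ruan product and pairing directly from the localization formula \eqref{eq:amodel:localization} at degree zero (no Novikov variables, since $\cX$ is affine), and then match the outcome with the group-algebra centre. The inertia stack of $\cX$ is $\coprod_{[h]}[(V^h\times\bC)/C(h)]$. For $h\neq1$ the fixed locus $V^h$ is $0$, because $\Gamma\subset SU(2)$ acts freely on $V\setminus0$. So $H^*_{\CR,\bT}(\cX)\otimes\bC(\nu)$ has basis $\mathbf 1_{[h]}$, indexed by conjugacy classes, and this is identified linearly with $Z(\bC(\nu)[\Gamma])$ via $\mathbf 1_{[h]}\mapsto$ class sums (suitably scaled). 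The three-point, genus-zero, degree-zero invariants are integrals over $\Mbar_{0,3}(B\Gamma)$, whose components are indexed by triples $(h_1,h_2,h_3)$ with $h_1h_2h_3=1$, modulo simultaneous conjugation, weighted by $1/|C|$. On each component the integrand is $e_\bT(R^1\pi_*f^*E)/e_\bT(R^0\pi_*f^*E)$ with $E=V_\nu\oplus\cO_{-2\nu}$.

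The key step is to evaluate this Euler-class ratio sector by sector. By orbifold Riemann--Roch on a genus-zero three-pointed orbicurve, the rank of $R^0-R^1$ for the $V$ part is $2-\sum_i\operatorname{age}_V(h_i)$, and for the trivial $\bC$ part it is $1$. The $\cO_{-2\nu}$ factor always contributes $e_\bT(R^0)=-2\nu$. For $V$, since $\operatorname{age}_V\in\{0,1\}$, the number $k$ of nontrivial $h_i$ satisfies $k\ne1$. If $k=0$ we have $H^0=V$, $H^1=0$, giving $1/\nu^2$. If $k=2$ we have $H^0=H^1=0$, giving $1$. If $k=3$ we have $H^0=0$ and $H^1$ one-dimensional with weight $\nu$ (the $\det V$ weight; this uses $SU(2)$ and Serre duality), giving a factor $\nu$. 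Hence the invariant is $\frac{1}{|\text{stabilizer}|}\,\nu^{\sum\operatorname{age}(h_i)-2}\,(-2\nu)^{-1}$. After the age normalization $\overline{\mathbf 1}_{[h]}=\nu^{-\operatorname{age}}\mathbf 1_{[h]}$, every three-point structure constant becomes $\nu^{-2}(-2\nu)^{-1}=e_1^{-1}$ times the $B\Gamma$ structure constant. The same computation with one insertion equal to the unit gives the pairing $e_1^{-1}\eta_{B\Gamma}$. Since both the 3-tensor and the metric scale by the same $e_1^{-1}$, the product $\star$ (raising an index with $\eta^\cX$) is exactly the $B\Gamma$ product. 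That product is the class-sum multiplication in $Z(\bC[\Gamma])$, by the standard Jarvis--Kimura computation. This gives \eqref{eq:amodel:orbifold-frobenius}.

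Semisimplicity follows from Wedderburn theory. $Z(\bC(\nu)[\Gamma])$ has the primitive idempotents $e_\rho=\frac{\dim\rho}{|\Gamma|}\sum_g\chi_\rho(g^{-1})g$, one for each $\rho\in\mathrm{Irr}(\Gamma)$, and these are defined over $\bQ$ after adjoining character values. The metric is nondegenerate on them because $\eta_{B\Gamma}(e_\rho,e_\rho)=(\dim\rho/|\Gamma|)^2\neq0$ up to normalization.

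I expect the main obstacle to be the $k=3$ sector: justifying that $H^1(\mathcal C,f^*V)$ is one-dimensional with the right weight, and that no sign survives. My first approach would be to use $V\cong V^\vee$ (via $\det V$ trivial as a $\Gamma$-rep but weight $2\nu$ as a torus rep) together with Serre duality, which identifies $H^1(f^*V)\cong H^0(f^*V^\vee\otimes\omega)^\vee$. I would then compute the latter via the $\Gamma$-invariants of holomorphic forms on the $\Gamma$-cover, checking the result against the known $A_1$/$\bZ_2$ case. A secondary check is that the relative normalization $\nu^{-\operatorname{age}}$ is consistent in the metric, where the ages pair as $\operatorname{age}(h)+\operatorname{age}(h^{-1})=2\cdot\operatorname{age}(h)$ for $h\neq1$.
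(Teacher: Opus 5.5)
Your proposal is correct. It follows the same overall skeleton as the paper's proof: age normalization, the product becoming the class-algebra product with the metric scaled by $e_1^{-1}$, Jarvis--Kimura, and then the Wedderburn idempotents. It differs in how the key step is obtained.

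The paper cites the Lan--Zong finite-group description of equivariant Chen--Ruan cohomology of $[\bC^r/G]$ for the fact that rescaling each sector by its equivariant age factor yields the class-algebra product. It then records the sector pairing $\delta_{[k],[h^{-1}]}/(|C_\Gamma(h)|\,e_1)$.

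You prove that rescaling statement directly for this target:
\begin{itemize}
\item The twisting factor $e_\bT(R^1\pi_*f^*E)/e_\bT(R^0\pi_*f^*E)$ is constant on each component of $\Mbar_{0,3}(B\Gamma)$.
\item By your case analysis $k\in\{0,2,3\}$ (orbifold Riemann--Roch, plus $V^H=0$ for every nontrivial subgroup $H$), it equals $\nu^{\sum_i\operatorname{age}_V(h_i)-2}(-2\nu)^{-1}$.
\item The age normalization therefore cancels the power of $\nu$ uniformly. Both the three-point tensor and the pairing become exactly $e_1^{-1}$ times their $B\Gamma$ counterparts, so the product is unchanged.
\end{itemize}

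What your route buys is a self-contained explanation of why the scalar is uniformly $e_1^{-1}$. It uses only that every nontrivial element of $SU(2)$ has age $1$ and that $\bT$ acts on $V$ by scalars. It also never needs the convention-dependent $B\Gamma$ counting normalizations, since those cancel in the comparison. The paper's citation is shorter and applies to general $[\bC^r/G]$.

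One correction: the step you flag as the main obstacle is not one. The torus acts on $V$ through the scalar character $\nu$, which commutes with $\Gamma$. Hence $H^*(\mathcal C,f^*V_\nu)=H^*(\mathcal C,f^*V)\otimes\bC_\nu$. The one-dimensional $H^1$ in the $k=3$ sector (its dimension is forced by $\chi=-1$ and $H^0=0$) therefore has weight $\nu$, with no sign and no need for Serre duality. Your parenthetical attributing this to the $\det V$ weight is off, since that weight is $2\nu$.
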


\begin{proof}
By the finite-group description of equivariant Chen--Ruan cohomology of $[\bC^r/G]$
\cite{lan2023twistedequivariantgromovwittentheory}, rescaling each sector class by its equivariant age
factor turns the product into the class-algebra product. For $h\ne1$ the defining $SU(2)$ representation
has no fixed vector and $\operatorname{age}_V(h)=1$, while the trivial third summand has age zero. In the
age-normalized conjugacy-class basis $\overline{\mathbf 1}_{[h]}$, the pairing is
\begin{equation}\label{eq:amodel:sector-pairing}
  \eta^\cX(\overline{\mathbf 1}_{[h]},\overline{\mathbf 1}_{[k]})
  =
  \frac{\delta_{[k],[h^{-1}]}}{|C_\Gamma(h)|\,e_1}.
\end{equation}
By the Jarvis--Kimura description of the orbifold quantum cohomology of a classifying space
\cite{jarvis2002orbifoldquantumcohomologyclassifying}, this class algebra is $Z(\bC[\Gamma])$, and
after scalar extension it becomes $Z(\bC(\nu)[\Gamma])$. Thus the product is the usual product on
$Z(\bC(\nu)[\Gamma])$, but the Frobenius form is $e_1^{-1}\eta_{B\Gamma}$, as
encoded in \eqref{eq:amodel:sector-pairing}.

The primitive central idempotents are
\begin{equation}\label{eq:amodel:primitive-idempotents}
  p_\alpha=\frac{d_\alpha}{|\Gamma|}\sum_{h\in\Gamma}\chi_\alpha(h^{-1})h,
  \qquad d_\alpha=\dim V_\alpha,
\end{equation}
so Maschke's theorem and the Artin--Wedderburn decomposition give
$Z(\bC(\nu)[\Gamma])\cong\bigoplus_{\alpha\in\mathrm{Irr}(\Gamma)}\bC(\nu)p_\alpha$. Their norms are
\begin{equation}\label{eq:amodel:idempotent-norms}
  \eta^\cX(p_\alpha,p_\beta)
  =\delta_{\alpha\beta}\frac{d_\alpha^2}{|\Gamma|^2e_1}.
\end{equation}
Therefore the normalized canonical vectors are
\begin{equation}\label{eq:amodel:normalized-canonical-vector}
  \widehat p_\alpha=\frac{|\Gamma|\sqrt{e_1}}{d_\alpha}p_\alpha,
  \qquad
  \eta^\cX(\widehat p_\alpha,\widehat p_\beta)=\delta_{\alpha\beta}.
\end{equation}
Thus the primitive idempotents \eqref{eq:amodel:primitive-idempotents} and the
norms \eqref{eq:amodel:idempotent-norms} fix the normalized frame
\eqref{eq:amodel:normalized-canonical-vector}.  This frame is defined
after adjoining $\sqrt{e_1}$, although the $R$-matrix entries written below
descend to $\bC(\nu)$.
\end{proof}

\subsection{The Givental--Teleman $R$-matrix}

Because the orbifold-point topological theory is semisimple, the inverse-Euler-twisted cohomological
field theory (CohFT) is obtained from it by the quantum Riemann--Roch
$R$-action, equivalently by Givental--Teleman reconstruction
\cite{givental2001semisimplefrobeniusstructureshigher, teleman2012structure2dsemisimplefield}. We
compute this $R$-matrix in the normalized canonical frame $\{\widehat p_\alpha\}$. For $h\ne1$, choose
$a_h\in(0,1)$ so that the eigenvalues of $h$ on $V$ are
$e^{2\pi i a_h}$ and $e^{2\pi i(1-a_h)}$; for $h=1$ the eigenangle pair is $(0,0)$. In the displayed
formula \eqref{eq:amodel:rmatrix} below we write the identity contribution with $a_1=0$ and $1-a_1=1$, which is harmless because only
$B_{t+1}$ with $t\ge1$ appears and $B_n(0)=B_n(1)$ for $n\ge2$.

\begin{proposition}\label{prop:rmatrix}
In the normalized canonical idempotent basis indexed by $\alpha,\beta\in\mathrm{Irr}(\Gamma)$, the
Givental--Teleman $R$-matrix of the inverse-Euler $(V_\nu\oplus\cO_{-2\nu})$-twisted theory of
$B\Gamma$ is
\begin{equation}\label{eq:amodel:rmatrix}
  \hat R^{\cX}(z)_{\alpha\beta}
  =\frac{1}{|\Gamma|}\sum_{h\in\Gamma}\chi_\alpha(h^{-1})\chi_\beta(h)\,\Theta_h(z)\,E(z),
\end{equation}
where
\begin{gather*}
  \Theta_h(z)=\exp\Big(\sum_{t\geq1}\frac{(-1)^t\big[B_{t+1}(a_h)+B_{t+1}(1-a_h)\big]}{t(t+1)}
  \Big(\frac{z}{\nu}\Big)^t\Big),\\
  E(z)=\exp\Big(\sum_{t\geq1}\frac{(-1)^{t+1}B_{t+1}(1)}{t(t+1)}\Big(\frac{z}{2\nu}\Big)^t\Big).
\end{gather*}
\end{proposition}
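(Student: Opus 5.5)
The approach is to compute the $R$-matrix directly from orbifold quantum Riemann--Roch (Tseng's theorem) applied to the inverse-Euler twist by $V_\nu\oplus\cO_{-2\nu}$. In the conjugacy-sector basis this $R$-matrix is diagonal. We then conjugate it into the normalized canonical frame of Lemma~\ref{lem:frob} by the character table.

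\textbf{Step 1: the sector $R$-matrix.} Tseng's theorem writes the twisted descendant potential as $\widehat{\Delta}\,\mathcal D^{B\Gamma}$. The operator $\Delta$ acts on the untwisted sector $[h]$ by
\[
  \Delta_{[h]}=\prod_{\text{eigenlines } L}\exp\Big(\sum_{m\ge1}s_{2m-1}\frac{B_{2m}(r_L)}{2m(2m-1)}z^{2m-1}\Big)\cdot(\text{degree-zero part}),
\]
where $r_L$ is the age of $h$ on the line $L$. For the inverse Euler class, $s_k=(-1)^{k}(k-1)!/\lambda_L^{k}$ with $\lambda_L$ the equivariant weight, up to the standard sign convention, which I will fix by comparing with the known $[\bC^3/\bZ_n]$ computation. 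I would rewrite the exponent as $\sum_{t\ge1}(-1)^{t}B_{t+1}(r)\,(z/\lambda)^t/(t(t+1))$, keeping all $t$. The even-$t$ terms are exactly what odd Bernoulli numbers contribute, and they cancel between $r$ and $1-r$ because $B_n(1-x)=(-1)^nB_n(x)$.

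\textbf{Step 2: the three eigenlines.} For $h\ne1$ the two lines of $V$ carry weight $\nu$ and ages $a_h$ and $1-a_h$, which gives $\Theta_h$. The trivial line has weight $-2\nu$ and age $0$. Replacing $z/\lambda$ by $-z/(2\nu)$ turns $(-1)^t$ into $(-1)^{t+1}$, and $B_{t+1}(0)=B_{t+1}(1)$ for $t\ge1$, which gives $E(z)$. For $h=1$ the convention $a_1=0$, $1-a_1=1$ yields the same formula. The degree-zero pieces (the sector normalization $\nu^{-\mathrm{age}}$, the factor $e_1$ in the metric, and the rescaling $\hbar\mapsto e_1\hbar$) have already been absorbed in \eqref{eq:amodel:qrr-potential}–\eqref{eq:amodel:metric-scaling}. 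The remaining operator is therefore $\diag_{[h]}(\Theta_h(z)E(z))$ with constant term $I$.

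\textbf{Step 3: change of frame.} I would express $\widehat p_\alpha$ in the age-normalized sector basis using \eqref{eq:amodel:primitive-idempotents} and \eqref{eq:amodel:normalized-canonical-vector}. The transformation matrix is $U_{[h],\alpha}=\sqrt{e_1}\,\chi_\alpha(h^{-1})\,|[h]|\,\nu^{\mathrm{age}}/d_\alpha$. Its inverse follows from the column orthogonality relations, using the pairing \eqref{eq:amodel:sector-pairing}. Conjugating the diagonal matrix gives
\[
  \sum_{[h]}\frac{|[h]|}{|\Gamma|}\,\chi_\alpha(h^{-1})\chi_\beta(h)\,\Theta_h E,
\]
which is the claimed formula after expanding the sum over classes to a sum over elements. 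The factor $d_\alpha$ from the normalization cancels, because the normalized frame is orthonormal. The factors $\sqrt{e_1}$ and $\nu^{\mathrm{age}}$ cancel between $U$ and $U^{-1}$, since conjugating a diagonal matrix by a diagonal rescaling leaves it unchanged. This cancellation is why the entries descend to $\bC(\nu)$.

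\textbf{Main obstacle.} The real content is checking that this $R$ is the \emph{Givental--Teleman} $R$-matrix, i.e., that the quantum Riemann--Roch $\Delta$ is the unique calibration compatible with the grading/homogeneity. I would argue this from semisimplicity at the orbifold point: the twisted CohFT is obtained from the semisimple topological theory by $\Delta$, and Teleman's classification identifies the $R$-action uniquely once the Euler-field homogeneity holds. The Bernoulli form of $\Delta$ is manifestly homogeneous in $z/\nu$, so this condition is satisfied. The other delicate point is the sign and normalization bookkeeping in Step 1, namely getting $(-1)^t$ versus $(-1)^{t+1}$ and $\nu$ versus $2\nu$ right. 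I would pin these down by checking the $t=1$ coefficient against the genus-one (or $\lambda_1$) correction of the twisted theory.
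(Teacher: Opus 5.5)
Your Steps 1--3 follow the paper's route. Tseng's operator, specialized to $B\Gamma$, is diagonal on sectors with scalar $S_h=\Theta_h(z)E(z)$ once the degree-zero part is split off, and the character transform to $\{\widehat p_\alpha\}$ then gives the formula. Two steps, however, would not go through as written.

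The first is in Step 3. The factor $1/d_\alpha$ in your $U_{[h],\alpha}$ is wrong, and the reason you give for its disappearance is not valid. From $p_\alpha=\frac{d_\alpha}{|\Gamma|}\sum_h\chi_\alpha(h^{-1})h$ and $\widehat p_\alpha=\frac{|\Gamma|\sqrt{e_1}}{d_\alpha}p_\alpha$ one gets $\widehat p_\alpha=\sqrt{e_1}\sum_h\chi_\alpha(h^{-1})h$, so $d_\alpha$ cancels inside $U$ itself. This matters because only sector-side (row) rescalings, such as $\sqrt{e_1}$, $\nu^{\rm age}$ and $|[h]|$, drop out of $U^{-1}\diag(S_h)U$. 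A canonical-side (column) rescaling $D$ survives as $D^{-1}(\,\cdot\,)D$. With your $U$ you would therefore get a spurious factor $d_\alpha/d_\beta$ in the $(\alpha,\beta)$ entry, and orthonormality plays no role in removing it.

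The second is the uniqueness argument in your last paragraph, which fails. Homogeneity in $z/\nu$ cannot single out $\Delta$. Multiplying $\Delta$ by any constant diagonal factor $\diag\bigl(\exp\sum_k c_{\alpha,k}(z/\nu)^{2k-1}\bigr)$ in the canonical frame preserves $R(0)=I$, the symplectic condition, and homogeneity. It does, however, change the higher-genus theory by Hodge-class twists at the vertices. This is the same kind of diagonal ambiguity the paper later removes with orbifold-point boundary data, not with a grading.

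You also do not need this step; the first half of your sentence is the actual input. Tseng's theorem presents the twisted CohFT as the $\Delta$-action on the topological $B\Gamma$ theory. The paper simply takes that positive-degree operator, with constant term $I$, as the Givental--Teleman $R$-matrix. If you want uniqueness, it comes from the full higher-genus CohFT, not from homogeneity.

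Two minor points:
\begin{itemize}
\item The positive-degree exponent in the orbifold formula is $\sum_{m\ge2}s_{m-1}B_m(r_L)z^{m-1}/m!$ over all $m$. Your display double counts $(2m-2)!$ and omits the odd-index terms that you later cancel.
\item On the third line, $(-1)^t$ becomes $+1$, not $(-1)^{t+1}$. This agrees with $E$ only because $B_{t+1}(0)=0$ for even $t\ge2$.
\end{itemize}
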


\begin{proof}
Apply the finite-group quantum Riemann--Roch formula
\cite{lan2023twistedequivariantgromovwittentheory}, itself Tseng's operator
\cite{tseng2009orbifoldquantumriemannrochlefschetz} specialized to $B\Gamma$, to the representation
$V\oplus\cO$. The degree-zero part of the inverse Euler class is not part of the
normalized $R$-matrix: it is the normalization already recorded in Lemma~\ref{lem:frob}, namely the
sector rescaling and the scalar Frobenius form $e_1^{-1}\eta_{B\Gamma}$; the remaining
constant scalar is the genus-counting rescaling $\hbar\mapsto e_1\hbar$ of \eqref{eq:amodel:qrr-potential}. After
this separation, the positive-degree operator has constant term $I$.

On the $h$-sector the two eigenangles of $h$ on $V$ give the scalar $\Theta_h(z)$. The third
summand is trivial as a $\Gamma$-representation, so its eigenangle is $0$ for every $h$; at torus
weight $-2\nu$ this contributes the $h$-independent scalar $E(z)$. Hence the inertia-basis operator is
diagonal with scalar
\[
  S_h(z)=\Theta_h(z)E(z).
\]
Passing from the age-normalized conjugacy-class basis to the primitive central idempotents is the usual
character transform. In the unnormalized idempotent frame this is where the $d_\alpha/d_\beta$ factors
of the convention of \cite{lan2023twistedequivariantgromovwittentheory} appear. Passing further to the normalized canonical vectors $\widehat p_\alpha$
uses the norms in Lemma~\ref{lem:frob}; these factors cancel, and with rows indexed by the output
idempotent and columns by the input idempotent we obtain
\[
  \hat R^{\cX}(z)_{\alpha\beta}
  =\frac{1}{|\Gamma|}\sum_{h\in\Gamma}\chi_\alpha(h^{-1})\chi_\beta(h)S_h(z).
\]
\end{proof}

The computation isolates the positive-degree effect of the third leg.

\begin{lemma}\label{lem:thirdleg}
The scalar $E(z)$ is the positive-degree quantum Riemann--Roch contribution of the trivial third leg of
weight $-2\nu$, and
\begin{equation}\label{eq:amodel:third-leg-factor}
  \hat R^{\cX}(z)=E(z)\,\hat R^{A,\mathrm{surf}}(z),
\end{equation}
where $\hat R^{A,\mathrm{surf}}(z)$ is the surface $R$-matrix of the $V_\nu$-twisted theory of
$B\Gamma$. The degree-zero part changes the metric by
\begin{equation}\label{eq:amodel:surface-metric-scaling}
  \eta^\cX=(-2\nu)^{-1}\eta^{[\bC^2/\Gamma]}
\end{equation}
in the corresponding age-normalized sector bases.
\end{lemma}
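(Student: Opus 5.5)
The plan is to read both claims off the explicit formula of Proposition~\ref{prop:rmatrix} and the metric normalization of Lemma~\ref{lem:frob}, by splitting the twisting bundle as $V_\nu\oplus\cO_{-2\nu}$ and using multiplicativity of quantum Riemann--Roch in direct sums.

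\emph{The $R$-matrix factorization.} Tseng's operator for an inverse-Euler twist is the exponential of a sum over the eigen-line-bundles of the twisting bundle on each sector. For $V_\nu\oplus\cO_{-2\nu}$ the exponent is therefore a sum of two commuting diagonal pieces on the inertia basis:
\begin{itemize}
\item the $V_\nu$ piece gives $\Theta_h(z)$;
\item the $\cO_{-2\nu}$ piece has eigenangle $0$ and weight $-2\nu$, so it gives $E(z)$ independently of $h$.
\end{itemize}
Running the proof of Proposition~\ref{prop:rmatrix} with $V_\nu$ alone produces
\[
\hat R^{A,\mathrm{surf}}(z)_{\alpha\beta}=\frac1{|\Gamma|}\sum_h\chi_\alpha(h^{-1})\chi_\beta(h)\Theta_h(z).
\]
Since $E(z)$ is a scalar, it pulls out of the character sum in \eqref{eq:amodel:rmatrix}. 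This gives \eqref{eq:amodel:third-leg-factor}.

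Two conventions need checking. The surface $R$-matrix must be taken in the surface's own normalized canonical frame. The third leg rescales all idempotent norms by the same scalar, so the frames differ only by the common factor $\sqrt{-2\nu}$. This common factor cancels in the conjugation $\widehat p_\alpha$-to-$\widehat p_\beta$, since the matrix entries are ratios of frame scalings.

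I would also check that $E(z)$ is indeed the genus-one Bernoulli term for a trivial line of weight $-2\nu$. The substitution $z/\nu\mapsto z/(-2\nu)$ together with the sign $(-1)^t$ gives exactly the $(-1)^{t+1}$ and $z/(2\nu)$ displayed in $E$, using $B_{t+1}(0)=B_{t+1}(1)$ for $t\ge1$.

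\emph{The metric scaling.} The degree-zero part of the inverse Euler class factors multiplicatively. The third summand contributes $e_\bT(\cO_{-2\nu})^{-1}=(-2\nu)^{-1}$ on every sector, because its fixed part is the whole line for all $h$. The $V_\nu$ summand contributes the surface pairing $\eta^{[\bC^2/\Gamma]}$, which equals $\nu^{-2}|C_\Gamma(h)|^{-1}\delta$ in the same age-normalized basis.

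Comparing with \eqref{eq:amodel:sector-pairing}, where $e_1=\nu^2\cdot(-2\nu)$, gives \eqref{eq:amodel:surface-metric-scaling}. One subtlety is that the age normalization $\nu^{-\operatorname{age}_V}$ depends only on $V$. Hence the bases on $\cX$ and $[\bC^2/\Gamma]$ coincide, and the comparison is literally entrywise.

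\emph{Main obstacle.} The only delicate point is bookkeeping rather than a conceptual step. One must confirm that the constant term of the $\cO_{-2\nu}$ factor goes entirely into the metric and the genus rescaling, and not partly into $E(z)$. I would settle this by writing Tseng's operator for the single line in the split form $\exp(\text{degree }0)\cdot\exp(\text{positive degree})$, exactly as in the proof of Proposition~\ref{prop:rmatrix}.
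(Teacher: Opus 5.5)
Your proposal is correct and follows the paper's proof. You split $V_\nu\oplus\cO_{-2\nu}$, pull the $h$-independent scalar $E(z)$ out of the character sum, and obtain the metric ratio $e_1^{\rm surf}/e_1=\nu^2/(-2\nu^3)=(-2\nu)^{-1}$ in the common age-normalized basis. Your extra checks are sound and slightly more explicit than the paper's argument: the common $\sqrt{-2\nu}$ frame rescaling cancels, and $E$ is identified as a weight-$(-2\nu)$ line. One small correction: the substitution actually produces the sign $+1$, and this matches the $(-1)^{t+1}$ in $E$ only because $B_{t+1}(0)=B_{t+1}(1)=0$ for even $t\ge2$.
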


\begin{proof}
For the surface quotient $[\bC^2/\Gamma]$ the relevant representation is $V$ alone, so the normalized
canonical $R$-matrix has sector scalar $\Theta_h(z)$. Passing to the threefold adjoins the trivial
summand $\cO$, contributing the $h$-independent positive-degree scalar $E(z)$; it therefore pulls out of
the character sum. For the metric, the surface Euler factor is $e_1^{\mathrm{surf}}=\nu^2$, whereas the
threefold Euler factor is $e_1=-2\nu^3$, so the ratio of the Frobenius forms is
$e_1^{\mathrm{surf}}/e_1=(-2\nu)^{-1}$, which is
\eqref{eq:amodel:surface-metric-scaling}.
\end{proof}

\begin{lemma}\label{lem:unitarity}
The matrix $\hat R^{\cX}$ satisfies $\hat R^{\cX}(0)=I$ and the symplectic unitarity relation
\begin{equation}\label{eq:amodel:unitarity}
  \hat R^{\cX}(z)\,\hat R^{\cX}(-z)^T=I
\end{equation}
in the normalized canonical metric.
\end{lemma}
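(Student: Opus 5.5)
The plan is to compute directly from the character-sum formula of Proposition~\ref{prop:rmatrix}. First, $\hat R^{\cX}(0)=I$: every exponent in $\Theta_h$ and $E$ begins at $t\ge1$, so $\Theta_h(0)=E(0)=1$. Then
\[
\hat R^{\cX}(0)_{\alpha\beta}=\frac1{|\Gamma|}\sum_h\chi_\alpha(h^{-1})\chi_\beta(h)=\delta_{\alpha\beta}
\]
by the first orthogonality relation.

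For unitarity, I would write $\hat R^{\cX}(z)=U\,D(z)\,U^{*}$, with $D(z)=\diag(S_{[h]}(z))$ indexed by conjugacy classes. The matrix $U_{\alpha,[h]}=\chi_\alpha(h^{-1})\sqrt{|[h]|/|\Gamma|}$ is unitary by the row and column orthogonality relations, and $U^{*}_{[h],\beta}=\overline{U_{\beta,[h]}}=\chi_\beta(h)\sqrt{|[h]|/|\Gamma|}$. For the transpose I need $U^T$, not $U^*$. Inside the sum I would substitute $h\mapsto h^{-1}$ and use $a_{h^{-1}}=1-a_h$, which holds because the eigenvalues of $h^{-1}$ are the inverses of those of $h$. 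This gives $\Theta_{h^{-1}}=\Theta_h$, so $\hat R^{\cX}(z)_{\alpha\beta}=\hat R^{\cX}(z)_{\beta\alpha}$ and $\hat R^{\cX}$ is symmetric. (Alternatively, for the binary dihedral group every element is conjugate to its inverse, so all characters are real.) The claim therefore reduces to $\hat R^{\cX}(z)\hat R^{\cX}(-z)=I$. By orthogonality the product equals $U\,D(z)D(-z)\,U^*$, so it suffices to show the scalar identity $S_h(z)S_h(-z)=1$ for each $h$.

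That scalar identity follows from parity of Bernoulli polynomials, and this is the main point to check carefully. In $\log\Theta_h(z)+\log\Theta_h(-z)$ only even $t$ survive, where the coefficient is $B_{t+1}(a)+B_{t+1}(1-a)$ with $t+1$ odd. By $B_n(1-x)=(-1)^nB_n(x)$ this vanishes. Similarly, in $\log E(z)+\log E(-z)$ only even $t$ survive, with $B_{t+1}(1)=0$ for odd $t+1\ge3$. Hence $S_h(z)S_h(-z)=1$, including for $h=1$ by the $a_1=0$ convention noted before Proposition~\ref{prop:rmatrix}.

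Finally, I would remark why this is unitarity \emph{in the normalized canonical metric}. The frame $\widehat p_\alpha$ of Lemma~\ref{lem:frob} is $\eta^\cX$-orthonormal, so symplectic unitarity $R(z)R^*(-z)=I$, with the adjoint taken with respect to $\eta^\cX$, becomes exactly the transpose relation \eqref{eq:amodel:unitarity}. The only real obstacle is bookkeeping the transpose versus conjugate transpose, and that is resolved by the $h\leftrightarrow h^{-1}$ symmetry of $\Theta_h$.
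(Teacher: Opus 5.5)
Your proposal is correct and follows essentially the paper's argument. Constant terms plus first orthogonality give $\hat R^{\cX}(0)=I$; Bernoulli reflection together with $B_{2k+1}(1)=0$ for $k\geq1$ gives $S_h(z)S_h(-z)=1$; and $S_{h^{-1}}=S_h$ plus character orthogonality finishes. The only cosmetic difference is that you package orthogonality as the unitary conjugation $\hat R^{\cX}=UDU^{*}$ together with symmetry of $\hat R^{\cX}$, whereas the paper expands the product directly using the second orthogonality relation.

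One caveat: your parenthetical alternative is false when $l-2$ is odd. In that case $\Gamma^{\mathrm{ab}}\cong\bZ_4$, $b$ is not conjugate to $b^{-1}$, and two one-dimensional characters take the values $\pm\sqrt{-1}$ on $b$. Your main eigenvalue argument for $\Theta_{h^{-1}}=\Theta_h$ does not rely on that remark, so the proof is unaffected.
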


\begin{proof}
Each exponential defining $\Theta_h$ and $E$ has constant term $1$, so $S_h(0)=1$, and first character
orthogonality gives $\hat R^{\cX}(0)_{\alpha\beta}=\delta_{\alpha\beta}$. The unordered eigenvalue pair
of $h$ depends only on the conjugacy class, so $S_h$ is a class function. The reflection identity
$B_n(1-x)=(-1)^nB_n(x)$ makes $B_{t+1}(a_h)+B_{t+1}(1-a_h)$ vanish for even $t$, and
$B_{2m+1}(1)=0$ for $m\geq1$, so $\log\Theta_h$ and $\log E$ contain only odd powers of $z$. Hence
\[
  S_h(z)S_h(-z)=1.
\]
Since $h^{-1}$ has the same unordered eigenangle pair, also $S_{h^{-1}}(z)=S_h(z)$.

Now compute, with rows and columns in the normalized canonical frame,
\[
\begin{aligned}
(\hat R(z)\hat R(-z)^T)_{\alpha\gamma}
&=\frac1{|\Gamma|^2}\sum_{h,k\in\Gamma}
  \chi_\alpha(h^{-1})\chi_\gamma(k^{-1})S_h(z)S_k(-z)
  \sum_\beta\chi_\beta(h)\chi_\beta(k).
\end{aligned}
\]
The second character orthogonality relation says
\[
  \sum_\beta\chi_\beta(h)\chi_\beta(k)
  =
  \begin{cases}
  |C_\Gamma(h)|, & k\text{ is conjugate to }h^{-1},\\
  0, & \text{otherwise}.
  \end{cases}
\]
For fixed $h$, the allowed $k$ form the conjugacy class of $h^{-1}$, of size
$|\Gamma|/|C_\Gamma(h)|$. Since $S$ and $\chi_\gamma$ are class functions and $S_{h^{-1}}=S_h$, the
centralizer factor cancels the class size and the expression becomes
\[
  \frac1{|\Gamma|}\sum_{h\in\Gamma}
  \chi_\alpha(h^{-1})\chi_\gamma(h)S_h(z)S_h(-z)
  =
  \frac1{|\Gamma|}\sum_{h\in\Gamma}\chi_\alpha(h^{-1})\chi_\gamma(h)
  =\delta_{\alpha\gamma}.
\]
This proves the symplectic relation.
\end{proof}

\subsection{Summary of the A-model data}

Assembling the lemmas of this section gives the input that the B-model comparison will match term by
term.

\begin{theorem}\label{thm:amodel}
Let $\Gamma\subset SU(2)$ be binary dihedral of order $4(l-2)$, $l\geq 4$, let
$\cX=[\bC^2/\Gamma\times\bC]$ with $\bT$-weights $(\nu,\nu,-2\nu)$, and work at the orbifold point. Then:
\begin{enumerate}
\item the $\bT$-equivariant orbifold GW theory of $\cX$ is the inverse-Euler
  $(V_\nu\oplus\cO_{-2\nu})$-twisted theory of $B\Gamma$, as in
  \eqref{eq:amodel:localization} (Definition~\ref{lem:loc-twisted-bg});
\item after localization, its orbifold-point Frobenius algebra is
  \eqref{eq:amodel:orbifold-frobenius}, with $e_1=-2\nu^3$;
\item in the normalized canonical frame
  \eqref{eq:amodel:normalized-canonical-vector}, its Givental--Teleman
  $R$-matrix is given by \eqref{eq:amodel:rmatrix} and factors as
  \eqref{eq:amodel:third-leg-factor} (Lemma~\ref{lem:thirdleg});
\item $\hat R^{\cX}(0)=I$ and the unitarity relation
  \eqref{eq:amodel:unitarity} holds in the normalized canonical metric
  (Lemma~\ref{lem:unitarity}).
\end{enumerate}
\end{theorem}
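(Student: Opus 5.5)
Theorem~\ref{thm:amodel} is an assembly of results already established in this section, so the proof is a matter of citing them in order and checking that their normalizations agree. For item (1), the localization identity \eqref{eq:amodel:localization} is Definition~\ref{lem:loc-twisted-bg}. Its justification is the Graber--Pandharipande localization: the $\bT$-fixed locus of $\overline{\cM}_{g,n}(\cX)$ is $\overline{\cM}_{g,n}(B\Gamma)$, because $\cX$ is affine and the origin is the only fixed point. The virtual normal bundle is $R^\bullet\pi_*f^*(V_\nu\oplus\cO_{-2\nu})$, and its inverse Euler class gives the displayed twist. All insertions lie in $H^*_{\CR,\bT}(\cX)\otimes\bC(\nu)$, so no convergence issue arises after inverting $\nu$.

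For item (2), I would invoke Lemma~\ref{lem:frob} directly. The value $e_1=e_\bT(V_\nu\oplus\cO_{-2\nu})=\nu\cdot\nu\cdot(-2\nu)=-2\nu^3$ is immediate. The only thing to check is that the sector normalization $\overline{\mathbf 1}_{[h]}=\nu^{-\operatorname{age}_V(h)}\mathbf 1_{[h]}$ is the same one used in the quantum Riemann--Roch identity \eqref{eq:amodel:qrr-potential}. That ensures the Frobenius algebra at the orbifold point is indeed the $z^0$ part of the twisted theory.

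For item (3), I would cite Proposition~\ref{prop:rmatrix} for formula \eqref{eq:amodel:rmatrix} and Lemma~\ref{lem:thirdleg} for the factorization \eqref{eq:amodel:third-leg-factor}. The key observation is that $E(z)$ is independent of $h$, so it factors out of the character sum. I would also check that the Bernoulli exponents match Tseng's operator with the sign conventions for inverse Euler classes. Concretely:
\begin{itemize}
\item each eigenvalue $e^{2\pi i a}$ of weight $\nu$ on $V$ should contribute a term with $B_{t+1}(a)$ and $(z/\nu)^t$;
\item the trivial summand of weight $-2\nu$ should contribute a term with $B_{t+1}(0)=B_{t+1}(1)$ and $(z/(-2\nu))^t$.
\end{itemize}
Absorbing the sign $(-1)^t$ from the $-2\nu$ weight explains why $E$ carries $(-1)^{t+1}$ rather than $(-1)^t$.

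For item (4), Lemma~\ref{lem:unitarity} gives both $\hat R^\cX(0)=I$ and \eqref{eq:amodel:unitarity}. The ingredients are that only odd powers of $z$ survive in $\log S_h$, that $S_{h^{-1}}=S_h$, and the two character orthogonality relations.

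The main obstacle is consistency of conventions rather than any new argument. The normalized canonical frame \eqref{eq:amodel:normalized-canonical-vector} requires adjoining $\sqrt{e_1}$, and the real-valuedness of characters is not assumed. Before citing the lemmas, I would therefore verify two points. First, the transpose in the unitarity relation is taken with respect to $\eta^\cX$ in the orthonormal frame $\widehat p_\alpha$. Second, the character transform uses $\chi_\alpha(h^{-1})\chi_\beta(h)$ consistently in both the $R$-matrix formula and the orthogonality computation. Once these are aligned, the theorem follows.
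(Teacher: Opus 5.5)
Your proposal is correct and follows the paper's proof, which likewise obtains items (1)--(4) by citing Definition~\ref{lem:loc-twisted-bg}, Lemma~\ref{lem:frob}, Proposition~\ref{prop:rmatrix} together with Lemma~\ref{lem:thirdleg}, and Lemma~\ref{lem:unitarity}; your added convention checks are consistent with those results. One small correction to your sign remark: absorbing the $(-1)^t$ from the weight $-2\nu$ gives the sign $(-1)^{2t}=1$, not $(-1)^{t+1}$, and it matches the paper's $(-1)^{t+1}$ only because $B_{t+1}(1)=0$ for even $t\geq2$, so only odd $t$ contribute.
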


\begin{proof}
Items (1)--(4) are Definition~\ref{lem:loc-twisted-bg}, Lemma~\ref{lem:frob}, Proposition~\ref{prop:rmatrix}
together with Lemma~\ref{lem:thirdleg}, and Lemma~\ref{lem:unitarity}, respectively. No B-model
statement is used or claimed.
\end{proof}


\section{The B-model: spectral curve, orbifold locus, and nodal degeneration}
\label{sec:bmodel}

This section fixes the B-model geometry that the graph-sum comparison runs on. We first record the
type-$D_l$ logarithmic Toda curve of Brini--Ma--Strachan and its conventions
\cite{brini2025dubrovindualitymirrorsymmetry}, and define the orbifold locus. We then construct the
semistable nodal curve at that locus, identify its limiting componentwise kernel, and count the normalization ramification labels used
later by the graph-sum and $R$-matrix-comparison calculations. The Frobenius algebra and the inertia state space are not
part of this section; they are constructed in Section~\ref{sec:frobenius}.

\subsection{The type-$D_l$ logarithmic Toda curve and its conventions}

\begin{definition}[The type-$D_l$ logarithmic Toda family]\label{def:toda-curve}
For $l\geq 4$ and parameters
\[
  \kappa=(\kappa_1,\ldots,\kappa_l;K)\in(\bC^\times)^{l+1},
  \qquad K=\kappa_{l+1},
\]
let $\bar C_\kappa\subset\bP^1_\mu\times\bP^1_\lambda$ be the compactification of the type-$D_l$
logarithmic Toda curve
\begin{equation}\label{eq:bmodel:toda-curve}
\begin{aligned}
  F(\mu,\lambda;\kappa)=\lambda\,\mu^{l-2}(\mu^2-1)^2
  -K\prod_{i=1}^l(\mu-\kappa_i)(\mu-\kappa_i^{-1})=0.
\end{aligned}
\end{equation}
Equivalently,
\begin{equation}\label{eq:bmodel:lambda-w}
  \lambda(w)=K\frac{\prod_{i=1}^l(w-c_i)}{w^2-4},
  \qquad w=\mu+\mu^{-1},\qquad c_i=\kappa_i+\kappa_i^{-1}.
\end{equation}
We equip the curve with
\begin{equation}\label{eq:bmodel:superpotential}
  \hat x=-2\nu\log\lambda.
\end{equation}
The type-$D_l$ primitive differential is
\begin{equation}\label{eq:bmodel:primitive-differential}
  \phi_D=\sqrt{\nu}\,{d\mu\over\mu},
  \qquad
  \phi_D^2=\nu\left(\frac{d\mu}{\mu}\right)^2.
\end{equation}
For the Calabi--Yau threefold normalization we do not introduce a second primitive form. Instead,
Section~\ref{sec:frobenius} rescales the surface logarithmic metric and cubic tensor by the third-leg
Euler contribution, and equivalently writes the residue tensors with
$d\hat x=-2\nu\,d\log\lambda$ from \eqref{eq:bmodel:superpotential} in the
denominator. The local conjugate coordinate used by the
Chekhov--Eynard--Orantin topological recursion in Section~\ref{sec:bgraphsum} is normalized separately
so that its Gaussian vertex matches the threefold metric built from
\eqref{eq:bmodel:primitive-differential}. The involution is
$\iota:\mu\mapsto\mu^{-1}$, which fixes $\lambda$, and on smooth
fibers we use the
standard genus-zero Bergman kernel, the fundamental bidifferential of the second kind,
\[
  B^{\rm std}=\frac{d\mu_1\,d\mu_2}{(\mu_1-\mu_2)^2}.
\]
\end{definition}

This is the type-$D_l$ logarithmic Toda curve of
\cite{brini2025dubrovindualitymirrorsymmetry}; we use both the upstairs form
\eqref{eq:bmodel:toda-curve} and the quotient coordinate form
\eqref{eq:bmodel:lambda-w}. On the smooth locus the normalization of
$\bar C_\kappa$ is rational, so $B^{\rm std}$ is the unique
fundamental bidifferential and requires no period normalization. At the singular fibers used below, the
normalizations of the reduced components are again rational.

\subsection{The orbifold locus}

\begin{definition}[Orbifold locus and orbifold point]\label{def:orbifold-point}
Let
\begin{equation}\label{eq:bmodel:orbifold-exponents}
  \zeta=e^{2\pi i/(4(l-2))},
  \qquad
  \mathbf n=(n_1,\ldots,n_l)=(2l-4,\,2l-5,\,2l-7,\,\ldots,\,3,\,1,\,0).
\end{equation}
For $1\leq k\leq l$ set $\kappa_k^{\rm orb}=\zeta^{-n_k}$. For each scale $K\in\bC^\times$ we write
\begin{equation}\label{eq:bmodel:orbifold-point}
  \kappa^{\rm orb}(K)=(\kappa_1^{\rm orb},\ldots,\kappa_l^{\rm orb};K).
\end{equation}
The vector $\mathbf n$ in \eqref{eq:bmodel:orbifold-exponents} is the orbifold exponent vector. It is
determined by the type-$D_l$ Dynkin marks, equivalently by the dimensions of the irreducible
representations under the McKay correspondence.
The \emph{orbifold locus} is the punctured scale line
$\{\kappa^{\rm orb}(K):K\in\bC^\times\}$; for a fixed scale $K$ we call $\kappa^{\rm orb}(K)$ the
\emph{orbifold point} at that scale, abbreviated $\kappa^{\rm orb}$ when the scale is clear.
\end{definition}

\subsection{The degeneration at the orbifold locus}

The naive flat limit of $\bar C_\kappa$
inside the fixed surface $S=\bP^1_\mu\times\bP^1_\lambda$ is \emph{nonreduced}; the reduced
three-component nodal curve is instead the stable-map limit, obtained by expanding $S$ along the two
vertical fibers over the involution-fixed points. Write $m=l-2$, let
$D_\sigma=\{\mu=\sigma\}\subset S$ for $\sigma=\pm1$, and let
\[
  A=[\bP^1_\mu\times\{\lambda_0\}],\qquad
  B=[\{\mu_0\}\times\bP^1_\lambda]
\]
be the section and fiber classes for the projection to $\bP^1_\mu$.

\begin{lemma}[Cyclotomic collapse and the nonreduced flat limit]\label{lem:hilbert-limit}
At $\kappa^{\rm orb}(K)$ the Toda numerator collapses to the cyclotomic factorization
\begin{equation}\label{eq:bmodel:cyclotomic-collapse}
  \prod_{i=1}^l(\mu-\kappa_i^{\rm orb})(\mu-(\kappa_i^{\rm orb})^{-1})
  =(\mu^2-1)^2\big(\mu^{2m}+1\big).
\end{equation}
Hence $\bar C_\kappa$, of class $A+2lB$ in $S$, has equation at $\kappa^{\rm orb}(K)$
\[
  (\mu^2-1)^2\big(\lambda\mu^{m}-K(\mu^{2m}+1)\big)=0,
\]
and its flat limit in the fixed surface (the limit taken in the Hilbert scheme of $S$) is the
nonreduced divisor
\begin{equation}\label{eq:bmodel:hilbert-limit}
  C^{\rm flat}_0=\Sigma^{\rm orb}+2D_++2D_-,\qquad
  \Sigma^{\rm orb}:\ \lambda=K(\mu^m+\mu^{-m}).
\end{equation}
\end{lemma}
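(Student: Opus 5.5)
The statement is a direct computation: I will first check the root-of-unity identity \eqref{eq:bmodel:cyclotomic-collapse}, then substitute it into \eqref{eq:bmodel:toda-curve}, and finally identify the flat limit using the fact that curves of fixed class in $S$ form a linear system.

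\emph{Step 1: the cyclotomic identity.} Write $m=l-2$, so $\zeta$ is a primitive $4m$-th root of unity. The exponent vector \eqref{eq:bmodel:orbifold-exponents} has three parts: the entry $n_1=2m$, the $m$ odd entries $2m-1,2m-3,\ldots,3,1$, and the entry $n_l=0$. This gives $l=m+2$ entries, as required.

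For each $k$ the factor $(\mu-\kappa_k)(\mu-\kappa_k^{-1})$ has roots $\zeta^{\pm n_k}$. The three parts contribute as follows.
\begin{itemize}
\item[(a)] For $n_1=2m$ we have $\zeta^{\pm 2m}=-1$, giving the factor $(\mu+1)^2$.
\item[(b)] For $n_l=0$ both roots equal $1$, giving the factor $(\mu-1)^2$.
\item[(c)] The odd exponents give the roots $\zeta^{\pm1},\zeta^{\pm3},\ldots,\zeta^{\pm(2m-1)}$. Modulo $4m$ these are exactly the $2m$ odd residues $\zeta^j$, $j$ odd, each appearing once. These are precisely the roots of $\mu^{2m}=-1$, so their product is $\mu^{2m}+1$.
\end{itemize}
Multiplying (a), (b) and (c) gives \eqref{eq:bmodel:cyclotomic-collapse}.

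\emph{Step 2: the specialized equation.} Substitute \eqref{eq:bmodel:cyclotomic-collapse} into \eqref{eq:bmodel:toda-curve}, using $\mu^{l-2}=\mu^m$. The common factor $(\mu^2-1)^2$ pulls out, leaving
\[
(\mu^2-1)^2\bigl(\lambda\mu^m-K(\mu^{2m}+1)\bigr)=0.
\]

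For the class, note that $F$ has bidegree $(2l,1)$ in $(\mu,\lambda)$ after homogenizing on $\bP^1\times\bP^1$. The factor $\mu^{l-2}(\mu^2-1)^2$ has $\mu$-degree $l+2\le 2l$, so the bidegree is indeed $(2l,1)$. A curve of bidegree $(2l,1)$ has class $A+2lB$, since fibers $\{\mu=\mu_0\}$ have class $B$ and sections have class $A$.

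The factorization respects this class. The divisor $\Sigma^{\rm orb}$, given by $\lambda\mu^m=K(\mu^{2m}+1)$, has bidegree $(2m,1)$. Each $D_\pm$ has bidegree $(1,0)$. The total is $(2m+4,1)=(2l,1)$, consistent with the class $A+2lB$.

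\emph{Step 3: the flat limit.} Effective divisors of class $A+2lB$ on $S$ form the projective space $\bP H^0(S,\cO(2l,1))$. The Hilbert scheme of such divisors is this linear system, so the flat limit of a family of divisors is the divisor of the limiting section, provided that section is nonzero.

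The coefficients of $F$ depend polynomially on $\kappa$. At $\kappa^{\rm orb}(K)$ the section is nonzero because its $\lambda$-coefficient is nonzero. Hence the limit is exactly the divisor of the factored equation in Step 2, namely $\Sigma^{\rm orb}+2D_++2D_-$. Rewriting the equation of $\Sigma^{\rm orb}$ as $\lambda=K(\mu^m+\mu^{-m})$ gives \eqref{eq:bmodel:hilbert-limit}.

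Finally, $\Sigma^{\rm orb}$ is reduced and irreducible: it is the graph of a rational function of $\mu$. It contains neither $D_+$ nor $D_-$, because it is a graph over $\bP^1_\mu$ and so contains no fiber $\{\mu=\text{const}\}$. The multiplicity-two fibers $D_\pm$ therefore make the limit genuinely nonreduced.

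\emph{Main obstacle.} The only real step is the exponent bookkeeping in Step 1(c): one must check that $\pm(\text{odd }n<2m)$ runs over all odd residues modulo $4m$ exactly once. Beyond that, care is needed only at $\mu\in\{0,\infty\}$ when homogenizing, to confirm the bidegree is $(2l,1)$ and that no extra components are hidden at the boundary of $S$.
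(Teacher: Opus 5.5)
Your proposal is correct and follows the paper's proof. Like the paper, you read off $\kappa_1^{\rm orb}=-1$ and $\kappa_l^{\rm orb}=1$ as the source of $(\mu^2-1)^2$, show that the middle zero-pairs exhaust the roots of $\mu^{2m}=-1$, factor the Toda equation, and check the class identity $A+2lB=(A+2mB)+2B+2B$; your odd-residue count modulo $4m$ and your linear-system justification of the flat limit just spell out details the paper leaves implicit.
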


\begin{proof}
By Definition~\ref{def:orbifold-point}, $\kappa_1^{\rm orb}=-1$ and $\kappa_l^{\rm orb}=1$. These are
the two $\iota$-fixed collisions and produce the factor $(\mu^2-1)^2$. The remaining $l-2$ zero-pairs
of $\lambda$ exhaust the roots of $\mu^{2m}=-1$. This gives the factorization
\eqref{eq:bmodel:cyclotomic-collapse}. The Toda equation then gives the
nonreduced central divisor \eqref{eq:bmodel:hilbert-limit}. The cycle-class identity
is
\[
  A+2lB=(A+2mB)+2B+2B,
\]
which is $[\Sigma^{\rm orb}]+2[D_+]+2[D_-]$.
\end{proof}

To obtain a reduced limit we expand the target. Fix a transverse arc $\kappa(t)$ through
$\kappa^{\rm orb}(K)$ along which the two colliding inverse-root pairs are
$\kappa_\sigma(t)=\sigma e^{a_\sigma t+O(t^2)}$ with $a_\sigma\neq0$. We choose the arc so that
$\bar C_{\kappa(t)}$ is smooth for $t\neq0$ and so that it avoids every other discriminant and
pole-cancellation locus. Set
\begin{equation}\label{eq:bmodel:expanded-target}
  \mathfrak S=\operatorname{Bl}_{(D_+\cup D_-)\times\{0\}}\big(S\times\bA^1_t\big).
\end{equation}
For $t\neq0$ one has $\mathfrak S_t\cong S$, while the central fiber is the normal-crossing surface
$\mathfrak S_0=S_0\cup_{D_+}E_+\cup_{D_-}E_-$ with
\[
  E_\sigma=\bP_{D_\sigma}\big(\cO_{D_\sigma}\oplus N_{D_\sigma/S}\big)
  \cong\bP^1_\lambda\times\bP^1_{y_\sigma},
\]
the normal bundle of a vertical fiber being trivial. This is the standard expanded degeneration along
$D_+\cup D_-$ \cite{li2001degenerationstablemorphismsrelative}.

\begin{proposition}[Stable-map degeneration]\label{prop:semistable}
The strict transform $\mathfrak C=\overline{\bigcup_{t\neq0}\bar C_{\kappa(t)}}\subset\mathfrak S$ is a
semistable genus-zero family whose central fiber is the reduced nodal curve
\begin{equation}\label{eq:bmodel:central-curve}
  \bar C_0=\Sigma^{\rm orb}\cup\Sigma_+\cup\Sigma_-,
\end{equation}
a tree of three rational components with two separating nodes. The main component has equation
$\lambda^{\rm orb}=K(\mu^m+\mu^{-m})$, and each bubble $\Sigma_\sigma\subset E_\sigma$ is the conic
\begin{equation}\label{eq:bmodel:bubble-conic}
  \lambda=\lambda_\sigma(1-y_\sigma^2),\qquad \lambda_\sigma=2K\sigma^{m}.
\end{equation}
The deck transformation induced by $\iota:\mu\mapsto\mu^{-1}$ is $y_\sigma\mapsto-y_\sigma$. The node
$p_\sigma$ lies at $(\mu,\lambda)=(\sigma,2K\sigma^m)$. With plumbing parameter
$a=a_\sigma t$, the bubble chart is
\begin{equation}\label{eq:bmodel:plumbing-chart}
  \mu=\sigma+\frac{a}{y_\sigma},
\end{equation}
so the node is $y_\sigma=0$. The finite ramification point of $\hat x=-2\nu\log\lambda$ on the bubble
is $y_\sigma=0$, hence it coincides with the node preimage on the normalization.
\end{proposition}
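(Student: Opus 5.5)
The plan is a direct local computation in the expanded target $\mathfrak S$, done in three charts: away from $D_\pm$, and near each $D_\sigma$ in the bubble coordinate \eqref{eq:bmodel:plumbing-chart}. Since the generic fiber $\bar C_{\kappa(t)}$ is rational (it is parametrized by $\mu$), the strict transform $\mathfrak C$ is the closure of a flat family over the punctured disc. Over the DVR it is torsion-free, hence flat. Arithmetic genus is then constant in the family, so a reduced central fiber that is a tree of rational curves is automatically of genus zero. It therefore suffices to determine the central fiber set-theoretically and check that every component appears with multiplicity one.

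\emph{Away from $D_\pm$.} Here $\mathfrak S_0$ agrees with $S$. By Lemma~\ref{lem:hilbert-limit}, after removing the factor $(\mu^2-1)^2$, which is supported on $D_\pm$, the limit is $\Sigma^{\rm orb}$ with multiplicity one. Its equation $\lambda=K(\mu^m+\mu^{-m})$ meets $D_\sigma$ at $\lambda=2K\sigma^m$, using $\sigma^{-m}=\sigma^m$. This gives the node location $p_\sigma$.

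\emph{Near $D_\sigma$.} I substitute $\mu=\sigma+a/y_\sigma$ with $a=a_\sigma t$ and expand to leading order in $t$.
\begin{itemize}
\item Since $\kappa_\sigma+\kappa_\sigma^{-1}=2\sigma\cosh(a)+O(t^2)$, the colliding pair contributes
\[
(\mu-\kappa_\sigma)(\mu-\kappa_\sigma^{-1})=(\mu-\sigma)^2-a^2+O(t^3)=a^2\,\frac{1-y_\sigma^2}{y_\sigma^2}+\dots .
\]
\item The remaining factors of the numerator tend to their values at $\mu=\sigma$. The opposite fixed pair gives $(2\sigma)^2=4$, and the cyclotomic part gives $\sigma^{2m}+1=2$.
\item On the left side, $(\mu^2-1)^2=(\mu-\sigma)^2(\mu+\sigma)^2\sim 4a^2/y_\sigma^2$ and $\mu^m\to\sigma^m$.
\end{itemize}
Dividing the Toda equation by the exceptional factor $a^2/y_\sigma^2$ (this division is precisely the strict transform) gives
\[
4\sigma^m\lambda=8K(1-y_\sigma^2)+O(t).
\]
This is the conic \eqref{eq:bmodel:bubble-conic}, reduced and irreducible, with $\lambda_\sigma=2K\sigma^m$.

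\emph{Remaining claims of the proposition.}
\begin{itemize}
\item \emph{Node.} The conic meets the gluing divisor $y_\sigma=0$ (the side glued to $S_0$) exactly at $\lambda=\lambda_\sigma$. This matches $p_\sigma$, so the three components form a tree with two separating nodes.
\item \emph{Involution.} We have $\mu^{-1}=\sigma\bigl(1+\sigma a/y_\sigma\bigr)^{-1}=\sigma-a/y_\sigma+O(a^2)$, since $\sigma^2=1$. Hence $\iota$ acts by $y_\sigma\mapsto-y_\sigma$ on the bubble.
\item \emph{Ramification.} On the bubble $d\lambda=-2\lambda_\sigma y_\sigma\,dy_\sigma$, so the only finite zero of $d\hat x=-2\nu\,d\lambda/\lambda$ is at $y_\sigma=0$, which is the node preimage. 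At $y_\sigma=\infty$, $\lambda$ has a pole, so that point is not a finite ramification point.
\end{itemize}

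The step I expect to be the main obstacle is justifying that the leading-order analysis captures the whole central fiber. Two things must be ruled out: extra components inside $E_\sigma$ (for instance, the fiber $y_\sigma=\infty$, or further vertical lines), and a residual multiple of $D_\sigma$ in $S_0$. I would settle this with a degree and contact check.
\begin{itemize}
\item \emph{Class count.} The multiplicity-two divisor $2D_\sigma$ of Lemma~\ref{lem:hilbert-limit} has to be fully absorbed by the bubble. The conic is a bisection of $E_\sigma\to\bP^1_\lambda$, hence accounts for both copies, and the class identity $A+2lB=(A+2mB)+2B+2B$ then leaves nothing over.
\item \emph{Predeformability.} Expanding $\lambda^{\rm orb}$ at $\mu=\sigma$ gives $\lambda^{\rm orb}-2K\sigma^m=O((\mu-\sigma)^2)$, so $\Sigma^{\rm orb}$ is tangent to $D_\sigma$ with contact order $2$. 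This must equal the contact order of the conic with $y_\sigma=0$ measured in the normal direction, and I would verify it from the double root of $1-y_\sigma^2$ structure in the $t$-expansion. This is exactly Li's predeformability condition \cite{li2001degenerationstablemorphismsrelative}.
\item \emph{Conclusion.} Li's condition yields semistability of the expanded family. The absence of contracted unstable components is immediate, since each component carries at least two special points: the node(s) together with the points over $\lambda=\infty$.
\end{itemize}
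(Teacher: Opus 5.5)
Your chart computation (the substitution $\mu=\sigma+a/y_\sigma$, division by the exceptional factor, the conic $4\sigma^m\lambda=8K(1-y_\sigma^2)$, the node location, the involution, and the ramification point) is the paper's computation. Your DVR-closure flatness argument is an acceptable substitute for the paper's relative-Cartier-divisor argument. The gap is in the step you flag as the main obstacle, which would not go through as proposed.

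First, the predeformability check rests on a false claim. $\Sigma^{\rm orb}$ is a graph over $\mu$, so it meets $D_\sigma=\{\mu=\sigma\}$ transversally; indeed $\Sigma^{\rm orb}\cdot D_\sigma=(A+2mB)\cdot B=1$. The vanishing $\lambda^{\rm orb}-2K\sigma^m=O((\mu-\sigma)^2)$ expresses tangency to the horizontal line $\lambda=2K\sigma^m$, i.e.\ ramification of $\hat x$ at the node, not contact with $D_\sigma$. Likewise the double zero of $\lambda-\lambda_\sigma=-\lambda_\sigma y_\sigma^2$ on the bubble is ramification. The conic is a graph over $y_\sigma$, so it meets $\{y_\sigma=0\}$ with contact order $1$. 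The verification you plan would therefore not return order $2$.

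Second, even with the correct orders, matching contact orders is only a necessary condition on limits in Li's expanded target. It does not show that $\mathfrak C$ has smooth total space at $p_\sigma$ with an ordinary node in the central fibre, which is what ``semistable'' asserts.

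Third, the class count only detects components with nonzero pushforward to $S$. Curves contracted by $E_\sigma\to D_\sigma$, namely the fibres $\{\lambda_0\}\times\bP^1_{y_\sigma}$, are invisible to it.

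The paper's proof settles all of this directly from the divided strict-transform equation $G$ you already wrote down.
\begin{enumerate}
\item The restriction of $G$ to $E_\sigma$ is the conic equation, not identically zero, in the $y_\sigma$ chart. The complementary $1/y_\sigma$ chart gives the same bubble and no additional component. The chart at $\lambda=\infty$ is the same check: after multiplying by $\lambda^{-1}$, the restriction reads $4\sigma^m-8K\lambda^{-1}(1-y_\sigma^2)$, which does not vanish at $\lambda^{-1}=0$. Hence $\{G=0\}$ is the strict transform and meets $E_\sigma$ exactly in the conic, with no vertical fibres and no line $y_\sigma=\infty$.
\item At the node, $\partial_\lambda G=4\sigma^m\neq0$, so $\lambda$ can be eliminated analytically. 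Then $\mathfrak C$ is locally the smooth surface with coordinates $(\mu-\sigma,y_\sigma)$, on which $t=(\mu-\sigma)y_\sigma/a_\sigma$. This gives smooth total space, an ordinary node, and transverse branches.
\end{enumerate}
Replacing your class-count and predeformability step by these two checks completes the proof. Predeformability, with contact order $1$, then comes out as a consequence rather than serving as an input.
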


\begin{proof}
Work near $D_\sigma$ and in the blow-up chart
\[
  y_\sigma=\frac{a_\sigma t}{\mu-\sigma},\qquad (\mu-\sigma)y_\sigma=a_\sigma t.
\]
The two inverse roots colliding at $\sigma$ contribute
\[
  (\mu-\sigma e^{a_\sigma t})(\mu-\sigma e^{-a_\sigma t})
  =(\mu-\sigma)^2-a_\sigma^2t^2+O(t^3,(\mu-\sigma)t^2).
\]
After dividing the strict transform by the exceptional factor $(\mu-\sigma)^2$, the exceptional divisor
$\mu-\sigma=0$ has equation
\[
  4\sigma^m\lambda-8K(1-y_\sigma^2)=0,
\]
that is \eqref{eq:bmodel:bubble-conic}. On the component $S_0=\{y_\sigma=0\}$ of the central fiber,
the reduced equation is $\lambda=K(\mu^m+\mu^{-m})$.

At the point where $\mu-\sigma=y_\sigma=0$, the derivative of the divided strict-transform equation in
the $\lambda$ direction is $4\sigma^m\neq0$. Thus $\lambda$ can be eliminated analytically, and the
total family is locally smooth with map to the base given by
$t=(\mu-\sigma)y_\sigma/a_\sigma$. The central fiber is the ordinary node
$(\mu-\sigma)y_\sigma=0$. The complementary blow-up chart gives the same bubble with
coordinate $1/y_\sigma$ and no additional component. Away from $\mu=\pm1$, including the neighborhoods
of $\mu=0$ and $\mu=\infty$, no pair of inverse roots collides at an $\iota$-fixed point, so no further
exceptional component appears. The divided strict-transform equation is a relative Cartier divisor in
the smooth expanded total space \eqref{eq:bmodel:expanded-target}. Therefore $t$ is a non-zero-divisor on the curve family, and
$\mathfrak C\to\bA^1_t$ is flat. Properness follows since the expanded target is proper over
$\bA^1_t$, and the
arithmetic genus is constant; since the nearby fibers are smooth rational curves, the central fiber is
a semistable genus-zero curve. Hence the central curve is the three-component
tree \eqref{eq:bmodel:central-curve}.
\end{proof}

\begin{remark}[Contraction and cycle classes]\label{rem:contraction}
Contracting each $E_\sigma$ back to $D_\sigma$ carries $\bar C_0$ to the stable-map limit in the fixed
surface, $\Sigma^{\rm orb}\cup(\Sigma_+\to D_+)\cup(\Sigma_-\to D_-)$ with the bubbles of degree two; the cycle
pushforward recovers the nonreduced flat limit of Lemma~\ref{lem:hilbert-limit},
\[
  A+2lB\ \rightsquigarrow\ \big(A+2mB\big)+2B+2B
  \quad\Longleftrightarrow\quad [\Sigma^{\rm orb}]+2[D_+]+2[D_-].
\]
This realizes the flat limit in $S$ at the level of cycle classes.
The bubbles are unstable as bare curves but stable as components of the stable map, since each maps
nontrivially.
\end{remark}

\begin{lemma}[Componentwise nodal kernel and its uniqueness]\label{lem:componentwise-nodal-kernel}
Let
\[
  \widetilde C_0=\Sigma^{\rm orb}\sqcup\Sigma_+\sqcup\Sigma_-
\]
be the normalization of $\bar C_0$. The unique admissible fundamental bidifferential, defined by its
normalization data on $\widetilde C_0$ with node-residue matching, is the direct-sum kernel
\begin{equation}\label{eq:bmodel:nodal-kernel}
  B_0=B^{\rm std}_{\Sigma^{\rm orb}}\oplus B^{\rm std}_{\Sigma_+}\oplus B^{\rm std}_{\Sigma_-},
\end{equation}
with vanishing cross-component terms, and $B^{\rm std}$ degenerates to $B_0$ as
$\kappa\to\kappa^{\rm orb}(K)$.
\end{lemma}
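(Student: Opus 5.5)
The proof has two parts: uniqueness on the central fiber, and convergence of $B^{\rm std}$ along the arc. The central curve $\bar C_0$ of Proposition~\ref{prop:semistable} is a tree of three rational components meeting at two separating nodes $p_\pm$. This forces the nodal kernel to split componentwise, and the plumbing chart \eqref{eq:bmodel:plumbing-chart} shows that the smooth Bergman kernel converges to it.

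\emph{Uniqueness.} An admissible fundamental bidifferential on $\bar C_0$ is a symmetric bidifferential $B$ on $\widetilde C_0\times\widetilde C_0$ with the following properties. Its only singularities are a double pole with biresidue $1$ on each diagonal $\Sigma_j\times\Sigma_j$. At the two preimages of each node, the residues in either variable are opposite, which is the node-residue matching. Since $\widetilde C_0$ has three rational components, there are no cycles to normalize against. Fix a component $\Sigma_j$ and consider the difference $B-B^{\rm std}_{\Sigma_j}$ on $\Sigma_j\times\Sigma_j$. It is a holomorphic symmetric bidifferential on $\bP^1\times\bP^1$, hence zero because $H^0(\bP^1,K)=0$. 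So only the cross terms on $\Sigma_j\times\Sigma_k$ with $j\neq k$ need attention. With the second variable fixed on $\Sigma_k$, such a term is a differential on $\Sigma_j$ whose poles are allowed only at node preimages. Each node of the tree is separating, so each component contains exactly one preimage of a given node and at most one of any node on the path toward $\Sigma_k$. A meromorphic differential on $\bP^1$ with a single simple pole has zero residue by the residue theorem, and is therefore holomorphic and hence zero. This gives \eqref{eq:bmodel:nodal-kernel}.

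\emph{Degeneration.} Next I would show that $B^{\rm std}=d\mu_1d\mu_2/(\mu_1-\mu_2)^2$ on $\bar C_{\kappa(t)}$ converges to $B_0$ on compact subsets of the smooth locus of $\bar C_0$. On the main component, the coordinate $\mu$ survives, and away from the nodes the kernel is literally $B^{\rm std}_{\Sigma^{\rm orb}}$. On the bubble $\Sigma_\sigma$, substitute $\mu_i=\sigma+a/y_i$ with $a=a_\sigma t$. Since this is a Möbius change of coordinate and $B^{\rm std}$ is Möbius invariant, the kernel becomes exactly $dy_1dy_2/(y_1-y_2)^2$ for every $t\neq0$; this is $B^{\rm std}_{\Sigma_\sigma}$. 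For the cross terms, take $\mu_1$ on a compact subset of the main component away from $\pm1$, and $\mu_2=\sigma+a/y_2$ with $y_2$ in a compact subset of $\bC^\times$. Then $d\mu_2=-a\,dy_2/y_2^2$, while $(\mu_1-\mu_2)^{-2}$ stays bounded, so the term is $O(t)$. Bubble-to-bubble terms between $\Sigma_+$ and $\Sigma_-$ are $O(t^2)$ in the same way.

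The step needing the most care is making this convergence uniform enough to count as degeneration in the sense used later. One point is to use uniform convergence on compacta away from the nodes, not near them. The other is to confirm that the main-component coordinate $\mu$ and the bubble coordinate $y_\sigma$ on the family $\mathfrak C$ agree with the normalization coordinates; this follows from the chart description in the proof of Proposition~\ref{prop:semistable}. Finally, the vanishing of the limiting cross terms is consistent with the uniqueness argument, which confirms that the limit is the admissible kernel.
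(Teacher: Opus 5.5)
Your degeneration half is the paper's argument: Möbius invariance gives the exact bubble kernel, main--bubble cross terms are $O(a)$, and bubble--bubble cross terms are $O(a^2)$. The uniqueness half, however, has a gap at the main component.

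\textbf{The gap.} In the lemma, an admissible kernel may have simple poles at node preimages, subject to opposite-residue matching. Your definition is inconsistent on this point. It first says the only singularities are the diagonal double poles; that makes matching vacuous and proves uniqueness only in a smaller class. It then allows node-preimage poles in the cross terms. Under the intended definition, $\Sigma^{\rm orb}$ carries two node preimages, $\mu=\pm1$, and two of your steps fail as written.
\begin{itemize}
\item The difference $B-B^{\rm std}_{\Sigma^{\rm orb}}$ on $\Sigma^{\rm orb}\times\Sigma^{\rm orb}$ need not be holomorphic. For instance, $\eta\boxtimes\eta$ with $\eta=\bigl(\frac{1}{\mu-1}-\frac{1}{\mu+1}\bigr)d\mu$ is symmetric, has only simple poles at node preimages, and its residues at $\mu=\pm1$ already sum to zero. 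So the residue theorem on $\Sigma^{\rm orb}$ alone does not exclude it.
\item A cross term viewed as a differential on $\Sigma^{\rm orb}$ can have two simple poles, not one. Your claim that a component contains at most one relevant node preimage is false there.
\end{itemize}
Your argument never actually uses node-residue matching, and that is exactly the ingredient needed to exclude such terms.

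\textbf{The repair.} This is the paper's argument, equivalent to $H^0(\bar C_0,\omega_{\bar C_0})=0$.
\begin{itemize}
\item Every pair of distinct components contains a bubble, so by symmetry of $B$ you may always put the free variable on a bubble.
\item After subtracting $B^{\rm std}_{\Sigma_\pm}$ on the bubble diagonal blocks, the only possible pole on a bubble is a simple pole at its single node preimage. That residue vanishes by the residue theorem, so every block with a bubble variable agrees with $B_0$.
\item Now fix $q\in\Sigma^{\rm orb}$. Matching at $p_\pm$ says the residues of $(B-B_0)(\cdot,q)$ at $\mu=\pm1$ on $\Sigma^{\rm orb}$ are minus the residues on the bubble sides, which vanish. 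Hence $(B-B_0)(\cdot,q)$ is holomorphic on $\Sigma^{\rm orb}\cong\bP^1$, and therefore zero.
\end{itemize}
You should also state explicitly that $B_0$ itself is admissible: it has the standard diagonal principal part and no node residues. This gives existence as well as uniqueness.
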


\begin{proof}
We use the normalization description. An admissible fundamental bidifferential on the nodal curve is a
collection of symmetric bidifferentials on pairs of components of $\widetilde C_0$ with the standard
principal part $ds_1ds_2/(s_1-s_2)^2$ along the diagonal of each component, no other poles except the
allowed node-preimage poles of the dualizing sheaf, and opposite-residue matching at each node. The
kernel $B_0$ in \eqref{eq:bmodel:nodal-kernel} has this diagonal principal
part and no node residues, so it is admissible.

Two admissible kernels differ by a symmetric global section of
$\omega_{\bar C_0}\boxtimes\omega_{\bar C_0}$. Since $\bar C_0$ is a tree of rational curves,
$H^0(\bar C_0,\omega_{\bar C_0})=0$: on each bubble a dualizing differential has at most one simple
pole, at the node preimage, and the residue theorem forces its residue to vanish; the matching
condition then kills the possible residues on the main component as well. Hence the difference of two
admissible kernels is zero, proving uniqueness.

It remains to identify the smooth limit. On the main component this is immediate away from the node
preimages. If both points lie on the same bubble, write
$\mu_i=\sigma+a/y_i$. Then
\[
  \frac{d\mu_1d\mu_2}{(\mu_1-\mu_2)^2}
  =\frac{dy_1dy_2}{(y_1-y_2)^2}.
\]
If one point is on the main component and one on a bubble, the numerator has one factor of $a$ and
the denominator has a nonzero limit on compact subsets of the main component away from the node, so the
cross-component term is $O(a)$. If the two points are on different bubbles, the denominator has a
nonzero limit and the numerator is $O(a^2)$. Thus all cross-component terms vanish and the
same-component kernels are the standard rational kernels.
\end{proof}

The graph-sum and $R$-matrix-comparison arguments later index thimbles and boundary data by distinguished points on
the normalization of the central curve; we call these points \emph{labels}. The next lemma counts
them.

\begin{lemma}[Normalization ramification labels]\label{lem:thimble-count}
The finite ramification labels of $\hat x$ on the normalization of $\bar C_0$ are the $2l-4$ points
$\mu^{2(l-2)}=1$ on $\Sigma^{\rm orb}$ together with one point $y_\sigma=0$ on each bubble. After the
quotient by $\iota:\mu\mapsto\mu^{-1}$, with the fixed-node main labels and bubble labels retained as
inertia labels, the count is
\[
  (l-3)+2+2=l+1.
\]
\end{lemma}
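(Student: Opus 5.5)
The count splits into the main component and the two bubbles, treated separately; the two contributions are then added and the $\iota$-quotient taken.

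\emph{Main component.} On $\Sigma^{\rm orb}$, Proposition~\ref{prop:semistable} gives $\lambda=K(\mu^m+\mu^{-m})$ with $m=l-2$. Since $d\hat x=-2\nu\,d\lambda/\lambda$, the finite ramification points of $\hat x$ are the zeros of $d\lambda$ away from the zeros and poles of $\lambda$. We have
\[
  d\lambda=Km(\mu^{m}-\mu^{-m})\frac{d\mu}{\mu}.
\]
This vanishes exactly when $\mu^{2m}=1$, which gives $2m=2l-4$ simple critical points. None of them is a zero of $\lambda$, since $\lambda=\pm2K\neq0$ there. The points $\mu=0,\infty$ are poles of $\lambda$, hence logarithmic singularities of $\hat x$ and not finite ramification labels. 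Among the $2m$ roots, $\mu=\pm1$ are exactly the node preimages $p_\sigma$ of Proposition~\ref{prop:semistable}, because $\lambda(\sigma)=2K\sigma^m$.

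\emph{Bubbles.} On $\Sigma_\sigma$ we have $\lambda=\lambda_\sigma(1-y_\sigma^2)$, so
\[
  d\lambda=-2\lambda_\sigma y_\sigma\,dy_\sigma .
\]
The only finite zero is $y_\sigma=0$, where $\lambda=\lambda_\sigma\neq0$. The point $y_\sigma=\infty$ is a pole of $\lambda$ and is again logarithmic. This gives one label per bubble, and Proposition~\ref{prop:semistable} already notes that it coincides with the node preimage.

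\emph{Quotient by $\iota$.} The involution $\mu\mapsto\mu^{-1}$ acts on the $2m$ roots of unity $\mu^{2m}=1$. It fixes exactly $\mu=\pm1$ and pairs the remaining $2m-2$ roots into $m-1=l-3$ free orbits. On the bubbles, $\iota$ acts by $y_\sigma\mapsto-y_\sigma$ (Proposition~\ref{prop:semistable}), which fixes $y_\sigma=0$, so each bubble label survives as a single fixed label. The statement's convention retains both the fixed main labels $\mu=\pm1$ and the bubble labels as separate inertia labels, even though they sit over the same node. The total is therefore
\[
  (l-3)+2+2=l+1.
\]

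\emph{Main obstacle.} The arithmetic above is routine. The delicate point is justifying the convention that a fixed node contributes two labels, one on the main side and one on the bubble side, rather than one. For the statement as given this is a matter of convention. The real justification is the match with $|\mathrm{Irr}(\Gamma)|=l+1$ from Lemma~\ref{lem:frob}. I would support it by a nearby-fiber count: on a smooth $\bar C_{\kappa(t)}$, near each $\sigma$ the two colliding ramification pairs produce two $\iota$-orbits of critical points. One is detected in the main scaling and one in the bubble scaling $\mu=\sigma+a/y_\sigma$. To check this, I would expand $d\lambda=0$ in both scalings using the chart of Proposition~\ref{prop:semistable}.
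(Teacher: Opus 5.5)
Your count is correct and follows the paper's proof essentially line by line. The ingredients are the same: the critical points $\mu^{2m}=1$ of $\lambda^{\rm orb}$, the single finite critical point $y_\sigma=0$ of the bubble conic, and the $\iota$-orbit count $(l-3)+2+2$. Your extra checks, that the critical values $\pm2K$ are nonzero and that $\mu=0,\infty$ and $y_\sigma=\infty$ are logarithmic, are harmless details the paper leaves implicit.

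The supporting argument you sketch under ``Main obstacle'' would not work as described.

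\begin{itemize}
\item \emph{Where the nearby critical points sit.} On a nearby smooth fiber, write $t=w-2\sigma$, and let $\delta\approx\sigma a^2$ be the displaced zero, where $a$ is the plumbing parameter. The two quotient critical roots near $w=2\sigma$ then satisfy $t(t-\delta)\approx-2a^2/m^2$, so $t\sim a$. Since $t=(\mu-\sigma)^2/\mu$, both $\iota$-orbits have $\mu-\sigma\sim a^{1/2}$, equivalently $y_\sigma=a/(\mu-\sigma)\sim a^{1/2}$. The paper records the same fact as $\ell_\pm^4=-2a^2/m^2+O(a^3)$ in the proof of Lemma~\ref{lem:coefficientwise-parity-limit}.
\item \emph{Consequence.} Both orbits sit in the neck and converge to the node in \emph{both} charts. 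Neither the main scaling nor the bubble scaling isolates one of them.
\item \emph{What the paper does instead.} It makes no pointwise matching of this kind. It says explicitly that the four fixed labels are not ordinary critical points of the coarse quotient; they are inertia labels carried by the normalization. The count $l+1$ is justified by the stabilizer summand $\bC[\bZ_2]$, with generators $u_p,\xi_p$, in Proposition~\ref{prop:bmodel-flat-algebra-family}.
\item \emph{Geometric picture.} By Remark~\ref{rem:fixed-node-thimble-splitting}, it is the even and odd combinations of the two smooth thimbles that go to the main and bubble directions, not the individual thimbles. Indeed, Lemma~\ref{lem:coefficientwise-parity-limit} shows that both individual Morse germs tend to the same quotient main germ.
\end{itemize}
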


\begin{proof}
On $\Sigma^{\rm orb}$,
\[
  \lambda^{\rm orb}=K(\mu^m+\mu^{-m}),
  \qquad
  d\lambda^{\rm orb}=Km\mu^{-m-1}(\mu^{2m}-1)d\mu,
\]
so the finite ramification points are $\mu^{2m}=1$, a set of size $2l-4$. On a bubble,
$\lambda=\lambda_\sigma(1-y_\sigma^2)$, so $d\lambda=-2\lambda_\sigma y_\sigma dy_\sigma$ and the only
finite ramification point is $y_\sigma=0$; the other ramification of the double cover is over
$\lambda=\infty$ and does not give a finite label.

The involution pairs the $2l-6$ nonfixed main points into $l-3$ orbits and fixes the two main points
$\mu=\pm1$. The two bubble labels are also fixed. These four fixed labels should not be read as
ordinary critical points of the coarse $w$-quotient; they are the fixed-node main and bubble inertia
labels carried by the normalization. The total count is therefore $(l-3)+2+2=l+1$, the number matched
with the flat B-model algebra family in Proposition~\ref{prop:bmodel-flat-algebra-family}.
\end{proof}

This lemma is only a count of normalization ramification labels and fixed-node inertia labels on the
central curve. The convergence of the smooth twisted thimbles to relative cycles attached to these
labels is a relative-homology statement, and is supplied later by the thimble-limit construction
of Section~\ref{sec:anchor} rather than by this count alone.

\begin{remark}[The limiting second-kind form on the bubble]\label{rem:bubble-measure}
In the bubble chart \eqref{eq:bmodel:plumbing-chart} ($a=a_\sigma t$, node at
$y_\sigma=0$), the type-$D_l$
primitive differential has the rescaled limit
\begin{equation}\label{eq:bmodel:bubble-measure}
  {\phi_D\over a}\ \longrightarrow\ -\,{\sqrt{\nu}\over\sigma}\,{dy_\sigma\over y_\sigma^2},
\end{equation}
a second-kind form with double pole and zero residue at the node. This is only the limiting form.
The smooth bubble thimble periods are computed later through the plumbing chart and the second-kind
regularized calculation of Section~\ref{sec:anchor} (Lemma~\ref{lem:bubble-beta}); the limit
\eqref{eq:bmodel:bubble-measure} is not an independent period on the singular central bubble.
\end{remark}

\begin{remark}[Why the componentwise assembly is canonical here]\label{rem:why-componentwise-kernel}
The uniqueness in Lemma~\ref{lem:componentwise-nodal-kernel} uses that both nodes are separating and
every component is rational, so $b_1$ of the dual graph is zero and no node or third-kind terms can
survive in the limit. For a positive-genus mirror curve this fails---for instance the mirror curve
of $K_{\bP^2}$ degenerates with
$b_1=1$, where $H^0(\omega)\neq0$ and the limiting kernel carries genuine node terms, which is why the
toric remodeling proof must track period normalizations through the degeneration
\cite{fang2019remodelingconjecturetoriccalabiyau}. Rationality of the $D_l$ curve on the smooth locus
and of the normalized limiting components is what makes the componentwise assembly canonical here.
\end{remark}


\section{The reduced B-model graph sum}
\label{sec:bgraphsum}

In this section we put the B-side recursion in a form whose label set matches the rank $l+1$ of the
Frobenius manifold of Section~\ref{sec:frobenius}. The smooth type-$D_l$ curve is written on the
$\mu$-double cover, but the Frobenius algebra is the $\iota$-invariant algebra, where
$\iota:\mu\mapsto\mu^{-1}$. We therefore do not take the ordinary topological recursion on the full
upstairs curve. This invariant algebra is described by functions of
$w=\mu+\mu^{-1}$, while the period integrands are such functions multiplied by the type-$D_l$ primitive differential
$\phi_D$, whose square is $\nu(d\mu/\mu)^2$. The differential $\phi_D$ is anti-invariant under $\iota$. The
threefold normalization is imposed on the Frobenius tensors in Section~\ref{sec:frobenius}; with the
paper's choice $\hat x=-2\nu\log\lambda$, it is represented by the orbit-summed residues of
$\phi_D^2/d\hat x$. Thus the reduced recursion is a local recursion with values in the sign
local system on the quotient; throughout, the adjective \emph{reduced} refers to this package of
quotient critical labels and sign-local-system values. We set up the $\bZ_2$-equivariant topological recursion of
Giacchetto--Kramer--Lewa\'nski for the involution $\iota$, with the sign character realized by
the Prym kernel \cite[Sec.~8]{giacchetto2024newspinhurwitztheory}. We then apply the theorem of
Dunin-Barkowski--Orantin--Shadrin--Spitz (DOSS)
\cite{duninbarkowski2012identificationgiventalformulaspectral} to obtain the Givental graph sum
\cite{eynard2007invariantsalgebraiccurvestopological}.

The output is a collection of stable local forms $\omega^\iota_{g,n}$ indexed by the $l+1$ critical
orbits. They take values in the sign local system on the quotient. After pulling them back to the smooth
$\mu$-curve, they are anti-invariant in every external variable. They are not the naive
topological-recursion invariants of the full upstairs spectral curve with its $2(l+1)$ ramification
points. This distinction is the reason for the notation $\omega^\iota_{g,n}$ throughout this section.

\subsection{The B-model working open set}

We first choose the open set on which all reduced smooth B-model objects are defined. This open set is
not meant to be maximal; it is chosen small enough that the critical points, logarithm branches, and
thimbles vary without monodromy.

Let
\begin{equation}\label{eq:bgraphsum:critical-polynomial}
\begin{aligned}
  P(w;\kappa)=\prod_{i=1}^l(w-c_i),\qquad c_i=\kappa_i+\kappa_i^{-1},
  \\
  N(w;\kappa)=(w^2-4)P'(w;\kappa)-2wP(w;\kappa).
\end{aligned}
\end{equation}
The roots of $N$ in \eqref{eq:bgraphsum:critical-polynomial} are the critical
points of the quotient function $\lambda(w)$, because
\[
  \lambda(w)=K\frac{P(w;\kappa)}{w^2-4},\qquad K=\kappa_{l+1}.
\]

\begin{definition}[B-model working open set]\label{def:bmodel-open}
Fix a nonzero scale $K_0$ and a small analytic neighborhood $\mathcal U_B$ of
\[
  (\kappa_1^{\rm orb},\ldots,\kappa_l^{\rm orb};K_0)
\]
in the B-model parameter space. We remove the following bad locus from $\mathcal U_B$:
\begin{enumerate}
\item the discriminant of the type-$D_l$ curve and the discriminant of $N(w;\kappa)$, so that the roots
  of $N$ are simple and the quotient superpotential is Morse at each critical point;
\item the pole-cancellation branches through the orbifold point, where a zero-pair of $\lambda$
  (a pair of zeros $\{\kappa_i,\kappa_i^{-1}\}$, one $\iota$-orbit) cancels one of the poles at
  $w=\pm2$ (the pole-cancellation strata of Section~\ref{subsec:pole-cancellation}); in particular
  we remove $\{\kappa_1=-1\}$ and
  $\{\kappa_l=1\}$, together with the loci where the ordinary invariant critical quotient
  $\bC(\nu)[w]/\langle N(w;\kappa)\rangle$ is not the smooth rank-$(l+1)$ Jacobian;
\item the locus on the chosen logarithmic cover where two quotient critical values of
  $W=-\hat x=2\nu\log\lambda$ collide, and the Maxwell locus needed for a distinguished local
  canonical frame;
\item the Stokes walls for the chosen sector of $e^{W/z}$, so that the reduced Lefschetz thimbles are
  locally constant;
\item the cuts on which the chosen branches of $\log\lambda$, $\log\mu$, or
  $\log_{\rm orb}\kappa_i$ (the branch fixed in Section~\ref{sec:frobenius}) cease to be single-valued.
\end{enumerate}
A \emph{B-model working open set} is a simply connected component
\begin{equation}\label{eq:bgraphsum:working-open}
  \Omega_B\subset \mathcal U_B\setminus\{\text{bad locus above}\}
\end{equation}
whose closure contains the orbifold point. On $\Omega_B$ we fix once and for all the
roots $w_\alpha$ of $N(w;\kappa)$, local branches $\mu_\alpha(w)$ of
$\mu+\mu^{-1}=w$, critical values $u_\alpha=W(w_\alpha)$, local coordinates $s_\alpha$, reduced
Lefschetz thimbles $\Gamma_\alpha$, and all logarithm branches.
\end{definition}

The orbifold point itself is not in the open set \eqref{eq:bgraphsum:working-open}. It appears only as
a boundary point whose semistable geometry was constructed in Section~\ref{sec:bmodel} and whose
limiting periods will be used in Section~\ref{sec:anchor}.

\subsection{The reduced local spectral curve}

We now define the object on which the recursion runs. Let
\begin{equation}\label{eq:bgraphsum:label-set}
  I(\kappa)=\{\alpha: N(w_\alpha;\kappa)=0\}.
\end{equation}
For $\kappa\in\Omega_B$ this is an $(l+1)$-element set. We abbreviate $I=I(\kappa)$ when the base
point is clear. Near $w_\alpha$ we choose a small disk
$U_\alpha$ and the branch $\mu_\alpha(w)$ fixed in Definition~\ref{def:bmodel-open}. Thus
\eqref{eq:bgraphsum:label-set} is the reduced set of quotient critical points. We denote the
corresponding local ramification point by $p_\alpha$. The superpotential is the quotient function
\[
  \hat x(w)=-2\nu\log\lambda(w),
\]
and the local conjugate coordinate is the centered logarithm
\begin{equation}\label{eq:bgraphsum:centered-y}
  \widetilde y_\alpha(w)={\sqrt{-1}\over 2\nu}\,
  \bigl(\log\mu_\alpha(w)-\log\mu_\alpha(w_\alpha)\bigr).
\end{equation}
We fix the square root once. The coordinate \eqref{eq:bgraphsum:centered-y} is
the threefold Gaussian normalization, not an additional rescaling of the
type-$D_l$ primitive differential. Centering the logarithm removes the affine ambiguity
$\log(\mu^{-1})=-\log\mu+2\pi i n$; it does not change the stable recursion, which only uses the
local difference $\widetilde y_\alpha(q)-\widetilde y_\alpha(\bar q)$.

Let $Q_\kappa=\bar C_\kappa/\iota$ and set
$Q_\kappa^\circ=Q_\kappa\setminus\{w=2,w=-2\}$. We write $\pi$ for the natural projection from the
covers used below to the quotient; on the smooth double cover this is
$\pi:\bar C_\kappa\to Q_\kappa$. The double cover is branched over the two omitted points, so the sign
datum is the sign local system $L$ on $Q_\kappa^\circ$, equivalently the sign line on the quotient
stack $[\bar C_\kappa/\langle\iota\rangle]$. The disks $U_\alpha$ are chosen inside
$Q_\kappa^\circ$ on $\Omega_B$. The relevant two-point function is the Prym kernel
\begin{equation}\label{eq:bgraphsum:prym-kernel}
  B^-:=B^{\rm std}-(\operatorname{id}\times\iota)^*B^{\rm std}.
\end{equation}
On the smooth upstairs curve, where
$B^{\rm std}(\mu_1,\mu_2)=d\mu_1\,d\mu_2/(\mu_1-\mu_2)^2$, this is
\begin{equation}\label{eq:bgraphsum:prym-kernel-explicit}
  B^-=
  \left(
  \frac1{(\mu_1-\mu_2)^2}
  +\frac1{(\mu_1\mu_2-1)^2}
  \right)d\mu_1\,d\mu_2 .
\end{equation}
It is anti-invariant in each variable,
$\iota_1^*B^-=\iota_2^*B^-=-B^-$, and descends as a section of
$(K_{Q_\kappa}\otimes L)\boxtimes(K_{Q_\kappa}\otimes L)(2\Delta)$. In the notation of
\cite[Sec.~8]{giacchetto2024newspinhurwitztheory}, $B^-$ is twice the $\bZ_2$-equivariant
bidifferential with sign character; the factor $2$ removes the averaging prefactor $\tfrac12$
used there and keeps the standard diagonal singularity in
\eqref{eq:bgraphsum:prym-kernel-explicit}.

\begin{remark}
The ordinary coarse-quotient kernel is instead
\[
  B^+:=B^{\rm std}+(\operatorname{id}\times\iota)^*B^{\rm std}
  =\pi^*\left(\frac{dw_1\,dw_2}{(w_1-w_2)^2}\right).
\]
It is the fundamental bidifferential for ordinary (sign-untwisted) quotient differentials. The
B-model periods used here lie in the sign sector
because they are represented by $f(w)\phi_D$, so the graph sum uses the Prym
kernel \eqref{eq:bgraphsum:prym-kernel} rather than $B^+$. The explicit
upstairs form is \eqref{eq:bgraphsum:prym-kernel-explicit}.
\end{remark}

\begin{definition}[Reduced local B-model curve]\label{def:reduced-local-curve}
For $\kappa\in\Omega_B$, the \emph{reduced local B-model curve} is
\begin{equation}\label{eq:bgraphsum:reduced-curve}
  \cS^\iota_\kappa=\bigl(\{U_\alpha\}_{\alpha\in I},\hat x,\{\widetilde y_\alpha\}_{\alpha\in I},B^-\bigr).
\end{equation}
At each $w_\alpha$ we choose the local coordinate $s_\alpha$ by
\begin{equation}\label{eq:bgraphsum:morse-coordinate}
  \hat x=-u_\alpha+s_\alpha^2,
  \qquad u_\alpha=W(w_\alpha),
\end{equation}
and write $q\mapsto\bar q$ for the local conjugation $s_\alpha\mapsto -s_\alpha$.
The coordinate in \eqref{eq:bgraphsum:morse-coordinate} defines this local
conjugation; it is not the global involution $\iota$.
\end{definition}

The stable objects produced below are indexed by the quotient critical points of
\eqref{eq:bgraphsum:reduced-curve} and take values in $L^{\boxtimes n}$. Pulling them back to the
$\mu$-curve gives anti-invariant forms:
\[
  (\iota_j)^*(\pi^{\times n})^*\omega^\iota_{g,n}
  =-(\pi^{\times n})^*\omega^\iota_{g,n},
  \qquad 1\leq j\leq n.
\]
This is a descent statement for the sign local system. It is not the assertion that the ordinary
upstairs topological recursion, run with $B^{\rm std}$ and all $2(l+1)$ ramification points, has been projected
afterwards.

The reduced thimbles live on the logarithmic cover $\widetilde Q_{\kappa,\lambda}$ of
$Q_\kappa^\circ$ on which $W=2\nu\log\lambda$ is single-valued.
Equivalently, they are rapid-decay twisted cycles for the exponential connection
$d-dW/z$:
\[
  \Gamma_\alpha\in
  H^{\rm rd}_1\bigl(\widetilde Q_{\kappa,\lambda};\pi^*L^\vee\bigr).
\]
The condition $\operatorname{Re}(W/z)\ll0$ is imposed on this cover. Choosing the opposite lift of a
quotient label changes the sign trivialization by a diagonal sign matrix
$D=\diag(\varepsilon_\alpha)$, $\varepsilon_\alpha=\pm1$. In that frame change,
\[
  \hat R^B\longmapsto D\hat R^B D,\qquad
  \check B_{k,l}\longmapsto D\check B_{k,l}D,\qquad
  \theta^k_\alpha\longmapsto\varepsilon_\alpha\theta^k_\alpha,
  \qquad
  \check h^\alpha_k\longmapsto\varepsilon_\alpha\check h^\alpha_k.
\]
Thus the graph sum, as a tensor, is independent of the lift choices, although individual matrix entries
and labeled leaves are not. Indeed, at a vertex $v$ the Gaussian factor contributes
$\varepsilon_{\alpha(v)}^{2-2g(v)-\val(v)}$, while the incident half-edges, ordinary leaves, and
dilaton leaves contribute $\varepsilon_{\alpha(v)}^{\val(v)}$. The total exponent is
$2-2g(v)$, hence even.

The leading Gaussian is compatible with the threefold Frobenius normalization. Indeed, if
\[
  a_\alpha=\left.\frac{d\log\mu_\alpha}{ds_\alpha}\right|_{s_\alpha=0},
\]
then the quotient primitive idempotent is represented by $1$ on both lifted critical points
$p_{\alpha,+}$ and $p_{\alpha,-}=\iota p_{\alpha,+}$. The orbit-summed residue normalization of
Lemma~\ref{lem:quotient-residue-normalization} gives
\[
  \frac1{\Delta^\cX_\alpha}
  ={1\over(2\nu)^3}
  \sum_{p_{\alpha,\pm}}\Res_{p_{\alpha,\pm}}
  \frac{\phi_D^2}{d\hat x}
  ={a_\alpha^2\over8\nu^2}.
\]
On the other hand
$\widetilde y_\alpha={\sqrt{-1}\over2\nu}(\log\mu_\alpha-\log\mu_\alpha(w_\alpha))$ gives
\[
  \widetilde y_\alpha(s_\alpha)-\widetilde y_\alpha(-s_\alpha)
  =2\check h^\alpha_1s_\alpha+O(s_\alpha^3),
  \qquad \check h^\alpha_1={\sqrt{-1}\over2\nu}\,a_\alpha.
\]
Thus
\begin{equation}\label{eq:bgraphsum:gaussian-normalization}
  \left(\frac{\check h^\alpha_1}{\sqrt{-2}}\right)^2
  ={1\over\Delta^\cX_\alpha},
\end{equation}
after the same choice of square roots used in the normalized canonical frame.
This is the Gaussian normalization used in the vertex factor below.

\subsection{The reduced recursion}

The reduced recursion is the local topological recursion applied to $\cS^\iota_\kappa$ after choosing the
local sign trivializations above. The unstable inputs are specified locally:
\begin{equation}\label{eq:bgraphsum:unstable-inputs}
  \omega^\iota_{0,1}|_{U_\alpha}=\widetilde y_\alpha\,d\hat x,
  \qquad
  \omega^\iota_{0,2}=B^-.
\end{equation}
The first expression in \eqref{eq:bgraphsum:unstable-inputs} is local because
the centered $\log\mu$ is a sign-local-system coordinate; the stable recursion
only uses the difference $\widetilde y_\alpha(q)-\widetilde y_\alpha(\bar q)$
for the local conjugation $q\mapsto\bar q$.

For $2g-2+n>0$, define
\begin{equation}\label{eq:bgraphsum:recursion-kernel}
  K_\alpha(q_0,q)
  =\frac{\frac12\displaystyle\int_{\bar q}^{q}B^-(q_0,\cdot)}
  {\bigl(\widetilde y_\alpha(q)-\widetilde y_\alpha(\bar q)\bigr)d\hat x(q)}.
\end{equation}
Then
\begin{equation}\label{eq:bgraphsum:reduced-recursion}
\begin{aligned}
  \omega^\iota_{g,n+1}(q_0,q_1,\ldots,q_n)
  ={}&\sum_{\alpha\in I}\Res_{q\to p_\alpha}K_\alpha(q_0,q)\Bigg[
  \omega^\iota_{g-1,n+2}(q,\bar q,q_1,\ldots,q_n) \\
  &\quad+
  \sum_{\substack{g_1+g_2=g\\J_1\sqcup J_2=\{1,\ldots,n\}}}^{\prime}
  \omega^\iota_{g_1,|J_1|+1}(q,q_{J_1})
  \omega^\iota_{g_2,|J_2|+1}(\bar q,q_{J_2})\Bigg],
\end{aligned}
\end{equation}
where the primed sum omits the two unstable terms carrying an $\omega_{0,1}$ factor. The sign in the
kernel \eqref{eq:bgraphsum:recursion-kernel} is the one compatible with the
graph-sum convention used below; on the Airy curve, the recursion
\eqref{eq:bgraphsum:reduced-recursion} gives the DOSS vertex sign appearing in
Theorem~\ref{thm:doss}.

\begin{lemma}[Sign descent of the reduced recursion]\label{lem:sign-descent}
For every $(g,n)$ in the stable range $2g-2+n>0$, the form $\omega^\iota_{g,n}$ is independent of the chosen lifted
trivializations and its pullback to the $\mu$-cover is anti-invariant in each external variable.
\end{lemma}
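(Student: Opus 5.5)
The plan is an induction on $2g-2+n$, following the order in which the recursion \eqref{eq:bgraphsum:reduced-recursion} builds $\omega^\iota_{g,n+1}$. At each label $\alpha$ the lifted trivialization at $p_\alpha$ is one of the two points $p_{\alpha,\pm}$, and the sign change of switching lifts is recorded by $\varepsilon_\alpha=\pm1$. Before inducting, I will check how each ingredient transforms under $\iota$.

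The checks on the inputs are as follows.
\begin{itemize}
\item $B^-$ is anti-invariant in each variable; this is immediate from \eqref{eq:bgraphsum:prym-kernel-explicit}, using $\iota^*B^{\rm std}(\cdot,\mu_2)=(\operatorname{id}\times\iota)^*B^{\rm std}$ and $\iota^2=\operatorname{id}$.
\item $d\hat x$ is $\iota$-invariant, because $\lambda$ is $\iota$-invariant.
\item Since $\log\mu\circ\iota=-\log\mu+2\pi i n$, the centered coordinate satisfies $\widetilde y_\alpha\circ\iota=-\widetilde y_\alpha$ up to the recentering. Hence the local difference $\widetilde y_\alpha(q)-\widetilde y_\alpha(\bar q)$ changes by the factor $-1$ when one passes to the opposite lift.
\item The local conjugation $\bar q$ is defined through $s_\alpha$, which depends only on $\hat x$. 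It therefore commutes with $\iota$ transported between the lifts, that is, $\iota$ carries $U_{\alpha,+}$ to $U_{\alpha,-}$ and intertwines the two conjugations.
\end{itemize}

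The induction step has two parts.

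\emph{Anti-invariance in the variables $q_1,\ldots,q_n$.} For $j\geq1$, the variable $q_j$ appears in the bracket of \eqref{eq:bgraphsum:reduced-recursion} through exactly one factor, either $\omega^\iota_{g-1,n+2}$ or one of the two factors of the product term. By the induction hypothesis that factor is anti-invariant in $q_j$, and the kernel does not depend on $q_j$, so $\omega^\iota_{g,n+1}$ is anti-invariant in $q_j$. The base cases are the stable outputs of lowest order, $\omega^\iota_{0,3}$ and $\omega^\iota_{1,1}$, whose brackets contain only copies of $B^-$; the same argument applies to them directly.

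\emph{Anti-invariance in $q_0$.} The variable $q_0$ enters only through the numerator $\int_{\bar q}^q B^-(q_0,\cdot)$ of $K_\alpha$, which is anti-invariant in $q_0$ because $B^-$ is. Together with the previous part, this gives anti-invariance in every external variable.

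\emph{Independence of lifts.} Replace the lift at $\alpha$ by its $\iota$-image and change variables $q\mapsto\iota q$ in the residue at $p_\alpha$. Track the signs produced:
\begin{itemize}
\item The numerator $\int_{\bar q}^{q}B^-(q_0,\cdot)$ picks up a factor $-1$, from anti-invariance of $B^-$ in its second slot.
\item The denominator picks up a factor $-1$, from the $\widetilde y$ difference; $d\hat x$ is unchanged.
\item The bracket is bilinear in the internal points $(q,\bar q)$, with one anti-invariant slot at each of them by induction, so it picks up $(-1)^2=1$. This holds both for the term $\omega^\iota_{g-1,n+2}(q,\bar q,\ldots)$ and for each product term.
\end{itemize}
The total sign is $(-1)(-1)(1)=+1$, so the residue at each $\alpha$, and hence $\omega^\iota_{g,n+1}$, is independent of the chosen lifts. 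The same count also shows that each output descends to a section of $L^{\boxtimes(n+1)}$ on $Q_\kappa^\circ$.

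The main obstacle is the precise treatment of $\widetilde y_\alpha$, which is only defined up to the affine ambiguity in $\log\mu$. I will handle it by noting that the recursion uses only the difference $\widetilde y_\alpha(q)-\widetilde y_\alpha(\bar q)$: the additive constants $2\pi i n$ and the centering cancel in this difference, while the sign flip survives.

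A second point needs care: the symmetry of $\omega^\iota_{g,n+1}$ is not used above, because anti-invariance in $q_0$ is read off from the kernel directly. Symmetry is the standard topological recursion symmetry and is therefore not needed here.
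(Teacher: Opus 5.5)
Your proposal is correct and follows essentially the same induction on $2g-2+n$ as the paper. In both, switching the internal lift leaves the kernel invariant because the Prym-kernel numerator and the centered $\widetilde y$ difference each change sign, the bracket is even in the paired internal points $(q,\bar q)$, and anti-invariance in the external variables comes from $B^-$ (for $q_0$) and from the induction hypothesis (for $q_1,\ldots,q_n$).
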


\begin{proof}
The assertion is true for $\omega^\iota_{0,2}=B^-$, since $B^-$ changes sign in each variable. The
unstable one-form is local, but its centered odd part changes sign under the opposite lift, and the
additive constant in the uncentered logarithm cancels from
$\widetilde y_\alpha(q)-\widetilde y_\alpha(\bar q)$. Thus the recursion kernel is invariant in the
internal variable and anti-invariant in its external variable. Assuming the claim for smaller
$2g-2+n$, every term inside the recursion bracket has the required sign in the external variables.
The residue variable is paired with its local conjugate and is independent of the chosen sign
trivialization. The residue formula therefore preserves the anti-invariance and gives the induction.
\end{proof}

\subsection{The reduced $R$-matrix and the local weights}

We next record the local weights entering the DOSS formula. Expand the odd part of $\widetilde y_\alpha$ by
\begin{equation}\label{eq:bgraphsum:dilaton-coefficients}
  \widetilde y_\alpha(s_\alpha)-\widetilde y_\alpha(-s_\alpha)
  =2\sum_{k\geq1}\frac{2^{k-1}}{(2k-1)!!}\check h^\alpha_k s_\alpha^{2k-1}.
\end{equation}
The coefficients $\check h^\alpha_k$ are the Gaussian, or dilaton, coefficients. The coefficient
$\check h^\alpha_1$ from \eqref{eq:bgraphsum:dilaton-coefficients} gives the
normalized vertex factor $\check h^\alpha_1/\sqrt{-2}$, which is normalized by
\eqref{eq:bgraphsum:gaussian-normalization}.

The \emph{bare second-kind forms} are defined from the Prym kernel by
\begin{equation}\label{eq:bgraphsum:bare-second-kind}
  \vartheta^k_\alpha(q)=
  -\frac{(2k-1)!!}{2^k}
  \Res_{q'\to p_\alpha}B^-(q,q')s_\alpha(q')^{-2k-1},
  \qquad k\geq0.
\end{equation}
On the chosen local disk $U_\alpha$ they have the prescribed pole at $p_\alpha$, of order $2k+2$, with zero
residue. Viewed on the upstairs cover, the sign-local-system pullback has the corresponding image
pole at $\iota p_\alpha$. Since the $\mu$-cover of the type-$D_l$ curve is rational and we are working with these local
forms valued in the sign local system, there are no $A$-period normalizations to choose. The ordinary graph leaf is
\begin{equation}\label{eq:bgraphsum:ordinary-leaf}
  \theta^k_\alpha=\frac1{\sqrt{-2}}\,\vartheta^k_\alpha.
\end{equation}
The definition \eqref{eq:bgraphsum:bare-second-kind} and the leaf convention
\eqref{eq:bgraphsum:ordinary-leaf} fix the ordinary leaf normalization used in
Theorem~\ref{thm:doss}.

The primary form $\vartheta^0_\beta$ has a double pole at $p_\beta$, so its thimble Laplace transform
is understood as a regularized formal stationary-phase expansion. On the $\mu$-cover, first choose any
rational primitive $\xi^0_\beta$ whose differential is the pullback of $\vartheta^0_\beta$. Its
additive constant shifts the expression below by a boundary term on the relative
thimble, which does not contribute to the formal expansion. We then replace $\xi^0_\beta$ by its
anti-invariant part
\[
  \xi_\beta={1\over2}\bigl(\xi^0_\beta-\iota^*\xi^0_\beta\bigr).
\]
Since the pullback of $\vartheta^0_\beta$ is anti-invariant, $d\xi_\beta$ is still that pullback, and
within this anti-invariant convention there is no further constant ambiguity. We set
\begin{equation}\label{eq:bgraphsum:regularized-primary-transform}
  \int_{\Gamma_\alpha}^{\rm reg}e^{-\hat x/z}\vartheta^0_\beta
  :=\frac1z\int_{\Gamma_\alpha}e^{-\hat x/z}\xi_\beta\,d\hat x .
\end{equation}
This is the regularized transform used in the definition of $\hat R^B$.

\begin{definition}[The reduced B-model $R$-matrix]\label{def:b-rmatrix}
The \emph{B-model $R$-matrix} $\hat R^B(z)$ is the formal stationary-phase series defined by
\begin{equation}\label{eq:bgraphsum:b-rmatrix}
  \hat R^B_{\beta\alpha}(z)
  \sim
  \frac{\sqrt z\,e^{-u_\alpha/z}}{2\sqrt\pi}
  \int_{\Gamma_\alpha}^{\rm reg}e^{-\hat x/z}\vartheta^0_\beta,
  \qquad z\to0.
\end{equation}
The column $\alpha$ labels the reduced thimble and its critical value $u_\alpha$, and the row $\beta$
labels the primary second-kind form. The regularization
\eqref{eq:bgraphsum:regularized-primary-transform} and the normalization
\eqref{eq:bgraphsum:b-rmatrix} give $\hat R^B(0)=I$.
\end{definition}

The double-Laplace transform of the Prym kernel gives the edge coefficients. Near
$(p_\alpha,p_\beta)$ write
\begin{equation}\label{eq:bgraphsum:kernel-expansion}
  B^-(q_1,q_2)=
  \left(\frac{\delta_{\alpha\beta}}{(s_\alpha-s_\beta)^2}
  +\sum_{k,l\geq0}B^{\alpha\beta}_{k,l}s_\alpha^ks_\beta^l\right)
  ds_\alpha ds_\beta,
\end{equation}
and set
\begin{equation}\label{eq:bgraphsum:edge-coefficients}
  \check B^{\alpha\beta}_{k,l}
  =\frac{(2k-1)!!(2l-1)!!}{2^{k+l+1}}B^{\alpha\beta}_{2k,2l}.
\end{equation}
Thus the local expansion \eqref{eq:bgraphsum:kernel-expansion} gives the
Laplace-normalized edge coefficients \eqref{eq:bgraphsum:edge-coefficients}.

\begin{lemma}[Equivariant local Cauchy projection]\label{lem:equivariant-cauchy-projection}
Let $\widetilde I$ be a finite set of local cover labels with involution $\iota$, and suppose the
cover-level local Cauchy identity for $B^{\rm std}$ has the form
\[
  \sum_{k,l\geq0}\check B^{{\rm cov},ab}_{k,l}z^kw^l
  =
  {\delta_{ab}-\sum_{c\in\widetilde I}
  R^{\rm cov}_{ca}(z)R^{\rm cov}_{cb}(w)\over z+w},
  \qquad a,b\in\widetilde I.
\]
Assume this identity is equivariant for $\iota$. For the $i$-th labeled tensor factor write
\[
  P^-_i={1-\iota_i\over2},
\]
where $\iota_i$ means pullback by $\iota$ on a form tensor factor and pushforward by $\iota$ on a thimble
tensor factor. For each quotient label $\alpha\in\widetilde I/\iota$, choose a lift $\alpha,+$ and set
$\alpha,-=\iota(\alpha,+)$. Define
\[
  \check B^{-,\alpha\beta}_{k,l}
  =
  {1\over2}\left(
  \check B^{{\rm cov},\alpha+,\beta+}_{k,l}
  -\check B^{{\rm cov},\alpha+,\beta-}_{k,l}
  -\check B^{{\rm cov},\alpha-,\beta+}_{k,l}
  +\check B^{{\rm cov},\alpha-,\beta-}_{k,l}
  \right).
\]
For the one-point transform matrix define the projected entries by
\[
  R^-_{\gamma\alpha}(z)
  =
  {1\over2}\left(
  R^{\rm cov}_{\gamma+,\alpha+}(z)
  -R^{\rm cov}_{\gamma+,\alpha-}(z)
  -R^{\rm cov}_{\gamma-,\alpha+}(z)
  +R^{\rm cov}_{\gamma-,\alpha-}(z)
  \right).
\]
Then applying $P^-_1P^-_2$ to the two external labeled tensor factors gives
\[
  \sum_{k,l\geq0}\check B^{-,\alpha\beta}_{k,l}z^kw^l
  =
  {\delta_{\alpha\beta}-\sum_{\gamma\in \widetilde I/\iota}
  R^-_{\gamma\alpha}(z)R^-_{\gamma\beta}(w)\over z+w}.
\]
Moreover the coefficients $\check B^{-,\alpha\beta}_{k,l}$ are the local coefficients of the Prym
kernel
\[
  B^-=
  B^{\rm std}-(\operatorname{id}\times\iota)^*B^{\rm std}.
\]
\end{lemma}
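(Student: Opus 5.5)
The plan is to view the statement as linear algebra on the cover-level Cauchy identity, after splitting every labeled tensor factor into its $\iota$-isotypic parts. On the cover label set $\widetilde I$, the involution $\iota$ permutes labels. Because the identity is assumed $\iota$-equivariant, the Laplace-normalized cover edge coefficients and the cover one-point matrix satisfy
\[
\check B^{{\rm cov},\iota a,\iota b}_{k,l}=\check B^{{\rm cov},ab}_{k,l},
\qquad
R^{\rm cov}_{\iota c,\iota a}(z)=R^{\rm cov}_{ca}(z).
\]
So I would first record these equalities as the precise meaning of equivariance. Here pullback on the form factor and pushforward on the thimble factor both act by relabeling $a\mapsto\iota a$, and the local Morse coordinates satisfy $s_{\iota a}\circ\iota=s_a$.

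Next, I would introduce the orthonormal sign vectors $e^-_\alpha=(e_{\alpha+}-e_{\alpha-})/\sqrt2$ in $\bC^{\widetilde I}$. The operator $P^-_i$ is the orthogonal projection onto their span. The projected matrices are then the matrix coefficients $\check B^{-,\alpha\beta}=\langle e^-_\alpha,\check B^{\rm cov}e^-_\beta\rangle$ and $R^-_{\gamma\alpha}=\langle e^-_\gamma,R^{\rm cov}e^-_\alpha\rangle$. One checks directly that the $1/2$ prefactors in the statement equal $(1/\sqrt2)^2$.

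To obtain the projected identity, I would contract the cover identity with $e^-_\alpha\otimes e^-_\beta$.
\begin{itemize}
\item The $\delta_{ab}$ term becomes $\langle e^-_\alpha,e^-_\beta\rangle=\delta_{\alpha\beta}$.
\item In the quadratic term, the sum over $c\in\widetilde I$ is the matrix product $(R^{\rm cov})^T(z)R^{\rm cov}(w)$. I would insert the resolution of the identity $\sum_\gamma(e^+_\gamma e^{+T}_\gamma+e^-_\gamma e^{-T}_\gamma)$ between the two factors.
\item Equivariance means $R^{\rm cov}$ commutes with the permutation matrix of $\iota$, so it preserves the isotypic splitting. The mixed entries $\langle e^+_\gamma,R^{\rm cov}e^-_\alpha\rangle$ therefore vanish.
\end{itemize}
Only the sign block survives, giving $\sum_\gamma R^-_{\gamma\alpha}(z)R^-_{\gamma\beta}(w)$. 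This block-diagonality is the key point. Without it, cross terms between the invariant and sign sectors would spoil the quadratic form, so I would check carefully that both the thimble-side and form-side actions are the same relabeling.

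For the last assertion, I would expand $B^-=B^{\rm std}-(\operatorname{id}\times\iota)^*B^{\rm std}$ near $(p_{\alpha+},p_{\beta+})$. The second term near $(p_{\alpha+},p_{\beta+})$ equals $B^{\rm std}$ near $(p_{\alpha+},p_{\beta-})$ written in the coordinate $s_{\beta-}\circ\iota=s_{\beta+}$. Its regular part therefore contributes $-B^{{\rm cov},\alpha+,\beta-}$. Its singular part is zero unless $\alpha+=\beta-$, which is impossible off the fixed points; by Definition~\ref{def:bmodel-open} the disks $U_\alpha$ avoid $w=\pm2$.

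This gives $B^{-,\alpha\beta}_{k,l}=B^{{\rm cov},\alpha+,\beta+}_{k,l}-B^{{\rm cov},\alpha+,\beta-}_{k,l}$, with the diagonal pole $\delta_{\alpha\beta}/(s_\alpha-s_\beta)^2$ intact. Applying equivariance rewrites this as the four-term average in the statement. Passing through the Laplace normalization \eqref{eq:bgraphsum:edge-coefficients}, which is linear and label-independent, yields the claim. The main care needed is the sign and coordinate bookkeeping in this last step, namely matching $s_{\beta-}$ with $s_{\beta+}\circ\iota$. Because only even powers $B_{2k,2l}$ enter the Laplace-normalized coefficients, any sign ambiguity $s\mapsto -s$ is harmless.
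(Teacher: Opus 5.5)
Your proposal is correct and follows essentially the same route as the paper. Both project the $\iota$-equivariant cover identity onto the sign sector in both external factors. Both use equivariance to kill the invariant--sign cross terms in the internal sum over $c$; your resolution-of-the-identity and block-diagonality argument is an explicit version of the paper's one-line remark that equivariance restricts the internal sum to quotient labels. Both then use $\check B^{{\rm cov},\alpha-,\beta-}_{k,l}=\check B^{{\rm cov},\alpha+,\beta+}_{k,l}$ and $\check B^{{\rm cov},\alpha-,\beta+}_{k,l}=\check B^{{\rm cov},\alpha+,\beta-}_{k,l}$ to collapse the four-term average to the Prym coefficient. You justify that last step more carefully than the paper, by expanding $(\operatorname{id}\times\iota)^*B^{\rm std}$ near $(p_{\alpha+},p_{\beta+})$ and using that $\iota$ acts freely on the lifted critical points.

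One correction: your closing claim that an $s\mapsto -s$ ambiguity is harmless because only the $B_{2k,2l}$ enter is false. Since $s^{2l}\,ds$ is odd, flipping $s_{\beta-}$ flips the sign of every $B^{{\rm cov},\alpha+,\beta-}_{2k,2l}$. That would turn the Prym difference into a sum and insert signs into the equivariance relations. The convention $s_{\iota a}\circ\iota=s_a$ that you fixed at the outset is therefore essential, not cosmetic; it holds automatically if the cover Morse coordinates are pulled back from the quotient $s_\alpha$. With that convention in place, your argument is complete.
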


\begin{proof}
The local Cauchy identity is obtained in the chosen local disks by applying the Cauchy residue formula
and then taking the regularized Laplace expansions. Since $\iota$ preserves $\hat x$ and
$B^{\rm std}$, these residues, expansions, and the integration-by-parts regularization commute with
the labeled tensor-factor action of $\iota$. Thus the cover identity is an identity of
$\iota$-equivariant label tensors, and we may apply the idempotent $P^-_1P^-_2$ to its two external
tensor factors.

By the definitions above, this projection gives the displayed four-term expression for
$\check B^{-,\alpha\beta}_{k,l}$ in the two external tensor factors and the displayed coefficients
$R^-_{\gamma\alpha}$ for the one-point transforms. Equivariance makes the internal cover sum restrict
to quotient labels $\gamma\in\widetilde I/\iota$, which gives the displayed right-hand side with
$R^-$. Finally,
\[
  \check B^{{\rm cov},\alpha-,\beta-}_{k,l}
  =
  \check B^{{\rm cov},\alpha+,\beta+}_{k,l},
  \qquad
  \check B^{{\rm cov},\alpha-,\beta+}_{k,l}
  =
  \check B^{{\rm cov},\alpha+,\beta-}_{k,l}.
\]
Hence
\[
  \check B^{-,\alpha\beta}_{k,l}
  =
  \check B^{{\rm cov},\alpha+,\beta+}_{k,l}
  -
  \check B^{{\rm cov},\alpha+,\beta-}_{k,l},
\]
which is the coefficient of
$B^{\rm std}-(\operatorname{id}\times\iota)^*B^{\rm std}$.
\end{proof}

\begin{proposition}[Prym double-Laplace factorization]\label{prop:prym-double-laplace}
The edge coefficients \eqref{eq:bgraphsum:edge-coefficients} of $B^-$ and the
stationary-phase matrix of Definition~\ref{def:b-rmatrix} satisfy
\[
  \sum_{k,l\geq0}\check B^{\alpha\beta}_{k,l}z^kw^l
  =\frac{\delta_{\alpha\beta}-\sum_{\gamma\in I}
  \hat R^B_{\gamma\alpha}(z)\hat R^B_{\gamma\beta}(w)}{z+w}.
\]
In particular $\hat R^B(0)=I$ and
\[
  \hat R^B(z)\hat R^B(-z)^{\mathsf T}=I.
\]
\end{proposition}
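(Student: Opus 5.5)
The plan is to reduce the statement to the ordinary (cover-level) Eynard--DOSS Laplace identity for the standard genus-zero kernel $B^{\rm std}$ on the smooth $\mu$-cover, and then apply Lemma~\ref{lem:equivariant-cauchy-projection}. On the smooth rational $\mu$-curve with the $\iota$-invariant superpotential $\hat x$, the $2(l+1)$ lifted critical points $p_{\alpha,\pm}$ form the cover label set $\widetilde I$. For these labels the classical argument of Eynard and DOSS gives the cover identity
\[
  \sum_{k,l}\check B^{{\rm cov},ab}_{k,l}z^kw^l=\frac{\delta_{ab}-\sum_c R^{\rm cov}_{ca}(z)R^{\rm cov}_{cb}(w)}{z+w}.
\]
Here $R^{\rm cov}$ is the stationary-phase matrix of $B^{\rm std}$-second-kind forms over the lifted thimbles.

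The cover identity is proved as follows. Form the double Laplace transform
\[
  \int_{\Gamma_a}\int_{\Gamma_b}e^{-\hat x(q_1)/z-\hat x(q_2)/w}B^{\rm std}(q_1,q_2).
\]
Apply the operator $(z^{-1}+w^{-1})$, which after integration by parts converts $B^{\rm std}$ into the derivative of $B^{\rm std}$ along the vector field dual to $d\hat x$ in both variables. The key identity is that
\[
  \Bigl(\frac{d}{d\hat x_1}+\frac{d}{d\hat x_2}\Bigr)\frac{B^{\rm std}}{d\hat x_1 d\hat x_2}
\]
is a sum over critical points of products of one-point second-kind forms. This follows from the residue theorem on the rational cover, because $B^{\rm std}$ has only the diagonal pole and there are no holomorphic differentials. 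The diagonal term produces $\delta_{ab}$, and the critical-point residues produce the product $R^{\rm cov}R^{\rm cov}$. The regularized primitive $\xi_\beta$ of \eqref{eq:bgraphsum:regularized-primary-transform} is exactly what makes the integration by parts legitimate for the double-pole primary forms. The boundary terms on relative rapid-decay thimbles vanish in the formal expansion, as already noted before Definition~\ref{def:b-rmatrix}.

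Next, I verify equivariance. Since $\iota^*\hat x=\hat x$, $\iota$ permutes cover thimbles and critical points, and $(\iota\times\iota)^*B^{\rm std}=B^{\rm std}$, the cover identity is $\iota$-equivariant. Lemma~\ref{lem:equivariant-cauchy-projection} then yields the quotient identity with $R^-$ and with $\check B^{-}$ equal to the local coefficients of $B^-$.

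It remains to identify $R^-$ with $\hat R^B$ from Definition~\ref{def:b-rmatrix}. The form $\vartheta^0_\beta$ is built from $B^-$, so it equals the anti-symmetrized cover form. The reduced thimble $\Gamma_\alpha$ with its sign-local-system coefficient is the anti-invariant combination of the two lifted thimbles. Matching the normalizations, namely the factor $\tfrac12$ in the lemma against the factor $2$ built into $B^-$, the local coordinates $s_\alpha$ pulled back to both lifts, and the shift $e^{-u_\alpha/z}$, gives $R^-=\hat R^B$. I expect this bookkeeping of factors of $2$ and signs to be the main obstacle. I would check it on the leading term, where both sides must give $\hat R^B(0)=I$ consistently with the diagonal $(s_\alpha-s_\beta)^{-2}$ normalization in \eqref{eq:bgraphsum:kernel-expansion}.

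Finally, setting $w=-z$ and noting that the left-hand side is a power series, hence regular on $z+w=0$, forces
\[
  \sum_\gamma\hat R^B_{\gamma\alpha}(z)\hat R^B_{\gamma\beta}(-z)=\delta_{\alpha\beta},
\]
that is, $\hat R^B(z)^{\mathsf T}\hat R^B(-z)=I$. For square matrices this is equivalent to $\hat R^B(z)\hat R^B(-z)^{\mathsf T}=I$. Setting $z=w=0$ and comparing constant terms, or reading off the leading stationary-phase term, gives $\hat R^B(0)=I$.
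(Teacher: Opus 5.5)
Your proposal is essentially the paper's proof. It runs through the same steps: the cover-level Cauchy/Laplace identity for $B^{\rm std}$ at the $2(l+1)$ lifted critical points, $\iota$-equivariance and Lemma~\ref{lem:equivariant-cauchy-projection}, the identification $R^-=\hat R^B$, and setting $w=-z$ for unitarity. The paper makes the identification $R^-=\hat R^B$ explicit through $\pi^*\vartheta^{0,{\rm quot}}_\alpha=\vartheta^0_{\alpha,+}-\vartheta^0_{\alpha,-}$ and the quotient thimble $\tfrac12(\Gamma_{\alpha,+}-\Gamma_{\alpha,-})$, whose normalizations multiply to give exactly the four-term $\tfrac12$ combination.

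One small correction: comparing constant terms at $z=w=0$ only yields $\hat R^B(0)^{\mathsf T}\hat R^B(0)=I$. So $\hat R^B(0)=I$ must come from the leading stationary-phase term, as in the paper.
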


\begin{proof}
We first apply the local Cauchy identity on the smooth $\mu$-cover, with standard kernel
$B^{\rm std}$ and lifted critical points $p_{\alpha,+}$ and
$p_{\alpha,-}=\iota p_{\alpha,+}$. The paired lifted critical points have the same critical value, but
the identity is local in the chosen local disks, before any choice of a global distinguished thimble basis.
Thus Lemma~\ref{lem:equivariant-cauchy-projection} applies to the cover identity, with cover label
set $\widetilde I$ the set of lifted critical points and quotient label set $\widetilde I/\iota$
identified with $I$, and gives
\[
  \sum_{k,l\geq0}\check B^{-,\alpha\beta}_{k,l}z^kw^l
  =
  {\delta_{\alpha\beta}-\sum_{\gamma\in I}
  R^-_{\gamma\alpha}(z)R^-_{\gamma\beta}(w)\over z+w}.
\]
By the last assertion of Lemma~\ref{lem:equivariant-cauchy-projection},
$\check B^{-,\alpha\beta}_{k,l}$ is the coefficient of
$B^{\rm std}-(\operatorname{id}\times\iota)^*B^{\rm std}=B^-$. Hence
\[
  \check B^{-,\alpha\beta}_{k,l}=\check B^{\alpha\beta}_{k,l}.
\]

It remains to identify the projected one-point matrix $R^-$ with the reduced matrix of
Definition~\ref{def:b-rmatrix}. For the lifted forms and thimbles set
\[
  \vartheta_{\alpha}^{-,0}
  ={\vartheta_{\alpha,+}^0-\vartheta_{\alpha,-}^0\over\sqrt2},
  \qquad
  \Gamma_\alpha^-={\Gamma_{\alpha,+}-\Gamma_{\alpha,-}\over\sqrt2}.
\]
The quotient sign-local-system objects used in Definition~\ref{def:b-rmatrix} pull back as
\[
  \pi^*\vartheta_{\alpha}^{0,{\rm quot}}
  =\vartheta_{\alpha,+}^0-\vartheta_{\alpha,-}^0
  =\sqrt2\,\vartheta_{\alpha}^{-,0}.
\]
The quotient thimble, with its $L^\vee$-trivialization, is represented upstairs by the anti-invariant
chain
\[
  \pi^{-1}\Gamma_\alpha^{\rm quot}
  ={1\over2}(\Gamma_{\alpha,+}-\Gamma_{\alpha,-})
  ={1\over\sqrt2}\Gamma_\alpha^-.
\]
Thus the product of the form normalization and the cycle normalization is $1$. The regularized
stationary-phase integrals in Definition~\ref{def:b-rmatrix} are therefore the projected sign-sector
integrals, and
\[
  R^-=\hat R^B.
\]
Substituting the two identifications into the projected cover identity gives the displayed
factorization.

We now compute the leading term. Near $p_\alpha$ one has
$\vartheta^0_\alpha=-ds_\alpha/s_\alpha^2+O(1)ds_\alpha$ and
$d\hat x=2s_\alpha ds_\alpha$. The regularized integral is therefore, by the same integration by parts
used in Definition~\ref{def:b-rmatrix},
\[
  \int_{\Gamma_\alpha}^{\rm reg}e^{-\hat x/z}\vartheta^0_\alpha
  \sim 2\sqrt\pi\,z^{-1/2}e^{u_\alpha/z}(1+O(z)),
\]
whereas the off-diagonal primary forms are holomorphic at $p_\alpha$ and contribute $O(z)$ after the
Gaussian normalization. Hence $\hat R^B(0)=I$.

Multiplying the factorization by $z+w$ and setting $w=-z$ gives
\[
  (\hat R^B(z))^{\mathsf T}\hat R^B(-z)=I.
\]
After transposing this identity, $(\hat R^B(-z))^{\mathsf T}$ is a left inverse of $\hat R^B(z)$.
Since $\hat R^B(0)=I$, $\hat R^B(z)$ is invertible in the formal matrix algebra, so this left inverse is
also the right inverse. Therefore the equivalent right-unitarity convention used in the rest of the paper is
\[
  \hat R^B(z)\hat R^B(-z)^{\mathsf T}=I.
\]
\end{proof}

The higher coefficients $\check h^\alpha_k$ are independent local DOSS input at this point. The
identity which expresses them as the flat-unit translation determined by $\hat R^B$ is not a formal
consequence of the local recursion for an arbitrary spectral curve. We prove this compatibility later,
after the smooth-chamber oscillatory calibration has been established in Section~\ref{sec:anchor}.

\subsection{The DOSS graph sum}

We now state the graph-sum form in the notation just fixed. The theorem is the local DOSS theorem
applied to the reduced curve $\cS^\iota_\kappa$; the preceding subsections served only to identify its
inputs in the present binary-dihedral type-$D_l$ setting.

\begin{theorem}[DOSS graph sum for the reduced curve]\label{thm:doss}
For $\kappa\in\Omega_B$ and every $(g,n)$ in the stable range $2g-2+n>0$, the reduced recursion output is
\begin{equation}\label{eq:bgraphsum:doss-expansion}
  \omega^\iota_{g,n}=
  \sum_{\Gamma}\frac{\mathrm{wt}_B(\Gamma)}{|\Aut\Gamma|},
\end{equation}
where the sum is over connected stable graphs of genus $g$ with $n$ ordinary leaves and any finite
number of dilaton leaves. If $L^\circ(\Gamma)$ is the set of ordinary leaves and
$L^{\rm dil}(\Gamma)$ is the set of dilaton leaves, the weight is
\begin{equation}\label{eq:bgraphsum:doss-weight}
\begin{aligned}
  \mathrm{wt}_B(\Gamma)= {}&(-1)^{g-1}
  \prod_{v\in V(\Gamma)}
  \left(\frac{\check h^{\alpha(v)}_1}{\sqrt{-2}}\right)^{2-2g(v)-\val(v)}
  \left\langle\prod_{h\in H(v)}\tau_{k(h)}\right\rangle_{g(v)} \\
  &\quad\cdot
  \prod_{e\in E(\Gamma)}
  \check B^{\alpha(e'),\alpha(e'')}_{k(e'),k(e'')}
  \prod_{\ell\in L^\circ(\Gamma)}\theta^{k(\ell)}_{\alpha(\ell)}
  \prod_{\ell\in L^{\rm dil}(\Gamma)}
  \left(-\frac{\check h^{\alpha(\ell)}_{k(\ell)}}{\sqrt{-2}}\right).
\end{aligned}
\end{equation}
Here each vertex, half-edge, ordinary leaf, and dilaton leaf is labeled by an element of $I$. For a
vertex $v$, $H(v)$ is the set of incident half-edges, ordinary leaves, and dilaton leaves, all with
label $\alpha(v)$, and $\val(v)=|H(v)|$. Ordinary leaves have
$k(\ell)\geq0$, while dilaton leaves have $k(\ell)\geq2$. The expansion
\eqref{eq:bgraphsum:doss-expansion} with the weight
\eqref{eq:bgraphsum:doss-weight} is the form used in
Section~\ref{sec:graphsum-remodeling}.
\end{theorem}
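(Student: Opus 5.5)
The plan is to deduce the statement from the local DOSS theorem \cite{duninbarkowski2012identificationgiventalformulaspectral}, which applies to any local spectral curve: a finite set of simple ramification points, each with a Morse coordinate $s_\alpha$ and $\hat x=-u_\alpha+s_\alpha^2$, a local odd function $\widetilde y_\alpha(q)-\widetilde y_\alpha(\bar q)$, and a symmetric bidifferential with double-pole diagonal singularity and no residue. That theorem expresses the recursion outputs as a Givental graph sum. Its vertex weights are Kontsevich--Witten intersection numbers scaled by the Gaussian coefficients $\check h^\alpha_1$. Its edge weights are the Laplace-normalized coefficients of the regular part of the bidifferential. Its ordinary leaves are the second-kind forms obtained from the bidifferential at the ramification points, and its dilaton leaves come from the higher odd coefficients $\check h^\alpha_k$. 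The proof therefore reduces to checking that $\cS^\iota_\kappa$ of Definition~\ref{def:reduced-local-curve} is a legitimate local spectral curve once local sign trivializations are fixed, and that the normalizations \eqref{eq:bgraphsum:dilaton-coefficients}--\eqref{eq:bgraphsum:edge-coefficients} agree with those in DOSS.

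The steps are as follows. \textbf{First}, on $\Omega_B$ I fix a lift $p_{\alpha,+}$ of each quotient label; this trivializes $L$ on each $U_\alpha$. In that trivialization, $B^-$ restricted to $\bigsqcup_\alpha U_\alpha\times\bigsqcup_\beta U_\beta$ is an ordinary local symmetric bidifferential with singular part $\delta_{\alpha\beta}ds_\alpha ds_\beta/(s_\alpha-s_\beta)^2$. This follows from \eqref{eq:bgraphsum:prym-kernel-explicit}: the $\iota$-image term $d\mu_1d\mu_2/(\mu_1\mu_2-1)^2$ is regular on $U_\alpha\times U_\alpha$, because $U_\alpha$ avoids the fixed points $\mu=\pm1$ (the disks lie in $Q^\circ_\kappa$). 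Simplicity of the roots of $N$ makes each $p_\alpha$ a simple ramification point of $\hat x$, and $\check h^\alpha_1\neq0$ because $a_\alpha\neq0$. \textbf{Second}, I observe that the recursion \eqref{eq:bgraphsum:reduced-recursion} with kernel \eqref{eq:bgraphsum:recursion-kernel} is exactly the local Eynard--Orantin recursion for this datum. Only the odd part of $\widetilde y_\alpha$ and the restriction of $B^-$ to the disks enter it, so DOSS applies verbatim and yields a sum over stable graphs with the standard weights. \textbf{Third}, I match normalizations. The vertex factor $(\check h^\alpha_1/\sqrt{-2})^{2-2g(v)-\val(v)}$, the edge factor \eqref{eq:bgraphsum:edge-coefficients}, the leaves \eqref{eq:bgraphsum:ordinary-leaf} and the dilaton leaves $-\check h^\alpha_k/\sqrt{-2}$ are the DOSS conventions rewritten with the factor $\sqrt{-2}$ distributed to leaves. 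The global sign $(-1)^{g-1}$ is fixed by the sign choice in \eqref{eq:bgraphsum:recursion-kernel}. To check it, I compare with the Airy curve $\omega_{0,3}$ and $\omega_{1,1}$. Graph-by-graph, a factor $(-1)^{2-2g(v)-\val(v)}$-type parity, combined with the Euler characteristic relation for stable graphs, produces the global $(-1)^{g-1}$. \textbf{Fourth}, I remove the dependence on trivializations. Under $\varepsilon_\alpha$ the graph weight changes by $\prod_v\varepsilon_{\alpha(v)}^{2-2g(v)}=1$, as computed after Definition~\ref{def:reduced-local-curve}, and the ordinary leaves transform exactly as the output $\omega^\iota_{g,n}$ does by Lemma~\ref{lem:sign-descent}. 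Hence the identity descends to $L^{\boxtimes n}$-valued forms.

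I expect the main obstacle to be the sign and normalization bookkeeping of the third step. DOSS states its theorem with its own conventions for $\Delta_\alpha$, the Laplace transform and the recursion kernel's sign and factor $\frac12$. Reconciling these with the factor $2$ in $B^-$ relative to the Giacchetto--Kramer--Lewa\'nski equivariant kernel, and with the $\sqrt{-1}/(2\nu)$ in $\widetilde y_\alpha$, is where errors creep in. My first attempt at this will be a direct check on the Airy model, $\hat x=s^2$ with $\widetilde y$ linear: compute $\omega_{0,3}$ and $\omega_{1,1}$ from \eqref{eq:bgraphsum:reduced-recursion} and compare them with the one-vertex graphs of \eqref{eq:bgraphsum:doss-weight}. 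This pins down the vertex sign and the power of $\sqrt{-2}$. The edge and dilaton conventions are then checked on $\omega_{0,4}$ and $\omega_{1,1}$ with a quadratic deformation of $\widetilde y$.
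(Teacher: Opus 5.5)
Your proposal is correct and is essentially the paper's argument. The paper's proof invokes the local DOSS theorem after checking, in the chosen local sign trivializations, that:
\begin{itemize}
\item the critical points are nondegenerate, with $\hat x=-u_\alpha+s_\alpha^2$;
\item the odd part of $\widetilde y_\alpha$ has nonzero linear term;
\item $B^-$ has the standard diagonal singularity on each disk.
\end{itemize}
It then reads off the weights in the toric-remodeling normalization of the ordinary and dilaton leaves.

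Your additional checks are consistent with the paper's discussion around the theorem: regularity of the $\iota$-image term of $B^-$ on the disks, the lift-independence count, and the Airy tests. One small correction concerns the global sign. Flipping the kernel sign multiplies $\omega^\iota_{g,n}$ by $(-1)^n$, so that sign choice can only affect the $n$-parity of the prefactor. The $g$-dependence of $(-1)^{g-1}$ must therefore come from the DOSS normalization itself, not from the kernel sign as you state.
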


\begin{proof}
This is the local graph-sum theorem of
\cite{duninbarkowski2012identificationgiventalformulaspectral}. Its hypotheses hold for
$\cS^\iota_\kappa$ after the local sign trivializations are chosen: the critical points are
nondegenerate by the definition of $\Omega_B$, the local form of $\hat x$ is $-u_\alpha+s_\alpha^2$, the odd part of
$\widetilde y_\alpha$ has nonzero linear term, and the Prym kernel $B^-$ has the standard diagonal
singularity on each chosen local disk. The formula above is the DOSS weight written with the
toric-remodeling normalization of the ordinary leaves and dilaton leaves
\cite{fang2019remodelingconjecturetoriccalabiyau,fang2016eynardorantinrecursionequivariantmirror}.
\end{proof}

\begin{definition}[Reduced B-model forms on the smooth chamber]\label{def:smooth-graphsum}
For $\kappa\in\Omega_B$ and $2g-2+n>0$, the smooth B-side forms used in this paper are the reduced
local recursion outputs $\omega^\iota_{g,n}$ of $\cS^\iota_\kappa$, equivalently the DOSS graph sums of
Theorem~\ref{thm:doss}. Their pullbacks to the smooth type-$D_l$ curve are $\iota$-anti-invariant in every
external variable.
\end{definition}

This section has only constructed the B-side graph sum and its reduced $R$-matrix. It does
not identify $\hat R^B$ with the A-side $R$-matrix and does not prove the remodeling theorem; those
are the purposes of Sections~\ref{sec:anchor} and~\ref{sec:graphsum-remodeling}.

\subsection{Semistable boundary data at the orbifold point}

We finish by recording what happens to the reduced data at the orbifold point. This is
not an ordinary central-fiber topological recursion. The cover-level kernel has the componentwise limit of
Lemma~\ref{lem:componentwise-nodal-kernel}, and the reduced boundary kernel is the componentwise Prym kernel
\begin{equation}\label{eq:bgraphsum:boundary-kernel}
  B^-_0=B^-_{\Sigma^{\rm orb}}\oplus B^-_{\Sigma_+}\oplus B^-_{\Sigma_-},
  \qquad
  B^-_C=B^{\rm std}_C-(\operatorname{id}\times\iota_C)^*B^{\rm std}_C.
\end{equation}
At a fixed node the limiting sign sector will be identified, in Section~\ref{sec:anchor}, through the
relative-homology splitting into the even main direction and the odd bubble direction. In this section
we only record the componentwise boundary data \eqref{eq:bgraphsum:boundary-kernel}
on the semistable model of Section~\ref{sec:bmodel}, on compact subsets of the
normalizations away from the nodes.

On the main component one has
\[
  \lambda^{\rm orb}=K(\mu^m+\mu^{-m}),\qquad m=l-2.
\]
On the bubble over $\mu=\sigma$ the plumbing chart is
\begin{equation}\label{eq:bgraphsum:boundary-bubble-chart}
  \mu=\sigma+\frac{a}{y_\sigma},
  \qquad
  \lambda=\lambda_\sigma(1-y_\sigma^2),
  \qquad
  \lambda_\sigma=2K\sigma^m.
\end{equation}
The centered local coordinate $\widetilde y$ degenerates on the bubble: on compact subsets away
from the node,
\begin{equation}\label{eq:bgraphsum:bubble-y-degeneration}
  \widetilde y-\widetilde y\circ\iota=O(a).
\end{equation}
Thus the central bubble is not an ordinary Airy spectral curve. The bubble data enter later through
the limiting relative cycles, the boundary chart
\eqref{eq:bgraphsum:boundary-bubble-chart}, and the rescaled periods of the
primitive differential from Remark~\ref{rem:bubble-measure}; the degeneration
\eqref{eq:bgraphsum:bubble-y-degeneration} does not define a central B-side CohFT or a
central $R$-matrix.

The quotient labels specialize to the ramification and inertia labels of Lemma~\ref{lem:thimble-count}.
The free main-component pairs are exchanged by $\iota$ and descend to single quotient labels, while
the two fixed main labels and the two bubble labels are retained as fixed-node labels. The thimbles
are limiting relative-homology cycles in the semistable family, as described in
Section~\ref{sec:anchor}. These are the boundary labels used to normalize
$\hat R^B(\kappa;z)$ on the smooth chamber $\Omega_B$.


\section{The genus-zero Frobenius isomorphism near the orbifold point}
\label{sec:frobenius}

In this section we identify the genus-zero Frobenius manifolds that enter the remodeling comparison.
The statement is local near the orbifold point, but it is not a statement about the
ordinary residue construction on the singular central fiber. The first point is to separate the
Frobenius domain from the oscillatory chamber of Section~\ref{sec:bgraphsum}. Critical-value
collisions, the Maxwell locus, and Stokes walls matter for thimbles and asymptotic bases, but they do
not enter the construction of the Frobenius manifold. We therefore work first on a larger open set on
which the type-$D_l$ curve is smooth and the invariant critical scheme is reduced, and only later
restrict to the chamber $\Omega_B$ when oscillatory data are needed.

The second point is normalization. Brini--Ma--Strachan compare their type-$D_l$ logarithmic Toda
Dubrovin--Frobenius structure with the equivariant quantum cohomology of the surface
resolution. Our target is
\[
  \cX=[\bC^2/\Gamma\times\bC]
\]
with the third torus weight $-2\nu$. The third leg does not change the multiplication, because it
rescales the Frobenius metric and the cubic tensor by the same nonzero scalar, but it does multiply
both tensors by $(-2\nu)^{-1}$. This scalar is part of the Frobenius-manifold isomorphism below.

\subsection{The Frobenius domain}

We refine the neighborhood notation used in Definition~\ref{def:bmodel-open}. There $\mathcal U_B$ is
the ambient analytic neighborhood in the B-model parameter space, while $\Omega_B$ is the smaller
working open set used for oscillatory data. We choose a local branch chart
$\widetilde{\mathcal U}_B$ over $\mathcal U_B$, containing the orbifold point
$\kappa^{\rm orb}(K_0)$, on which the logarithm branches and the ramified coordinates along the
pole-cancellation strata are fixed; at such a stratum a zero-pair of $\lambda$ cancels one of the
poles over $w=\pm2$.
We now define the open set needed for genus zero. We remove from $\widetilde{\mathcal U}_B$ the
discriminant of the compactified type-$D_l$ curve, the discriminant of the invariant critical polynomial
\begin{equation}\label{eq:frob:critical-polynomial}
  P(w;\kappa)=\prod_{i=1}^l(w-c_i),
  \qquad
  N(w;\kappa)=(w^2-4)P'(w;\kappa)-2wP(w;\kappa),
\end{equation}
the pole-cancellation strata over $w=\pm2$ in the B-model chart, and the branch cuts for the chosen
logarithms. After shrinking, we take a simply connected component and denote it by
\begin{equation}\label{eq:frob:frobenius-domain}
  \Omega_{\rm Frob}\subset\widetilde{\mathcal U}_B.
\end{equation}
Thus, for every $\kappa$ in the domain \eqref{eq:frob:frobenius-domain}, the
type-$D_l$ curve is smooth,
the roots of $N(w;\kappa)$ in \eqref{eq:frob:critical-polynomial} are simple,
and the ordinary invariant Jacobian, defined below in
Definition~\ref{def:smooth-invariant-jacobian}, has rank $l+1$. We do not remove
the critical-value collision locus, the Maxwell locus, or the Stokes walls here. The chamber
$\Omega_B$ of Definition~\ref{def:bmodel-open} is a smaller oscillatory chamber,
\begin{equation}\label{eq:frob:oscillatory-subdomain}
  \Omega_B\subset\Omega_{\rm Frob},
\end{equation}
on which a distinguished thimble basis and a distinguished local canonical frame have also been
chosen.

On $\Omega_{\rm Frob}$ we use the same branches of the logarithms as in Section~\ref{sec:bgraphsum}.
We keep the notation
\[
  \widehat P(w;\kappa)=\kappa_{l+1}P(w;\kappa)
\]
for the numerator of the quotient function $\lambda(w)=\widehat P(w;\kappa)/(w^2-4)$.
The symbol $\log_{\rm orb}$ denotes the logarithm branch fixed on
$\widetilde{\mathcal U}_B$ and normalized at the orbifold value. In particular,
\[
  \btau_i(\kappa)=\log_{\rm orb}\kappa_i,
  \qquad
  \btau_i^{\rm orb}=\log_{\rm orb}\kappa_i^{\rm orb}=-n_i\log\zeta,
\]
where
\[
  \zeta=e^{2\pi i/(4(l-2))},
  \qquad
  \mathbf n=(n_1,\ldots,n_l)=(2l-4,2l-5,2l-7,\ldots,3,1,0).
\]
We write
\[
  \delta\btau_i=\btau_i-\btau_i^{\rm orb}.
\]
The point $\kappa^{\rm orb}(K_0)$ is a point of the chosen chart $\widetilde{\mathcal U}_B$, but not a
point of $\Omega_{\rm Frob}$.

We now define the smooth B-side algebra on this domain. Let
\[
  F(\mu,\lambda;\kappa)
  =
  \lambda\,\mu^{l-2}(\mu^2-1)^2
  -\kappa_{l+1}\prod_{i=1}^l(\mu-\kappa_i)(\mu-\kappa_i^{-1})
\]
be the type-$D_l$ curve equation. Since we have removed the pole-cancellation strata, the points over
$\mu=\pm1$ remain punctures and $\partial_\lambda F$ is invertible on the affine curve. Thus the
logarithmic critical equation may be written either upstairs as $\mu\partial_\mu F=0$ or downstairs
as $N(w;\kappa)=0$.

\begin{definition}[Smooth invariant Jacobian]\label{def:smooth-invariant-jacobian}
For $\kappa\in\Omega_{\rm Frob}$, the \emph{smooth invariant Jacobian} is
\begin{equation}\label{eq:frob:smooth-jacobian}
  H_B^\iota(\kappa)
  =
  \left(
  {\bC(\nu)[\mu^{\pm1},\lambda^{\pm1}]
  \over
  \langle F,\mu\partial_\mu F\rangle}
  \right)^{\bZ_2},
\end{equation}
where $\bZ_2$ acts by the involution $\iota:\mu\mapsto\mu^{-1}$ and fixes $\lambda$.
The product is induced by multiplication in the quotient. The Kodaira--Spencer representative of a
flat vector field $X$ is
\begin{equation}\label{eq:frob:ks-map}
  X\longmapsto [\,2\nu\,\partial_X\log\lambda\,]\in H_B^\iota(\kappa).
\end{equation}
Equivalently, using $w=\mu+\mu^{-1}$, this is the invariant critical quotient
$\bC(\nu)[w]/\langle N(w;\kappa)\rangle$ on $\Omega_{\rm Frob}$; this is the
same smooth algebra as \eqref{eq:frob:smooth-jacobian}, with Kodaira--Spencer
representative \eqref{eq:frob:ks-map}.
\end{definition}

\subsection{Logarithmic residue tensors and the threefold normalization}

We first recall the surface normalization in which the theorem of Brini--Ma--Strachan is stated
\cite{brini2025dubrovindualitymirrorsymmetry}. On the smooth type-$D_l$ curve
\[
  \lambda(w)=\kappa_{l+1}{\prod_{i=1}^l(w-c_i)\over w^2-4},
  \qquad w=\mu+\mu^{-1},
\]
let
\[
  \phi_D^2=\nu\left({d\mu\over\mu}\right)^2.
\]
The unnormalized logarithmic residue tensors are
\begin{equation}\label{eq:frob:raw-metric}
  \eta_0(X,Y)
  =\sum_{\tilde p:\,d\log\lambda(\tilde p)=0}
  \Res_{\tilde p}
  {\partial_X\log\lambda\,\partial_Y\log\lambda\over d\log\lambda}\,\phi_D^2,
\end{equation}
and
\begin{equation}\label{eq:frob:raw-cubic}
  c_0(X,Y,Z)
  =\sum_{\tilde p:\,d\log\lambda(\tilde p)=0}
  \Res_{\tilde p}
  {\partial_X\log\lambda\,\partial_Y\log\lambda\,\partial_Z\log\lambda
  \over d\log\lambda}\,\phi_D^2.
\end{equation}
Here derivatives are taken at fixed $\mu$; we call this the fixed-$\mu$ lift. The
sums in \eqref{eq:frob:raw-metric} and \eqref{eq:frob:raw-cubic} are upstairs
on the smooth $\mu$-curve.

\begin{lemma}[Surface normalization of the logarithmic residue tensors]\label{lem:surface-residue-normalization}
In the Kodaira--Spencer convention
\[
  X\longmapsto 2\nu\,\partial_X\log\lambda
\]
and with the scale coordinate fixed in Definition~\ref{def:mirror-map}, the surface-normalized Frobenius tensors
are
\begin{equation}\label{eq:frob:surface-tensors}
  \eta^{B,{\rm surf}}={1\over2\nu}\eta_0,
  \qquad
  c^{B,{\rm surf}}=c_0.
\end{equation}
\end{lemma}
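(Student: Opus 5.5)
The plan is to compare the raw residue tensors \eqref{eq:frob:raw-metric}--\eqref{eq:frob:raw-cubic} with the Brini--Ma--Strachan surface tensors, which are written in Saito/Dubrovin form. In that form the Kodaira--Spencer representative is $\partial_X\log\lambda$ itself, and the residues are taken against $d\log\lambda$ with the primitive form $\phi_D^2$. The whole content of the lemma is then a bookkeeping of powers of $2\nu$.

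First, I would record how each tensor transforms under the convention change. Replacing the representative $\partial_X\log\lambda$ by $2\nu\,\partial_X\log\lambda$ multiplies a $k$-linear residue tensor by $(2\nu)^k$. Simultaneously the denominator $d\log\lambda$ is replaced by the superpotential differential $d(2\nu\log\lambda)=2\nu\,d\log\lambda$; this is the oscillatory phase $W=2\nu\log\lambda$ (up to the sign $d\hat x=-d W$). With these substitutions, a residue pairing built from $W$ and its Kodaira--Spencer classes equals $(2\nu)^{k-1}$ times the corresponding raw tensor. Hence the metric ($k=2$) picks up $2\nu$ and the cubic tensor ($k=3$) picks up $(2\nu)^2$.

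Second, I would fix the overall normalization against BMS. In the convention where the Kodaira--Spencer class of the flat unit (the scale direction $K\partial_K$, since $\partial_{\log K}\log\lambda=1$, in the coordinate of Definition~\ref{def:mirror-map}) is $2\nu$ rather than $1$, the unit field of the Frobenius structure is $\tfrac1{2\nu}K\partial_K$. Requiring $c(e,Y,Z)=\eta(Y,Z)$ with $e$ the unit fixes the relative scalar between $\eta$ and $c$. Writing $\eta^{B,{\rm surf}}=a\,\eta_0$ and $c^{B,{\rm surf}}=b\,c_0$, I would compute $c_0(K\partial_K,Y,Z)=\eta_0(Y,Z)$ directly from the residue formula, using $\partial_{K\partial_K}\log\lambda=1$. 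The unit axiom then gives $b/(2\nu)=a$. The remaining scalar is pinned by matching the BMS metric with the surface $\bT$-equivariant pairing, whose orbifold-point normalization is $e_1^{\rm surf}=\nu^2$ by Lemma~\ref{lem:thirdleg}. With $\phi_D^2=\nu(d\mu/\mu)^2$ already carrying one factor of $\nu$, this should force $b=1$ and hence $a=1/(2\nu)$, which is \eqref{eq:frob:surface-tensors}.

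The main obstacle is the absolute scalar, as opposed to the relative one. Both the primitive form $\phi_D$ and the BMS normalization of the scale coordinate contribute powers of $\nu$ and possibly of $2$. To avoid relying on an abstract convention, I would check the scalar in one explicit case: evaluate $\eta_0(K\partial_K,K\partial_K)$ by summing residues of $\phi_D^2/d\log\lambda$ over the critical points. Since there are no other poles of $\phi_D^2/d\log\lambda$ besides $\mu=0,\infty$, this sum equals minus the residues there, which are computable from $\lambda\sim K\mu^{\pm l}$-type behavior at $\mu\to0,\infty$. I would then compare the result with the flat-unit norm of the surface theory, which is $\tfrac1{|\Gamma|\nu^2}$ up to the BMS convention. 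The fixed-$\mu$ versus fixed-$w$ lift causes no discrepancy, because $w$ depends only on $\mu$, so the two lifts agree.
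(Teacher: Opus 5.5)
Your argument is correct, and its core is the paper's. With $e=\partial_{x_{l+1}}=\frac{1}{2\nu}K\partial_K$ one has $\partial_e\log\lambda=1/(2\nu)$, hence $c_0(X,Y,e)=\frac{1}{2\nu}\eta_0(X,Y)$, and the unit axiom forces $a=b/(2\nu)$.

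You differ from the paper on the absolute scalar.
\begin{itemize}
\item The paper does not derive it. It keeps the cubic tensor equal to $c_0$ (the ``unit-normalized Kodaira--Spencer convention'') and reads off $\eta_0/(2\nu)$.
\item You propose to pin $b=1$ by computing the flat-unit norm. This is an independent check, and it does work.
\end{itemize}
On the rational $\mu$-curve, the only non-critical poles of $\phi_D^2/d\log\lambda$ are at $\mu=0$ and $\mu=\infty$. At $\mu=\pm1$ and at the zeros $\kappa_i^{\pm1}$ the form vanishes. The two poles each have residue $-\nu/m$, with $m=l-2$. Hence $\eta_0(K\partial_K,K\partial_K)=2\nu/m$, and
\[
\tfrac{1}{2\nu}\eta_0(e,e)=\tfrac{1}{4m\nu^2}=\tfrac{1}{|\Gamma|\nu^2},
\]
which is the surface pairing of the unit.

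Two corrections.
\begin{itemize}
\item The local behavior is $\lambda\sim K\mu^{\mp(l-2)}$ at $\mu=0,\infty$, not $\mu^{\pm l}$. The numerator has degree $2l$ and the denominator $\mu^{l-2}(\mu^2-1)^2$ has degree $l+2$. With exponent $l$ your check would give $a=l/(2\nu(l-2))$ and fail, so the exact pole order matters.
\item Your first step is not needed and should not be read as fixing the answer. The residue tensors built from $W$ and the classes $2\nu\,\partial_X\log\lambda$ are $(2\nu\,\eta_0,(2\nu)^2c_0)$. They satisfy the unit axiom automatically but differ from the surface normalization by the factor $(2\nu)^2$. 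This is why the absolute scalar needs an extra input: a convention in the paper, or the A-side unit norm in your route.
\end{itemize}

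What each route buys: the paper's is shorter but leaves $c^{B,\mathrm{surf}}=c_0$ as an unverified convention at that point. Yours supplies a consistency check that carries over to the threefold normalization. There $\eta^{B,\cX}(e,e)=-1/(8m\nu^3)=1/(|\Gamma|e_1)$, which agrees with the sum of the idempotent norms in Step~6 of Proposition~\ref{prop:bmodel-flat-algebra-family}.
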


\begin{proof}
The unnormalized cubic tensor is the logarithmic residue tensor $c_0$. For the scale vector
$e=\partial_{x_{l+1}}$ one has
\[
  \partial_e\log\lambda={1\over2\nu}.
\]
Therefore
\[
  c_0(X,Y,e)={1\over2\nu}\eta_0(X,Y).
\]
If the cubic tensor is kept as $c_0$ and the scale vector is the unit, the Frobenius metric must be
$\eta_0/(2\nu)$. This gives the surface normalization
\eqref{eq:frob:surface-tensors}, which is the unit-normalized
Kodaira--Spencer convention used when we compare the logarithmic residues with
the Gromov--Witten Frobenius structure.
\end{proof}

For the threefold $\cX$ we use the same multiplication and unit, but the metric and cubic tensor are
rescaled by the Euler factor of the third leg:
\begin{equation}\label{eq:frob:threefold-tensors}
  \eta^{B,\cX}={1\over-2\nu}\eta^{B,{\rm surf}}=-{1\over4\nu^2}\eta_0,
  \qquad
  c^{B,\cX}={1\over-2\nu}c^{B,{\rm surf}}=-{1\over2\nu}c_0.
\end{equation}
Since the product is determined by $\eta(X\circ Y,Z)=c(X,Y,Z)$, scaling $\eta$ and $c$ by the same
nonzero scalar leaves the product and the unit unchanged.

It remains to express this normalization in the invariant Jacobian convention used later. Let
\[
  \hat x=-2\nu\log\lambda,
  \qquad
  W=-\hat x=2\nu\log\lambda.
\]
For a flat vector field $X$, set
\[
  f_X=\partial_X\log\lambda,
  \qquad
  c_X^B=2\nu f_X=\partial_XW.
\]
The logarithmic residue product is multiplication by the class $c_X^B$:
\begin{equation}\label{eq:frob:product-operator}
  C_X^B=\operatorname{mult}_{c_X^B}.
\end{equation}
For the scale coordinate of Definition~\ref{def:mirror-map}, we have
\[
  c^B_{\partial_{x_{l+1}}}=1,
\]
and the Frobenius unit is
\[
  e=\partial_{x_{l+1}}.
\]
The tensors $(\eta^{B,\cX},c^{B,\cX})$ define the Frobenius product by
$\eta^{B,\cX}(X\circ Y,Z)=c^{B,\cX}(X,Y,Z)$; under the Kodaira--Spencer map of
Definition~\ref{def:smooth-invariant-jacobian}, this product is the quotient product
\eqref{eq:frob:product-operator} on
$H_B^\iota(\kappa)$.

\begin{lemma}[Orbit-summed residue normalization]\label{lem:quotient-residue-normalization}
For $\kappa\in\Omega_{\rm Frob}$, the logarithmic residue tensors in the threefold normalization
are equivalently
\begin{equation}\label{eq:frob:quotient-residue-metric}
  \eta^{B,\cX}(X,Y)
  ={1\over(2\nu)^3}\sum_{p_\alpha}\Res^{\iota}_{p_\alpha}
  {c_X^B c_Y^B\,\phi_D^2\over d\hat x},
\end{equation}
and
\begin{equation}\label{eq:frob:quotient-residue-cubic}
  c^{B,\cX}(X,Y,Z)
  ={1\over(2\nu)^3}\sum_{p_\alpha}\Res^{\iota}_{p_\alpha}
  {c_X^B c_Y^B c_Z^B\,\phi_D^2\over d\hat x}.
\end{equation}
The sum is over the $l+1$ quotient critical points. The notation $\Res^{\iota}$ means the
orbit-summed lifted residue; for a free critical orbit it is the sum of the two equal upstairs
residues.
\end{lemma}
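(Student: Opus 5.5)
The identity is a direct rewriting of the threefold-normalized tensors \eqref{eq:frob:threefold-tensors} in terms of $\hat x$ and $c^B_X$, followed by a regrouping of the upstairs residue sum into quotient orbits. We begin with the metric. By \eqref{eq:frob:threefold-tensors}, $\eta^{B,\cX}=-\eta_0/(4\nu^2)$, with $\eta_0$ the upstairs residue sum \eqref{eq:frob:raw-metric}. We substitute $\partial_X\log\lambda=c^B_X/(2\nu)$ and $d\log\lambda=-d\hat x/(2\nu)$. The integrand of $\eta_0$ then becomes
\[
  {c^B_Xc^B_Y\over(2\nu)^2}\cdot{-2\nu\over d\hat x}\,\phi_D^2
  =-{1\over2\nu}{c^B_Xc^B_Y\,\phi_D^2\over d\hat x}.
\]
Multiplying by $-1/(4\nu^2)$ gives the prefactor $1/(8\nu^3)=1/(2\nu)^3$. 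The cubic tensor is handled identically: $c^{B,\cX}=-c_0/(2\nu)$, the three factors contribute $(2\nu)^{-3}$, the denominator contributes $-2\nu$, and the prefactor is again $1/(2\nu)^3$. Both computations are pure scalar bookkeeping at fixed $\mu$, because $\phi_D^2$ and the fixed-$\mu$ lift are unchanged by this substitution.

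Next we identify the summation set. On $\Omega_{\rm Frob}$ the pole-cancellation strata have been removed, so $\mu=\pm1$ are punctures (poles of $\lambda$) and not zeros of $d\log\lambda$. The upstairs zeros of $d\log\lambda$ are then exactly the preimages of the roots $w_\alpha$ of $N(w;\kappa)$, using $d\lambda=\lambda'(w)\,(1-\mu^{-2})\,d\mu$ and $\mu\neq\pm1$. Each root $w_\alpha$ is simple and differs from $\pm2$, so it has two distinct preimages $p_{\alpha,\pm}$ exchanged by $\iota$, and these are simple zeros of $d\hat x$. There are $l+1$ roots because $\deg N=l+1$: the leading coefficient is $l-2\neq0$.

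Finally we show the two residues in each orbit are equal. The functions $c^B_X$ and $\hat x$ are $\iota$-invariant. The differential $\phi_D$ is anti-invariant, so $\phi_D^2$ is invariant. Hence the meromorphic one-form in the residue is $\iota$-invariant, and $\iota$, being a biholomorphism, preserves residues. Therefore $\Res_{p_{\alpha,+}}=\Res_{p_{\alpha,-}}$, and the upstairs sum over zeros of $d\log\lambda$ equals $\sum_\alpha\Res^\iota_{p_\alpha}$ with $\Res^\iota$ the orbit sum.

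The main subtlety is the counting step. One must check that no critical point of $\log\lambda$ sits at a fixed point $\mu=\pm1$ or at $\mu=0,\infty$ on $\Omega_{\rm Frob}$, which would spoil the ``two equal residues'' description. This is guaranteed by the removal of the pole-cancellation strata and the discriminant of $N$. At $\mu=0,\infty$, $\lambda$ has a pole of order $l$, which again is not a zero of $d\log\lambda$.
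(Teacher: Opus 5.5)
Your proposal is correct and is essentially the paper's proof: the same substitution $\partial_X\log\lambda=c^B_X/(2\nu)$, $d\log\lambda=-d\hat x/(2\nu)$ into \eqref{eq:frob:threefold-tensors} produces the $(2\nu)^{-3}$ prefactor, and $\iota$-invariance of the integrand regroups the upstairs residue sum into orbit sums. Your check that every critical orbit is free on $\Omega_{\rm Frob}$ makes explicit what the paper leaves implicit; one small slip there is that $\lambda\sim Kw^{l-2}$ as $w\to\infty$, so the pole of $\lambda$ at $\mu=0,\infty$ has order $l-2$, not $l$, which is harmless because the order is still nonzero.
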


\begin{proof}
The invariant differential in the logarithmic residue is unchanged by the involution
$\mu\mapsto\mu^{-1}$. We deliberately use the orbit-summed lifted residue, rather than the ordinary
residue of a descended quotient differential. Thus a free critical orbit contributes the sum of the
two lifted residues. This accounts for the orbit factor which would be missed by evaluating at only
one lift.

The remaining constants come from the paper's phase convention. Since
\[
  d\hat x=-2\nu\,d\log\lambda,
  \qquad
  c_X^B=2\nu\,\partial_X\log\lambda,
\]
we have
\[
  {1\over(2\nu)^3}{c_X^B c_Y^B\over d\hat x}
  =-{1\over4\nu^2}{\partial_X\log\lambda\,\partial_Y\log\lambda\over d\log\lambda},
\]
and similarly
\[
  {1\over(2\nu)^3}{c_X^B c_Y^B c_Z^B\over d\hat x}
  =-{1\over2\nu}{\partial_X\log\lambda\,\partial_Y\log\lambda\,\partial_Z\log\lambda
  \over d\log\lambda}.
\]
Substituting these identities into the orbit-summed residue formulas gives
$\eta^{B,\cX}=-\eta_0/(4\nu^2)$ and $c^{B,\cX}=-c_0/(2\nu)$, as in
\eqref{eq:frob:threefold-tensors}.
\end{proof}

\begin{definition}[B-side Frobenius manifold]\label{def:bside-frobenius-manifold}
On $\Omega_{\rm Frob}$, let $\cM^B$ be the semisimple B-side Dubrovin--Frobenius
manifold whose tangent algebra is $H_B^\iota(\kappa)$, whose product is the invariant
Jacobian product of Definition~\ref{def:smooth-invariant-jacobian}, and whose metric and cubic tensor
are the threefold-normalized tensors $\eta^{B,\cX}$ and $c^{B,\cX}$. This is the
type-$D_l$ logarithmic Toda Frobenius structure of Brini--Ma--Strachan
\cite{brini2025dubrovindualitymirrorsymmetry}, with the threefold scalar normalization
\eqref{eq:frob:threefold-tensors}.
\end{definition}

If $e_\alpha$ is a primitive idempotent of $\cM^B$, then
\begin{equation}\label{eq:frob:idempotent-metric}
  \eta^{B,\cX}(e_\alpha,e_\beta)
  ={\delta_{\alpha\beta}\over\Delta^{\cX}_\alpha}.
\end{equation}
Let $\Delta^{\rm surf}_\alpha$ denote the corresponding idempotent norm for the
surface-normalized tensors:
\[
  \eta^{B,{\rm surf}}(e_\alpha,e_\beta)
  ={\delta_{\alpha\beta}\over\Delta^{\rm surf}_\alpha}.
\]
Equivalently,
\[
  {1\over\Delta^{\cX}_\alpha}
  ={1\over-2\nu}{1\over\Delta^{\rm surf}_\alpha}.
\]
When no confusion is possible in Sections~\ref{sec:anchor} and~\ref{sec:graphsum-remodeling}, the
symbol $\Delta_\alpha$ means the threefold-normalized quantity in
\eqref{eq:frob:idempotent-metric}.
The Frobenius canonical coordinate attached to $e_\alpha$ is the critical value of $W$:
\begin{equation}\label{eq:frob:canonical-coordinate}
  u_\alpha=W(p_\alpha)=2\nu\log\lambda(p_\alpha).
\end{equation}
The differential identity
\[
  X(u_\alpha)=c_X^B(p_\alpha)
\]
will be proved in Proposition~\ref{prop:mirror-match}.
This is the canonical-coordinate convention used below.
We will use \eqref{eq:frob:canonical-coordinate} again in the smooth-chamber calibration of
Section~\ref{sec:anchor}.

\subsection{The centered mirror map}

We now write the coordinate change. The logarithmic Toda affine coordinates are resolution
coordinates. Hu's quantum McKay theorem identifies the resolution root-coordinate expression with the
orbifold Chen--Ruan coordinates after the square-root branch choice used below
\cite{hu2012quantummckaycorrespondencesingularities}; the root-of-unity shift in that formula is what
forces us to center at $\kappa^{\rm orb}$.

\begin{definition}[Closed mirror map on the Frobenius domain]\label{def:mirror-map}
Let $G=(G_{ij})_{1\leq i,j\leq l}$ be the type-$D_l$ resolution matrix
\begin{equation}\label{eq:frob:G-matrix}
  G_{ij}=\delta_{ij}-\delta_{i,j+1}+\delta_{i,l-1}\delta_{j,l},
  \qquad
  G^{\mathsf T}G=C,
\end{equation}
where $C$ is the $D_l$ Cartan matrix. Define the shifted resolution root coordinates $y_j$ by
\begin{equation}\label{eq:frob:resolution-coordinates}
  \delta\btau_i=\sum_{j=1}^lG_{ij}y_j,
  \qquad 1\leq i\leq l,
\end{equation}
or $y=G^{-1}\delta\btau$.

Let $\chi_1,\ldots,\chi_l$ be the nontrivial irreducible characters corresponding to the $D_l$ nodes,
and index the columns of the matrix $\mathsf L$ below by the nontrivial conjugacy classes $[g]$ of $\Gamma$. Define
\begin{equation}\label{eq:frob:L-matrix}
  \mathsf L_{j,[g]}
  ={|[g]|\over|\Gamma|}\sqrt{2-\chi_V(g)}\,\overline{\chi_j(g)},
\end{equation}
with the same choice of square roots as in the Bryan--Gholampour coordinate formula, in Hu's
continuation convention. Character orthogonality implies that $\mathsf L$ is invertible: a linear
combination of nontrivial irreducible characters vanishing on all nonidentity conjugacy classes would
be supported at the identity, hence would be a multiple of the regular character, impossible without a
trivial-character component. We set
\[
  \mathsf H=\mathsf L^{-1}
\]
and first define the raw (uncentered, unrescaled) Hu orbifold coordinates by
\begin{equation}\label{eq:frob:raw-hu-map}
  (x^{\rm Hu}_{[g]})_{[g]\ne[1]}
  =i\,\mathsf H\,C\,G^{-1}
  \bigl(\log_{\rm orb}\kappa-\log_{\rm orb}\kappa^{\rm orb}\bigr)
  =i\,\mathsf H\,G^{\mathsf T}
  \bigl(\log_{\rm orb}\kappa-\log_{\rm orb}\kappa^{\rm orb}\bigr).
\end{equation}
These variables are dual to the conjugacy-sector basis before age normalization. Since Section~\ref{sec:amodel} uses the
age-normalized classes $\overline{\mathbf 1}_{[g]}=\nu^{-1}\mathbf 1_{[g]}$ for $g\neq1$, the paper's
twisted orbifold flat coordinates are
\begin{equation}\label{eq:frob:age-normalized-map}
  x_{[g]}=\nu\,x^{\rm Hu}_{[g]},
  \qquad [g]\neq[1].
\end{equation}
For the scale direction we use
\begin{equation}\label{eq:frob:scale-coordinate}
  x_{l+1}=2\nu\log{\kappa_{l+1}\over K_0},
\end{equation}
so the unit vector is $\partial_{x_{l+1}}$. Then the scaled orbifold point
$(\kappa_1^{\rm orb},\ldots,\kappa_l^{\rm orb};K_0)$ maps to $x=0$. The
closed mirror map is the collection of formulas
\eqref{eq:frob:G-matrix}, \eqref{eq:frob:resolution-coordinates},
\eqref{eq:frob:L-matrix}, \eqref{eq:frob:raw-hu-map},
\eqref{eq:frob:age-normalized-map}, and \eqref{eq:frob:scale-coordinate}.
\end{definition}

\begin{lemma}[Coordinate check against Hu's formula]\label{lem:hu-coordinate-check}
The raw variables $x^{\rm Hu}$ in Definition~\ref{def:mirror-map} are obtained by inverting the linear part
of Hu's type-$D$ quantum McKay coordinate transformation in the conjugacy-sector basis before age
normalization. The
variables $x_{[g]}=\nu x^{\rm Hu}_{[g]}$ are the corresponding age-normalized Chen--Ruan coordinates
used in Section~\ref{sec:amodel}. In particular, the vector $\log_{\rm orb}\kappa^{\rm orb}$ is the
affine root-of-unity shift in Hu's formula, and the centered coordinate $x$ is the orbifold coordinate
vanishing at the Chen--Ruan origin.
\end{lemma}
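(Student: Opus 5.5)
The plan is to recover the definition of $x^{\rm Hu}$ by running Hu's type-$D$ quantum McKay transformation backwards, one linear step at a time, and then to account separately for the affine shift and for the age normalization.

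Hu's theorem, in the Bryan--Gholampour form, writes the resolution root coordinates $y_j$ (dual to the exceptional curve classes) as an affine function of the orbifold Chen--Ruan coordinates $x^{\rm Hu}_{[g]}$. The linear part is
\[
  y_j=-i\sum_{[g]\neq[1]}\frac{|[g]|}{|\Gamma|}\sqrt{2-\chi_V(g)}\,\overline{\chi_j(g)}\,(\cdots)\,x^{\rm Hu}_{[g]} .
\]
Up to the Cartan pairing that converts root coordinates to coweight coordinates, this linear part is exactly $\mathsf L$. The root-of-unity constant term comes from the continuation of $\log$ at the orbifold point.

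The argument then proceeds in four steps.

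\textbf{Step 1: match the linear part.} First I would match Hu's linear part with $-i\,C^{-1}\mathsf L$ when it is written in the basis of simple roots. This is where the factor $C$ enters: Hu pairs the root coordinates against the curve classes through the intersection form $-C$. Inverting gives $x^{\rm Hu}=i\,\mathsf H\,C\,y$. Invertibility of $\mathsf L$ was already shown in Definition~\ref{def:mirror-map} by the character argument.

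\textbf{Step 2: substitute the resolution coordinates.} Next I substitute $y=G^{-1}\delta\btau$ from \eqref{eq:frob:resolution-coordinates}. Then $G^{\mathsf T}G=C$ gives $CG^{-1}=G^{\mathsf T}$, which yields the second expression in \eqref{eq:frob:raw-hu-map}. Along the way I must check that the Brini--Ma--Strachan affine coordinates $\btau_i=\log\kappa_i$ relate to the root coordinates through $G$. This follows from the definition of $G$ as the resolution matrix, i.e. the change from the $\kappa$-exponent basis to simple roots, with $G_{l-1,l}=1$ recording the $D_l$ fork.

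\textbf{Step 3: the affine shift.} Hu's formula sets the orbifold point to $x=0$ exactly when the exponentiated root coordinates equal the root-of-unity values $e^{y_j}$ prescribed by the McKay data. I would verify that $G^{-1}\log_{\rm orb}\kappa^{\rm orb}$ reproduces these values: with $\btau^{\rm orb}=-\mathbf n\log\zeta$, the entries of $G^{-1}\mathbf n$ should be the standard $D_l$ root-coordinate values, i.e. multiples of $2\pi i/(4(l-2))$ weighted by the Dynkin marks. This is a finite computation with the given $\mathbf n$. After centering, $x$ vanishes at the Chen--Ruan origin.

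\textbf{Step 4: age normalization.} Hu's coordinates are dual to $\mathbf 1_{[g]}$. Since $\overline{\mathbf 1}_{[g]}=\nu^{-1}\mathbf 1_{[g]}$, the dual coordinates scale by $\nu$, which gives \eqref{eq:frob:age-normalized-map}.

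\textbf{Main obstacle.} I expect the hardest part to be the sign and branch conventions: the factor $i$, the $C$ versus $-C$ pairing, and the consistent choice of $\sqrt{2-\chi_V(g)}$ along Hu's continuation path. I would resolve these by testing the identity to first order on a single node direction and comparing with Bryan--Gholampour's $A_1$-type local computation.
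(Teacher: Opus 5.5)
Your overall route is the paper's: obtain $\mathsf L x^{\rm Hu}=iCy$ from the root expressions, substitute $y=G^{-1}\delta\btau$ and use $CG^{-1}=G^{\mathsf T}$, then rescale by $\nu$ for the age-normalized classes. Steps 2 and 4 are fine as written.

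\textbf{Step 3 has a wrong target.} This is the step that proves the ``in particular'' clause. You propose to check that $G^{-1}\mathbf n$ is a multiple of the Dynkin-mark vector. Up to the factor $-2\pi i/|\Gamma|$, that vector is the orbifold value $y^{\rm orb}$ of the coordinates themselves, and it is not proportional to the marks. For $l=4$ one has $\mathbf n=(4,3,1,0)^{\mathsf T}$ and $G^{-1}\mathbf n=(4,7,4,4)^{\mathsf T}$, while the marks are $(1,2,1,1)^{\mathsf T}$. So your ``finite computation'' would fail.

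Hu's root-of-unity shift is prescribed on the simple-root values $\alpha_j(y)=(Cy)_j$. This is the same Cartan pairing you invoked in Step 1, so Steps 1 and 3 use incompatible conventions for what $y_j$ measures. The correct check is on $CG^{-1}\mathbf n=G^{\mathsf T}\mathbf n$. Since $G^{\mathsf T}$ sends $\mathbf n$ to $(n_1-n_2,\ldots,n_{l-1}-n_l,\,n_{l-1}+n_l)^{\mathsf T}$, this gives $G^{\mathsf T}\mathbf n=(1,2,\ldots,2,1,1)^{\mathsf T}$, the $D_l$ marks. The identity $\tfrac{2\pi}{|\Gamma|}G^{\mathsf T}\mathbf n+iG^{\mathsf T}(\btau-\btau^{\rm orb})=iG^{\mathsf T}\btau$ then shows that subtracting $\btau^{\rm orb}$ removes exactly Hu's constant term. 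This is the paper's argument.

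\textbf{Step 1 is left at the level of a heuristic.} The placeholder $(\cdots)$ and the phrase ``up to the Cartan pairing'' assert the match rather than prove it. The paper makes it precise as follows:
\begin{itemize}
\item Set $A_{[g],j}=\sqrt{2-\chi_V(g)}\,\chi_j(g)$ and fix the branch $\sqrt{\chi_V(g)-2}=i\sqrt{2-\chi_V(g)}$. Then the Bryan--Gholampour direct map is $x^{\rm Hu}=iAy$.
\item Column orthogonality together with the McKay relation for $V$ gives $\mathsf L A=C$. The identity class may be added to the sum because $2-\chi_V(1)=0$, and the trivial-character node drops out when pairing with nontrivial $\chi_j$.
\item This identity is what reconciles the direct form with Hu's $\mathsf L$-grouped form, so that the latter, compared with the root evaluation $\beta(y)=b^{\mathsf T}Cy$, yields $\mathsf L x^{\rm Hu}=iCy$.
\end{itemize}
With that identity supplied and Step 3 redirected to $G^{\mathsf T}\mathbf n$, your argument goes through.
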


\begin{proof}
We keep Hu's conjugacy-class labels. In these labels the root-coordinate expression uses the
conjugate character matrix. More explicitly, set
\[
  A_{[g],j}=\sqrt{2-\chi_V(g)}\,\chi_j(g).
\]
With the branch
$\sqrt{\chi_V(g)-2}=i\sqrt{2-\chi_V(g)}$, the Bryan--Gholampour direct coordinate transformation is
$x^{\rm Hu}=iAy$. On the other hand, Hu's formula groups the class variables through
\[
  \mathsf L_{j,[g]}
  ={|[g]|\over|\Gamma|}\sqrt{2-\chi_V(g)}\,\overline{\chi_j(g)}.
\]
Character orthogonality, together with the McKay relation for the defining representation, gives
\[
  \mathsf L A=C.
\]
Thus the nonconstant part of Hu's root expression is $\mathsf Lx^{\rm Hu}$ and must be compared with
$iCy$ below.

On the resolution side a root $\beta=\sum_jb_j\alpha_j$ evaluates on the resolution coordinate $y$ as
\[
  \beta(y)=b^{\mathsf T}Cy.
\]
With the square-root branch fixed above, equality of the root-coordinate expressions for all
coefficient vectors $b$ requires
\[
  \mathsf L x^{\rm Hu}=i C y.
\]
Since $y=G^{-1}\delta\btau$ and $C=G^{\mathsf T}G$, this gives
\[
  x^{\rm Hu}=i\,\mathsf L^{-1}CG^{-1}\delta\btau
  =i\,\mathsf L^{-1}G^{\mathsf T}\delta\btau .
\]
The final multiplication by $\nu$ is only the dual-coordinate conversion to the age-normalized
basis used in the A-model section.

It remains to check the affine origin. Let
\[
  \mathbf n=(2l-4,2l-5,2l-7,\ldots,3,1,0)^{\mathsf T}
\]
be the orbifold exponent vector, so that
\[
  \btau^{\rm orb}= -{2\pi i\over4(l-2)}\,\mathbf n .
\]
A direct multiplication by the matrix $G^{\mathsf T}$ gives
\[
  G^{\mathsf T}\mathbf n
  =(1,2,\ldots,2,1,1)^{\mathsf T},
\]
the Dynkin-mark (equivalently, highest-root-coefficient) vector for $D_l$. Hence
\[
  G^{\mathsf T}\btau^{\rm orb}
  =-{2\pi i\over|\Gamma|}\,G^{\mathsf T}\mathbf n,
  \qquad |\Gamma|=4(l-2),
\]
which is Hu's root-of-unity shift after the same square-root branch choice, because
\[
  {2\pi\over|\Gamma|}G^{\mathsf T}\mathbf n
  +iG^{\mathsf T}\bigl(\btau-\btau^{\rm orb}\bigr)
  =iG^{\mathsf T}\btau .
\]
Subtracting $\btau^{\rm orb}$ is therefore the centering which sends the orbifold point to
$x=0$.
\end{proof}

\subsection{The Frobenius-manifold isomorphism}

We can now state the genus-zero input in the normalization used by the rest of the paper.

\begin{theorem}[Frobenius-manifold isomorphism on the Frobenius domain]\label{thm:frob-iso}
Under the closed mirror map of Definition~\ref{def:mirror-map}, there is an isomorphism of semisimple
Frobenius manifolds
\begin{equation}\label{eq:frob:mirror-isomorphism}
  \Phi_{\rm mir}:\cM^B\big|_{\Omega_{\rm Frob}}
  \xrightarrow{\ \sim\ }
  \cM^{\cX}\big|_{x(\Omega_{\rm Frob})}.
\end{equation}
It identifies the B-side invariant Jacobian Frobenius structure, with the threefold-normalized tensors
$\eta^{B,\cX}$ and $c^{B,\cX}$, with the A-side genus-zero orbifold quantum-cohomology Frobenius
structure of $\cX$ in the orbifold frame. In particular it preserves the unit, multiplication, and
metric. Its restriction to the subdomain \eqref{eq:frob:oscillatory-subdomain}
gives the corresponding Frobenius isomorphism over $\Omega_B$.
\end{theorem}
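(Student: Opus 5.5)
The plan is to reduce the threefold statement to the surface mirror theorem of Brini--Ma--Strachan, and then carry the third-leg scalar through on both sides.

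\textbf{Step 1: the surface comparison.} Brini--Ma--Strachan identify the type-$D_l$ logarithmic Toda Frobenius structure, in the surface normalization $(\eta^{B,{\rm surf}},c^{B,{\rm surf}})$ of Lemma~\ref{lem:surface-residue-normalization}, with the equivariant quantum cohomology of the resolution $\widetilde{\bC^2/\Gamma}$. Their identification is written in resolution root coordinates; here these are the shifted coordinates $y=G^{-1}\delta\btau$ of \eqref{eq:frob:resolution-coordinates}, together with the scale coordinate $x_{l+1}$. The surface theorem is only an identity of analytic Frobenius structures near the large radius limit. So the first task is to continue it analytically to the chart $\widetilde{\mathcal U}_B$. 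On the B-side, the residue tensors \eqref{eq:frob:raw-metric}--\eqref{eq:frob:raw-cubic} are analytic on $\Omega_{\rm Frob}$, because the critical points are simple there. On the A-side, the analytic continuation is exactly what Hu's quantum McKay theorem supplies. It says that the resolution Frobenius structure, continued along the chosen square-root and logarithm branches, agrees with the orbifold Chen--Ruan Frobenius structure of $[\bC^2/\Gamma]$ under the linear change $\mathsf L x^{\rm Hu}=iCy$ plus the root-of-unity shift. Lemma~\ref{lem:hu-coordinate-check} shows that this is precisely the map of Definition~\ref{def:mirror-map}, centered so that the orbifold point goes to $x^{\rm Hu}=0$, with the age-normalization factor $\nu$ included.

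\textbf{Step 2: insert the third leg.} This is bookkeeping with a single scalar. On the A-side, Lemma~\ref{lem:thirdleg} gives $\eta^\cX=(-2\nu)^{-1}\eta^{[\bC^2/\Gamma]}$. The cubic tensor scales by the same factor, since genus-zero invariants of $\cX$ are surface invariants times the inverse Euler class of the trivial $\cO_{-2\nu}$ summand. For that summand $R^0\pi_*$ is trivial and $R^1\pi_*=0$ in genus zero, so the contribution is exactly $(-2\nu)^{-1}$. On the B-side, \eqref{eq:frob:threefold-tensors} rescales both tensors by the same $(-2\nu)^{-1}$. Hence the surface isomorphism preserves the threefold metric and cubic tensor. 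The product and the unit are unchanged, because a common rescaling of $\eta$ and $c$ does not affect them. The unit is matched explicitly by $c^B_{\partial_{x_{l+1}}}=1$ on the B-side and by the identity sector on the A-side.

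\textbf{Step 3: check the product on the invariant Jacobian.} I must also check that the product on $H_B^\iota(\kappa)$, in the Kodaira--Spencer convention \eqref{eq:frob:ks-map}, is the same as the Brini--Ma--Strachan product. This is a residue-pairing argument. On $\Omega_{\rm Frob}$ the quotient $\bC(\nu)[w]/\langle N\rangle$ is semisimple of rank $l+1$. Because $\partial_\lambda F$ is invertible away from the pole-cancellation strata, the residues at critical points are the evaluations at the simple roots of $N$. Therefore $\eta(X\circ Y,Z)=c(X,Y,Z)$ holds with $\circ$ equal to multiplication of the classes $c_X^B$. Semisimplicity on the A-side then follows from the isomorphism, and restricting to $\Omega_B$ is immediate.

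\textbf{Main obstacle.} I expect the hardest step to be the branch bookkeeping in Step 1. The Bryan--Gholampour/Hu identification is stated with specific square-root and $\log_{\rm orb}$ branches, and these must be the ones under which the continuation from the large radius limit lands on the Chen--Ruan origin at $\kappa^{\rm orb}$ and not on a Galois-conjugate point. My first approach would be to check the affine shift $G^{\mathsf T}\mathbf n=(1,2,\ldots,2,1,1)$ against the Dynkin marks, which Lemma~\ref{lem:hu-coordinate-check} already does. I would then verify at the orbifold point itself that the limiting B-side algebra matches $Z(\bC(\nu)[\Gamma])$ of Lemma~\ref{lem:frob}, with matching idempotent norms. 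I would do this by evaluating the orbit-summed residues along the cyclotomic collapse of Lemma~\ref{lem:hilbert-limit}, using the $2m$ roots $\mu^{2m}=1$ and the labels of Lemma~\ref{lem:thimble-count}.
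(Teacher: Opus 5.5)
Your proposal follows essentially the same route as the paper: the Brini--Ma--Strachan surface identification in the coordinates $\btau_i=\log_{\rm orb}\kappa_i$ and the matrix $G$ is continued to the Chen--Ruan frame via Bryan--Gholampour and Hu, with Lemma~\ref{lem:hu-coordinate-check} as the coordinate translation, and then the common third-leg scalar $(-2\nu)^{-1}$ is applied to both the metric and the cubic tensor, which leaves the product and unit unchanged. Your Step~3 and the orbifold-point sanity check go beyond what the paper does, and the latter is not needed because the theorem only concerns $\Omega_{\rm Frob}$; if you do attempt that check, note that at $\kappa^{\rm orb}$ the unmodified critical quotient is nonreduced over $w=\pm2$, so the check must go through the elementary modification of Proposition~\ref{prop:bmodel-flat-algebra-family} rather than a direct residue evaluation at the $2m$ roots.
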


\begin{proof}
We use the standard chain of genus-zero mirror identifications, keeping track of the scalar normalizations. Brini--Ma--Strachan identify
the type-$D_l$ logarithmic Toda Dubrovin--Frobenius manifold with the equivariant quantum cohomology
Frobenius manifold of the $D_l$ minimal resolution, using the branch coordinates
$\btau_i=\log_{\rm orb}\kappa_i$ and the matrix $G$
\cite{brini2025dubrovindualitymirrorsymmetry}. The resolution-side Frobenius
structure is the root-system structure computed by Bryan--Gholampour
\cite{bryan2007rootsystemsquantumcohomology}. Hu's theorem continues this structure to the orbifold
Chen--Ruan frame for the binary dihedral group \cite{hu2012quantummckaycorrespondencesingularities}.
Lemma~\ref{lem:hu-coordinate-check} is the coordinate translation from the logarithmic Toda affine
variables to the centered orbifold variables \eqref{eq:frob:raw-hu-map} and
\eqref{eq:frob:age-normalized-map}.

This comparison is a surface comparison. Section~\ref{sec:amodel}, and in particular
Lemma~\ref{lem:frob}, shows that passing to
$\cX=[\bC^2/\Gamma\times\bC]$ with third torus weight $-2\nu$ leaves the multiplication unchanged but
rescales the Frobenius form by $(-2\nu)^{-1}$. We imposed the same scalar on the B-side metric and cubic
tensor in the definition of $\cM^B$. Hence the composed mirror map
\eqref{eq:frob:mirror-isomorphism} identifies not only the products and units,
but also the Frobenius metrics in the threefold normalization.
\end{proof}

\begin{proposition}[Coordinate consequences of the isomorphism]\label{prop:mirror-match}
Under the isomorphism of Theorem~\ref{thm:frob-iso}, the canonical idempotent pairings agree in the
threefold normalization:
\begin{equation}\label{eq:frob:mirror-idempotent-metric}
  \eta^{\cX}(e_\alpha,e_\beta)
  =\eta^{B,\cX}(e_\alpha,e_\beta)
  ={\delta_{\alpha\beta}\over\Delta^{\cX}_\alpha}.
\end{equation}
For a flat vector field $\partial_{x_a}$, the B-side Kodaira--Spencer class and multiplication matrix
are
\[
  c_a^B=\partial_{x_a}W=2\nu\,\partial_{x_a}\log\lambda,
  \qquad
  C_a^B=\operatorname{mult}_{c_a^B}.
\]
Equivalently,
\[
  \partial_{x_a}u_\alpha=c_a^B(p_\alpha),
  \qquad
  u_\alpha=W(p_\alpha).
\]
\end{proposition}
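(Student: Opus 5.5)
The statement has two parts, and I will treat them separately: the equality of idempotent pairings, and the identity $\partial_{x_a}u_\alpha=c_a^B(p_\alpha)$.

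\textbf{Idempotent pairings.} This part should be immediate from Theorem~\ref{thm:frob-iso}. The map $\Phi_{\rm mir}$ preserves the multiplication, so it sends primitive idempotents to primitive idempotents. Both algebras are semisimple of rank $l+1$, so this is a bijection of idempotent sets, and I relabel so that $e_\alpha\mapsto e_\alpha$. Since $\Phi_{\rm mir}$ also preserves the metric, we get $\eta^{\cX}(e_\alpha,e_\beta)=\eta^{B,\cX}(e_\alpha,e_\beta)$. The right-hand side is computed from the orbit-summed residue formula \eqref{eq:frob:quotient-residue-metric}. Represent $e_\alpha$ in $H_B^\iota(\kappa)\cong\bC(\nu)[w]/\langle N\rangle$ by the function equal to $\delta_{\alpha\gamma}$ at $w_\gamma$. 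Only the orbit $p_\alpha$ then contributes, and the off-diagonal terms vanish. This gives $\delta_{\alpha\beta}/\Delta^{\cX}_\alpha$, which is the definition \eqref{eq:frob:idempotent-metric}.

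\textbf{Kodaira--Spencer class and multiplication.} The expressions $c_a^B=\partial_{x_a}W$ and $C_a^B=\operatorname{mult}_{c_a^B}$ are just the Kodaira--Spencer convention \eqref{eq:frob:ks-map} and \eqref{eq:frob:product-operator} written in the flat coordinates of Definition~\ref{def:mirror-map}. These coordinates are affine-linear in $\log_{\rm orb}\kappa$ and $\log\kappa_{l+1}$, so $\partial_{x_a}$ is a well-defined constant-coefficient combination of the $\kappa_i\partial_{\kappa_i}$. The derivative is taken at fixed $\mu$, equivalently at fixed $w$.

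\textbf{Derivative of the critical values.} For $\partial_{x_a}u_\alpha=c_a^B(p_\alpha)$, I use the chain rule at a critical point. We have $u_\alpha(\kappa)=W(w_\alpha(\kappa);\kappa)$, where $w_\alpha$ is a simple root of $N$, so it depends analytically on $\kappa\in\Omega_{\rm Frob}$. Then
\[
\partial_{x_a}u_\alpha=(\partial_{x_a}W)(w_\alpha)+W'(w_\alpha)\,\partial_{x_a}w_\alpha ,
\]
and the second term vanishes because $N(w_\alpha)=0$ exactly means $d\log\lambda(w_\alpha)=0$. Hence $\partial_{x_a}u_\alpha=c_a^B(p_\alpha)$. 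The fixed-$\mu$ and fixed-$w$ lifts agree because $\mu\mapsto w$ does not depend on $\kappa$.

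It remains to check that this $u_\alpha$ is a genuine canonical coordinate. The Frobenius product on $H_B^\iota$ is multiplication of functions restricted to the reduced critical scheme, so evaluation at $p_\alpha$ is the character of $e_\alpha$. From $\partial_{x_a}=\sum_\alpha c_a^B(p_\alpha)e_\alpha$ we then get $du_\alpha(e_\beta)=\delta_{\alpha\beta}$.

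\textbf{Main obstacle.} The delicate step is keeping track of the lifts. At the fixed labels and along the orbit sums, one must make sure that evaluating an invariant function at either lift $p_{\alpha,\pm}$ gives the same value, so the identity is independent of the lift. This holds because $c_a^B$ is $\iota$-invariant. One must also check that the branch choices of $\log_{\rm orb}$ on $\Omega_{\rm Frob}$ introduce no constants into $\partial_{x_a}$, which they do not.
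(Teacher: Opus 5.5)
Your proposal is correct and follows the paper's proof: the pairing identity is the metric part of Theorem~\ref{thm:frob-iso}, and $\partial_{x_a}u_\alpha=c_a^B(p_\alpha)$ is the chain rule with the fixed-$\mu$ lift, where the motion of the critical point drops out because $dW(p_\alpha)=0$. Your explicit residue check of the idempotent norms and your verification that the $u_\alpha$ are canonical coordinates are harmless elaborations. The fixed-label concern in your last paragraph does not arise on $\Omega_{\rm Frob}$: excluding the pole-cancellation strata gives $N(\pm2)\neq0$, so every critical orbit there is free.
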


\begin{proof}
The equality \eqref{eq:frob:mirror-idempotent-metric} of products and pairings
is the metric part of Theorem~\ref{thm:frob-iso}. For the
Kodaira--Spencer formula, we differentiate the critical value using the fixed-$\mu$ lift. Since
$dW(p_\alpha)=0$, the motion of the critical point does not contribute. Thus
\[
  \partial_{x_a}u_\alpha
  =\partial_{x_a}W(p_\alpha)
  =2\nu\,\partial_{x_a}\log\lambda(p_\alpha)
  =c_a^B(p_\alpha).
\]
Thus multiplication by $c_a^B$ on the idempotent $e_\alpha$ has eigenvalue
$\partial_{x_a}u_\alpha$.
\end{proof}

\subsection{The A-side open neighborhood}

We also record the analytic meaning of the A-side structure at the orbifold point. This is the object
that extends through the point where the ordinary smooth logarithmic residue model is no longer defined.

\begin{proposition}[A-side analytic neighborhood at the orbifold point]\label{prop:aside-analytic-neighborhood}
After shrinking the orbifold flat coordinate domain, there is a holomorphic open neighborhood
$\mathcal U_A$ of $0$ carrying the genus-zero primary Frobenius manifold of
$\cX=[\bC^2/\Gamma\times\bC]$. The point $0\in\mathcal U_A$ is the orbifold-frame origin, and its
Frobenius algebra is
\begin{equation}\label{eq:frob:aside-special-fiber}
  T_0\mathcal U_A\cong Z\bigl(\bC(\nu)[\Gamma]\bigr)
\end{equation}
with the Frobenius form of Lemma~\ref{lem:frob}. The formal completion of this holomorphic Frobenius
manifold at $0$ is the formal equivariant orbifold quantum-cohomology Frobenius structure. After
shrinking $\widetilde{\mathcal U}_B$ if necessary, the centered mirror map sends
$\widetilde{\mathcal U}_B$ into $\mathcal U_A$, and over $x(\Omega_{\rm Frob})$ it restricts to the
isomorphism of Theorem~\ref{thm:frob-iso}.
\end{proposition}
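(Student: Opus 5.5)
The plan is to build $\mathcal U_A$ from the convergence of the genus-zero primary potential near the orbifold-frame origin, and then compare with the B-side through Theorem~\ref{thm:frob-iso} and analytic continuation. First, the point $0$: by Lemma~\ref{lem:frob}, the classical equivariant Chen--Ruan product in the age-normalized basis is the class-algebra product on $Z(\bC(\nu)[\Gamma])$ with form $e_1^{-1}\eta_{B\Gamma}$. This fixes the special fiber \eqref{eq:frob:aside-special-fiber}, and because the algebra is semisimple the origin is a semisimple point.

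Second, convergence. Since $\cX$ is affine and the insertions are twisted sectors only, the genus-zero primary potential is a formal power series in the orbifold flat coordinates $x_{[g]}$ and $x_{l+1}$. The dependence on $x_{l+1}$ is only through the unit, so the string equation makes it linear. I would prove convergence of the remaining dependence on the $x_{[g]}$ near $0$ using two inputs. The first is Hu's quantum McKay theorem \cite{hu2012quantummckaycorrespondencesingularities}: the orbifold potential is obtained from the Bryan--Gholampour resolution potential, which is a finite sum of polylogarithm-type functions $\sum_\beta \mathrm{Li}_3(e^{\beta(y)})$ over the positive roots, by the linear change of Lemma~\ref{lem:hu-coordinate-check}. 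That resolution potential is an analytic function on a neighborhood of the continuation point $y$ corresponding to $\btau^{\rm orb}$. The second input is the observation that no root $\beta$ has $e^{\beta(y)}=1$ there, because the root-of-unity shift $G^{\mathsf T}\mathbf n/|\Gamma|$ pairs with each positive root to give a value strictly between $0$ and $1$. This nonvanishing is the key check I expect to be the main obstacle. I would carry it out by showing that the height of the highest root with respect to the Dynkin marks is $<|\Gamma|$, so that every $\beta(\btau^{\rm orb})$ avoids $2\pi i\bZ$.

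With that check in hand, the potential converges on a polydisk $\mathcal U_A$ around $0$. The threefold rescaling by $(-2\nu)^{-1}$ from Lemma~\ref{lem:thirdleg} preserves analyticity. The Taylor expansion of the potential at $0$ is, by construction, the formal equivariant orbifold quantum cohomology, and this gives the formal-completion claim.

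Finally, for the mirror map: the centered map of Definition~\ref{def:mirror-map} is linear in $\log_{\rm orb}\kappa$ and sends $\kappa^{\rm orb}(K_0)$ to $0$. It is therefore continuous on $\widetilde{\mathcal U}_B$, and after shrinking $\widetilde{\mathcal U}_B$ its image lies in $\mathcal U_A$. On $x(\Omega_{\rm Frob})\subset\mathcal U_A$ the analytic A-structure agrees with the structure used in Theorem~\ref{thm:frob-iso}, since both are the continuation of the same Bryan--Gholampour/Hu potential through the same branch choices. Restricting $\mathcal U_A$ to $x(\Omega_{\rm Frob})$ therefore recovers $\Phi_{\rm mir}$.
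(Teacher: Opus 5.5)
Your proposal is correct and follows essentially the same route as the paper. Both arguments use Hu's type-$D$ formula for the orbifold potential, holomorphy of its third derivatives at the orbifold value, the string equation for the unit direction, the $(-2\nu)^{-1}$ third-leg rescaling, and Theorem~\ref{thm:frob-iso} after shrinking $\widetilde{\mathcal U}_B$ so that the centered mirror map lands in $\mathcal U_A$. Your explicit check that $e^{\beta}\neq1$ at the orbifold point supplies a justification the paper only asserts: for every positive root $\beta=\sum_j b_j\alpha_j$ and mark vector $d=G^{\mathsf T}\mathbf n$ one has $1\le b^{\mathsf T}d\le\sum_i d_i^2=|\Gamma|-1$. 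One small correction: the string equation makes the $x_{l+1}$-dependence of the potential polynomial of degree at most three through the classical pairing term, not linear, but that is all your argument needs.
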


\begin{proof}
Hu proves the Bryan--Gholampour type-$D$ formula for the complete genus-zero primary orbifold
potential of $[\bC^2/\Gamma]$ in the orbifold variables
\cite{bryan2007rootsystemsquantumcohomology,hu2012quantummckaycorrespondencesingularities}. The third
derivatives of this formula are holomorphic near the orbifold value; the unit direction is restored by
the string equation. Thus the surface quotient carries a holomorphic Frobenius manifold near the
orbifold point.

For the threefold $\cX$, the third leg (the trivial line of torus weight $-2\nu$) multiplies both the surface
metric and the surface cubic tensor by $(-2\nu)^{-1}$. Therefore the product remains the same, while
the Frobenius form becomes the one computed in Lemma~\ref{lem:frob}. Completing the same holomorphic
potential at $0$ gives the formal orbifold quantum product and the special
fiber \eqref{eq:frob:aside-special-fiber}. The compatibility with the B-model
over $\Omega_{\rm Frob}$ is Theorem~\ref{thm:frob-iso}, after shrinking the source so
that the centered mirror map lands in $\mathcal U_A$.
\end{proof}

\subsection{The flat algebra extension across pole-cancellation strata}
\label{subsec:pole-cancellation}

The Frobenius-manifold theorem above lives on $\Omega_{\rm Frob}$, but the pole-cancellation loci
belong to the chosen local chart. In this subsection we construct the B-model algebra over the full
chosen chart $\widetilde{\mathcal U}_B$ by modifying the flat critical module only at those loci.
A \emph{pole-cancellation stratum} is a hypersurface in the chosen B-model base chart along which
one root $c_i=\kappa_i+\kappa_i^{-1}$ of the numerator $P(w;\kappa)$ reaches $w=2$ or $w=-2$,
that is, $\kappa_i=1$ or $\kappa_i=-1$. There the factor $w-c_i$ cancels the corresponding simple
pole of $\lambda(w)=\kappa_{l+1}P(w;\kappa)/(w^2-4)$; upstairs, the zero-pair
$\{\kappa_i,\kappa_i^{-1}\}$ of $\lambda$ collides at the $\iota$-fixed point $\mu=\pm1$. Locally
in the base chart the stratum is the analytic hypersurface $\{\varepsilon_\pm=0\}$; it is not a
divisor on the mirror curve. Ordinary
critical collisions are kept as finite local residue algebras. At an $\iota$-fixed point after pole
cancellation, the local summand is replaced by the stabilizer-algebra summand used below. We do not
extend thimbles, canonical coordinates, or a semisimple Dubrovin--Frobenius structure across the
excluded loci.

Let
\[
  \mathcal A=\mathcal O_{\widetilde{\mathcal U}_B}[w]/\langle N(w;\kappa)\rangle
\]
be the flat critical module. Over $\Omega_{\rm Frob}$ this is the smooth invariant Jacobian
$H_B^\iota$ of Definition~\ref{def:smooth-invariant-jacobian}; in the local splitting below its ranks
are $2$, $2$, and $l-3$. At a fiber on the intersection of the two pole-cancellation strata, the
ordinary invariant Jacobian of the reduced curve keeps only the even line in each of the two
pole-cancellation summands, hence has rank
$(l-3)+2=l-1$. We replace the unmodified local lattice by an elementary modification with an even generator
$u_p$ and an odd stabilizer-sector generator
$\xi_p$. This symbol is distinct from the regularizing primitive $\xi_\beta$ of
Section~\ref{sec:bgraphsum}, which is attached to the primary second-kind form $\vartheta^0_\beta$.
The local stabilizer product is
\[
  u_p^2=u_p,
  \qquad
  u_p\xi_p=\xi_p,
  \qquad
  \xi_p^2=u_p.
\]
Thus an $\iota$-fixed point after pole cancellation contributes the stabilizer algebra $\bC[\bZ_2]$,
not the nilpotent special fiber of the unmodified critical scheme.

\begin{proposition}[Flat B-model Frobenius-algebra family]\label{prop:bmodel-flat-algebra-family}
After possibly shrinking $\widetilde{\mathcal U}_B$, the modified critical modules form a locally free
analytic family $\cH_B$ of rank $l+1$ Frobenius $\bC(\nu)$-algebras over the chosen chart.
Its restriction to $\Omega_{\rm Frob}$ is the ordinary invariant Jacobian Frobenius algebra bundle
underlying $\cM^B$. At the orbifold point there is an isomorphism of Frobenius algebras
\[
  \cH_B(\kappa^{\rm orb})\cong Z\bigl(\bC(\nu)[\Gamma]\bigr),
\]
with the Frobenius metric in the threefold normalization.
\end{proposition}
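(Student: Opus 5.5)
The plan is to build $\cH_B$ locally over $\widetilde{\mathcal U}_B$ by splitting the flat critical module $\mathcal A=\mathcal O[w]/\langle N\rangle$ according to where the roots of $N$ sit, modify it only near $w=\pm2$, and then glue. After shrinking $\widetilde{\mathcal U}_B$, the roots of $N(w;\kappa)$ separate into three clusters: those near $w=2$, those near $w=-2$, and the $l-3$ free ones, which stay away from $\pm2$ near $\kappa^{\rm orb}$. Weierstrass preparation then gives a splitting $\mathcal A=\mathcal A_{+}\oplus\mathcal A_{-}\oplus\mathcal A_{\rm free}$ into locally free summands of ranks $2,2,l-3$. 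This matches the count in Lemma~\ref{lem:thimble-count}. The free summand is already a flat family of finite residue algebras, including at ordinary critical collisions. It carries the orbit-summed residue pairing of Lemma~\ref{lem:quotient-residue-normalization}, which stays nondegenerate because the free critical points avoid the punctures $\mu=\pm1$.

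At each fixed point $\sigma=\pm1$ I would use the transverse coordinate $\varepsilon_\sigma$ cutting out the pole-cancellation stratum. I would replace $\mathcal A_\sigma$ by the lattice generated by an even unit $u_p$ and an odd element $\xi_p$, normalized so that $\xi_p^2=u_p$ holds identically. Concretely, near $w=\sigma\cdot2$ I would write the two local roots of $N$ in the ramified coordinate $\sqrt{\varepsilon_\sigma}$ fixed on the chart. I would then define $\xi_p$ as the rescaled odd combination (difference of idempotents divided by the appropriate power of $\sqrt{\varepsilon_\sigma}$, times the $\sqrt\nu$ coming from $\phi_D$). Off the stratum the modified module is then $\mathcal O$-spanned by $u_p,\xi_p$, which is the same algebra $\bC\oplus\bC$ with idempotents $(u_p\pm\xi_p)/2$. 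On the stratum it specializes to $\bC[\bZ_2]$, not to the nilpotent fiber. The modified module is locally free of rank $2$ by construction, so gluing gives rank $l+1$. Over $\Omega_{\rm Frob}$ the modification is an isomorphism onto $H_B^\iota$, so the restriction statement holds.

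For the metric, I would compute $\eta^{B,\cX}(u_p,u_p)$, $\eta^{B,\cX}(\xi_p,\xi_p)$ and $\eta^{B,\cX}(u_p,\xi_p)$ from the residue formula of Lemma~\ref{lem:quotient-residue-normalization}. The aim is to show these extend holomorphically with nonzero determinant across $\varepsilon_\sigma=0$. For the determinant, the two idempotent norms $1/\Delta_\alpha$ blow up or decay like opposite powers of $\varepsilon_\sigma$, and the rescaling of $\xi_p$ is chosen exactly to cancel this. Invariance of the pairing is automatic on the dense open set, so it extends by continuity.

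At $\kappa^{\rm orb}$ I would identify the fiber with $Z(\bC(\nu)[\Gamma])$ by transporting the A-side structure. Proposition~\ref{prop:aside-analytic-neighborhood} gives a holomorphic Frobenius algebra bundle $T\mathcal U_A$ with fiber $Z(\bC(\nu)[\Gamma])$ at $0$. Theorem~\ref{thm:frob-iso} identifies it with $\cH_B$ over $x(\Omega_{\rm Frob})$, which is dense in the chart. Two locally free algebra families with compatible pairings that agree on a dense open set agree everywhere, so the isomorphism extends through the mirror map, provided the mirror map sends the modified lattice onto $T\mathcal U_A$ near the strata. As a consistency check, I would verify the fiber directly. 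The free orbits give $l-3$ idempotents, matching the two-dimensional irreducibles paired via $\chi_V$, and each $\bC[\bZ_2]$ summand gives two one-dimensional characters, which matches the $4$ one-dimensional irreducibles of the binary dihedral group.

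The main obstacle is showing that the elementary modification matches the A-side lattice, namely that the rescaled odd generator corresponds to a holomorphic flat section on the A-side, so that the extension is the correct one and not merely some rank-$(l+1)$ extension. I would first try to settle this by computing the leading behavior of the idempotent norms $\Delta_\alpha$ in $\varepsilon_\sigma$ on both sides, using the Hu/Bryan--Gholampour character formulas for the A-side norms $d_\alpha^2/(|\Gamma|^2e_1)$ from Lemma~\ref{lem:frob}.
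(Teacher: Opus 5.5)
Your splitting of $\mathcal A$ into the clusters at $w=\pm2$ and the $l-3$ residual roots, and your replacement of each pole-cancellation summand by $\mathcal O u_p\oplus\mathcal O\xi_p$ with $\xi_p^2=u_p$, is the paper's construction. But $\xi_p^2=u_p$ leaves no freedom in $\xi_p$. It forces $\xi_p=\pm(e_{p,+}-e_{p,-})$, which is the class of $t-b$ divided by the half-splitting $\varepsilon a$ of the two roots; in the paper this is $\xi_+=(t-b_+)/(\varepsilon_+a_+)$, after checking that the splitting is of exact order one via $n_{20},n_{02}\neq0$. There is no extra power of $\sqrt{\varepsilon_\sigma}$ and no $\sqrt\nu$.

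This is where your metric step breaks. By invariance, $\eta(\xi_p,\xi_p)=\eta(u_p,u_p)=h_++h_-$ and $\eta(u_p,\xi_p)=h_+-h_-$, where $h_\pm=-P(w_\pm)/(8\nu^3N'(w_\pm))$ are the two idempotent norms. So the pairing extends holomorphically and nondegenerately in the modified frame if and only if each $h_\pm$ extends with a nonzero value. If the norms behaved like opposite powers of $\varepsilon_\sigma$, as you claim, the pairing would not extend, and no choice of $\xi_p$ could compensate.

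What actually happens is a cancellation produced by the pole cancellation itself. Writing $P=(t-\delta)S$ and $N=N_{\rm cof}\bigl((t-b)^2-\varepsilon^2a^2\bigr)$, one has
\[
  \frac{P(w_\pm)}{N'(w_\pm)}
  =\frac{(b-\delta\pm\varepsilon a)\,S(w_\pm)}{\pm2\varepsilon a\,N_{\rm cof}(w_\pm)}.
\]
Since $b$ and $\delta$ are both even in $\varepsilon$ and vanish on the stratum, $b-\delta=O(\varepsilon^2)$. Hence both ratios tend to the same nonzero value $S(w_0)/(2N_{\rm cof}(w_0))$; the norms converge, they do not diverge.

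The special-fiber identification is likewise not established. Without your proviso, the principle ``families agreeing on a dense open set agree everywhere'' is false here: the unmodified $\mathcal A$ agrees with $\cH_B$ on $\Omega_{\rm Frob}$ and has a dual-number special fiber. You leave the proviso open. Your consistency check only counts idempotents, which pins down the algebra but not the metric.

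The paper closes this by direct computation. With $P=(w^2-4)Q$, $N=(w^2-4)^2Q'$, $Q=2T_m(w/2)$ and the Chebyshev equation $(4-w^2)Q''-wQ'+m^2Q=0$, one gets $P(\rho)/N'(\rho)=1/m^2$ at residual roots and the limit $1/(4m^2)$ above. These give norms $-1/(8m^2\nu^3)=2^2/(|\Gamma|^2e_1)$ and $-1/(32m^2\nu^3)=1/(|\Gamma|^2e_1)$, exactly the norms $d_\alpha^2/(|\Gamma|^2e_1)$ of Lemma~\ref{lem:frob}. The paper also proves that $\widetilde{\mathrm{KS}}$ is a holomorphic isomorphism onto the modified lattice; this is what later defines $\Phi_{\rm ext}$.

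Your transport idea can be repaired, but not through lattice asymptotics. The modified lattice is spanned by the idempotents $(u_p\pm\xi_p)/2$, the A-side algebra is semisimple near $0$ with holomorphic idempotents, and $\Phi_{\rm mir}$ matches idempotents and their norms on $\Omega_{\rm Frob}$. So each B-side norm, a priori meromorphic on the chart, equals $h^A_\alpha(x(\kappa))$, a holomorphic unit with value $d_\alpha^2/(|\Gamma|^2e_1)$ at $\kappa^{\rm orb}$. That one observation gives both the metric extension and the special fiber, but it is not in your proposal, and it contradicts the asymptotics you predicted.
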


\begin{proof}
\emph{Step 1: the flat critical module.}
The flat module is controlled by the invariant critical equation. Since the leading coefficient of
$N(w;\kappa)$ is $l-2$, it is a unit, and
\[
  \bC(\nu)[w]/\langle N(w;\kappa)\rangle
\]
is locally free of rank $l+1$.
At $\kappa^{\rm orb}(K_0)$ one has
\[
  P(w;\kappa^{\rm orb})=(w^2-4)Q(w),
  \qquad
  N(w;\kappa^{\rm orb})=(w^2-4)^2Q'(w),
\]
and
\[
  Q(w)=2T_m\!\left({w\over2}\right),
  \qquad m=l-2.
\]
Here $T_m$ and $U_{m-1}$ are the Chebyshev polynomials of the first and second kinds, respectively.
Thus
\[
  Q'(w)=m\,U_{m-1}\!\left({w\over2}\right),
\]
so $w=\pm2$ are double roots and the remaining $m-1=l-3$ roots are simple. The unmodified quotient
therefore has dual-number factors at the nonreduced critical points over $w=\pm2$. Those factors
describe the nonreduced critical algebra in the flat critical-module family. Across a
pole-cancellation stratum we instead use the modified local summand in $\cH_B$, whose special fiber
has the stabilizer-algebra product.

\emph{Step 2: the pole-cancellation elementary modification.}
We now perform the local elementary modification relative to the full chosen chart. Consider the
pole-cancellation stratum over $w=2$ in the B-model chart; the construction at $w=-2$ is identical.
Let $t=w-2$, let $\varepsilon_+$ be the ramified pole-cancellation coordinate at $w=2$, and
let $\mathbf s$ denote all remaining base coordinates, including the coordinate for the opposite
pole-cancellation stratum.
Locally the moving zero-pair of $\lambda$ has
\[
  \kappa_l=e^{\varepsilon_+},
  \qquad c_l=2\cosh\varepsilon_+,
  \qquad \delta_+=c_l-2.
\]
Along the hypersurface $\varepsilon_+=0$ in the base chart, the monic numerator polynomial has the form
\[
  P(2+t;\mathbf s,0)=tS_+(t,\mathbf s,0),
  \qquad S_+(0,0,0)\neq0.
\]
Substituting this into $N=(w^2-4)P'-2wP$ gives, along this hypersurface, the exact identity
\[
  N(2+t;\mathbf s,0)
  =t^2\bigl((4+t)\partial_tS_+(t,\mathbf s,0)-S_+(t,\mathbf s,0)\bigr).
\]
The factor in parentheses is a unit near the orbifold point: at the central point
$S_+(t,0,0)=(4+t)Q(2+t)$, so its value at $t=0$ is $16Q'(2)\neq0$.
After shrinking the chart, we choose a holomorphic extension, still denoted $S_+$, such that
\[
  P(2+t;\mathbf s,\varepsilon_+)=(t-\delta_+)S_+(t,\mathbf s,\varepsilon_+).
\]
Weierstrass preparation therefore gives a monic quadratic factor
\[
  q_+(t;\mathbf s,\varepsilon_+)
  =t^2+A_+(\mathbf s,\varepsilon_+)t+B_+(\mathbf s,\varepsilon_+)
\]
supported near $w=2$. The zero-pair that cancels the pole enters through $2\cosh\varepsilon_+$, so
$A_+$ and $B_+$ are even in $\varepsilon_+$; moreover
$A_+(\mathbf s,0)=B_+(\mathbf s,0)=0$. On the transverse slice $\mathbf s=0$, the expansion of
$N$ from $P=(t-\delta_+)S_+$ has the form
\[
  N(2+t;0,\varepsilon_+)=n_{20} t^2+n_{02}\varepsilon_+^2
  +O(t^3,t\varepsilon_+^2,\varepsilon_+^4),
  \qquad n_{20},n_{02}\neq0.
\]
Thus the discriminant vanishes to order two at the orbifold point. After shrinking the
chosen chart and choosing the square root of the resulting unit on the ramified chart, we may write
\[
  A_+^2-4B_+=4\varepsilon_+^2a_+(\mathbf s,\varepsilon_+)^2,
  \qquad a_+(0,0)\neq0.
\]
With $b_+=-A_+/2$ this gives the exact relative factorization
\[
  q_+(t;\mathbf s,\varepsilon_+)
  =(t-b_+(\mathbf s,\varepsilon_+))^2
  -\varepsilon_+^2a_+(\mathbf s,\varepsilon_+)^2.
\]
Write $\mathcal O$ for the analytic functions on the chosen local chart. Let
\[
  \mathcal A_+
  =\mathcal O[t]/\langle q_+(t;\mathbf s,\varepsilon_+)\rangle,
\]
and let $u_+$ be the class of $1$.
The coordinate $\varepsilon_+$ is the ramified coordinate on the chosen logarithmic branch chart; on the
unramified quotient the odd generator below is the corresponding sign-line section. We define
\[
  \mathcal H_{B,+}
  =\mathcal O u_+\oplus\mathcal O\xi_+
  \subset \mathcal A_+[\varepsilon_+^{-1}],
  \qquad
  \xi_+={t-b_+\over\varepsilon_+a_+}.
\]
Then $\xi_+^2=u_+$ holds. Over $\varepsilon_+\neq0$ the split primitive idempotents are
\[
  e_{+,+}={u_++\xi_+\over2},
  \qquad
  e_{+,-}={u_+-\xi_+\over2}.
\]
The same argument at $w=-2$, using the branch $\kappa_1=-e^{\varepsilon_-}$ and local coordinate
$t_-=w+2$, gives a modified rank-two factor $\mathcal H_{B,-}$ with generator $\xi_-$ and relation
$\xi_-^2=u_-$. Choose disjoint Weierstrass neighborhoods of $w=2$, $w=-2$, and the residual roots.
After shrinking the chosen chart, the pairwise resultants are units and
\[
  N=N_+N_-N_{\rm res},
  \qquad
  \mathcal A\cong \mathcal A_+\oplus\mathcal A_-\oplus\mathcal A_{\rm res}.
\]
We set
\[
  \cH_B=\mathcal H_{B,+}\oplus\mathcal H_{B,-}\oplus\mathcal A_{\rm res}.
\]
This is locally free of rank $l+1$ and agrees with the ordinary invariant Jacobian away from the
pole-cancellation strata. The multiplication in the modified frame is obtained by transporting the
split idempotent product and taking the limit; on each pole-cancellation summand it gives
$u_p^2=u_p$, $u_p\xi_p=\xi_p$, and $\xi_p^2=u_p$.

\emph{Step 3: the modified Kodaira--Spencer map.}
It remains to check that the mirror bundle map extends in this modified frame. Define the modified
Kodaira--Spencer map on the smooth locus by
\[
  \widetilde{\mathrm{KS}}(X)=\bigl[2\nu\,\partial_X\log\lambda\bigr]\in\cH_B.
\]
For the pole-cancellation vector field $\partial_{\varepsilon_+}$ at $w=2$ we have
\[
  2\nu\,\partial_{\varepsilon_+}\log\lambda
  =-{4\nu\sinh\varepsilon_+\over w-2\cosh\varepsilon_+}.
\]
In the modified factor this denominator is
\[
  w-2\cosh\varepsilon_+
  =(b_+-\delta_+)+\varepsilon_+a_+\xi_+.
\]
Both $b_+$ and $\delta_+$ are even functions of $\varepsilon_+$ vanishing on this hypersurface, hence
$b_+-\delta_+=O(\varepsilon_+^2)$. Thus, in the basis $\{u_+,\xi_+\}$,
\[
  -{4\nu\sinh\varepsilon_+\over (b_+-\delta_+)+\varepsilon_+a_+\xi_+}
  =-4\nu\sinh\varepsilon_+
  {(b_+-\delta_+)-\varepsilon_+a_+\xi_+
  \over (b_+-\delta_+)^2-\varepsilon_+^2a_+^2}.
\]
Both coefficients are holomorphic: $\sinh\varepsilon_+/\varepsilon_+$ is a unit, while
$b_+-\delta_+$ is divisible by $\varepsilon_+^2$. The even coefficient vanishes on this hypersurface
and the odd coefficient has the nonzero limit
\[
  -{4\nu\over a_+(0,0)}.
\]
Equivalently, at the split roots $w_\pm=2+b_+\pm\varepsilon_+a_+$ the two values tend to
$\mp4\nu/a_+(0,0)$. If $X$ is tangent to the pole-cancellation stratum, then
$2\nu\partial_X\log\lambda$ is regular at $w=2$; expanding it in
$t=b_++\varepsilon_+a_+\xi_+$ shows that its odd coefficient is divisible by $\varepsilon_+$. The
calculation at $w=-2$ is the same, with $\varepsilon_-$ and $\xi_-$. Moreover the $w=2$
pole-cancellation vector field is regular at the opposite pole-cancellation summand over $w=-2$, and
the same holds with the two signs interchanged. Hence the two pole-cancellation vector fields have
zero odd limit in the opposite summands, so the odd part of the limiting Kodaira--Spencer matrix is
diagonal with two nonzero entries.

It remains to check the submatrix formed by the residual simple factors together with the even
directions of the two pole-cancellation summands. At the orbifold point the residual roots are the
$m-1$ roots $r_a$ of $U_{m-1}(w/2)$. Let $c_j^{\rm orb}$, $2\le j\le l-1$, be the quotient positions
of the $m$ middle zero-pairs of $\lambda$. These quotient positions are distinct from the points
\[
  \mathsf T=\{r_1,\ldots,r_{m-1},2,-2\}.
\]
The scale vector gives the constant class $1$. For the interior zero-pair coordinates, indexed by
$2\le j\le l-1$, set $\btau_j=\log_{\rm orb}\kappa_j$; then
\[
  2\nu\,\partial_{\btau_j}\log\lambda
  =2\nu\,{\kappa_j^{-1}-\kappa_j\over w-c_j}.
\]
At the orbifold point the constants
$\gamma_j=(\kappa_j^{\rm orb})^{-1}-\kappa_j^{\rm orb}$ are nonzero for all middle zero-pairs. We claim that the
$m+1$ functions
\[
  1,
  \qquad
  {\gamma_j\over w-c_j^{\rm orb}},\quad 2\le j\le l-1,
\]
are linearly independent after evaluation on $\mathsf T$. Indeed, if
\[
  b_0+\sum_{j=2}^{l-1}{b_j\gamma_j\over w-c_j^{\rm orb}}
\]
vanishes at all points of $\mathsf T$, then after multiplication by
the same central factor, written as $Q(w)=\prod_{j=2}^{l-1}(w-c_j^{\rm orb})$, we obtain a polynomial
of degree at most $m$ with the
$m+1$ distinct roots in $\mathsf T$. Hence it is zero. Evaluating this zero polynomial at
$w=c_j^{\rm orb}$ gives
\[
  b_j\gamma_j\prod_{k\ne j}(c_j^{\rm orb}-c_k^{\rm orb})=0,
\]
so all $b_j$ vanish, and then $b_0=0$. Thus the evaluation matrix with rows indexed by
$\mathsf T$ and columns
\[
  1,\qquad {\gamma_j\over w-c_j^{\rm orb}},\quad 2\le j\le l-1,
\]
is invertible; this is the block for the residual simple factors together with the two even
pole-cancellation directions.

After ordering the rows by the two odd directions and then by the residual-and-even directions, the
limit of the modified Kodaira--Spencer matrix at the central point is triangular: the two
pole-cancellation vector
fields have the nonzero diagonal odd limits computed above, while the remaining fields have zero odd
limit and form the invertible residual-and-even evaluation block. Hence
\[
  \widetilde{\mathrm{KS}}:T\widetilde{\mathcal U}_B\longrightarrow\cH_B
\]
is a holomorphic isomorphism in the frame consisting of the simple factors and the two modified summands
$\{u_p,\xi_p\}$.

\emph{Step 4: product and unit.}
Let
\[
  dx:T\widetilde{\mathcal U}_B\longrightarrow x^*T\mathcal U_A
\]
be the differential of the centered mirror map. By Lemma~\ref{lem:hu-coordinate-check}, $dx$ is a
holomorphic isomorphism on the same chosen chart. We define
\[
  \Phi_{\rm ext}=dx\circ\widetilde{\mathrm{KS}}^{-1}:
  \cH_B\longrightarrow x^*T\mathcal U_A .
\]
On $\Omega_{\rm Frob}$ this is the generic mirror identification $\Phi_{\rm mir}$. The differences
\[
  \Phi_{\rm ext}(X\circ_B Y)-\Phi_{\rm ext}(X)\circ_A\Phi_{\rm ext}(Y),
  \qquad
  \Phi_{\rm ext}(\mathbf 1_B)-
  \mathbf 1_A
\]
are holomorphic tensors in the modified frame. They vanish on the nonempty open subset
$\Omega_{\rm Frob}$ of the chosen chart; by the identity theorem they vanish on the chosen chart.

\emph{Step 5: the residue pairing.}
We now construct the pairing on the same modified lattice. At a simple invariant critical root $\rho$, let
$e_\rho$ denote the primitive idempotent whose Kodaira--Spencer representative has value $1$ at $\rho$ and
$0$ at the other roots. Since
\[
  d\hat x=-2\nu {N(w)\over (w^2-4)P(w)}\,dw,
  \qquad
  \phi_D^2=\nu {dw^2\over w^2-4},
\]
one lifted branch gives
\[
  {\phi_D^2\over d\hat x}
  =-{P(w)\over2N(w)}\,dw.
\]
Therefore the orbit-summed threefold residue formula gives
\[
  \eta^{B,\cX}(e_\rho,e_\rho)
  =- {P(\rho)\over 8\nu^3 N'(\rho)}.
\]
For the next calculation, fix one pole-cancellation summand. Bare
$a,b,\varepsilon,\delta,S,u,\xi$ denote its summand-labeled data: the $+$ data if its center is $2$,
and the $-$ data if its center is $-2$. In the factorization $N=N_+N_-N_{\rm res}$, let
$N_{\rm cof}$ be the complementary cofactor, namely $N_-N_{\rm res}$ for the $+$ summand and
$N_+N_{\rm res}$ for the $-$ summand. Write the relative factor as
\[
  N=N_{\rm cof}\bigl((t-b)^2-\varepsilon^2a^2\bigr),
  \qquad w_\pm=w_0+b\pm\varepsilon a,
\]
where $w_0=2$ or $-2$ is the involution-fixed point after pole cancellation. Within this fixed
summand, the signs in $w_\pm$ label the two split roots. Then
\[
  {P(w_\pm)\over N'(w_\pm)}
  ={(b-\delta\pm\varepsilon a)S(w_\pm)
  \over \pm 2\varepsilon a\,N_{\rm cof}(w_\pm)}.
\]
Because $b-\delta=O(\varepsilon^2)$, the two expressions are holomorphic on the chosen cover and have
the same nonzero limit. If
\[
  h_\pm=-{P(w_\pm)\over8\nu^3N'(w_\pm)},
\]
then the pairing in the modified frame is
\[
  \eta(u,u)=\eta(\xi,\xi)=h_++h_-,
  \qquad
  \eta(u,\xi)=h_+-h_-.
\]
It follows that the residue metric extends holomorphically and remains nondegenerate at the
pole-cancellation stratum; the even--odd cross term vanishes at the central point. We denote this
extension again by $\eta^{B,\cX}$. On $\Omega_{\rm Frob}$ it agrees with the logarithmic residue pairing and, by
Theorem~\ref{thm:frob-iso}, with the pullback
$\eta^{\cX}(\Phi_{\rm ext}(\,\bullet\,),\Phi_{\rm ext}(\,\bullet\,))$. The difference is holomorphic
and vanishes on the nonempty open subset $\Omega_{\rm Frob}$; by the identity theorem it vanishes on
the chosen chart. Hence Frobenius invariance and the metric comparison extend across the
pole-cancellation strata.

\emph{Step 6: the special fiber.}
It remains only to identify the special fiber. At $\kappa^{\rm orb}$ the $l-3$ residual simple factors
correspond to the two-dimensional irreducible representations of $\Gamma$. The Chebyshev formulas
\[
  P=(w^2-4)Q,
  \qquad
  N=(w^2-4)^2Q',
  \qquad
  Q=2T_m(w/2)
\]
give, for a residual root $\rho$ of $U_{m-1}(w/2)$,
\[
  {P(\rho)\over N'(\rho)}
  ={Q(\rho)\over (\rho^2-4)Q''(\rho)}
  ={1\over m^2}.
\]
For either branch splitting from $w=2$ or from $w=-2$, the preceding pole-cancellation summand formula gives
\[
  \lim {P(w_\pm)\over N'(w_\pm)}={1\over4m^2}.
\]
Thus the primitive-idempotent norms are
\[
  h_{\rm int}=-{1\over8m^2\nu^3}
  ={2^2\over |\Gamma|^2e_1},
  \qquad
  h_{\rm fix}=-{1\over32m^2\nu^3}
  ={1^2\over |\Gamma|^2e_1},
\]
using $|\Gamma|=4m$ and the Euler scalar $e_1=e_\bT(V_\nu\oplus\cO_{-2\nu})=-2\nu^3$ from
Lemma~\ref{lem:frob}. Each fixed point contributes a local $\bC[\bZ_2]$ factor with primitive idempotents
\[
  e_{r,+}={u_r+\xi_r\over2},
  \qquad
  e_{r,-}={u_r-\xi_r\over2}.
\]
These four idempotents over the two involution-fixed points carry the one-dimensional irreducible
norms, while the residual factors carry the two-dimensional irreducible norms. Together with the
multiplication already constructed, this identifies the special fiber with $Z(\bC(\nu)[\Gamma])$ as a
Frobenius algebra.
\end{proof}


\section{The smooth-chamber $R$-matrix comparison}
\label{sec:anchor}

In this section we identify the B-model calibration of the smooth type-$D_l$ B-model chamber with
the A-model Givental--Teleman calibration. The point $\kappa^{\rm orb}$ is not in the chamber
$\Omega_B$. It enters only as boundary data for the residual gauge. We will not construct a
central-fiber B-model CohFT, a semisimple central-fiber Frobenius manifold, or a central-fiber
matrix $\hat R^B$.

We prove Theorem~\ref{thm:anchor} in four steps. We first prove directly, by the Rauch variational
formula, that the B-model $R$-matrix $\hat R^B$ of Definition~\ref{def:b-rmatrix} is the normalized
canonical calibration of the B-model Frobenius manifold on $\Omega_B$. We then record the formal
A-side Givental--Teleman calibration of the shifted CohFT and its value at the orbifold boundary,
which is the quantum Riemann--Roch matrix computed in Section~\ref{sec:amodel}. The comparison is
first made over the localized completed ring $\mathscr K$; after the residual gauge is removed, it
becomes the analytic chamber comparison. The last part uses the flat-unit period and a parity-even
fixed-node limit to show that this gauge is the identity.

We keep the notation of Sections~\ref{sec:bgraphsum} and~\ref{sec:frobenius}. Thus
\[
  W=-\hat x=2\nu\log\lambda,\qquad u_\alpha=W(p_\alpha),\qquad
  U=\operatorname{diag}(u_\alpha),
\]
and $\hat R^B$ is in the normalized canonical convention
\[
  \hat R^B(0)=I,\qquad
  \hat R^B(z)\hat R^B(-z)^{\mathsf T}=I.
\]

\subsection{The B-model $R$-matrix on the chamber}

Definition~\ref{def:b-rmatrix} defines the B-model $R$-matrix $\hat R^B$ by regularized Laplace
transforms of the bare second-kind forms $\vartheta^0_\beta$. We rewrite that transform using the
evaluated Prym kernel, obtaining the matrix $Q$ below.

Let
\[
  \widetilde u_\alpha:=\hat x(p_\alpha)=-u_\alpha,\qquad
  \widetilde U=\operatorname{diag}(\widetilde u_\alpha)=-U.
\]
Near $p_\alpha$ choose the Morse coordinate $s_\alpha$ of Section~\ref{sec:bgraphsum}, so that
\[
  \hat x=\widetilde u_\alpha+s_\alpha^2.
\]
We also write
\[
  \zeta_\alpha=\sqrt2\,s_\alpha,\qquad
  \hat x=\widetilde u_\alpha+{\zeta_\alpha^2\over2}.
\]
The evaluated Prym kernel is
\[
  \mathcal B_\beta(q)
  :=
  \left.
  {B^-(q,q')\over d\zeta_\beta(q')}
  \right|_{q'=p_\beta}.
\]
We use $\rho$ for the flat row labels of the oscillatory solution. Let $T_\rho$
be the flat cohomology class with label $\rho$, and let $\partial_{x_\rho}$ be
the corresponding flat vector field under the mirror isomorphism of
Theorem~\ref{thm:frob-iso}; in particular
$\partial_{x_{\mathrm{unit}}}=\partial_{x_{l+1}}$. When the row label is written
as a coordinate index $a$, we use the same convention $T_a$ and
$\partial_{x_a}$.

\begin{proposition}[The B-model $R$-matrix on $\Omega_B$]\label{prop:smooth-chamber-calibration}
On $\Omega_B$, define the matrix
\[
  Q_{\beta\alpha}(z)
  :=
  -{\sqrt z\over\sqrt{2\pi}}
  \int_{\Gamma_\alpha}^{\rm reg}
  e^{-(\hat x-\widetilde u_\alpha)/z}\mathcal B_\beta .
\]
Then $Q=\hat R^B$. The matrix $Q$ satisfies $Q(0)=I$,
\[
  Q(z)Q(-z)^{\mathsf T}=I,
\]
and, with
$\Psi^B_{\rho\alpha}
=\eta^{B,\cX}(\partial_{x_\rho},\sqrt{\Delta^\cX_\alpha}e_\alpha)$,
\[
  S^B:=\Psi^BQe^{U/z}
\]
satisfies the B-model Dubrovin equation
\[
  z\,dS^B=C^B S^B.
\]
Consequently $\hat R^B$ is the normalized canonical calibration of $\cM^B|_{\Omega_B}$.
\end{proposition}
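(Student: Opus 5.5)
The proof splits into three parts: the identity $Q=\hat R^B$ by direct comparison of integrands; unitarity and $Q(0)=I$, inherited from Proposition~\ref{prop:prym-double-laplace}; and the Dubrovin equation via a Rauch-type variational formula for the Prym kernel.

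\emph{Step 1: $Q=\hat R^B$.} Compare $\mathcal B_\beta$ with the bare form $\vartheta^0_\beta$ of \eqref{eq:bgraphsum:bare-second-kind}. For $k=0$ that definition gives $\vartheta^0_\beta(q)=-\Res_{q'\to p_\beta}B^-(q,q')s_\beta(q')^{-1}$. Since $B^-(q,\cdot)$ is holomorphic at $p_\beta$ for $q$ away from it, this residue is the evaluation $B^-(q,q')/ds_\beta(q')$ at $q'=p_\beta$. With $\zeta_\beta=\sqrt2 s_\beta$ we get $\mathcal B_\beta=-\vartheta^0_\beta/\sqrt2$. Substituting into the definition of $Q$ gives $-\frac{\sqrt z}{\sqrt{2\pi}}\cdot(-\frac1{\sqrt2})=\frac{\sqrt z}{2\sqrt\pi}$ times the regularized transform. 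Moreover $e^{-(\hat x-\widetilde u_\alpha)/z}=e^{-u_\alpha/z}e^{-\hat x/z}$, since $\widetilde u_\alpha=-u_\alpha$. This is exactly \eqref{eq:bgraphsum:b-rmatrix}, with the same regularization \eqref{eq:bgraphsum:regularized-primary-transform}. Hence $Q=\hat R^B$, and $Q(0)=I$ together with $Q(z)Q(-z)^{\mathsf T}=I$ follow directly from Proposition~\ref{prop:prym-double-laplace}.

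\emph{Step 2: the oscillatory integrals.} For a flat vector field $\partial_{x_\rho}$, form the sign-sector oscillatory integrals
\[
  J_{\rho\alpha}(z)=\int_{\Gamma_\alpha}e^{W/z}\,c^B_\rho\,\phi_D,\qquad c^B_\rho=\partial_{x_\rho}W .
\]
These are well defined because $c^B_\rho$ is $\iota$-invariant and $\phi_D$ is anti-invariant, so the integrand lies in the sign sector. Differentiating under the integral along $\Omega_B$, where the thimbles are locally constant by Definition~\ref{def:bmodel-open}, gives $z\,\partial_{x_a}J=$ (integral of $c^B_a c^B_\rho\phi_D$). Reducing $c^B_a c^B_\rho$ modulo the Jacobian ideal, using the Kodaira--Spencer product of Definition~\ref{def:smooth-invariant-jacobian} and Proposition~\ref{prop:mirror-match}, and integrating by parts the exact remainder, produces the flat Dubrovin connection. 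The $z$-dependent correction coming from the exact terms is absorbed because the metric is the residue pairing with $\phi_D^2/d\hat x$ (Lemma~\ref{lem:quotient-residue-normalization}). Thus $J$, suitably normalized by $\eta^{B,\cX}$, solves $z\,dS=C^BS$.

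\emph{Step 3: matching $J$ with $\Psi^BQe^{U/z}$.} This is the step I expect to be the main obstacle. The idea is to show that $\partial_{x_\rho}\phi_D$ (at fixed $\hat x$) is a combination of the $\mathcal B_\beta$ with coefficients given by the flat-frame entries of $\Psi^B$. This is the Rauch variational formula for $B^-$ on the rational $\mu$-curve: the variation of the primary differential at fixed $\hat x$ equals $\sum_\beta \Res_{p_\beta}\,\partial_{x_\rho}\hat x\,\phi_D\,B^-(\cdot,q')/d\hat x$. Its residue at $p_\beta$ equals $c^B_\rho(p_\beta)$ times the local Gaussian factor $\sqrt{\Delta^\cX_\beta}^{-1}$ times $\mathcal B_\beta$. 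The Gaussian factor is fixed by \eqref{eq:bgraphsum:gaussian-normalization}, and the sign in the Prym kernel is handled by Lemma~\ref{lem:equivariant-cauchy-projection}. Since $\eta^{B,\cX}(\partial_{x_\rho},\sqrt{\Delta_\alpha}e_\alpha)=c^B_\rho(p_\alpha)/\sqrt{\Delta_\alpha}$, this identifies $J$ with $\Psi^BQe^{U/z}$ up to a constant diagonal normalization, which is fixed by the $z\to0$ leading term already computed in Proposition~\ref{prop:prym-double-laplace}.

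The delicate points are the regularization of the double-pole transform, which is compatible with differentiation because the boundary term does not contribute to the formal expansion, and the orbit-summed factor $2$ in the sign sector, which cancels as in the proof of Proposition~\ref{prop:prym-double-laplace}. Once $S^B=\Psi^BQe^{U/z}$ is known to solve the Dubrovin equation with $Q(0)=I$ and $Q$ symplectic, uniqueness of the normalized canonical calibration up to the diagonal gauge $\exp(\mathrm{diag}\sum a_kz^{2k-1})$, together with the stationary-phase normalization, gives the final claim.
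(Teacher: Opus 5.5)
Your Step~1 is the paper's identification $\vartheta^0_\beta=-\sqrt2\,\mathcal B_\beta$, and $Q(0)=I$ and $Q(z)Q(-z)^{\mathsf T}=I$ do follow from Proposition~\ref{prop:prym-double-laplace}. Step~3, which you flag as the main obstacle, is actually routine. Apply the global residue theorem to $B^-(q,p)\,\omega_\rho(p)/d\hat x(p)$ with $\omega_\rho=c^B_\rho\phi_D$, exactly as the paper does for the unit in Proposition~\ref{prop:prym-unit-translation}. This gives $\sum_\beta\omega_{\rho,\beta}\mathcal B_\beta=-d(\omega_\rho/d\hat x)$, and the regularization convention then gives $J=\mathrm{const}\cdot\sqrt z\,\Psi^BQe^{U/z}$ exactly, with no diagonal ambiguity.

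\textbf{The gap is Step~2.} Because of Step~3, the claim $z\,dJ=C^BJ$ is equivalent to the proposition itself, so all the content sits in the sentence you use to justify it, and that sentence does not do so. Differentiating at fixed $\mu$ gives $z\,\partial_aJ_\rho=\int e^{W/z}(c^B_ac^B_\rho+z\,\partial_ac^B_\rho)\phi_D$. Write $c^B_ac^B_\rho=\sum_\sigma c_{a\rho}^{\ \ \sigma}c^B_\sigma+h_{a\rho}\,\mu\partial_\mu W$ and integrate by parts; this leaves the remainder
$$z\int e^{W/z}\bigl(\partial_ac^B_\rho-\mu\partial_\mu h_{a\rho}\bigr)\phi_D .$$
The vanishing of this remainder is an exact identity. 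It holds only in flat coordinates, and only because $\phi_D$ is a primitive form for this superpotential. In other coordinates the same computation yields a Christoffel term, and without the primitive-form property possibly higher powers of $z$. The fact that $\eta^{B,\cX}$ is the residue pairing of $\phi_D^2/d\hat x$ controls only leading (Gaussian) data and does not force the remainder to vanish.

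\textbf{The missing input} is that the rotation coefficients of the flat frame equal the Prym-kernel coefficients $\widetilde\Gamma$. The paper obtains this by showing that each $\omega_a=c^B_a\phi_D$ is a period of $B^-$ over an anti-invariant relative cycle that is flat for the fixed-$\hat x$ Gauss--Manin connection:
\begin{itemize}
\item explicitly, $\omega_{\btau_i}=-\nu\sqrt\nu\int_{\mathcal C_i}B^-$ and $\omega_e=-\tfrac{\sqrt\nu}{2}\int_{\mathcal C_0}B^-$;
\item flatness of these cycles comes from the logarithmic-puncture estimate;
\item a constant mirror-map matrix then passes to the $x_a$.
\end{itemize}
Integrating the reduced Rauch formula \eqref{eq:anchor:rauchs-formula} over these cycles gives \eqref{eq:anchor:flat-primary-variation} for $\Psi^B$. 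Evaluating the same formula at $p_\beta$ gives the matching equation \eqref{eq:anchor:q-differential-equation} for $Q$. Combining the two yields $z\,dS^B=C^BS^B$.

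You could instead cite Dubrovin's twisted-period theorem for Hurwitz-type Frobenius manifolds. However, verifying its hypotheses in this logarithmic, sign-twisted, orbit-summed setting amounts to the same flat-cycle statement.

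A smaller point: your closing appeal to uniqueness up to diagonal gauge is not needed here. Also, the normalization $Q(0)=I$ would not fix that odd diagonal gauge; fixing it is the job of the boundary analysis in Section~\ref{sec:anchor}.
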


\begin{proof}
The relation with Definition~\ref{def:b-rmatrix} is local. Write, near $q'=p_\beta$,
\[
  B^-(q,q')=\bigl(b_0(q)+O(s_\beta(q'))\bigr)\,ds_\beta(q').
\]
By the residue definition of the bare primary second-kind form,
\[
  \vartheta^0_\beta(q)
  =
  -\Res_{q'=p_\beta} B^-(q,q')\,s_\beta(q')^{-1}
  =-b_0(q),
\]
whereas $d\zeta_\beta=\sqrt2\,ds_\beta$, so
\[
  \mathcal B_\beta={b_0\over\sqrt2},
  \qquad
  \vartheta^0_\beta=-\sqrt2\,\mathcal B_\beta.
\]
Substituting this into Definition~\ref{def:b-rmatrix} gives the displayed matrix $Q$.

We now prove the differential equation. For $\alpha\ne\beta$ put
\[
  (\widetilde\Gamma)_{\alpha\beta}
  :=
  \left.
  {B^-(q,q')\over d\zeta_\alpha(q)d\zeta_\beta(q')}
  \right|_{(q,q')=(p_\alpha,p_\beta)},
  \qquad
  (\widetilde\Gamma)_{\alpha\alpha}:=0.
\]
After shrinking the base and passing to a local
cover on which the critical points are labeled, choose small disjoint disks around them and a branch
of $\log\lambda$ on each disk. The functions $\hat x$ on these disks are ordinary holomorphic Morse
functions, and the critical values $\widetilde u_\gamma$ form local coordinates on this cover. We
denote by $\partial_{\widetilde u_\gamma}$ the vector field whose lift to the local total space is
taken at fixed local value of $\hat x$ and whose critical values satisfy
$\partial_{\widetilde u_\gamma}\widetilde u_{\gamma'}=\delta_{\gamma\gamma'}$. Changing the branch of
$\log\lambda$ adds a constant to $\hat x$ on a sector and does not change this horizontal lift,
$d\hat x$, or any residue below.

With this convention the reduced logarithmic Rauch formula for the sign local system is
\begin{equation}
\label{eq:anchor:rauchs-formula}
  \partial_{\widetilde u_\gamma}B^-(q_1,q_2)
  =
  \Res_{q=p_\gamma}
  {B^-(q_1,q)B^-(q_2,q)\over d\hat x(q)}.
\end{equation}
It is obtained from the Rauch variational formula for a meromorphic function by applying it on these
local branches and checking that the logarithmic punctures contribute no residues; compare
\cite[Thms.~7 and~9]{duninbarkowski2016primaryinvariantshurwitz}. The anti-invariant projection is
verified directly below. On the
$\mu$-cover the involution is base-independent, so differentiation commutes with the factorwise
pullback action. In the second variable of a cover kernel write
\[
  P^-_2={1-\iota_2^*\over2}.
\]
With our unhalved Prym-kernel convention,
\[
  B^-=
  B^{\rm std}-(\operatorname{id}\times\iota)^*B^{\rm std}
  =2P^-_2B^{\rm std}.
\]
Thus $2P^-_2$ may be applied to the cover Rauch identity before passing to the quotient labels.
For a non-fixed orbit $\{p,\iota p\}$, the sum of the two cover residues after anti-invariant
projection has numerator
\[
\begin{aligned}
  &B^{\rm std}(q_1,p)\bigl(B^{\rm std}(q_2,p)-B^{\rm std}(q_2,\iota p)\bigr)  \\
  &\quad
    +B^{\rm std}(q_1,\iota p)\bigl(B^{\rm std}(q_2,\iota p)-B^{\rm std}(q_2,p)\bigr)  \\
  &\hspace{2cm}
  =
  \bigl(B^{\rm std}(q_1,p)-B^{\rm std}(q_1,\iota p)\bigr)
  \bigl(B^{\rm std}(q_2,p)-B^{\rm std}(q_2,\iota p)\bigr).
\end{aligned}
\]
This is the product of the two Prym kernels, so the two cover residues combine to one
reduced residue and no additional factor appears.

There are no extra residues at the zeros or poles of $\lambda$. At a point $p_0$ of this divisor,
choose a local coordinate $\tau_{p_0}$ with $\tau_{p_0}(p_0)=0$. Then
\[
  \lambda=\lambda_{p_0}\tau_{p_0}^{m_{p_0}}(1+O(\tau_{p_0})),
  \qquad
  \lambda_{p_0}\ne0,
  \qquad
  m_{p_0}\in\mathbb Z\setminus\{0\},
\]
and
\[
  d\hat x=-2\nu {d\lambda\over\lambda}
  =-2\nu\left(m_{p_0}{d\tau_{p_0}\over\tau_{p_0}}+O(d\tau_{p_0})\right),
\]
and therefore $1/d\hat x=O(\tau_{p_0}/d\tau_{p_0})$. For $q_1,q_2$ away from the puncture,
$B^-(q_i,q)=O(d\tau_{p_0})$, so
\[
  {B^-(q_1,q)B^-(q_2,q)\over d\hat x(q)}=O(\tau_{p_0})\,d\tau_{p_0}
\]
has zero residue. Thus the local branchwise Rauch identities glue to the punctured logarithmic
curve.

Evaluating \eqref{eq:anchor:rauchs-formula} in the second variable at $p_\beta$ gives, for
$\gamma\ne\beta$,
\begin{equation}
\label{eq:anchor:kernel-variation}
  \partial_{\widetilde u_\gamma}\mathcal B_\beta
  =
  (\widetilde\Gamma)_{\gamma\beta}\mathcal B_\gamma.
\end{equation}
Using \eqref{eq:anchor:kernel-variation}, differentiating the regularized Laplace transform at fixed
$\hat x$ gives
\begin{equation}
\label{eq:anchor:q-directional-equation}
  \partial_{\widetilde u_\gamma}Q_{\beta\alpha}
  =
  {\delta_{\gamma\alpha}\over z}Q_{\beta\alpha}
  +(\widetilde\Gamma)_{\gamma\beta}Q_{\gamma\alpha},
  \qquad \gamma\ne\beta.
\end{equation}
The remaining equation follows from the identity direction. Simultaneously translating all critical
values leaves $\hat x-\widetilde u_\alpha$, $B^-$, and $\mathcal B_\beta$ unchanged, hence
$\sum_\gamma\partial_{\widetilde u_\gamma}Q=0$. Together with
\eqref{eq:anchor:q-directional-equation} this gives
\begin{equation}
\label{eq:anchor:q-differential-equation}
  dQ={[Q,d\widetilde U]\over z}+[\widetilde\Gamma,d\widetilde U]Q.
\end{equation}

We translate this to the convention $W=-\hat x$ of Section~\ref{sec:frobenius} by identifying the
rotation coefficients of the B-model primary differentials. For a flat coordinate $x_a$, set
\[
  \omega_a:=c_a^B\phi_D,
  \qquad
  c_a^B=2\nu\partial_{x_a}\log\lambda.
\]
We claim that each $\omega_a$ is a period of the Prym kernel over a Gauss--Manin-flat
anti-invariant relative cycle. We first check this in the logarithmic coordinate basis
$\btau_i=\log_{\rm orb}\kappa_i$ of Definition~\ref{def:mirror-map}. Recall that
\[
  \lambda(w)=K{\prod_i(w-c_i)\over w^2-4},
  \qquad
  c_i=\kappa_i+\kappa_i^{-1}.
\]
Then
\[
  \partial_{\btau_i}\log\lambda
  ={ \kappa_i^{-1}-\kappa_i \over w-c_i},
  \qquad
  w-c_i={(\mu-\kappa_i)(\mu-\kappa_i^{-1})\over \mu},
\]
and therefore
\[
\begin{aligned}
  \omega_{\btau_i}
  &=
  2\nu\sqrt\nu\,{\kappa_i^{-1}-\kappa_i \over w-c_i}{d\mu\over \mu}  \\
  &=-2\nu\sqrt\nu
  \left(
    {d\mu\over \mu-\kappa_i}
    -
    {d\mu\over \mu-\kappa_i^{-1}}
  \right).
\end{aligned}
\]
Let $\mathcal C_i$ be an oriented path from $\kappa_i^{-1}$ to $\kappa_i$. Since inversion sends
$\mathcal C_i$ to the oppositely oriented path, $\iota_*\mathcal C_i=-\mathcal C_i$. Direct integration
of the Prym kernel gives
\[
  \int_{\mathcal C_i}B^-
  =
  2\left(
    {d\mu\over \mu-\kappa_i}
    -
    {d\mu\over \mu-\kappa_i^{-1}}
  \right),
\]
so
\[
  \omega_{\btau_i}=-\nu\sqrt\nu\int_{\mathcal C_i}B^-.
\]
For the scale direction $e=\partial_{x_{l+1}}$, one has $c_e^B=1$ and
$\omega_e=\phi_D=\sqrt\nu\,d\mu/\mu$. If $\mathcal C_0$ is an oriented path from $0$ to $\infty$, then
\[
  \int_{\mathcal C_0}B^-=-2{d\mu\over \mu},
  \qquad
  \omega_e=-{\sqrt\nu\over2}\int_{\mathcal C_0}B^-.
\]
The flat $x_a$-basis is obtained from the $\btau_i$-basis and the scale direction by the constant
mirror-map matrix of Section~\ref{sec:frobenius}. We use the following relative-cycle claim to
differentiate these kernel periods. Let $Z=Z_0\cup Z_\infty$ be the divisor of the moving zeros
and poles of $\lambda$ on the compactified $\mu$-cover.
We work with the anti-invariant part of the relative homology group
\[
  H_1(\bar C_\kappa,Z;\bC)^- .
\]
The classes $\mathcal C_i$ and $\mathcal C_0$ above are chosen as relative classes with
\[
  \partial\mathcal C_i=\kappa_i-\kappa_i^{-1},
  \qquad
  \partial\mathcal C_0=\infty-0,
\]
and with
\[
  \iota_*\mathcal C_i=-\mathcal C_i,
  \qquad
  \iota_*\mathcal C_0=-\mathcal C_0.
\]
Under the Gauss--Manin connection defined by the fixed-$\hat x$ horizontal lift, these relative
classes are flat. If a period of $B^-$ is differentiated along such a class, no independent endpoint
term remains; the only extra terms are the local logarithmic-puncture residues, and these vanish.

To prove the claim, truncate each relative path by deleting disks $|\tau_{p_0}|<\epsilon_0$ around
each point $p_0$ of $Z$. Use the local coordinate $\tau_{p_0}$ and order $m_{p_0}$ fixed above.
Let $X$ be a vector field on the base. Its fixed-$\hat x$ horizontal lift $\widetilde X$ is
characterized by
\[
  X(\hat x)+\iota_{\widetilde X}d\hat x=0.
\]
Near the logarithmic end,
\[
  d\hat x=-2\nu {d\lambda\over\lambda}
  =
  -2\nu m_{p_0}{d\tau_{p_0}\over\tau_{p_0}}+O(d\tau_{p_0}),
\]
while $X(\hat x)$ is bounded in the moving local coordinate $\tau_{p_0}$. Therefore
\[
  \widetilde X(\tau_{p_0})=O(\tau_{p_0}).
\]
Hence the endpoint velocity on the truncation circle satisfies
$d\tau_{p_0}(\dot p)=O(\tau_{p_0})$.
For $q$ away from the puncture, $B^-(q,p)=O(d\tau_{p_0}(p))$, so the endpoint contribution from the
truncation is $O(\epsilon_0)$ and tends to zero. At the two ends of an anti-invariant class the same
statement is compatible with $\iota_*\mathcal C=-\mathcal C$, because $B^-$ is anti-invariant in the
moving variable.
Thus the truncated flat transports have a limit as relative cycles, and the period derivative is
obtained by differentiating the kernel.

It remains to check the puncture residues in the Rauch formula. Since
\[
  {1\over d\hat x}=O\left({\tau_{p_0}\over d\tau_{p_0}}\right),
  \qquad
  B^-(q,\tau_{p_0})=O(d\tau_{p_0})
\]
for $q$ away from the puncture, every puncture term of the form
\[
  {B^-(q_1,\tau_{p_0})B^-(q_2,\tau_{p_0})\over d\hat x(\tau_{p_0})}
\]
is $O(\tau_{p_0})d\tau_{p_0}$ and has zero residue. If one factor is replaced by an integrated primary
differential $\omega_a=\int_{\mathcal C_a}B^-$, the explicit formulas above show that
$\omega_a=O(d\tau_{p_0}/\tau_{p_0})$ at worst. Hence
\[
  {B^-(q,\tau_{p_0})\omega_a(\tau_{p_0})\over d\hat x(\tau_{p_0})}
  =
  O(d\tau_{p_0}),
\]
which is holomorphic at the puncture and again has zero residue.

Hence every $\omega_a$ has the form
\[
  \omega_a(q)=\int_{\mathcal C_a}B^-(q,p)
\]
for a Gauss--Manin-flat anti-invariant relative cycle $\mathcal C_a$.

We may now integrate the Rauch formula over $\mathcal C_a$. Since the cycle is flat,
\[
\begin{aligned}
  \nabla_{\partial_{\widetilde u_\gamma}}\omega_a(q)
  &=
  \int_{\mathcal C_a}
  \nabla_{\partial_{\widetilde u_\gamma}}B^-(q,p) \\
  &=
  \Res_{p'=p_\gamma}
  {B^-(q,p')\int_{\mathcal C_a}B^-(p,p')\over d\hat x(p')} \\
  &=
  \Res_{p'=p_\gamma}
  {B^-(q,p')\omega_a(p')\over d\hat x(p')}.
\end{aligned}
\]
The puncture terms vanish by the calculation above.

Near $p_\gamma$ write
\[
  B^-(q,p')=\bigl(\mathcal B_\gamma(q)+O(\zeta_\gamma(p'))\bigr)d\zeta_\gamma(p'),
  \qquad
  \omega_a(p')=\bigl(\omega_{a,\gamma}+O(\zeta_\gamma(p'))\bigr)d\zeta_\gamma(p'),
\]
where
\[
  \omega_{a,\gamma}:=
  \left.{\omega_a\over d\zeta_\gamma}\right|_{p_\gamma}.
\]
Since $d\hat x=\zeta_\gamma d\zeta_\gamma$, the residue gives
\[
  \nabla_{\partial_{\widetilde u_\gamma}}\omega_a(q)
  =
  \mathcal B_\gamma(q)\omega_{a,\gamma}.
\]
Evaluating at $p_\beta$, for $\beta\ne\gamma$, gives
\[
  \partial_{\widetilde u_\gamma}\omega_{a,\beta}
  =
  (\widetilde\Gamma)_{\beta\gamma}\omega_{a,\gamma}.
\]
As before, simultaneously translating every critical
value leaves $d\hat x$, the Prym kernel, and the flat relative cycles unchanged, so
\[
  \sum_\gamma\partial_{\widetilde u_\gamma}\omega_{a,\beta}=0.
\]
Together with the off-diagonal equations, this is the full rotation-coefficient system. Fix a
square root $\mathfrak g_\nu$ of $8\nu^3$; it normalizes the primary differentials below. If
\[
  a_\beta=
  \left.{d\log\mu\over ds_\beta}\right|_{p_\beta},
\]
then $d\zeta_\beta=\sqrt2\,ds_\beta$ and
\[
  \left.{\phi_D\over d\zeta_\beta}\right|_{p_\beta}
  =
  {\sqrt\nu\over\sqrt2}a_\beta.
\]
The residue normalization of Section~\ref{sec:frobenius} is
\[
  {1\over \Delta_\beta^\cX}={a_\beta^2\over8\nu^2}.
\]
With the chosen square roots this gives
\begin{equation}
\label{eq:anchor:omega-normalization}
  {\sqrt2\over\mathfrak g_\nu}
  \left.{\phi_D\over d\zeta_\beta}\right|_{p_\beta}
  =
  {1\over\sqrt{\Delta_\beta^\cX}}.
\end{equation}
Therefore
\[
  {\sqrt2\over \mathfrak g_\nu}\omega_{a,\beta}
  =
  {c_a^B(p_\beta)\over\sqrt{\Delta_\beta^\cX}}.
\]
This is the transpose entry of the B-model transition matrix, since
\[
  \Psi^B_{a\beta}
  =
  \eta^{B,\cX}\left(\partial_{x_a},\sqrt{\Delta_\beta^\cX}e_\beta\right)
  =
  {c_a^B(p_\beta)\over\sqrt{\Delta_\beta^\cX}}.
\]
Hence
\begin{equation}
\label{eq:anchor:flat-primary-variation}
  d(\Psi^B)^{\mathsf T}=[\widetilde\Gamma,d\widetilde U](\Psi^B)^{\mathsf T}.
\end{equation}
Set
\[
  \Gamma^B=-\widetilde\Gamma.
\]
Since $\widetilde U=-U$, transposing \eqref{eq:anchor:flat-primary-variation} gives
\begin{equation}
\label{eq:anchor:psi-variation}
  d\Psi^B=-\Psi^B[\Gamma^B,dU],
\end{equation}
and \eqref{eq:anchor:q-differential-equation} becomes
\begin{equation}
\label{eq:anchor:q-paper-equation}
  dQ={[dU,Q]\over z}+[\Gamma^B,dU]Q.
\end{equation}
Using \eqref{eq:anchor:psi-variation} and \eqref{eq:anchor:q-paper-equation}, we obtain
\[
\begin{aligned}
  z\,d(\Psi^BQ)
  &=
  z\,d\Psi^B\,Q+z\,\Psi^B\,dQ  \\
  &=
  -z\Psi^B[\Gamma^B,dU]Q
  +\Psi^B[dU,Q]
  +z\Psi^B[\Gamma^B,dU]Q   \\
  &=\Psi^B[dU,Q].
\end{aligned}
\]
Therefore
\[
\begin{aligned}
  z\,d(\Psi^BQe^{U/z})
  &=
  \Psi^B[dU,Q]e^{U/z}+\Psi^BQ\,dU\,e^{U/z}  \\
  &=
  \Psi^B dU\,Qe^{U/z}.
\end{aligned}
\]
Because $\Psi^B$ diagonalizes the Frobenius product, $C^B\Psi^B=\Psi^B dU$, and hence
\[
  z\,dS^B=C^B S^B.
\]

Finally, the local pole
\[
  \vartheta^0_\alpha=-{ds_\alpha\over s_\alpha^2}+O(1)\,ds_\alpha
\]
gives $Q(0)=I$. Proposition~\ref{prop:prym-double-laplace} gives the unitarity
$Q(z)Q(-z)^{\mathsf T}=I$. Thus $Q$ is the normalized canonical calibration, and by the definition of
$\hat R^B$ it is $\hat R^B$.
\end{proof}

\begin{proposition}[Flat-unit compatibility]\label{prop:prym-unit-translation}
Put
\[
  v_\beta={1\over\sqrt{\Delta^\cX_\beta}}
  ={\check h^\beta_1\over\sqrt{-2}},
  \qquad
  L^B_\alpha(z):=\sum_\beta v_\beta\,\hat R^B_{\beta\alpha}(z).
\]
Then
\[
  L^B_\alpha(z)
  ={1\over\sqrt{-2}}\sum_{k\ge1}\check h^\alpha_k z^{k-1}.
\]
Equivalently,
\[
  \check h^\alpha_k
  =
  [z^{k-1}]
  \sum_\beta \check h^\beta_1\hat R^B_{\beta\alpha}(z).
\]
\end{proposition}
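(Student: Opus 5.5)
The plan is to compute the regularized Laplace transform of the flat-unit primary differential in two ways. The first way uses the kernel-period representation from the proof of Proposition~\ref{prop:smooth-chamber-calibration}. The second way is direct stationary phase on the odd part of $\widetilde y_\alpha$.

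For the first computation, recall that the unit direction $e=\partial_{x_{l+1}}$ has $c^B_e=1$ and $\omega_e=\phi_D=-\tfrac{\sqrt\nu}{2}\int_{\mathcal C_0}B^-$. Integrating $B^-(q,p)$ over $p\in\mathcal C_0$ and expanding near each $p_\beta$ gives $\omega_e(q)=\sum_\beta \omega_{e,\beta}\,\bigl(\text{evaluated kernel }\mathcal B_\beta(q)\bigr)$ modulo terms regular at every critical point. I would make this precise by taking the thimble transform $\int^{\rm reg}_{\Gamma_\alpha}e^{-(\hat x-\widetilde u_\alpha)/z}\phi_D$ and expressing it through the Rauch/Cauchy identity. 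More concretely, the Prym double-Laplace factorization of Proposition~\ref{prop:prym-double-laplace}, integrated in one variable over the flat cycle $\mathcal C_0$, writes the Laplace transform of $\phi_D$ along $\Gamma_\alpha$ as $\sum_\beta \omega_{e,\beta}Q_{\beta\alpha}(z)$, up to the normalization $-\sqrt z/\sqrt{2\pi}$. By \eqref{eq:anchor:omega-normalization}, $\tfrac{\sqrt2}{\mathfrak g_\nu}\omega_{e,\beta}=1/\sqrt{\Delta^\cX_\beta}=v_\beta$. Combined with $Q=\hat R^B$, this yields
\[
\text{(normalized transform of }\phi_D\text{ on }\Gamma_\alpha)=\sum_\beta v_\beta\hat R^B_{\beta\alpha}(z)=L^B_\alpha(z).
\]
Here the identity $v_\beta=\check h^\beta_1/\sqrt{-2}$ is \eqref{eq:bgraphsum:gaussian-normalization}.

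For the second computation, evaluate the same transform locally at $p_\alpha$. Since $\phi_D=\sqrt\nu\,d\log\mu$ and $\widetilde y_\alpha=\tfrac{\sqrt{-1}}{2\nu}(\log\mu-\text{const})$, one has $\phi_D=-2\sqrt{-1}\,\nu^{3/2}\,d\widetilde y_\alpha$. The Gaussian integral against $e^{-s_\alpha^2/z}$ kills the even part of $d\widetilde y_\alpha$ in $s_\alpha$. Expanding the odd part with \eqref{eq:bgraphsum:dilaton-coefficients}, the differential of $s^{2k-1}$ is $(2k-1)s^{2k-2}ds$. This produces $\int e^{-s^2/z}s^{2k-2}ds=\sqrt{\pi z}\,\tfrac{(2k-3)!!}{2^{k-1}}z^{k-1}$, and the factors $2^{k-1}/(2k-1)!!$ are arranged precisely so that the $k$-th term is $\check h^\alpha_k z^{k-1}$ times a $k$-independent constant. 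Matching the constant term at $k=1$ against the first computation, where $\hat R^B(0)=I$ gives $v_\alpha$, fixes that constant as $1/\sqrt{-2}$. The same scalar then multiplies every $k$, which gives the stated formula.

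The main obstacle is the first step: justifying that the flat-unit differential's Laplace transform factors through $Q$ with coefficient exactly $\omega_{e,\beta}$. This needs the unit period $\mathcal C_0$, which runs from $0$ to $\infty$ through logarithmic punctures, to be a Gauss--Manin flat relative cycle. The puncture residue vanishing established in the proof of Proposition~\ref{prop:smooth-chamber-calibration} should handle this. Alternatively, I would argue via the differential equation: both $\Psi^B$'s unit row times $Qe^{U/z}$ and the column of thimble transforms of $\phi_D$ solve $z\,dS=C^BS$ with the same $z\to0$ leading term, so they agree. Care is also needed with the branches of $\sqrt{-2}$, $\mathfrak g_\nu$ and the sign of $\sqrt{-1}$ so that the constant term matches; I would fix these by checking the $k=1$ term explicitly.
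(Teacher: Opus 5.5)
Your two-step plan has the same shape as the paper's proof. You identify $L^B_\alpha$ with a Gaussian-normalized thimble transform of $\phi_D=\mathfrak g_\nu\,d\widetilde y/\sqrt{-2}$, then compute moments of the odd part of $\widetilde y$. Your second computation is correct; differentiating the odd part directly is equivalent to the paper's extra integration by parts against $\widetilde y\,d\hat x$. The gap is in the first computation, which is the real content of the proposition.

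\textbf{The stated first step is false.} You claim $\omega_e\equiv\sum_\beta\omega_{e,\beta}\mathcal B_\beta$ modulo terms regular at the critical points. But each $\mathcal B_\beta=-\vartheta^0_\beta/\sqrt2$ has a double pole at $p_\beta$, while $\omega_e=\phi_D$ is holomorphic there and $\omega_{e,\beta}\neq0$. Correspondingly, the thimble transform of $\phi_D$ is not $\sum_\beta\omega_{e,\beta}\int^{\rm reg}_{\Gamma_\alpha}e^{-\hat x/z}\mathcal B_\beta$ up to the $\sqrt z$ prefactor of $Q$; it is $-z$ times it. With your normalization one side has constant term $\omega_{e,\alpha}\neq0$ and the other is $O(z)$. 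So matching at $k=1$ cannot fix the constant; it would expose the inconsistency.

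\textbf{The missing identity.} What you need is the exact Cauchy relation
\[
  \sum_\beta\omega_{e,\beta}\mathcal B_\beta(q)=-d_q\Bigl({\omega_e\over d\hat x}\Bigr)(q).
\]
The paper proves it by the global residue theorem on the compactified $\mu$-cover, applied to $p\mapsto B^-(q,p)\omega_e(p)/d\hat x(p)$:
\begin{itemize}
\item the two lifts of each quotient critical point together contribute $2\sum_\beta\omega_{e,\beta}\mathcal B_\beta(q)$;
\item the Cauchy poles $p=q$ and $p=\iota q$ each contribute $d_q(\omega_e/d\hat x)$, because $\omega_e/d\hat x$ is anti-invariant;
\item the logarithmic punctures contribute nothing, since $\omega_e/d\hat x$ is regular there.
\end{itemize}
The regularized transform with primitive $\omega_e/d\hat x$ then gives $\frac1z\int_{\Gamma_\alpha}e^{-\hat x/z}\omega_e$. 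Together with $v_\beta=\sqrt2\,\omega_{e,\beta}/\mathfrak g_\nu$ and $Q=\hat R^B$, this yields
\[
  L^B_\alpha(z)={1\over\mathfrak g_\nu}{e^{-u_\alpha/z}\over\sqrt{\pi z}}\int_{\Gamma_\alpha}e^{-\hat x/z}\phi_D ,
\]
with prefactor $1/\sqrt z$ rather than $\sqrt z$. This step needs neither the Gauss--Manin flatness of $\mathcal C_0$, which you flagged as the main obstacle, nor the double-Laplace factorization.

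\textbf{The fallback does not work either.} Your Dubrovin-equation argument fails because normalized solutions $\Psi^B R\,e^{U/z}$ with $R(0)=I$ are unique only up to a base-constant diagonal gauge $D(z)$ with $D(z)D(-z)=I$ (Lemma~\ref{lem:formal-canonical-uniqueness}). Agreement of leading terms does not detect such a gauge. You would also first have to show that the thimble transforms of all $c^B_a\phi_D$ assemble into a solution.
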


\begin{proof}
The normalized unit primary differential satisfies
\[
  {\phi_D\over\mathfrak g_\nu}
  ={d\widetilde y\over\sqrt{-2}}.
\]
Put $\omega_e=\phi_D$.
We first prove the Prym Cauchy identity
\begin{equation}
\label{eq:anchor:prym-cauchy-unit}
  \sum_{\beta}
  \omega_{e,\beta}\mathcal B_\beta(q)
  =
  -d_q\left({\omega_e\over d\hat x}\right)(q).
\end{equation}
Here $\omega_{e,\beta}$ is defined by the expansion
\[
  \omega_e(p)=\bigl(\omega_{e,\beta}+O(\zeta_\beta)\bigr)d\zeta_\beta
\]
at a lift of the critical point $p_\beta$. Let $q$ be away from the critical points and logarithmic
punctures. Apply the global residue theorem in the $p$-variable on the compactified $\mu$-cover to
\[
  \Xi_q(p):=B^-(q,p){\omega_e(p)\over d\hat x(p)}
  ={B^-(q,p)\omega_e(p)\over d\hat x(p)}.
\]
Near a lifted critical point $p_\beta$ we have
\[
  d\hat x=\zeta_\beta\,d\zeta_\beta,
  \qquad
  B^-(q,p)=\bigl(\mathcal B_\beta(q)+O(\zeta_\beta)\bigr)d\zeta_\beta,
\]
and therefore
\[
  \Res_{p=p_\beta}\Xi_q(p)
  =
  \omega_{e,\beta}\mathcal B_\beta(q).
\]
The product $\Xi_q$ is invariant in the $p$-variable under $\iota$: both $B^-(q,p)$ and
$\omega_e(p)/d\hat x(p)$ are anti-invariant. Thus the two lifts of each quotient critical point have
equal residues, and the sum of the critical-point residues on the cover is
\[
  2\sum_\beta \omega_{e,\beta}\mathcal B_\beta(q).
\]
The only remaining finite poles are $p=q$ and $p=\iota(q)$. Since
$\omega_e/d\hat x$ is anti-invariant, the residues at these two Cauchy poles both equal
$d_q(\omega_e/d\hat x)(q)$. The logarithmic-puncture residues vanish by the same estimate as in the
proof of Proposition~\ref{prop:smooth-chamber-calibration}. Hence the global residue theorem gives
\[
  0=
  2\sum_\beta \omega_{e,\beta}\mathcal B_\beta(q)
  +2d_q\left({\omega_e\over d\hat x}\right)(q).
\]

Applying \eqref{eq:anchor:omega-normalization} to the unit direction $e$, where $c_e^B=1$, gives
\[
  v_\beta={\sqrt2\over\mathfrak g_\nu}\omega_{e,\beta}.
\]
Using Proposition~\ref{prop:smooth-chamber-calibration} and \eqref{eq:anchor:prym-cauchy-unit}, we
obtain
\[
\begin{aligned}
  L^B_\alpha(z)
  &=
  -{\sqrt z\over\mathfrak g_\nu\sqrt\pi}\,
  e^{-u_\alpha/z}
  \int_{\Gamma_\alpha}^{\rm reg}
  e^{-\hat x/z}
  \sum_\beta \omega_{e,\beta}\mathcal B_\beta   \\
  &=
  {\sqrt z\over\mathfrak g_\nu\sqrt\pi}\,
  e^{-u_\alpha/z}
  \int_{\Gamma_\alpha}^{\rm reg}
  e^{-\hat x/z}\,d\left({\omega_e\over d\hat x}\right).
\end{aligned}
\]
The primitive in the regularization is $\omega_e/d\hat x$, so integration by parts in the sense of
Definition~\ref{def:b-rmatrix} gives
\[
  \int_{\Gamma_\alpha}^{\rm reg}
  e^{-\hat x/z}\,d\left({\omega_e\over d\hat x}\right)
  =
  {1\over z}
  \int_{\Gamma_\alpha}e^{-\hat x/z}\omega_e .
\]
Since $\omega_e/\mathfrak g_\nu=d\widetilde y/\sqrt{-2}$, it follows that
\begin{equation}
\label{eq:anchor:unit-row-integral}
  L^B_\alpha(z)
  =
  {1\over\sqrt{-2}}\,
  {e^{-u_\alpha/z}\over\sqrt{\pi z}}
  \int_{\Gamma_\alpha} e^{-\hat x/z}\,d\widetilde y.
\end{equation}
Write $\hat x=-u_\alpha+s_\alpha^2$. Since the thimble has rapid decay at its ends,
\[
  0=\int_{\Gamma_\alpha} d(e^{-\hat x/z}\widetilde y)
  =
  \int_{\Gamma_\alpha}e^{-\hat x/z}d\widetilde y
  -{1\over z}\int_{\Gamma_\alpha}e^{-\hat x/z}\widetilde y\,d\hat x.
\]
Only the odd part of $\widetilde y$ in the Morse coordinate contributes, and by
\eqref{eq:bgraphsum:dilaton-coefficients},
\[
  \widetilde y(s)-\widetilde y(-s)
  =
  2\sum_{k\ge1}{2^{k-1}\over(2k-1)!!}\check h^\alpha_k s^{2k-1}.
\]
Since $d\hat x=2s\,ds$,
\[
  {e^{-u_\alpha/z}\over\sqrt{\pi z}}\,{1\over z}
  \int_{\Gamma_\alpha}
  e^{-\hat x/z}
  {2^{k-1}\over(2k-1)!!}s^{2k-1}\,d\hat x
  =
  z^{k-1}.
\]
Substituting this moment calculation into \eqref{eq:anchor:unit-row-integral} gives the displayed
series for $L^B_\alpha$.
\end{proof}

\subsection{The A-side formal calibration}

We next specify the A-side matrix that is compared with $\hat R^B$. At this stage the comparison is
formal at the orbifold point. We construct a completed shifted cohomological field theory (CohFT)
calibration and its closed-point value; we do not evaluate an arbitrary formal power series at a
nonzero chamber point.

Let
\[
  A=\widehat{\mathcal O}_{\mathcal U_A,0}
  \cong \bC(\nu)[[x_1,\ldots,x_{l+1}]]
\]
be the completed local ring of the A-side primary parameter space at the orbifold point, and let
$A^{\rm can}$ be the finite \'etale canonical square-root extension of $A$ obtained by adjoining the
chosen square roots $\sqrt{\Delta_\alpha(x)}$ of the canonical norms. Let $\mathcal O^{\rm can}_{B,0}$
be the convergent B-side local ring for the corresponding B-side branch choices. The centered mirror
map is holomorphic by Proposition~\ref{prop:aside-analytic-neighborhood}; after completing at the
orbifold point and passing to the chosen canonical cover, the completed mirror map of
Definition~\ref{def:mirror-map} identifies the completion of $\mathcal O^{\rm can}_{B,0}$ with
$A^{\rm can}$.

\begin{definition}[The localized completed ring $\mathscr K$]
\label{def:ring-K}
Let
$\mathcal O'$ be a finite normal analytic extension of $\mathcal O^{\rm can}_{B,0}$ on which all chosen
critical points, canonical square roots, and Morse-branch choices are single-valued. We replace
$\mathcal O'$ by the local normal analytic component containing the chosen lift of
$\Omega_B$. Thus $\mathcal O'$ is a local normal domain. Let $\widehat{\mathcal O}'$ be its completion
at the orbifold point; since analytic normal local rings are excellent, this completion is
again a domain. If $\mathfrak s$ is the product of the local equations for the pole-cancellation
strata and the critical-discriminant factors inverted on the oscillatory chamber, then
$\mathfrak s$ is a non-zero-divisor, and we define the localized completed ring
\[
  \mathscr K=\widehat{\mathcal O}'[\mathfrak s^{-1}].
\]
\end{definition}

Since $\mathcal O'$ and $\widehat{\mathcal O}'$ are domains and $\mathfrak s\ne0$, the localization
$\mathscr K=\widehat{\mathcal O}'[\mathfrak s^{-1}]$ is taken inside the fraction field of
$\widehat{\mathcal O}'$.

\begin{proposition}[A-side formal calibration and $\hat R^B$ over $\mathscr K$]
\label{prop:aside-chamber-calibration}
Over $A^{\rm can}$, the shifted
$\bT$-equivariant CohFT of $\cX$ has a normalized canonical calibration
\[
  \hat R_A^\cX(z)
  \in
  I+z\operatorname{End}(H_{\CR,\bT}^*(\cX))\otimes A^{\rm can}[[z]].
\]
Its specialization at the closed point is
\[
  \operatorname{sp}_0(\hat R_A^\cX(z))
  =
  \hat R^\cX_{\rm orb}(z)
  =
  E(z)\hat R^{A,\mathrm{surf}}(z).
\]
Each coefficient of the B-model $R$-matrix $\hat R^B$ lies in $\mathcal O'[\mathfrak s^{-1}]$
before completion, with $\mathcal O'$ and $\mathfrak s$ as in Definition~\ref{def:ring-K}, and
$\hat R^B$ defines an element of
\[
  I+z\operatorname{End}(H_{\CR,\bT}^*(\cX))\otimes\mathscr K[[z]].
\]
The pullback of $\hat R_A^\cX$ to $\mathscr K$ is the A-side formal normalized canonical solution
which will be compared with it below.
\end{proposition}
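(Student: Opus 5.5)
The statement has three parts: existence of the A-side calibration over $A^{\rm can}$ with the stated specialization, membership of $\hat R^B$ in $\mathcal O'[\mathfrak s^{-1}]$, and the resulting comparison setting over $\mathscr K$. I will treat them in that order.

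\emph{A-side calibration.} By Proposition~\ref{prop:aside-analytic-neighborhood}, the formal completion at $0$ of the holomorphic Frobenius manifold on $\mathcal U_A$ is the formal equivariant quantum cohomology of $\cX$. Its special fiber is the semisimple algebra $Z(\bC(\nu)[\Gamma])$. Semisimplicity is an open condition, and the idempotents lift uniquely by Hensel's lemma in the complete local ring $A$. So the canonical frame $\{e_\alpha\}$ and the norms $\Delta_\alpha(x)$ are defined over $A$. After adjoining $\sqrt{\Delta_\alpha}$, which is finite \'etale because each $\Delta_\alpha(0)$ is a unit by Lemma~\ref{lem:frob}, the normalized frame is defined over $A^{\rm can}$.

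By Teleman's classification applied to the shifted CohFT, the Givental--Teleman $R$-matrix is determined by the CohFT. Concretely, I would take the shifted CohFT to be the $\tau$-shift of the twisted $B\Gamma$ theory of Definition~\ref{lem:loc-twisted-bg}. Over a complete local base, the shifted CohFT equals $\hat R\cdot T\cdot\omega^{\rm top}$. One could worry that the canonical coordinates $u_\alpha$ are only defined over the completed ring. To handle this, I would construct $\hat R_A^\cX$ coefficientwise in $z$ by the standard recursion $[R_{k+1},dU]=(d+\Psi^{-1}d\Psi)R_k$. The off-diagonal part of $R_{k+1}$ is solved by dividing by $u_\alpha-u_\beta$. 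The diagonal part is fixed by integrating, with the integration constants chosen so that the specialization at $x=0$ is the quantum Riemann--Roch matrix.

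The division by $u_\alpha-u_\beta$ is the main obstacle, since $u_\alpha-u_\beta$ is not a unit of $A$ at the origin. Instead I would argue directly that the CohFT itself, which is defined over $A$ as a formal shift of the orbifold-point CohFT, gives the $R$-matrix as a genus-zero two-point descendant object. It is the normalized fundamental solution $R(z)=\Psi^{-1}S(z)e^{-U/z}$, where $S$ is built from the descendant $J$-function over $A$. The specialization at the closed point is then the Riemann--Roch matrix of Proposition~\ref{prop:rmatrix} and Lemma~\ref{lem:thirdleg}, because the shift is trivial at $x=0$ and the quantum Riemann--Roch operator is unique with constant term $I$. This gives $\operatorname{sp}_0=E(z)\hat R^{A,\rm surf}$.

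\emph{B-side regularity.} Each coefficient of $\hat R^B$ is a finite combination of Taylor coefficients of $B^-$ and of $\hat x$ in the Morse coordinates at the critical points. Equivalently, by Proposition~\ref{prop:smooth-chamber-calibration}, the coefficients come from the recursion for $Q$ with rotation coefficients $\widetilde\Gamma$. These are algebraic in the critical points $w_\alpha$, the branches $\mu_\alpha$, and $a_\alpha^{-1}$. They have denominators only powers of $N'(w_\alpha)$ (the Morse nondegeneracy), of $u_\alpha-u_\beta$, and of the pole-cancellation factors. All of these are single-valued on $\mathcal O'$ and are inverted in $\mathfrak s$.

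Since $\mathcal O'$ embeds in its completion, which is a domain, and $\mathfrak s\neq0$, the coefficients define elements of $\mathscr K$. The A-side $\hat R_A^\cX$ pulls back to $\mathscr K$ through the completed mirror map identification $\widehat{\mathcal O^{\rm can}_{B,0}}\cong A^{\rm can}$ and the inclusion into $\widehat{\mathcal O}'$. This gives the last assertion of the statement.
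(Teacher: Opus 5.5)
\textbf{Gap in the construction of $\hat R_A^\cX$.}
Your final formula $R(z)=\Psi^{-1}S(z)e^{-U/z}$, with $S$ the genus-zero fundamental solution, cannot be right. Two fundamental solutions of the quantum differential equation differ by an $x$-independent right factor, and your formula omits it.

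Test it at $x=0$. For this affine target there are no unstable two-point contributions, so $S_{x=0}(z)=I$. Your formula then returns $\Psi(0)^{-1}e^{-U(0)/z}$. This is not of the form $I+O(z)$, and it is not $E(z)\hat R^{A,\mathrm{surf}}(z)$, so the specialization you claim does not follow from your own construction.

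The missing factor is not genus-zero information. In this non-conformal equivariant setting, genus-zero data determine a normalized canonical solution only up to a base-constant diagonal gauge $\exp(\sum_j\gamma_{\alpha,j}z^{2j+1})$. This is the ambiguity of Lemma~\ref{lem:formal-canonical-uniqueness}; such factors are Hodge-class twists at the vertices and are invisible in genus zero.

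The paper fixes $\hat R_A^\cX$ using the full shifted CohFT. It runs Teleman's reconstruction, with the solution selected by the shifted CohFT, over the Artin quotients $A^{\rm can}/\mathfrak m^{M+1}$ modulo $z^{N+1}$. There only finitely many CohFT tensors and inverses of semisimplicity units enter. It then takes the inverse limit and uses compatibility with base change to identify $\operatorname{sp}_0\hat R_A^\cX$ with the reconstruction matrix of the unshifted theory, which Proposition~\ref{prop:rmatrix} and Lemma~\ref{lem:thirdleg} compute. Your one-line appeal to Teleman ``over a complete local base'' assumes exactly what this Artin-thickening argument supplies.

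\textbf{The recursion route you dropped.}
The obstacle that made you abandon it is not real. In $[dU,R_{k+1}]_{\alpha\beta}=(du_\alpha-du_\beta)(R_{k+1})_{\alpha\beta}$ you extract the off-diagonal entries by contracting with the idempotent field $e_\alpha=\partial_{u_\alpha}$, on which $du_\alpha-du_\beta$ equals $1$. Division by $u_\alpha-u_\beta$ appears only in the conformal Euler-field formula.

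So the recursion over $A^{\rm can}$, with diagonal integration constants fixed by $\operatorname{sp}_0=E\hat R^{A,\mathrm{surf}}$, does give a unique normalized solution with the right specialization, by the argument of Lemma~\ref{lem:formal-canonical-uniqueness}. To call it \emph{the} calibration of the shifted CohFT, you still need that the Teleman $R$-matrix of the shifted family solves the flatness equation and restricts at $x=0$ to the unshifted one. That is again the base-change content of the paper's argument.

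\textbf{B-side part.}
Your B-side paragraph is essentially the paper's: the stationary-phase coefficients are universal polynomials in Morse jets and inverse Hessians at a single critical point. Two corrections. First, $u_\alpha-u_\beta$ does not occur among the denominators. Second, the rotation-coefficient recursion of Proposition~\ref{prop:smooth-chamber-calibration} would not by itself determine the coefficients, since it leaves the diagonal constants free.
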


\begin{proof}
Let
\[
  \mathbf x=\sum_a x_aT_a\in A\otimes H_{\CR,\bT}^*(\cX).
\]
We define the $A$-valued shifted CohFT by
\begin{equation}
\label{eq:anchor:shifted-cohft}
  \Omega^A_{g,n}(\gamma_1,\ldots,\gamma_n)
  =
  \sum_{m\geq0}{1\over m!}
  (\pi_m)_*\Omega_{g,n+m}(\gamma_1,\ldots,\gamma_n,\mathbf x,\ldots,\mathbf x).
\end{equation}
Coefficientwise in $A$, only finitely many terms in the sum contribute. Its genus-zero product
is the formal quantum product at the universal point $\mathbf x$.

At $x=0$ the Frobenius algebra is the semisimple class algebra $Z(\bC(\nu)[\Gamma])$ by
Theorem~\ref{thm:amodel}. Its discriminant is a unit in the completed local ring, since its
closed-point value is nonzero. Therefore the Frobenius algebra
\[
  H_A=A\otimes H_{\CR,\bT}^*(\cX)
\]
is finite \'etale over $A$, and the primitive idempotents lift uniquely from the orbifold point:
\[
  e_\alpha(x)\in H_A,
  \qquad
  e_\alpha(0)=e_\alpha^{\rm orb}.
\]
Their norms are units. By the definition of the finite \'etale canonical square-root extension
$A^{\rm can}$, we obtain the normalized canonical basis
\[
  \widetilde e_\alpha(x)=\sqrt{\Delta_\alpha(x)}e_\alpha(x).
\]

We now apply Givental--Teleman reconstruction over Artin thickenings of the coefficient ring. The
shifted theory is still a CohFT with flat unit: insertion of the universal primary class $\mathbf x$
in \eqref{eq:anchor:shifted-cohft} preserves the unit and gluing axioms. We do not choose an arbitrary
symplectic fundamental solution of the quantum differential equation. The matrix
$\hat R_A^\cX$ is, by definition, the Givental--Teleman calibration selected by the actual shifted
equivariant CohFT.

Concretely, work modulo $\mathfrak m^{M+1}$ and $z^{N+1}$. Teleman's reconstruction equations
\cite{teleman2012structure2dsemisimplefield}, with the solution selected by the shifted CohFT, use
only finitely many shifted CohFT tensors and inverses of the units supplied by semisimplicity. They
therefore give a unique compatible solution over
$A^{\rm can}/\mathfrak m^{M+1}$ to order $z^N$. These finite solutions are compatible as $M$ and $N$
increase, and their inverse limit is the displayed matrix $\hat R_A^\cX(z)$. This construction is
compatible with continuous local base change, and in particular with Artin quotients and with the
closed-point specialization below.
Let $\operatorname{sp}_0:A^{\rm can}\to \bC(\nu)$ be evaluation at $x=0$. Then
$\operatorname{sp}_0(\hat R_A^\cX)$ is the reconstruction matrix of the unshifted orbifold theory.
Section~\ref{sec:amodel}, namely Proposition~\ref{prop:rmatrix} together with Lemma~\ref{lem:thirdleg},
computes this normalized canonical matrix as
\[
  \operatorname{sp}_0(\hat R_A^\cX(z))=E(z)\hat R^{A,\mathrm{surf}}(z).
\]

It remains to verify that $\hat R^B$ is defined over the ring $\mathscr K$ of
Definition~\ref{def:ring-K}. On
$\operatorname{Spec}\mathcal O'[\mathfrak s^{-1}]$ the critical points, Hessians, canonical norms, and
Morse jets are single-valued analytic functions, with inverse Hessians regular after inverting
$\mathfrak s$.

Each coefficient of the stationary-phase expansion of the regularized Laplace integral defining
$\hat R^B$ is a universal polynomial in finitely many Morse jets and inverse Hessians. Therefore these
coefficients lie in the convergent localized algebra $\mathcal O'[\mathfrak s^{-1}]$. Completing gives the
claimed element over
\[
  \mathscr K=\widehat{\mathcal O}'[\mathfrak s^{-1}],
\]
and the pullback of $\hat R_A^\cX$ to $\mathscr K$ is a formal normalized canonical solution for the
same Frobenius manifold after Theorem~\ref{thm:frob-iso}.
\end{proof}

\subsection{Boundary series at the orbifold point}

The gauge calculation only needs the flat-unit boundary series. We compute it separately for free
main labels and for the two fixed-node pairs. The fixed-node argument is parity-even; it does not
take a finite part of an individual smooth column.

Until the gauge is removed, all fixed-node boundary limits are taken along one common approach from
the actual oscillatory chamber. We fix an analytic arc
\[
  \gamma(\epsilon)\in\Omega_B,
  \qquad
  \gamma(\epsilon)\longrightarrow\kappa^{\rm orb},
\]
on which all pole-cancellation parameters and the remaining base parameters tend to zero, and which
avoids the excluded discriminant, Maxwell, and Stokes loci for $\epsilon\ne0$. At a fixed node $r$ we
write $a_r(\epsilon)$ for the local base smoothing parameter of the pole-cancellation stratum; it is
the Section~\ref{sec:frobenius} parameter $\varepsilon_r$ up to a unit, and it is not the Gaussian
slope $a_\beta=(d\log\mu/ds_\beta)|_{p_\beta}$. We write $\mathbf b(\epsilon)$ for the remaining local
base parameters, held fixed in the local estimates below. Since the residual diagonal gauge will be base-constant, this one arc
is enough for the gauge calculation; path-independent extension is proved only after removability.

The scalar series $S_{\rm unit}$ and $S_{\rm bub}$ below are normalized scalar components of the
B-model flat solution $S^B$ at the orbifold boundary; equivalently, their closed forms are
$\Theta_1E$ and $\Theta_{1/4}E$.

\begin{lemma}[Binary-dihedral unit contraction]\label{lem:binary-dihedral-unit-contraction}
For every irreducible character label $\alpha$,
\[
  {1\over d_\alpha}\sum_\beta d_\beta
  \bigl(\hat R^\cX_{\rm orb}\bigr)_{\beta\alpha}(z)
  =
  S_{\rm unit}(z),
\]
where
\[
  S_{\rm unit}(z)
  =
  \sqrt{\nu\over z}\,
  {\Gamma(\nu/z+1/2)\over\Gamma(\nu/z+1)}
  =
  \Theta_1(z)E(z).
\]
\end{lemma}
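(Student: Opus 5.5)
The plan has two parts: first collapse the character sum to the identity sector, then match the resulting scalar with the Gamma-ratio through the Binet--Stirling expansion.

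\emph{Step 1: collapse the character sum.} Insert the formula of Proposition~\ref{prop:rmatrix}, with rows indexed by the output label $\beta$:
\[
  \bigl(\hat R^\cX_{\rm orb}\bigr)_{\beta\alpha}(z)
  =\frac1{|\Gamma|}\sum_{h\in\Gamma}\chi_\beta(h^{-1})\chi_\alpha(h)\,\Theta_h(z)E(z).
\]
Exchange the $\beta$- and $h$-sums. The inner sum $\sum_\beta d_\beta\chi_\beta(h^{-1})$ is the regular character evaluated at $h^{-1}$, so it equals $|\Gamma|\delta_{h,1}$. Only $h=1$ survives, and the left-hand side becomes
\[
  \frac1{d_\alpha}\chi_\alpha(1)\,\Theta_1(z)E(z)=\Theta_1(z)E(z).
\]
This already shows that the contraction is independent of $\alpha$. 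Following the convention fixed before Proposition~\ref{prop:rmatrix}, take $a_1=0$ and $1-a_1=1$. Since $B_{t+1}(0)=B_{t+1}(1)=B_{t+1}$ for $t\ge1$, this gives
\[
  \log\Theta_1=\sum_{t\ge1}\frac{(-1)^t\,2B_{t+1}}{t(t+1)}\Big(\frac z\nu\Big)^t .
\]

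\emph{Step 2: the Gamma-function closed form.} This is the only step needing care, since it requires the right asymptotic identity with the correct signs. I would use the generalized Stirling (Binet) expansion, as a formal asymptotic series in $x^{-1}$:
\[
  \log\Gamma(x+a)\sim\Big(x+a-\tfrac12\Big)\log x-x+\tfrac12\log2\pi
  +\sum_{n\ge1}\frac{(-1)^{n+1}B_{n+1}(a)}{n(n+1)x^n}.
\]
Set $x=\nu/z$ and take the difference of the cases $a=\tfrac12$ and $a=1$. The terms $x\log x$, $-x$ and $\tfrac12\log 2\pi$ cancel, and the leftover $-\tfrac12\log x$ is cancelled by the prefactor $\sqrt{\nu/z}=x^{1/2}$. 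Hence
\[
  \log S_{\rm unit}
  =\sum_{n\ge1}\frac{(-1)^{n+1}\bigl[B_{n+1}(\tfrac12)-B_{n+1}(1)\bigr]}{n(n+1)}\Big(\frac z\nu\Big)^n .
\]

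\emph{Step 3: match coefficients.} Use $B_k(\tfrac12)=(2^{1-k}-1)B_k$ and $B_{n+1}(1)=B_{n+1}$ for $n\ge1$. Then the bracket equals $(2^{-n}-2)B_{n+1}$, and the $n$-th term splits into two pieces.
\begin{itemize}
\item The piece $(-1)^{n+1}2^{-n}B_{n+1}(z/\nu)^n/(n(n+1))$ equals $(-1)^{n+1}B_{n+1}(1)(z/2\nu)^n/(n(n+1))$, which is the $n$-th term of $\log E$.
\item The piece $(-1)^{n}2B_{n+1}(z/\nu)^n/(n(n+1))$ is the $n$-th term of $\log\Theta_1$.
\end{itemize}
Both sides are formal series with constant term $1$, so this coefficientwise identity gives $S_{\rm unit}=\Theta_1E$. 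Together with Step 1 this proves the lemma.

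As a consistency check, odd Bernoulli numbers vanish, so only odd powers of $z$ appear. This is compatible with Lemma~\ref{lem:unitarity}.
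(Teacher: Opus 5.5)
Your proof is correct and follows the paper's route. Inserting the character formula of Proposition~\ref{prop:rmatrix} and using $\sum_\beta d_\beta\chi_\beta(h^{-1})=|\Gamma|\delta_{h,1}$ isolates the identity sector and gives $\Theta_1E$, independently of $\alpha$.

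Your Steps~2--3 also verify the Gamma closed form, which the paper only asserts by appeal to the Bernoulli formulas. The bookkeeping is right: with $B_{n+1}(\tfrac12)-B_{n+1}(1)=(2^{-n}-2)B_{n+1}$, the Binet--Stirling series splits exactly into $\log E+\log\Theta_1$. For example, the linear coefficient is $-\tfrac16+\tfrac1{24}=-\tfrac18$, matching $\Gamma(x+\tfrac12)/\Gamma(x+1)\sim x^{-1/2}(1-\tfrac1{8x}+\cdots)$.
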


\begin{proof}
Using $\hat R^\cX_{\rm orb}=E\hat R^{A,\mathrm{surf}}$ and the normalized-canonical surface matrix,
\[
  \hat R^{A,\mathrm{surf}}_{\beta\alpha}(z)
  =
  {1\over|\Gamma|}\sum_{h\in\Gamma}
  \chi_\beta(h^{-1})\chi_\alpha(h)\Theta_h(z),
\]
we obtain
\[
  \sum_\beta d_\beta\hat R^{A,\mathrm{surf}}_{\beta\alpha}
  =
  {1\over|\Gamma|}
  \sum_{h\in\Gamma}
  \left(\sum_\beta d_\beta\chi_\beta(h^{-1})\right)
  \chi_\alpha(h)\Theta_h.
\]
The regular-character identity
\[
  \sum_\beta d_\beta\chi_\beta(h^{-1})=|\Gamma|\delta_{h,1}
\]
leaves $d_\alpha\Theta_1$. Multiplication by the third-leg factor $E$ gives the displayed formula.
The Gamma expression for $\Theta_1E$ is the unit Bernoulli identity of
Proposition~\ref{prop:rmatrix} together with Lemma~\ref{lem:thirdleg}.
\end{proof}

At a fixed node the central main chain is a quotient chain. We record the normalization because this
is where the primitive quotient chain differs from the full cover arc. We write
$\Gamma_r^{\rm main}$ for the primitive quotient chain on the main component at the fixed node
$r=\pm1$, and $\Gamma_r^{\rm cover}$ for its full lifted cover arc.

\begin{lemma}[Half-cover fixed-node chain]\label{lem:half-cover-chain}
Near a fixed point choose a cover coordinate $s$ with $\iota(s)=-s$ and quotient coordinate $t=s^2$.
If $\gamma_{\rm q}$ is the primitive quotient endpoint chain $0\le t\le\delta^2$ and
$\widetilde\gamma_{\rm q}=[0,\delta]$ is its chosen lift, then the associated sign-local-system chain is
\[
  {1\over2}\bigl(\widetilde\gamma_{\rm q}-\iota_*\widetilde\gamma_{\rm q}\bigr)
  =
  {1\over2}[-\delta,\delta].
\]
Consequently
\[
  2\Gamma_r^{\rm main}=\Gamma_r^{\rm cover}.
\]
With the branch fixed in Section~\ref{sec:frobenius} and the same $\mathfrak g_\nu$,
\[
  v_{r,\pm}^2=-{1\over32m^2\nu^3},
  \qquad
  \mathfrak g_\nu v_{r,\pm}={\sqrt{-1}\over2m}.
\]
Here the \emph{leading Gaussian coefficient} means the coefficient of $\sqrt{\pi z}$ in the
normalized local thimble integral. With this convention, $\Gamma_r^{\rm main}$ has the same leading
Gaussian coefficient as a smooth fixed-node scalar thimble.
\end{lemma}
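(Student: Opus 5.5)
The lemma has three parts, and I will prove them separately: a local chain identity, a pair of norm constants, and a Gaussian normalization statement.

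\emph{Chain identity.} I work in the cover coordinate $s$ with $\iota(s)=-s$. Then $\iota_*[0,\delta]=[0,-\delta]=-[-\delta,0]$ as oriented chains, so
\[
  \widetilde\gamma_{\rm q}-\iota_*\widetilde\gamma_{\rm q}=[0,\delta]+[-\delta,0]=[-\delta,\delta].
\]
This gives the first display. The same halving convention appears in the proof of Proposition~\ref{prop:prym-double-laplace}, where the quotient thimble is represented upstairs by $\tfrac12(\Gamma_{\alpha,+}-\Gamma_{\alpha,-})$. The full lifted cover arc $\Gamma_r^{\rm cover}$ through the fixed point is $[-\delta,\delta]$ extended along the main component, so $2\Gamma_r^{\rm main}=\Gamma_r^{\rm cover}$. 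Away from the local disk, both sides are the same anti-invariant transport, so the identity holds globally as relative chains.

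\emph{Norm constants.} By Proposition~\ref{prop:prym-unit-translation}, $v_\beta=1/\sqrt{\Delta^\cX_\beta}$, so $v_{r,\pm}^2=1/\Delta^\cX_{r,\pm}$. Step~6 of the proof of Proposition~\ref{prop:bmodel-flat-algebra-family} computes the limiting norm at the split idempotents over $w=\pm2$ as
\[
  h_{\rm fix}=-\frac{1}{32m^2\nu^3}.
\]
This gives $v_{r,\pm}^2=-1/(32m^2\nu^3)$. Since $\mathfrak g_\nu^2=8\nu^3$, we get
\[
  (\mathfrak g_\nu v_{r,\pm})^2=-\frac{1}{4m^2}.
\]
I then fix the sign as $+\sqrt{-1}/(2m)$ by the square-root branch chosen in Section~\ref{sec:frobenius}, i.e.\ the same branch used in \eqref{eq:anchor:omega-normalization}. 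I will check this against the limit of $a_\beta$ along the arc $\gamma(\epsilon)$.

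\emph{Gaussian coefficient.} This is the only analytic step. Near the fixed point, $\hat x=\widetilde u+c\,s^2+O(s^3)$ in the cover coordinate, with the Morse coordinate related to $s$ to first order. A smooth fixed-node scalar thimble corresponds in the Morse coordinate to a full line $(-\infty,\infty)$ carrying the sign weight. Its integral is $\int e^{-s_\alpha^2/z}ds_\alpha=\sqrt{\pi z}$, normalized by the form factor.

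For $\Gamma_r^{\rm main}=\tfrac12[-\delta,\delta]$ (extended), I must account for two things. The integrand must be rewritten in the quotient coordinate $t=s^2$, where $dt=2s\,ds$. The anti-invariant sign-sector form pulls back with the factor appearing in the relation $\pi^*\vartheta^{\rm quot}=\vartheta_+-\vartheta_-$, namely a factor $2$. The $\tfrac12$ from the chain and this factor $2$ from the form cancel, exactly as the product of the form and cycle normalizations is $1$ in Proposition~\ref{prop:prym-double-laplace}. Hence the leading Gaussian coefficient equals that of the smooth scalar thimble.

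The main obstacle is doing this bookkeeping honestly at the degenerate fixed point, where the form is the rescaled limit \eqref{eq:bmodel:bubble-measure} rather than a smooth Morse chart. I would handle it by computing the leading term along $\gamma(\epsilon)$ before the limit. The leading coefficient depends only on the Hessian and on the normalized value $v_{r,\pm}$. Both have finite limits by Step~6, so the leading coefficient is continuous in $\epsilon$ and the comparison passes to the boundary.
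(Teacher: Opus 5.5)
Your chain identity and the norm constants agree with the paper: $v_{r,\pm}^2=h_{\rm fix}$ from Step~6, and $\mathfrak g_\nu^2=8\nu^3$ gives $(\mathfrak g_\nu v_{r,\pm})^2=-1/(4m^2)$. The gap is in the Gaussian-coefficient step. There is no factor $2$ from ``$\pi^*\vartheta^{\rm quot}=\vartheta_+-\vartheta_-$'' at a fixed node to cancel the $\tfrac12$ of the chain, and if you did cancel it you would get the wrong constant.

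At $\mu=r$ there is a single lifted point. The cover arc is itself anti-invariant, $\iota_*\Gamma_r^{\rm cover}=-\Gamma_r^{\rm cover}$, and the integrand defining the leading coefficient is $\phi_D$ itself. The form-times-cycle cancellation in Proposition~\ref{prop:prym-double-laplace} is a statement about free orbits, where $\Gamma_{\alpha,\pm}$ sit at two distinct critical points and the cross terms are subleading.

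The paper instead computes the central coefficient explicitly. On the main component put $\mu=re^{\sqrt{-1}\theta/m}$. Then $\lambda^{\rm orb}=2Kr^m\cos\theta$, so $\hat x-\hat x(r)=\nu\theta^2+O(\theta^4)$ and $\phi_D=\sqrt{-1}\sqrt\nu\,d\theta/m$. In the Morse coordinate $s=\sqrt\nu\,\theta+\dots$ this gives $\phi_D=(\sqrt{-1}/m)\,ds+\dots$. Equivalently, the Beta integral of Lemma~\ref{lem:main-thimbles} gives full-cover coefficient $\sqrt{-1}/m$. The half chain $\Gamma_r^{\rm main}$ therefore has coefficient $\sqrt{-1}/(2m)=\mathfrak g_\nu v_{r,\pm}$, which is the lemma. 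Under your cancellation $\Gamma_r^{\rm main}$ would instead carry $\sqrt{-1}/m=2\,\mathfrak g_\nu v_{r,\pm}$, contradicting the statement. You also never compute either coefficient in terms of $m$; the phrase ``normalized by the form factor'' hides exactly the quantity that matters.

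Your continuity argument in the last paragraph fails for the same reason. Along $\gamma(\epsilon)$, the Hessian of $\log\lambda$ in $\ell$ at the two smooth fixed-node critical points tends to $4m^2$, not to the central value $m^2$ (Lemma~\ref{lem:coefficientwise-parity-limit}). So the smooth form factor $\phi_D/ds_\beta=\mathfrak g_\nu v_\beta$ tends to one half of the central full-cover factor. The leading coefficient is therefore \emph{not} continuous across the degeneration. Continuity along the arc only tells you the limit of the smooth side; you still need the independent central computation above to compare with $\Gamma_r^{\rm main}$. The factor $\tfrac12$ in $\Gamma_r^{\rm main}=\tfrac12\Gamma_r^{\rm cover}$ is precisely what absorbs this Hessian jump, so it must be kept, not cancelled.

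Finally, the bubble limit \eqref{eq:bmodel:bubble-measure} is not the obstacle here. $\Gamma_r^{\rm main}$ lies on $\Sigma^{\rm orb}$, where $\lambda^{\rm orb}$ is an ordinary Morse function at $\mu=r$, so the needed computation is an elementary one on the main component.
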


\begin{proof}
Since $\iota_*\widetilde\gamma_{\rm q}=[0,-\delta]$,
\[
  \widetilde\gamma_{\rm q}-\iota_*\widetilde\gamma_{\rm q}
  =
  [0,\delta]-[0,-\delta]
  =
  [-\delta,\delta].
\]
This is $\Gamma_r^{\rm cover}$, so $\Gamma_r^{\rm main}$ is half of it. The full-cover leading
Gaussian coefficient, computed below in Lemma~\ref{lem:main-thimbles}, is $\sqrt{-1}/m$; hence
$\Gamma_r^{\rm main}$ has leading Gaussian coefficient
\[
  {\sqrt{-1}\over2m}.
\]
This matches the smooth fixed-node scalar thimble coefficient $\mathfrak g_\nu v_{r,\pm}$.
\end{proof}

For the fixed-node quotient main germ we use the normalization determined by
Lemma~\ref{lem:half-cover-chain}: it is one half of the full-cover odd germ.

\begin{lemma}[Main thimbles and the unit series]\label{lem:main-thimbles}
On the main component at the orbifold boundary, let the full-cover coefficients be defined by
\[
  \widetilde y_r^{\rm cov}(s)-\widetilde y_r^{\rm cov}(-s)
  =
  2\sum_{k\geq1}{2^{k-1}\over(2k-1)!!}\check h_k^{r,\rm cov}s^{2k-1}.
\]
We define the quotient main coefficients by
\[
  \check h_k^{r,\rm main}:={1\over2}\check h_k^{r,\rm cov}.
\]
Then
\[
  \sum_{k\geq1}\check h_k^{r,\rm cov}z^{k-1}
  =
  \check h_1^{r,\rm cov}S_{\rm unit}(z),
  \qquad
  \sum_{k\geq1}\check h_k^{r,\rm main}z^{k-1}
  =
  \check h_1^{r,\rm main}S_{\rm unit}(z),
\]
where $S_{\rm unit}(z)$ is the unit series of
Lemma~\ref{lem:binary-dihedral-unit-contraction}.
Moreover, with the branch fixed in Section~\ref{sec:frobenius},
\[
  v_r:={\check h_1^{r,\rm main}\over\sqrt{-2}}
  ={\check h_1^{r,\rm cov}\over2\sqrt{-2}}
\]
satisfies
\[
  v_r^2=-{1\over32m^2\nu^3},
\]
which is the fixed-node norm from Section~\ref{sec:frobenius}.
For any free main thimble whose central cover orbit contains a point $\mu_0$ with $\mu_0^{2m}=1$ and
$\mu_0\ne\pm1$, the same normalized unit series holds:
\[
  \sum_{k\geq1}\check h_k^{\mu_0}z^{k-1}
  =
  \check h_1^{\mu_0}S_{\rm unit}(z).
\]
The formula also holds at the fixed node $r=-1$ with the quotient normalization displayed above.
\end{lemma}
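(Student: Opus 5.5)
The plan is to compute the odd germ of $\widetilde y$ at each central critical point of $\Sigma^{\rm orb}$ through one explicit exact integral, and then read off both the series identity and the norm. By the integration-by-parts and moment calculation at the end of the proof of Proposition~\ref{prop:prym-unit-translation}, the coefficients in the Morse expansion satisfy
\[
  \sum_{k\ge1}\check h_k z^{k-1}
  =
  {e^{\hat x(p)/z}\over\sqrt{\pi z}}\int_{\gamma}e^{-\hat x/z}\,d\widetilde y
\]
as formal stationary-phase series. Here $\gamma$ is the local steepest-descent arc through the critical point $p$, and $\hat x(p)=\widetilde u$ is the critical value of $\hat x$. This identity uses only the local Morse data, so it applies verbatim on the main component $\lambda^{\rm orb}=K(\mu^m+\mu^{-m})$, with $\hat x=-2\nu\log\lambda$ and $\widetilde y=(\sqrt{-1}/2\nu)(\log\mu-\log\mu_0)$. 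Hence it suffices to evaluate this normalized integral exactly at each critical point $\mu_0$, $\mu_0^{2m}=1$.

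Next I would substitute $\mu=\mu_0e^{v}$. Since $\mu_0^m=\epsilon=\pm1$, this gives $\lambda=2K\epsilon\cosh(mv)$ and $\hat x-\widetilde u=-2\nu\log\cosh(mv)$. I would then rotate to $v=\sqrt{-1}\,\theta/m$. The arc $\theta\in[-\pi/2,\pi/2]$ is the full steepest-descent cover arc on which $\cos\theta\ge0$. On it, $\widetilde y=-\theta/(2\nu m)$, and the integrand becomes $(\cos\theta)^{2\nu/z}$. The Beta integral
\[
  \int_{-\pi/2}^{\pi/2}(\cos\theta)^{2\nu/z}\,d\theta
  =\sqrt\pi\,{\Gamma(\nu/z+1/2)\over\Gamma(\nu/z+1)}
\]
then shows that the normalized integral is a constant multiple of $\sqrt{\nu/z}\,\Gamma(\nu/z+1/2)/\Gamma(\nu/z+1)$. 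By Lemma~\ref{lem:binary-dihedral-unit-contraction} this function is $S_{\rm unit}(z)$.

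To pass from the exact integral to the formal series, I would use the Stirling expansion of the Gamma ratio, equivalently the standard fact that Laplace's method applied to an analytic Morse integral over a compact arc reproduces its formal expansion. The boundary points $\theta=\pm\pi/2$ contribute only exponentially small terms. Since $S_{\rm unit}(0)=1$, the constant multiple equals $\check h_1^{\rm cov}$, which gives the first displayed identity for the cover germ. Both the substitution and the Beta integral are independent of the choice of $\mu_0$, so the same argument yields the free-label statement for $\mu_0\ne\pm1$.

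For the fixed nodes, dividing by $2$ is linear, so the quotient identity follows at once from the cover one. It is also consistent with Lemma~\ref{lem:half-cover-chain}, since $2\Gamma_r^{\rm main}=\Gamma_r^{\rm cover}$. For the norm I would expand $s^2=\hat x-\widetilde u=\nu m^2\theta^2+O(\theta^4)$, which gives $d\widetilde y/ds=\mp\sqrt{-1}/(2m\nu^{3/2})$ at $s=0$. Therefore $(\check h_1^{r,\rm cov})^2=-1/(4m^2\nu^3)$ and $v_r^2=(\check h_1^{r,\rm cov})^2/(-8)=-1/(32m^2\nu^3)$, matching $h_{\rm fix}$ of Proposition~\ref{prop:bmodel-flat-algebra-family}. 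The case $r=-1$ is identical because $\mu_0^m=(-1)^m$ only changes $\epsilon$.

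I expect the main obstacle to be bookkeeping of branches and orientations rather than analysis. The sign of $\check h_1$ depends on the square-root branch fixed in Section~\ref{sec:frobenius} and on the orientation of the arc, and it has to match the branch defining $v_r$, so that the odd germ is $+\check h_1 S_{\rm unit}$ rather than $-\check h_1 S_{\rm unit}$. I would fix this first by comparing the leading Gaussian coefficient of the integral with $\mathfrak g_\nu v_{r,\pm}=\sqrt{-1}/(2m)$ from Lemma~\ref{lem:half-cover-chain}.
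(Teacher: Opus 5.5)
Your route is the paper's own. You take the explicit arc $\mu=\mu_0e^{\sqrt{-1}\theta/m}$, $|\theta|\le\pi/2$, on which $e^{-(\hat x-\widetilde u)/z}=(\cos\theta)^{2\nu/z}$, and evaluate it with Euler's Beta integral. You then use the integration-by-parts moment identity from Proposition~\ref{prop:prym-unit-translation} to turn the normalized period into $\sum_k\check h_kz^{k-1}$, rotate to the free labels via $\mu_0^m=\pm1$, and halve for the quotient main germ. Centering $\hat x$ at its critical value only removes the $(2K)^{2\nu/z}$ factors that the paper carries and then cancels. The series identities, including the free-label and $r=-1$ cases, are fine as you argue them.

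The norm paragraph, however, is miscomputed as written.

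\begin{itemize}
\item On your arc, $s^2=\hat x-\widetilde u=-2\nu\log\cos\theta=\nu\theta^2+O(\theta^4)$, not $\nu m^2\theta^2$. The factor $m^2$, with a minus sign, appears only in the variable $v$, where $s^2=-\nu m^2v^2+O(v^4)$.
\item With $\widetilde y=-\theta/(2\nu m)$ this gives $\check h_1^{r,\rm cov}=(d\widetilde y/ds)(0)=\mp1/(2m\nu^{3/2})$, with no factor $\sqrt{-1}$. This is exactly the constant your own Beta-integral step produces: with the arc oriented by increasing $\theta$, the normalized period equals $-S_{\rm unit}(z)/(2m\nu^{3/2})$. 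It also agrees with the paper's $\sqrt{-1}/(2m\nu\sqrt{-\nu})$ for the corresponding branch of $\sqrt{-\nu}$.
\item Hence $(\check h_1^{r,\rm cov})^2=+1/(4m^2\nu^3)$ and $v_r^2=(\check h_1^{r,\rm cov})^2/(-8)=-1/(32m^2\nu^3)$.
\end{itemize}

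Your intermediate values $\mp\sqrt{-1}/(2m\nu^{3/2})$ and $-1/(4m^2\nu^3)$ both have the wrong sign. Moreover, $\bigl(-1/(4m^2\nu^3)\bigr)/(-8)$ equals $+1/(32m^2\nu^3)$, not the value you wrote. Two sign slips cancel, so the final answer is right but the displayed derivation is not. Reading $\check h_1$ directly off your Beta-integral constant avoids the issue.

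The orientation problem you single out as the main obstacle is harmless. Reversing $s$ flips every $\check h_k$ and the period together, the constant is pinned by $S_{\rm unit}(0)=1$, and the norm statement involves only $v_r^2$.
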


\begin{proof}
We first record the leading fixed-node normalization. Write $\mu=re^q$ near $r=\pm1$. On the main component
\[
  \lambda^{\rm orb}=K(\mu^m+\mu^{-m})=2Kr^m\cosh(mq),
\]
and hence
\[
  \hat x(q)-\hat x(0)=-2\nu\log\cosh(mq)=-\nu m^2q^2+O(q^4).
\]
For the Morse branch $\hat x-\hat x(r)=s^2$ this gives
\[
  q(s)={s\over m\sqrt{-\nu}}+O(s^3).
\]
Since the full-cover germ satisfies $\widetilde y_r^{\rm cov}=\sqrt{-1}q/(2\nu)$,
\[
  \widetilde y_r^{\rm cov}(s)-\widetilde y_r^{\rm cov}(-s)
  ={\sqrt{-1}\over m\nu\sqrt{-\nu}}s+O(s^3),
\]
so
\[
  \check h_1^{r,\rm cov}={\sqrt{-1}\over2m\nu\sqrt{-\nu}},
  \qquad
  \check h_1^{r,\rm main}={\sqrt{-1}\over4m\nu\sqrt{-\nu}}.
\]
This proves the displayed norm identity for $v_r$.

For the full cover arc through $\mu=1$, namely $\Gamma_{+1}^{\rm cover}$, use
\[
  \mu=e^{\sqrt{-1}\theta/m},
  \qquad
  -{\pi\over2}\leq\theta\leq{\pi\over2}.
\]
Then $\lambda=2K\cos\theta$ and $d\mu/\mu=\sqrt{-1}\,d\theta/m$, so
\[
  \int_{\Gamma_{+1}^{\rm cover}} e^{-\hat x/z}\phi_D
  =
  {\sqrt{-1}\sqrt\nu\over m}K^{2\nu/z}
  \int_{-\pi/2}^{\pi/2}(2\cos\theta)^{2\nu/z}\,d\theta.
\]
Euler's Beta integral gives
\[
  \int_{-\pi/2}^{\pi/2}(2\cos\theta)^{2\nu/z}\,d\theta
  =
  2^{2\nu/z}\sqrt\pi\,
  {\Gamma(\nu/z+1/2)\over\Gamma(\nu/z+1)}
\]
with the same rapid-decay branch as the thimble. Equivalently,
\[
  e^{\hat x(0)/z}
  \int_{\Gamma_{+1}^{\rm cover}} e^{-\hat x/z}\phi_D
  =
  {\sqrt{-1}\sqrt{\pi z}\over m}\,S_{\rm unit}(z).
\]
Thus the leading term is
\[
  {\sqrt{-1}\sqrt{\pi z}\over m},
\]
or equivalently the leading Gaussian coefficient is $\sqrt{-1}/m$. Dividing by this leading term
gives $S_{\rm unit}(z)$. By Lemma~\ref{lem:half-cover-chain}, $\Gamma_{+1}^{\rm main}$ and its
leading term are both one half of the cover values, so the normalized Gamma ratio is unchanged. The
coefficient identity for the full-cover germ follows from the integration-by-parts calculation in
Proposition~\ref{prop:prym-unit-translation}; by the quotient-main normalization above, the same
identity holds for $\check h_k^{r,\rm main}$.

The free main thimbles are obtained by rotating the same calculation. Let $\mu_0^{2m}=1$ and write
$\mu=\mu_0e^q$. Then
\[
  \lambda^{\rm orb}
  =
  K(\mu^m+\mu^{-m})
  =
  2K\mu_0^m\cosh(mq),
\]
because $\mu_0^{-m}=\mu_0^m$. The main contour
\[
  \mu=\mu_0e^{\sqrt{-1}\theta/m},
  \qquad
  -{\pi\over2}\leq\theta\leq{\pi\over2},
\]
gives
\[
  \lambda=2K\mu_0^m\cos\theta.
\]
The factor $\mu_0^m=\pm1$ fixes the rapid-decay branch and the leading Gaussian coefficient of the
corresponding free main thimble. After division by the corresponding leading term, the same Beta
integral gives $S_{\rm unit}(z)$. Taking $\mu_0=-1$ gives $\Gamma_{-1}^{\rm cover}$; passing to
$\Gamma_{-1}^{\rm main}$ is the same half-cover normalization used above.
\end{proof}

\begin{lemma}[Coefficientwise parity limit]
\label{lem:coefficientwise-parity-limit}
For each fixed pair, write $\check h^{r,\sigma}_k(\epsilon)$ for the smooth fixed-node coefficient
at $\gamma(\epsilon)$. Then, for every $k\geq1$,
\[
  \lim_{\epsilon\to0}
  {\check h^{r,+}_k(\epsilon)+\check h^{r,-}_k(\epsilon)\over2}
  =
  \check h_k^{r,\rm main},
\]
where $\check h_k^{r,\rm main}$ is the coefficient of the quotient main local germ at the node
$\mu=r$.
\end{lemma}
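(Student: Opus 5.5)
The plan is to compute the smooth fixed-node coefficients $\check h^{r,\sigma}_k(\epsilon)$ directly from the local germ of $\widetilde y$ at the two split critical points $w_\pm=r'+b+\pm\varepsilon a$ (with $r'=\pm2$ the quotient position of $\mu=r$), using the relative factorization $N=N_{\rm cof}((t-b)^2-\varepsilon^2a^2)$ of Proposition~\ref{prop:bmodel-flat-algebra-family}. I would then show that the parity-even average has a limit equal to the quotient main germ. The coefficient $\check h^{r,\sigma}_k$ is a universal polynomial, via \eqref{eq:bgraphsum:dilaton-coefficients}, in the Taylor jets of $\log\mu_\sigma$ and of the inverse Morse change of variables $s\mapsto w$ at $w_\sigma$. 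These jets are holomorphic in $(w_\sigma,\mathbf b)$ away from the ramification point $w=r'$ of $\mu\mapsto w$, but they are singular at $w=r'$. The task is therefore to see that the odd-in-$\varepsilon$ singular parts cancel in the average, while the even part converges.

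\emph{Step 1.} Work upstairs in the cover coordinate. Near $\mu=r$ write $\mu=re^{q}$, so that $\iota$ acts by $q\mapsto-q$ and $w-r'=r'(\cosh q-1)$. The two split critical points lift to cover points $q_\pm(\epsilon)$. The relations $\kappa_r=re^{\varepsilon}$ and $w_\pm-r'=O(\varepsilon)$ force $q_\pm=O(\varepsilon^{1/2})$ in general, but I expect the lifts to pair as $q_-\approx -q_+$ up to the chosen sign trivialization, since the two roots $w_\pm$ are exchanged by $\varepsilon\mapsto-\varepsilon$ (recall $A_+,B_+$ are even). In these coordinates
\[
\hat x=-2\nu\log\lambda,\qquad \lambda=K\frac{(\mu-\kappa_r)(\mu-\kappa_r^{-1})}{(\mu-r)^2}\,(\text{unit}),
\]
and this expression is holomorphic in $(q,\varepsilon)$ near $(0,0)$. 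At $\varepsilon=0$ it reduces to the main-component function $-2\nu\log\cosh(mq)+\text{const}$ of Lemma~\ref{lem:main-thimbles}.

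\emph{Step 2.} Compute the odd germ $\widetilde y(s)-\widetilde y(-s)$ at $q_\sigma$. Here $\widetilde y=\frac{\sqrt{-1}}{2\nu}(q-q_\sigma)$ and $s$ is the Morse coordinate of $\hat x(q;\varepsilon)$ at $q_\sigma$. Each $\check h_k^{r,\sigma}$ is then a universal polynomial in the derivatives $\partial_q^j\hat x(q_\sigma;\varepsilon)$ and in $(\partial_q^2\hat x)^{-1/2}$. These quantities are analytic in $(q_\sigma,\varepsilon)$ as long as the Hessian is nonzero. The even Hessian factor yields the common normalization, and the quotient versus cover factor $\tfrac12$ enters exactly as in Lemma~\ref{lem:half-cover-chain}.

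\emph{Step 3.} Take the limit. If $q_\sigma\to0$ and the Hessian stays bounded away from zero (it tends to $-2\nu m^2$), then each $\check h_k^{r,\sigma}(\epsilon)$ converges to the cover coefficient $\check h_k^{r,\rm cov}$. The opposite sign trivialization on the $-$ lift, fixed by the odd generator $\xi$, then produces the correct average after the half-cover normalization. This gives $\check h_k^{r,\rm main}=\tfrac12\check h_k^{r,\rm cov}$ as defined in Lemma~\ref{lem:main-thimbles}.

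\emph{Main obstacle.} The delicate point is that the pole-cancellation factor $(\mu-\kappa_r)(\mu-\kappa_r^{-1})/(\mu-r)^2$ is not uniformly holomorphic near $q=0$: it carries the bubble scale $a_r(\epsilon)$, and the critical points may sit at the bubble scale, $q_\sigma\sim a_r$, instead of converging on the main component. In that case the individual $\check h_k^{r,\sigma}$ diverge, the divergence coming from the bubble conic \eqref{eq:bmodel:bubble-conic}. To handle this, I would expand each $\check h_k^{r,\sigma}$ in $a_r$ as a Laurent series. Using the plumbing chart \eqref{eq:bmodel:plumbing-chart} together with the $O(a)$ estimate \eqref{eq:bgraphsum:bubble-y-degeneration}, I would check that the singular terms are odd under the exchange $\sigma\mapsto-\sigma$, equivalently under $\varepsilon\mapsto-\varepsilon$, and so cancel in the average. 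The even part would then be matched with the main-component Taylor jet, following the parity-even, main-direction splitting announced in Section~\ref{sec:bgraphsum}.
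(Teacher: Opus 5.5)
Your proposal has a genuine gap: the central claim of Step~3 is false, and the rest of the plan is built to recover a factor that never appears there.

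\textbf{Step 1 and the scale of the critical points.} In the cover coordinate $\mu=re^{q}$ with $\kappa_r=re^{\varepsilon}$, the local function is, up to a unit, $\lambda\propto\cosh(mq)\,(\cosh q-\cosh\varepsilon)/(\cosh q-1)$. This has a double pole at $q=0$ for every $\varepsilon\neq0$. So it is not holomorphic in $(q,\varepsilon)$ near $(0,0)$, contrary to Step~1, and there is no continuity of jets to invoke. You correctly guessed $q_\pm=O(\varepsilon^{1/2})$; in fact $q^4=-2a^2/m^2+O(a^3)$. The two split quotient critical points therefore have lifts $\pm q_+$ and $\approx\pm\sqrt{-1}\,q_+$. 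In particular $-q_+=\iota(q_+)$ lies over the same quotient point, not over the other split root, so the pairing $q_-\approx -q_+$ is also wrong.

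\textbf{Step 3 at the neck scale.} The scale $q\sim a^{1/2}$ is neither the main nor the bubble scale. In the bubble chart $y\sim a^{1/2}\to0$, so the estimate \eqref{eq:bgraphsum:bubble-y-degeneration}, valid on compacts away from the node, does not apply. At this scale $\partial_q^2\log\lambda(q_\pm)\to4m^2$, not $m^2$, while the higher jets blow up; for example $\partial_q^3\log\lambda(q_\pm)\sim a^{-1/2}$. Consequently $dq/ds\to1/(2m\sqrt{-\nu})$, and each individual $\check h^{r,\sigma}_1$ tends to $\tfrac12\check h^{r,\mathrm{cov}}_1=\check h^{r,\mathrm{main}}_1$, not to $\check h^{r,\mathrm{cov}}_1$.

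\textbf{Your proposed source of the factor $\tfrac12$.} Neither of your fixes produces it.
The coefficients $\check h_k$ are defined by the local Morse germ of $\widetilde y$, so the chain normalization $2\Gamma_r^{\mathrm{main}}=\Gamma_r^{\mathrm{cover}}$ of Lemma~\ref{lem:half-cover-chain} never enters. Flipping the sign trivialization on one lift would send the average to $0$, not halve it.

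\textbf{Your ``main obstacle'' mechanism.} This is not what happens either. The odd Morse coefficients do not diverge at all. Only even ones do, such as the $s^2$ coefficient of $q(s)$, which behaves like $1/q_\sigma\sim a^{-1/2}$, and these do not enter $\check h_k$. So there is no odd-in-$\sigma$ singular part to cancel. Each $\check h^{r,\sigma}_k$ converges on its own to $\check h^{r,\mathrm{main}}_k$, and the average then converges trivially.

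\textbf{The missing idea.} You need a way to see that the odd difference of the two local solutions is jointly holomorphic in $(s,a)$. The paper proceeds as follows.
\begin{enumerate}
\item Clear the pole. With $t=w-2r$ and $\lambda=H_r(t,a)(t-\delta(a))/t$, where $\delta=ra^2+O(a^3)$, the Morse equation becomes $\mathcal F_\sigma(t,s,a)=H_r(t,a)(t-\delta(a))-\Lambda_\sigma(a)e^{-s^2/(2\nu)}t=0$. Here $\Lambda_\sigma$ is holomorphic because $t_\sigma\sim a$ and $\delta\sim a^2$.
\item Prepare. Since $\mathcal F_\sigma(t,0,0)=Km^2r^{m+1}t^2+O(t^3)$, Weierstrass preparation gives a quadratic $t^2-A_\sigma t+B_\sigma$ with coefficients holomorphic in $(s^2,a)$. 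Its roots $t_{\sigma,1},t_{\sigma,2}$ are swapped by $s\mapsto-s$, and $B_\sigma=a^2\upsilon_{B,\sigma}$ because $\mathcal F_\sigma(0,s,a)=-H_r\delta$.
\item Lift. Using $q^2=t\,\upsilon_{\mathrm{cov},r}(t)$, the product $q_{\sigma,1}q_{\sigma,2}=a\sqrt{\upsilon_{\ell,\sigma}}$ is holomorphic. Hence $(q_{\sigma,1}-q_{\sigma,2})^2=s^2\upsilon_{\mathrm{sep},\sigma}(s^2,a)$ with $\upsilon_{\mathrm{sep},\sigma}(0,0)\neq0$, so $\mathfrak d_\sigma=q_{\sigma,1}-q_{\sigma,2}$ is holomorphic.
\item Specialize. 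At $a=0$ the prepared equation becomes $t\bigl(H_r(t,0)-H_r(0,0)e^{-s^2/(2\nu)}\bigr)=0$. One root is the node $t=0$, and the other is the main-component solution $q_0(s)$. Thus $\mathfrak d_\sigma(s,0)=q_0(s)$, with the sign fixed by $v_{r,\pm}\to v_r$. The full-cover central difference is instead $q_0(s)-(-q_0(s))=2q_0(s)$.
\end{enumerate}
This coalescence of one local solution into the node is the actual source of the $\tfrac12$. Holomorphy of $\mathfrak d_\sigma$ gives convergence of every odd coefficient, which proves the lemma.
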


\begin{proof}
We first compare the leading Hessians. Throughout the proof, $r=\pm1$ is the fixed node, while
$\sigma=\pm$ indexes the two smooth critical points over that fixed node. We prove the statement by
solving
\[
  \hat x(t,a)-\hat x(t_\sigma(a),a)=s^2
\]
for its two local solutions. The point is that the degeneration is not holomorphic in the logarithmic
cover coordinate itself. On a transverse fixed-node slice one may write $\mu=re^\ell$ and, up to a
nonzero holomorphic unit,
\[
  \lambda(\ell,a)
  =
  2Kr^m\cosh(m\ell){\cosh \ell-\cosh a\over\cosh \ell-1}.
\]
The two smooth quotient critical roots satisfy
\[
  \ell_\pm^4=-{2a^2\over m^2}+O(a^3),
  \qquad
  \ell_\pm=O(a^{1/2}).
\]
Moreover
\[
  \partial_\ell^2\log\lambda(\ell_\pm,a)=4m^2+O(a),
\]
whereas the central full-cover main germ has
\[
  \partial_\ell^2\log\lambda(\ell,0)\big|_{\ell=0}=m^2.
\]
Thus the linear coefficient of the local solution $\ell=\ell(s)$ for a smooth fixed-node thimble tends
to one half of the central full-cover coefficient. We now prove the same factor coefficientwise.

Along the arc, write $a=a_r(\epsilon)$ and keep the remaining local base parameters
$\mathbf b=\mathbf b(\epsilon)$ held fixed. The fixed-node normal form is
\[
  \lambda(t,a,\mathbf b)=H_r(t,a,\mathbf b){t-\delta(a,\mathbf b)\over t},
  \qquad
  H_r(0,0)\ne0,
  \qquad
  \delta(a,\mathbf b)=ra^2+O(a^3,a^2\mathbf b),
\]
where $t=w-2r$. The units which occur below remain units for $\mathbf b$ near zero, so we suppress
$\mathbf b$ from the notation. Let $t_\sigma(a)$ be the $t$-coordinate of the smooth critical point
indexed by $\sigma$, and set $\Lambda_\sigma(a)=\lambda(t_\sigma(a),a)$. The Morse equation
$\hat x-\hat x(t_\sigma(a),a)=s^2$ is equivalent to
\[
  \lambda(t,a)=\Lambda_\sigma(a)e^{-s^2/(2\nu)}.
\]
After multiplying by $t$, define
\[
  \mathcal F_\sigma(t,s,a)
  :=
  H_r(t,a)(t-\delta(a))
  -\Lambda_\sigma(a)e^{-s^2/(2\nu)}t .
\]
We next put the local equation into Weierstrass form.
On the central main component,
\[
  H_r(t,0)=2Kr^m\cosh(m\ell(t)),
  \qquad
  \ell(t)^2=rt+O(t^2).
\]
Hence
\[
  H_r(t,0)=2Kr^m+Km^2r^{m+1}t+O(t^2),
  \qquad
  \partial_tH_r(0,0)=Km^2r^{m+1}\ne0.
\]
On the chosen local chart near the pole-cancellation stratum the splitting root and the moving zero
have the form
\[
  t_\sigma(a)=a\,\upsilon_{t,\sigma}(a),
  \qquad
  \delta(a)=a^2\upsilon_\delta(a),
  \qquad
  \upsilon_{t,\sigma}(0)\ne0 .
\]
Thus the apparent pole in the critical value cancels:
\[
  \Lambda_\sigma(a)=\lambda(t_\sigma(a),a)
  =
  H_r(t_\sigma(a),a)
  \left(1-a{\upsilon_\delta(a)\over \upsilon_{t,\sigma}(a)}\right).
\]
Hence $\Lambda_\sigma$ is holomorphic and $\Lambda_\sigma(0)=H_r(0,0)$ for both signs
$\sigma=\pm$. Therefore
\[
  \mathcal F_\sigma(t,0,0)
  =
  t\bigl(H_r(t,0)-H_r(0,0)\bigr)
  =
  Km^2r^{m+1}t^2+O(t^3),
\]
so $\mathcal F_\sigma(t,0,0)$ has order two in $t$.

At $s=0$, $t=t_\sigma(a)$ is a double root of $\mathcal F_\sigma$ for $a\ne0$. The preceding central
calculation shows that the two nearby roots are described, near $(t,s,a)=(0,0,0)$, by a degree-two
Weierstrass factor. Thus, by the Weierstrass preparation theorem, the degree-two factor of
$\mathcal F_\sigma$ can be written
\[
  t^2-A_\sigma(s,a)t+B_\sigma(s,a),
\]
where $A_\sigma$ and $B_\sigma$ are holomorphic in $a$ and $s^2$. Let
$t_{\sigma,1}(s,a)$ and $t_{\sigma,2}(s,a)$ be its two roots; the two roots are exchanged by
$s\mapsto -s$. Here
\[
  A_\sigma=t_{\sigma,1}+t_{\sigma,2},
  \qquad
  B_\sigma=t_{\sigma,1}t_{\sigma,2}.
\]
Their elementary symmetric functions are therefore holomorphic in $a$ and $s^2$.

We now lift the prepared roots to the logarithmic cover coordinate. Since
$w=\mu+\mu^{-1}=2r+t$ and $\mu=re^\ell$, the cover coordinate satisfies
\[
  \ell^2=t\,\upsilon_{{\rm cov},r}(t),
  \qquad
  \upsilon_{{\rm cov},r}(0)=r.
\]
On the chosen square-root cover choose compatible lifts $\ell_{\sigma,1}$ and $\ell_{\sigma,2}$ of
$t_{\sigma,1}$ and $t_{\sigma,2}$, and put
\[
  \mathfrak d_\sigma(s,a)=\ell_{\sigma,1}(s,a)-\ell_{\sigma,2}(s,a).
\]
The product $\ell_{\sigma,1}\ell_{\sigma,2}$ is holomorphic on the chosen chart. This is where the
pole-cancellation coordinate is used. If the Weierstrass factor is multiplied by the unit
$\upsilon_{{\rm prep},\sigma}(t,s,a)$ to recover $\mathcal F_\sigma$, then evaluating at $t=0$ gives
\[
  \upsilon_{{\rm prep},\sigma}(0,s,a)B_\sigma(s,a)
  =
  \mathcal F_\sigma(0,s,a)
  =
  -H_r(0,a)\delta(a).
\]
Since $\upsilon_{{\rm prep},\sigma}(0,s,a)$ and $H_r(0,a)$ are units and
$\delta(a)=ra^2+O(a^3)$, we have
\[
  B_\sigma(s,a)=a^2\,\upsilon_{B,\sigma}(s,a),
  \qquad
  \upsilon_{B,\sigma}(0,0)\ne0.
\]
Hence
\[
  \bigl(\ell_{\sigma,1}\ell_{\sigma,2}\bigr)^2
  =
  t_{\sigma,1}\upsilon_{{\rm cov},r}(t_{\sigma,1})
  t_{\sigma,2}\upsilon_{{\rm cov},r}(t_{\sigma,2})
  =
  B_\sigma(s,a)\,
  \upsilon_{{\rm cov},r}(t_{\sigma,1})\upsilon_{{\rm cov},r}(t_{\sigma,2})
  =
  a^2\,\upsilon_{\ell,\sigma}(s,a),
\]
where $\upsilon_{\ell,\sigma}$ is a holomorphic unit. The chosen square-root cover fixes a square root
of $\upsilon_{\ell,\sigma}$, so
$\ell_{\sigma,1}\ell_{\sigma,2}=a\sqrt{\upsilon_{\ell,\sigma}}$ is holomorphic. Hence
\[
  \mathfrak d_\sigma(s,a)^2
  =
  t_{\sigma,1}\upsilon_{{\rm cov},r}(t_{\sigma,1})
  +t_{\sigma,2}\upsilon_{{\rm cov},r}(t_{\sigma,2})
  -2\ell_{\sigma,1}\ell_{\sigma,2}
\]
is holomorphic in $a$ and $s$. It vanishes at $s=0$, because the two roots coincide there, and it is
even in $s$ because $s\mapsto -s$ exchanges the roots. The Morse Hessian is nonzero for $a\ne0$ and
has the nonzero limiting value recorded above, so
\[
  \mathfrak d_\sigma(s,a)^2=s^2\upsilon_{{\rm sep},\sigma}(s^2,a),
  \qquad
  \upsilon_{{\rm sep},\sigma}(0,0)\ne0.
\]
We choose
$\mathfrak d_\sigma=s\sqrt{\upsilon_{{\rm sep},\sigma}(s^2,a)}$ by the same canonical square roots
which normalize the two smooth fixed-node solutions. Equivalently, for both $\sigma=\pm$,
\[
  {\sqrt{-1}\over2\nu}\,\partial_s\mathfrak d_\sigma(0,a)
  =
  2\check h^{r,\sigma}_1(a),
  \qquad
  {\check h^{r,\sigma}_1(a)\over\sqrt{-2}}\longrightarrow v_r .
\]
With this choice $\mathfrak d_\sigma$ is holomorphic on the chosen chart.

Finally we specialize the prepared equation at $a=0$: it becomes
\[
  t\bigl(H_r(t,0)-H_r(0,0)e^{-s^2/(2\nu)}\bigr)=0.
\]
Thus one root is the node $t=0$, and the other is the local solution $t_0(s)$ determined by
$H_r(t,0)=H_r(0,0)e^{-s^2/(2\nu)}$. If $\ell_0(s)$ is the corresponding cover solution with
$\ell_0(0)=0$, then
\[
  \mathfrak d_\sigma(s,0)^2=\ell_0(s)^2.
\]
The Hessian limit computed at the beginning of the proof identifies the limiting Gaussian coefficient
with that of the quotient main root for both smooth fixed-node solutions. Together with the sign choice above,
this selects the same sign for $\sigma=+$ and $\sigma=-$, so
\[
  \mathfrak d_\sigma(s,0)=\ell_0(s).
\]

The centered local primitive on a smooth fixed-node thimble is
\[
  \widetilde y_\sigma(s)
  =
  {\sqrt{-1}\over2\nu}
  \bigl(\ell_{\sigma,1}(s,a)-\ell_{\sigma,1}(0,a)\bigr)
\]
on one local solution, and the local solution with $-s$ is $\ell_{\sigma,2}$. Hence
\[
  \widetilde y_\sigma(s)-\widetilde y_\sigma(-s)
  =
  {\sqrt{-1}\over2\nu}\mathfrak d_\sigma(s,a)
  \longrightarrow
  {\sqrt{-1}\over2\nu}\ell_0(s).
\]
By contrast, the full-cover central odd difference at the fixed node $r$ is
\[
  \widetilde y_r^{\rm cov}(s)-\widetilde y_r^{\rm cov}(-s)
  =
  {\sqrt{-1}\over\nu}\ell_0(s).
\]
Thus the smooth fixed-node odd Morse coefficients specialize to one half of the full-cover
central coefficients. This half is a local quotient effect coming from the coalescing square-root
solutions, not a normalization of the smooth thimble. The quotient main germ is
$(\sqrt{-1}/(2\nu))\ell_0(s)$, so taking the coefficient of $s^{2k-1}$ gives
\[
  \lim_{\epsilon\to0}\check h^{r,\sigma}_k(\epsilon)=\check h_k^{r,\rm main}
  \qquad(\sigma=\pm).
\]
Averaging the two smooth fixed-node thimbles gives the displayed identity. We do not take Taylor
coefficients of the odd germ on the bubble chart here: on the bubble chart $\mu=r+a/y_r$, the
corresponding odd primitive is polar at $y_r=0$ and belongs to the second-kind regularized calculation
of Lemma~\ref{lem:bubble-beta}, not to the ordinary formal Morse lemma.
\end{proof}

For the boundary comparison we use the flat-unit contractions
\[
  L^B_\alpha(\kappa;z):=\sum_\beta v_\beta(\kappa)\hat R^B_{\beta\alpha}(\kappa;z),
  \qquad
  L^A_\alpha(0;z):=\sum_\beta v_\beta(0)\bigl(\hat R^\cX_{\rm orb}\bigr)_{\beta\alpha}(z).
\]

\begin{proposition}[Parity-even flat-unit limit]\label{prop:parity-even-anchor}
For a free main thimble indexed by $\alpha$,
\[
  \lim_{\epsilon\to0}L^B_\alpha(\gamma(\epsilon);z)
  =
  v_\alpha(0)S_{\rm unit}(z),
\]
where the limit is coefficientwise along the chosen arc and is ordinary for free main thimbles. For a fixed
pair over $r=\pm1$,
\[
  \lim_{\epsilon\to0}
  {L^B_{r,+}(\gamma(\epsilon);z)+L^B_{r,-}(\gamma(\epsilon);z)\over\sqrt2}
  =
  \sqrt2\,v_rS_{\rm unit}(z),
\]
where $v_r=\check h_1^{r,\rm main}/\sqrt{-2}$.
The A-side closed-point values are
\[
  L^A_\alpha(0;z)=v_\alpha(0)S_{\rm unit}(z)
\]
for every orbifold canonical label. In particular,
\[
  L^A_{r,+}(0;z)=L^A_{r,-}(0;z)=v_rS_{\rm unit}(z),
\]
and hence
\[
  {L^A_{r,+}(0;z)+L^A_{r,-}(0;z)\over\sqrt2}
  =
  \sqrt2\,v_rS_{\rm unit}(z)
\]
for a fixed pair.
\end{proposition}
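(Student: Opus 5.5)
The plan is to reduce everything to the flat-unit compatibility of Proposition~\ref{prop:prym-unit-translation}, which on the smooth chamber rewrites the B-side contraction as
\[
  L^B_\alpha(\kappa;z)={1\over\sqrt{-2}}\sum_{k\ge1}\check h^\alpha_k(\kappa)z^{k-1}.
\]
The B-side limits then become statements about the limits of the individual dilaton coefficients $\check h^\alpha_k$ along the arc $\gamma(\epsilon)$. Since these are coefficientwise statements, the limit is taken separately in each power of $z$, and no analytic control of the whole series is needed.

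\emph{Free main labels.} Near a lifted critical point $\mu_0$ with $\mu_0^{2m}=1$ and $\mu_0\ne\pm1$, the curve degenerates holomorphically in the cover coordinate. Critical points, Morse coordinates and the odd part of $\widetilde y_\alpha$ therefore vary analytically up to $\epsilon=0$, and each $\check h^\alpha_k(\gamma(\epsilon))$ tends to the corresponding central coefficient $\check h^{\mu_0}_k$. The last statement of Lemma~\ref{lem:main-thimbles} gives
\[
  \sum_k\check h^{\mu_0}_kz^{k-1}=\check h^{\mu_0}_1S_{\rm unit}(z).
\]
Since $\check h_1/\sqrt{-2}=v$ by the Gaussian normalization \eqref{eq:bgraphsum:gaussian-normalization}, and the central norms agree with the special-fiber norms of Proposition~\ref{prop:bmodel-flat-algebra-family}, the limit is $v_\alpha(0)S_{\rm unit}(z)$.

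\emph{Fixed pairs.} Here I would not take limits of the individual columns. Averaging the two labels gives
\[
  {L^B_{r,+}+L^B_{r,-}\over\sqrt2}
  ={\sqrt2\over\sqrt{-2}}\sum_k{\check h^{r,+}_k+\check h^{r,-}_k\over2}z^{k-1}.
\]
Lemma~\ref{lem:coefficientwise-parity-limit} sends the averaged coefficient to $\check h^{r,\rm main}_k$. Lemma~\ref{lem:main-thimbles} then sums these to $\check h^{r,\rm main}_1S_{\rm unit}(z)$, which equals $\sqrt{-2}\,v_rS_{\rm unit}(z)$. Multiplying out gives $\sqrt2\,v_rS_{\rm unit}$.

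\emph{A-side values.} At the closed point the unit vector is $\sum_\beta v_\beta(0)$ times the normalized idempotents. In the normalized canonical frame of Lemma~\ref{lem:frob}, $v_\beta(0)=1/\sqrt{\Delta_\beta}$ is proportional to $d_\beta$, since $p_\beta$ has norm $d_\beta^2/(|\Gamma|^2e_1)$. Lemma~\ref{lem:binary-dihedral-unit-contraction} then gives
\[
  \sum_\beta d_\beta(\hat R^\cX_{\rm orb})_{\beta\alpha}=d_\alpha S_{\rm unit},
\]
so $L^A_\alpha(0;z)=v_\alpha(0)S_{\rm unit}(z)$. For the two one-dimensional labels over $r$, the norms computed in Step~6 of Proposition~\ref{prop:bmodel-flat-algebra-family} give $v_{r,\pm}(0)^2=-1/(32m^2\nu^3)=v_r^2$. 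With the branch fixed in Lemma~\ref{lem:half-cover-chain} this gives $v_{r,\pm}(0)=v_r$, and summing the two labels yields the fixed-pair identity.

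\emph{Main obstacle.} I expect the main difficulty to be the consistency of square-root branches. The sign of $v_r$ must be the one produced by the chosen canonical square roots on the smooth side in Lemma~\ref{lem:coefficientwise-parity-limit}, the free-label factor $\mu_0^m=\pm1$ must match the sign of the A-side $d_\beta$-normalized vector, and both must be compatible with the A-side $\sqrt{e_1}$. I would fix all of these by comparing leading Gaussian coefficients, as in Lemma~\ref{lem:half-cover-chain}, using the $z^0$ term of each identity as the normalization check.
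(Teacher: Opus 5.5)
Your proposal is correct and follows the paper's proof essentially step for step. Proposition~\ref{prop:prym-unit-translation} reduces $L^B$ to the dilaton coefficients. For free labels the B-side limit then comes from Lemma~\ref{lem:main-thimbles}, and for the averaged fixed pair from Lemma~\ref{lem:coefficientwise-parity-limit} followed by Lemma~\ref{lem:main-thimbles}. On the A-side, $v_\beta(0)\propto d_\beta$ together with the regular-character contraction of Lemma~\ref{lem:binary-dihedral-unit-contraction} gives the closed-point values. The only differences are that you spell out the holomorphic convergence of the Morse data at free points $\mu_0\neq\pm1$, and you flag the square-root branch bookkeeping; the paper leaves both to its fixed conventions.
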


\begin{proof}
The B-side free-main-thimble statement is Proposition~\ref{prop:prym-unit-translation} and the ordinary
main-chain limit of Lemma~\ref{lem:main-thimbles}, restricted to the chosen arc. For a fixed pair,
Proposition~\ref{prop:prym-unit-translation} and Lemma~\ref{lem:coefficientwise-parity-limit} give
\[
  \lim_{\epsilon\to0}
  {L^B_{r,+}(\gamma(\epsilon);z)+L^B_{r,-}(\gamma(\epsilon);z)\over\sqrt2}
  =
  {\sqrt2\over\sqrt{-2}}
  \sum_{k\ge1}\check h_k^{r,\rm main}z^{k-1}.
\]
Lemma~\ref{lem:main-thimbles} turns this into $\sqrt2\,v_rS_{\rm unit}(z)$.

On the A-side, Proposition~\ref{prop:aside-chamber-calibration} gives the closed-point specialization
at $x=0$. At the orbifold point the unit covector is a common nonzero scalar multiple of the dimension
vector,
\[
  v_\beta(0)=c\,d_\beta.
\]
For either one-dimensional fixed label $d_{r,\pm}=1$ and $v_{r,\pm}(0)=v_r$. The regular-character
calculation in Lemma~\ref{lem:binary-dihedral-unit-contraction} therefore gives
\[
  L^A_\alpha(0;z)=v_\alpha(0)S_{\rm unit}(z)
\]
for every orbifold canonical label, and the fixed parity-even formula follows by adding the two
one-dimensional labels.
\end{proof}

\begin{lemma}[Bubble consistency]\label{lem:bubble-beta}
At a fixed node $r=\pm1$, let $a_r$ be the local base smoothing parameter of the pole-cancellation
stratum, and write
$a$ for this local parameter in the normal form
\[
  \lambda
  =
  H_r(t,a){t-ra^2\over t},
  \qquad
  H_r(0,0)\ne0,
\]
with
\[
  \mu=r+{a\over y_r},
  \qquad
  t={ra^2\over y_r^2}+O(a^3).
\]
Set
\[
  \omega_{a_r}:=c^B_{a_r}\phi_D,
  \qquad
  c^B_{a_r}=2\nu\,\partial_{a_r}\log\lambda .
\]
Then this B-model primary differential has local bubble limit
\[
  \omega_{a_r}
  \longrightarrow
  {4\nu\sqrt\nu\over r}{dy_r\over1-y_r^2}.
\]
With $X=2\nu/z$, define the Gaussian-normalized bubble period of this limiting differential by
\[
  S_{\rm bub}(z):=
  {\sqrt X\over\sqrt\pi}
  \int_{-1}^{1}(1-y^2)^{X-1}\,dy .
\]
Then
\[
  S_{\rm bub}(z)
  =
  {\Gamma(2\nu/z+1)\over\sqrt{2\nu/z}\,\Gamma(2\nu/z+1/2)}
  =
  \Theta_{1/4}(z)E(z).
\]
\end{lemma}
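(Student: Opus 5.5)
The plan is to prove the three assertions of Lemma~\ref{lem:bubble-beta} in order: the local bubble limit of $\omega_{a_r}$, the closed Gamma form of $S_{\rm bub}$, and its identification with $\Theta_{1/4}E$. The second is an exact Beta integral and the third an identity of formal asymptotic series in $z$. Each is a short computation once the normal form of the statement and the plumbing chart $\mu=r+a/y_r$ of Proposition~\ref{prop:semistable} are in place.

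\emph{Step 1 (bubble limit).} Differentiate the normal form at fixed $\mu$, which is the fixed-$\mu$ lift of Section~\ref{sec:frobenius}:
\[
  \partial_a\log\lambda=\partial_a\log H_r(t,a)-{2ra\over t-ra^2}.
\]
Substituting $t=ra^2/y_r^2+O(a^3)$ gives $t-ra^2=ra^2(1-y_r^2)/y_r^2+O(a^3)$. Hence
\[
  c^B_{a_r}=-{4\nu y_r^2\over a(1-y_r^2)}\bigl(1+O(a)\bigr)+O(1),
\]
uniformly on compact subsets of the bubble away from $y_r=\pm1$ and $y_r=0$. In the same chart $d\mu=-a\,dy_r/y_r^2$ and $\mu\to r$, so
\[
  \phi_D=-{\sqrt\nu\, a\over r}{dy_r\over y_r^2}\bigl(1+O(a)\bigr),
\]
in agreement with Remark~\ref{rem:bubble-measure}. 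In the product the factor $a$ cancels, giving $4\nu\sqrt\nu\,dy_r/(r(1-y_r^2))$. The bounded term $2\nu\,\partial_a\log H_r$ multiplies $\phi_D=O(a)$ and therefore drops out. I will also check that the neglected $O(a^3)$ corrections to $t$ stay $O(a)$ after division by $t-ra^2$; this holds on compact sets away from $y_r=\pm1$.

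\emph{Step 2 (Beta integral).} Substitute $u=y^2$ on $[0,1]$ and use evenness. This gives
\[
  \int_{-1}^1(1-y^2)^{X-1}dy=B(\tfrac12,X)=\sqrt\pi\,{\Gamma(X)\over\Gamma(X+\tfrac12)}.
\]
Multiplying by $\sqrt X/\sqrt\pi$ and using $X\Gamma(X)=\Gamma(X+1)$ gives $\Gamma(X+1)/(\sqrt X\,\Gamma(X+\frac12))$ with $X=2\nu/z$. The integral is taken for $\operatorname{Re}X>0$, and the result is then read as a formal series in $z$, with the same rapid-decay branch convention as in Lemma~\ref{lem:main-thimbles}.

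\emph{Step 3 (Bernoulli identification).} This is the only step needing care, namely bookkeeping of signs and of the factors of $2$ in the powers of $z$. I will use the generalized Stirling expansion
\[
  \log\Gamma(X+b)\sim (X+b-\tfrac12)\log X-X+\tfrac12\log2\pi+\sum_{n\ge1}{(-1)^{n+1}B_{n+1}(b)\over n(n+1)X^n}.
\]
Applied with $b=1$ and $b=\frac12$, the $\log X$ terms cancel against $\sqrt X$. This leaves
\[
  \log S_{\rm bub}=\sum_{n\ge1}{(-1)^{n+1}\bigl[B_{n+1}(1)-B_{n+1}(\tfrac12)\bigr]\over n(n+1)}\Bigl({z\over2\nu}\Bigr)^n.
\]
The $B_{n+1}(1)$ part is exactly $\log E(z)$.

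For the remainder, $a_h=\frac14$ for the order-four elements, whose eigenvalues on $V$ are $\pm\sqrt{-1}$. The multiplication theorem gives
\[
  B_{n+1}(\tfrac14)+B_{n+1}(\tfrac34)=2^{-n}B_{n+1}(\tfrac12).
\]
Hence the $t$-th term of $\log\Theta_{1/4}$ is
\[
  (-1)^tB_{t+1}(\tfrac12)(z/2\nu)^t/(t(t+1)),
\]
which matches $-(-1)^{t+1}B_{t+1}(\frac12)(z/2\nu)^t/(t(t+1))$ term by term. Both series are odd in $z$, consistent with Lemma~\ref{lem:unitarity}, and this parity gives a quick check on the signs.
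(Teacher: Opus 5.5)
Your proposal is correct and follows the paper's proof. It uses the same fixed-$t$ differentiation of the normal form against the rescaled plumbing-chart limit of $\phi_D$, and the same Beta integral $B(\tfrac12,X)$. The one difference is that you carry out the identification with $\Theta_{1/4}(z)E(z)$ explicitly, via the generalized Stirling expansion and the multiplication theorem $B_{n+1}(\tfrac14)+B_{n+1}(\tfrac34)=2^{-n}B_{n+1}(\tfrac12)$, where the paper only attributes it to the Bernoulli calculation of Proposition~\ref{prop:rmatrix} and Lemma~\ref{lem:thirdleg}; your signs and powers of $2$ check out.
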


\begin{proof}
First
\[
  \phi_D
  =
  \sqrt\nu\,{d\mu\over\mu}
  =
  -{a\sqrt\nu\over r}{dy_r\over y_r^2}+O(a^2).
\]
At fixed $t$,
\[
  \partial_a\log\lambda
  =
  -{2ra\over t-ra^2}+\partial_a\log H_r.
\]
The $H_r$ term contributes $O(a)$ after multiplication by $\phi_D$. For the principal term,
\[
  t-ra^2
  =
  ra^2\left({1\over y_r^2}-1\right)+O(a^3)
  =
  ra^2{1-y_r^2\over y_r^2}+O(a^3).
\]
Therefore
\[
\begin{aligned}
  \partial_a\log\lambda\cdot\phi_D
  &=
  \left(
    -{2ra\over ra^2(1-y_r^2)/y_r^2}+O(1)
  \right)
  \left(
    -{a\sqrt\nu\over r}{dy_r\over y_r^2}+O(a^2)
  \right)  \\
  &\longrightarrow
  {2\sqrt\nu\over r}{dy_r\over1-y_r^2}.
\end{aligned}
\]
Multiplying by the Kodaira--Spencer factor $2\nu$ gives the stated limit for $\omega_{a_r}$; this is
the same local pole-cancellation calculation used to extend the modified Kodaira--Spencer map
$\widetilde{\mathrm{KS}}$ in Proposition~\ref{prop:bmodel-flat-algebra-family}. If $a'=ca+O(a^2)$,
then $\partial_{a'}=(\partial a/\partial a')\partial_a$, and both the differential and its leading
Gaussian coefficient are multiplied by the same nonzero scalar. The normalized bubble period is
therefore independent of the choice of local plumbing parameter.

It remains to evaluate the displayed period. With $X=2\nu/z$,
\[
  \int_{-1}^{1}(1-y^2)^{X-1}\,dy
  =
  \sqrt\pi\,{\Gamma(X)\over\Gamma(X+1/2)}.
\]
Thus the Gaussian-normalized bubble period is
\[
  {\sqrt X\over\sqrt\pi}
  \int_{-1}^{1}(1-y^2)^{X-1}\,dy
  =
  {\Gamma(X+1)\over\sqrt X\,\Gamma(X+1/2)}.
\]
This equals $\Theta_{1/4}E$ by the reflection-age Bernoulli calculation of
Proposition~\ref{prop:rmatrix} and Lemma~\ref{lem:thirdleg}.
\end{proof}

\subsection{Orbifold-point gauge fixing}

We now compare the two calibrations. The first comparison is not made by substituting a nonzero point
into the formal A-side series. Instead we work over the localized completed ring $\mathscr K$ of
Definition~\ref{def:ring-K}. The Frobenius-manifold isomorphism of
Theorem~\ref{thm:frob-iso} identifies the products, metrics, canonical coordinates, and normalized
canonical frames there. Thus the formal A-side solution and the stationary-phase B-side solution are
two normalized canonical fundamental solutions of the same formal Dubrovin equation over
$\mathscr K$.

\begin{lemma}[Formal canonical uniqueness]\label{lem:formal-canonical-uniqueness}
Let $F=\operatorname{Frac}(\widehat{\mathcal O}')$. Suppose
\[
  R_i\in I+z\operatorname{Mat}_{l+1}(F[[z]]),\qquad i=1,2,
\]
satisfy
\[
  dR_i={[dU,R_i]\over z}+[\Gamma^B,dU]R_i
\]
in the normalized canonical frame, and suppose both are symplectic:
\[
  R_i(z)R_i(-z)^{\mathsf T}=I.
\]
Then
\[
  R_2=R_1D,
\]
where $D=\operatorname{diag}(D_\alpha)$ is diagonal, is annihilated by all base derivations, and
satisfies
\[
  D(z)D(-z)=I.
\]
Equivalently,
\[
  D_\alpha(z)=\exp\left(\sum_{j\ge0}\gamma_{\alpha,j}z^{2j+1}\right).
\]
If $R_1$ and $R_2$ have coefficients in $\mathscr K[[z]]$, then so does $D$.
\end{lemma}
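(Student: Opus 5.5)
Set $D:=R_1^{-1}R_2$. Since $R_1(0)=I$, $R_1$ is invertible in $\operatorname{Mat}_{l+1}(F[[z]])$, and symplecticity gives $R_1^{-1}(z)=R_1(-z)^{\mathsf T}$. So $D\in I+z\operatorname{Mat}_{l+1}(F[[z]])$. If $R_1,R_2$ have coefficients in $\mathscr K[[z]]$, then so does $D=R_1(-z)^{\mathsf T}R_2(z)$, because it is a product of matrices over $\mathscr K[[z]]$. This settles the last assertion at once. It also gives $D(z)D(-z)^{\mathsf T}=R_1(-z)^{\mathsf T}R_2(z)R_2(-z)^{\mathsf T}R_1(z)=I$.

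Next we derive the equation for $D$. Write $A=[\Gamma^B,dU]$. From $dR_i=z^{-1}[dU,R_i]+AR_i$ we get $d(R_1^{-1})=-R_1^{-1}dR_1R_1^{-1}=-z^{-1}[R_1^{-1},dU]\cdot(-1)\ldots$; computing directly, the $A$-terms cancel and the $dU$-terms combine to
\[
  dD=\frac{1}{z}[dU,D].
\]
Now expand $D=\sum_{k\ge0}D_kz^k$ with $D_0=I$. Comparing coefficients of $z^{k-1}$ gives $[dU,D_k]=dD_{k-1}$ for $k\ge1$, together with $dD_k$ determined at the next order.

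We prove by induction that every $D_k$ is diagonal and annihilated by all base derivations. For $k=0$ this is clear. Assume $D_{k-1}$ is diagonal and $d$-closed. Then $[dU,D_k]=dD_{k-1}=0$, so $(u_\alpha-u_\beta)$-differentials kill the off-diagonal entries: $(D_k)_{\alpha\beta}\,d(u_\alpha-u_\beta)=0$. Here is the one nonformal input. On the chamber the critical values are distinct and the $u_\alpha$ are canonical coordinates (Proposition~\ref{prop:mirror-match}), so $d(u_\alpha-u_\beta)\neq0$ as a $1$-form over the domain $F$. Hence $(D_k)_{\alpha\beta}=0$ for $\alpha\ne\beta$. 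For a diagonal $D_k$ we have $[dU,D_k]=0$, so the next-order equation gives $dD_k=0$. This closes the induction.

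It remains to pin down the form of the diagonal entries. Write $D=\operatorname{diag}(D_\alpha)$. A diagonal matrix is its own transpose, so the symplectic relation becomes $D_\alpha(z)D_\alpha(-z)=1$. Since $D_\alpha(0)=1$, the logarithm $\log D_\alpha\in zF[[z]]$ exists. The relation says $\log D_\alpha$ is odd in $z$, which gives the displayed exponential form with $\gamma_{\alpha,j}$ annihilated by all derivations.

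I expect the main obstacle to be the step asserting $d(u_\alpha-u_\beta)\neq0$ over $F$. Concretely, one needs that $\partial_{x_a}(u_\alpha-u_\beta)$ is not identically zero for some $a$. This follows because, by Proposition~\ref{prop:mirror-match}, the $du_\alpha$ form a basis of $1$-forms (semisimplicity of the invariant Jacobian on $\Omega_{\rm Frob}$), and $\widehat{\mathcal O}'$ is a domain, so a nonzero element is invertible in $F$.
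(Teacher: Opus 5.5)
Your proposal follows the paper's proof essentially step for step. You form $D=R_1^{-1}R_2$, note that the $[\Gamma^B,dU]$ terms cancel so that $dD=z^{-1}[dU,D]$, run a coefficientwise induction using $[dU,M]_{\alpha\beta}=(du_\alpha-du_\beta)M_{\alpha\beta}$ and the independence of the $du_\alpha$ over $F$, and then read off $D_\alpha(z)D_\alpha(-z)=1$ from symplecticity.

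One sentence in your induction needs repair. From ``$D_k$ is diagonal, hence $[dU,D_k]=0$'' you cannot conclude $dD_k=0$. Combined with the order-$z^{k-1}$ equation, that identity only gives back $dD_{k-1}=0$, which you had already assumed. The correct reason is the one the paper uses. In the next-order equation $dD_k=[dU,D_{k+1}]$, the left side is diagonal because $D_k$ is. The right side has zero diagonal for any $D_{k+1}$, since $[dU,M]_{\alpha\alpha}=0$. So both sides vanish. This gives $dD_k=0$, and at the same time, by independence of the $du_\alpha$, it shows $D_{k+1}$ is diagonal.

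With that fix the argument is complete. Your explicit formula $D=R_1(-z)^{\mathsf T}R_2(z)$ is a slightly cleaner way to see that $D$ has coefficients in $\mathscr K[[z]]$.
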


\begin{proof}
Set $P=R_1^{-1}R_2$. Subtracting the two differential equations gives
\begin{equation}
\label{eq:anchor:formal-gauge-equation}
  dP={[dU,P]\over z}.
\end{equation}
Write $P=\sum_{k\ge0}P_kz^k$. Since $R_1$ and $R_2$ are normalized, $P_0=I$. Comparing coefficients
in \eqref{eq:anchor:formal-gauge-equation} gives
\[
  dP_k=[dU,P_{k+1}]
  \qquad(k\ge0).
\]
The canonical coordinates are local coordinates over $F$, so the differentials $du_\alpha$ are
independent.
For any matrix $M$,
\[
  [dU,M]_{\alpha\beta}=(du_\alpha-du_\beta)M_{\alpha\beta}.
\]
Thus $[dU,M]=0$ implies $M_{\alpha\beta}=0$ for $\alpha\ne\beta$. From $dP_0=0$ we get that $P_1$ is
diagonal. The next coefficient equation has diagonal left-hand side and off-diagonal right-hand side,
so both vanish:
$dP_1=0$ and the off-diagonal part of $P_2$ is zero. Inductively, $P_k$ is diagonal and base-constant
for every $k$. Thus $P=D$ is diagonal and base-constant.

The symplectic condition gives
\[
  I
  =
  R_2(z)R_2(-z)^{\mathsf T}
  =
  R_1(z)D(z)D(-z)^{\mathsf T}R_1(-z)^{\mathsf T}.
\]
Since $R_1$ is symplectic and $D$ is diagonal, $D(z)D(-z)=I$. The logarithm of each diagonal entry is
therefore an odd power series in $z$, giving the displayed exponential form. Finally
$D=R_1^{-1}R_2$; if $R_1$ and $R_2$ are defined over $\mathscr K[[z]]$, then so is $D$.
\end{proof}

\begin{lemma}[Removability from the completed local ring]\label{lem:completion-removability}
Let $b\in\mathcal O'[\mathfrak s^{-1}]$. If the image of $b$ in
$\widehat{\mathcal O}'[\mathfrak s^{-1}]$ belongs to $\widehat{\mathcal O}'$, then $b\in\mathcal O'$.
\end{lemma}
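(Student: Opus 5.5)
We reduce the statement to the faithful flatness of completion for the Noetherian local ring $\mathcal O'$, together with the fact that $\mathcal O'$ and $\widehat{\mathcal O}'$ are domains (Definition~\ref{def:ring-K}). Since $b\in\mathcal O'[\mathfrak s^{-1}]$, write $b=c/\mathfrak s^N$ with $c\in\mathcal O'$ and $N\ge0$. The hypothesis says that there is $\hat b\in\widehat{\mathcal O}'$ with $c=\mathfrak s^N\hat b$ in $\widehat{\mathcal O}'[\mathfrak s^{-1}]$. Because $\widehat{\mathcal O}'$ is a domain and $\mathfrak s\neq0$, the localization map $\widehat{\mathcal O}'\to\widehat{\mathcal O}'[\mathfrak s^{-1}]$ is injective, so this equality already holds in $\widehat{\mathcal O}'$. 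Equivalently, $c\in\mathfrak s^N\widehat{\mathcal O}'\cap\mathcal O'$.

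The key step is the ideal-contraction identity $I\widehat{\mathcal O}'\cap\mathcal O'=I$ for every ideal $I\subset\mathcal O'$. It holds because $\mathcal O'\to\widehat{\mathcal O}'$ is faithfully flat: $\mathcal O'$ is a Noetherian local ring (an analytic local algebra, finite over a convergent power series ring), and its $\mathfrak m$-adic completion is therefore faithfully flat over it. Applying this with $I=\mathfrak s^N\mathcal O'$ gives $c=\mathfrak s^N b'$ for some $b'\in\mathcal O'$.

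It remains to see that $b=b'$ in $\mathcal O'[\mathfrak s^{-1}]$. We have $\mathfrak s^N(b-b')=0$ there, and $\mathfrak s$ is a unit in the localization, so $b=b'\in\mathcal O'$. Here we also use that $\mathcal O'\to\mathcal O'[\mathfrak s^{-1}]$ is injective, which holds because $\mathcal O'$ is a domain.

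The only point requiring care is that the localized completed ring really is the base change $\mathcal O'[\mathfrak s^{-1}]\otimes_{\mathcal O'}\widehat{\mathcal O}'$. This is needed so that the image of $b$ is $c/\mathfrak s^N$ computed in $\widehat{\mathcal O}'$. It holds because localization commutes with tensor products. We also need that $\mathcal O'$ is Noetherian local, so that faithful flatness applies; this follows because $\mathcal O'$ is a finite normal extension of a convergent local ring and was taken to be a local component. We expect no serious obstacle; the step most likely to need justification is Noetherianity, which we take from the finiteness in Definition~\ref{def:ring-K}.
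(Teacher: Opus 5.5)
Your proposal is correct and follows essentially the same route as the paper. Both arguments clear denominators, use injectivity of $\widehat{\mathcal O}'\to\widehat{\mathcal O}'[\mathfrak s^{-1}]$ (since $\widehat{\mathcal O}'$ is a domain) to place $\mathfrak s^Nb$ in $\mathfrak s^N\widehat{\mathcal O}'$, and then apply the faithful-flatness contraction $\mathfrak s^N\widehat{\mathcal O}'\cap\mathcal O'=\mathfrak s^N\mathcal O'$ to conclude that $b\in\mathcal O'$.
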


\begin{proof}
Choose $N$ such that
\[
  b':=\mathfrak s^Nb\in\mathcal O'.
\]
By assumption, the image of $b$ in the completed localization is represented by some
\[
  \widehat b\in\widehat{\mathcal O}'.
\]
Since $\widehat{\mathcal O}'$ is a domain and $\mathfrak s\ne0$, the localization map
\[
  \widehat{\mathcal O}'\longrightarrow \widehat{\mathcal O}'[\mathfrak s^{-1}]
\]
is injective. Therefore the equality between the images of $b$ and $\widehat b$ in the completed
localization implies that the image of $b'=\mathfrak s^Nb$ in $\widehat{\mathcal O}'$ is actually
$\mathfrak s^N\widehat b$. In particular it lies in the ideal
$\mathfrak s^N\widehat{\mathcal O}'$. Since $\mathcal O'$ is a Noetherian local analytic algebra, its
completion is faithfully flat. Therefore the
ideal-contraction identity gives
\[
  \mathfrak s^N\widehat{\mathcal O}'\cap\mathcal O'=\mathfrak s^N\mathcal O'.
\]
Thus $b'=\mathfrak s^Nb_1$ for some $b_1\in\mathcal O'$, and $b=b_1$ in
$\mathcal O'[\mathfrak s^{-1}]$.
\end{proof}

\begin{proposition}[Gauge fixing by the flat-unit limit]\label{prop:gauge-fixing}
In $\operatorname{End}(H_{\CR,\bT}^*(\cX))\otimes\mathscr K[[z]]$,
\[
  \hat R^B=\hat R_A^\cX.
\]
Consequently, after pullback to the chosen finite analytic cover over the oscillatory chamber, the
coefficients of $\hat R_A^\cX$ are convergent. Their
coefficientwise analytic continuation in the chosen normalized canonical frame satisfies
\[
  \hat R^B(\kappa;z)=\hat R^\cX_{\rm can}(x(\kappa);z),
  \qquad
  \kappa\in\Omega_B.
\]
\end{proposition}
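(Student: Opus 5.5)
The plan is to reduce the comparison to a residual diagonal gauge and then kill that gauge with the flat-unit boundary data. By Proposition~\ref{prop:smooth-chamber-calibration}, $\hat R^B$ is a normalized symplectic canonical solution of the Dubrovin equation of $\cM^B$. By Proposition~\ref{prop:aside-chamber-calibration} and Theorem~\ref{thm:frob-iso}, the pullback of $\hat R_A^\cX$ to $\mathscr K$ solves the same equation, with the same $U$, $\Gamma^B$ and normalized canonical frame. Lemma~\ref{lem:formal-canonical-uniqueness} then gives
\[
  \hat R^B=\hat R_A^\cX D,\qquad D=\operatorname{diag}(D_\alpha),\qquad D(z)D(-z)=I,
\]
where $D$ has coefficients in $\mathscr K[[z]]$ and is annihilated by all base derivations. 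I expect to first argue that $D$ is a constant, i.e.\ lies in $\bC(\nu)[[z]]$. Base-constant elements of $\mathscr K$ are constants because the derivations $\partial_{x_a}$ act on the completed localization with kernel $\bC(\nu)$. This constancy is what makes evaluation along a single arc $\gamma(\epsilon)$ legitimate.

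Next I contract with the unit covector. Proposition~\ref{prop:prym-unit-translation} expresses $L^B_\alpha=\sum_\beta v_\beta\hat R^B_{\beta\alpha}$ through the dilaton coefficients $\check h^\alpha_k$. On the A-side, $L^A_\alpha(x;z)=\sum_\beta v_\beta\hat R^{\cX}_{A,\beta\alpha}$ is an element of $A^{\rm can}[[z]]$, and its closed-point value is given by Proposition~\ref{prop:parity-even-anchor}. The gauge relation yields
\[
  L^B_\alpha=L^A_\alpha D_\alpha
\]
over $\mathscr K$, coefficientwise in $z$. For a free main label I take $\epsilon\to0$ along the arc. The right-hand side tends to $v_\alpha(0)S_{\rm unit}(z)D_\alpha(z)$, using continuity of $L^A$ at $x=0$ and constancy of $D_\alpha$. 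The left-hand side tends to $v_\alpha(0)S_{\rm unit}(z)$. Since $v_\alpha(0)\neq0$, dividing gives $D_\alpha=1$.

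For a fixed pair $(r,\pm)$ the individual columns have no ordinary limit, so I would use the parity-even combination. I would first argue that $D_{r,+}=D_{r,-}$: the monodromy around the pole-cancellation stratum swaps the two split labels and fixes the constant gauge up to relabeling. If this fails, I would instead pair each label with the bubble period $S_{\rm bub}$ of Lemma~\ref{lem:bubble-beta}, which separates the odd direction. Granting the equality, the sum $(L^B_{r,+}+L^B_{r,-})/\sqrt2$ tends to $\sqrt2v_rS_{\rm unit}$. Its A-side counterpart is $\sqrt2 v_rS_{\rm unit}D_r$, so $D_r=1$. Hence $D=I$ and $\hat R^B=\hat R_A^\cX$ over $\mathscr K$.

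The main obstacle is making the boundary limits rigorous against the formal A-side series. The A-side coefficients live in the completed ring, while the B-side limits are analytic along an arc, so the two sides must be compared in a common convergent ring. For this I would use Lemma~\ref{lem:completion-removability}. Each coefficient of $\hat R^B$ lies in $\mathcal O'[\mathfrak s^{-1}]$. Once $D=I$, its image equals that of $\hat R_A^\cX$, whose coefficients lie in $A^{\rm can}\subset\widehat{\mathcal O}'$. The lemma then puts these coefficients in $\mathcal O'$, so they are convergent.

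Convergence gives the analytic continuation on the chosen cover over $\Omega_B$. The identity $\hat R^B(\kappa;z)=\hat R^\cX_{\rm can}(x(\kappa);z)$ for $\kappa\in\Omega_B$ then follows from the identity theorem. The delicate point is ordering the argument to avoid circularity: the limits along $\gamma$ used to determine $D$ must be taken on convergent representatives. I would handle this by applying removability to the product $\hat R^B D^{-1}$ before the limit. That product equals $\hat R_A^\cX$, which lies in the completed local ring, and $D$ is constant.
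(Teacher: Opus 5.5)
There is a genuine gap in the fixed-pair step. You claim $D_{r,+}=D_{r,-}$ because ``monodromy around the pole-cancellation stratum swaps the two split labels''. That argument is not available in this setup, for three reasons:
\begin{itemize}
\item The chart $\widetilde{\mathcal U}_B$ and the ring $\mathcal O'$ are built on the ramified pole-cancellation coordinate $\varepsilon_r$. On that coordinate the idempotents $e_{r,\pm}=(u_r\pm\xi_r)/2$ are single-valued, and $\Omega_B$ is simply connected. So no loop in the setup exchanges the two labels.
\item The only exchange is the deck transformation $\varepsilon_r\mapsto-\varepsilon_r$. It does not fix the A-side point $x(\kappa)$, because the centered mirror map is linear in $\delta\btau$, hence in $\varepsilon_r$.
\item To turn that deck transformation into a statement about the single constant gauge $D$, you would need an A-side symmetry compatible with the mirror map and with every B-side branch, square-root and thimble choice. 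You supply none of this.
\end{itemize}
Your fallback via $S_{\rm bub}$ has no support either. Lemma~\ref{lem:bubble-beta} computes the period of $\omega_{a_r}$, which is a different row of $\Psi^B$, not the odd part of the flat-unit contraction. The paper also deliberately uses no odd-direction specialization (Remark~\ref{rem:fixed-node-thimble-splitting}). As written, the parity-even limit only gives $D_{r,+}(z)+D_{r,-}(z)=2$, which does not pin down the pair.

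The missing idea is the relation $D(z)D(-z)=I$, which you wrote down but never used. Replace $z$ by $-z$ in $D_++D_-=2$ and use $D_\pm(-z)=D_\pm(z)^{-1}$; this gives $D_+^{-1}+D_-^{-1}=2$, hence $D_+D_-=1$. Substituting $D_-=2-D_+$ gives $(D_+-1)^2=0$. A ring of formal power series over a field has no nonzero nilpotents, so $D_+=D_-=1$. This is how the paper closes the fixed-pair case, without assuming $D_+=D_-$ in advance.

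The rest of your outline is sound. In particular, your ordering is a legitimate alternative to the paper's device: you first apply Lemma~\ref{lem:completion-removability} to $\hat R^BD^{-1}$, whose coefficients lie in $\mathcal O'[\mathfrak s^{-1}]$ because $D$ is base-constant. This makes the A-side contractions convergent along the arc, so you can take honest limits. The paper instead applies the formal arc homomorphism $\widehat\gamma^*:\mathscr K\to\bC(\nu)((\epsilon))$ and takes the constant term $[\epsilon^0]$.
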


\begin{proof}
Lemma~\ref{lem:formal-canonical-uniqueness}, applied over
$F=\operatorname{Frac}(\widehat{\mathcal O}')$, gives
\[
  \hat R^B=\hat R_A^\cX D,
\]
where $D=\operatorname{diag}(D_\alpha)$ is independent of $\kappa$ and
$D_\alpha(z)=\exp(\sum_{j\ge0}\gamma_{\alpha,j}z^{2j+1})$ is odd in $z$. Define
\[
  L^B_\alpha
  :=
  \sum_\beta v_\beta\,\hat R^B_{\beta\alpha},
  \qquad
  L^A_\alpha
  :=
  \sum_\beta v_\beta\,(\hat R_A^\cX)_{\beta\alpha}.
\]
The A-side contraction is formed with the formal calibration over $\mathscr K$; in the boundary
calculation below its closed-point specialization is the corresponding contraction of
$\hat R^\cX_{\rm orb}$.
Right multiplication by $D$ scales canonical column $\alpha$, hence
\begin{equation}
\label{eq:anchor:gauge-column-scaling}
  L^B_\alpha=D_\alpha L^A_\alpha.
\end{equation}

All boundary comparisons in the gauge calculation are made by one formal operation. The chosen arc
$\gamma(\epsilon)$ induces a continuous homomorphism
\[
  \widehat\gamma^*:\widehat{\mathcal O}'\longrightarrow \bC(\nu)[[\epsilon]].
\]
For $\epsilon\ne0$ the arc lies in $\Omega_B$, hence $\mathfrak s(\gamma(\epsilon))\ne0$, and the map
extends to
\[
  \widehat\gamma^*:\mathscr K=\widehat{\mathcal O}'[\mathfrak s^{-1}]
  \longrightarrow \bC(\nu)((\epsilon)).
\]
We apply this map coefficientwise in $z$ and then take the Laurent coefficient $[\epsilon^0]$. Since
$D_\alpha(z)$ is independent of the base parameter, for any $\varphi$,
\[
  [\epsilon^0]\widehat\gamma^*\bigl(D_\alpha\varphi\bigr)
  =
  D_\alpha(z)[\epsilon^0]\widehat\gamma^*\varphi .
\]
If $\varphi$ lies in $\widehat{\mathcal O}'$, this constant term is its closed-point specialization.

For a free main thimble, applying $[\epsilon^0]\widehat\gamma^*$ to
\eqref{eq:anchor:gauge-column-scaling} gives
\[
  [\epsilon^0]\widehat\gamma^*L^B_\alpha
  =
  D_\alpha(z)[\epsilon^0]\widehat\gamma^*L^A_\alpha .
\]
Proposition~\ref{prop:parity-even-anchor} gives the B-side value
$v_\alpha(0)S_{\rm unit}(z)$, while the A-side is regular and has the same closed-point value. Since
$v_\alpha(0)\ne0$ and $S_{\rm unit}(0)=1$, we get $D_\alpha=1$.

For a fixed pair over $r$, write
\[
  D_+(z):=D_{r,+}(z),
  \qquad
  D_-(z):=D_{r,-}(z).
\]
Adding the two equations \eqref{eq:anchor:gauge-column-scaling} and dividing by $\sqrt2$ gives
\[
  {L^B_{r,+}+L^B_{r,-}\over\sqrt2}
  =
  {D_+(z)L^A_{r,+}+D_-(z)L^A_{r,-}\over\sqrt2}.
\]
Apply the same operation $[\epsilon^0]\widehat\gamma^*$ to this identity. The B-side parity-even
constant term is computed by Proposition~\ref{prop:parity-even-anchor}, and the two A-side entries are
regular with common closed-point value $v_rS_{\rm unit}(z)$. Therefore
\[
  \sqrt2\,v_rS_{\rm unit}(z)
  =
  {D_+(z)+D_-(z)\over\sqrt2}\,v_rS_{\rm unit}(z).
\]
Since $v_r\ne0$ and $S_{\rm unit}(0)=1$, we get
\begin{equation}
\label{eq:anchor:fixed-pair-sum}
  D_+(z)+D_-(z)=2.
\end{equation}
The diagonal gauge is symplectic by Lemma~\ref{lem:formal-canonical-uniqueness}, so
\[
  D_\pm(z)D_\pm(-z)=1.
\]
Applying \eqref{eq:anchor:fixed-pair-sum} at $-z$ gives
\[
  {1\over D_+(z)}+{1\over D_-(z)}=2.
\]
Multiplying by $D_+(z)D_-(z)$ and comparing with \eqref{eq:anchor:fixed-pair-sum} gives
$D_+(z)D_-(z)=1$. Substituting $D_-(z)=2-D_+(z)$ gives
\[
  (D_+(z)-1)^2=0.
\]
The formal power-series ring has no nonzero nilpotents, so $D_+(z)=D_-(z)=1$. This holds for both
fixed nodes, and all diagonal entries of $D$ are therefore $1$.

Thus $\hat R^B=\hat R_A^\cX$ over $\mathscr K$. We now prove removability coefficientwise in $z$. Let
$b$ be any matrix coefficient of any fixed $z^N$ coefficient of $\hat R^B$. By
Proposition~\ref{prop:aside-chamber-calibration}, before completion
\[
  b\in\mathcal O'[\mathfrak s^{-1}].
\]
The equality with the A-side formal coefficient says that the image of $b$ in
$\widehat{\mathcal O}'[\mathfrak s^{-1}]$ actually lies in the regular completed ring
$\widehat{\mathcal O}'$.
Lemma~\ref{lem:completion-removability} gives $b\in\mathcal O'$. Therefore every apparent pole of the
B-side stationary-phase coefficient along a pole-cancellation stratum or critical-discriminant locus
is removable on the chosen finite analytic cover.

The Taylor expansion of this analytic coefficient is the pullback of the corresponding A-side formal
coefficient. Hence the pulled-back A-side calibration is convergent, coefficientwise in $z$, on the
chosen finite analytic cover over the oscillatory chamber.

We denote this pulled-back analytic A-side calibration near the boundary by
$\hat R^\cX_{\rm can}(x(\kappa);z)$. On the chosen simply connected oscillatory chamber both
$\hat R^B(\kappa;z)$ and this analytic continuation solve the same Dubrovin equation in the same
normalized canonical frame. They agree near the boundary on the chosen cover, and hence agree
throughout $\Omega_B$ by uniqueness of analytic continuation for solutions of the Dubrovin equation
on the simply connected chamber.
\end{proof}

\begin{theorem}[Smooth-chamber $R$-matrix comparison]\label{thm:anchor}
In the chosen normalized canonical frame, for every $\kappa\in\Omega_B$,
\[
  \hat R^B(\kappa;z)
  =
  \hat R^\cX_{\rm can}(x(\kappa);z).
\]
The orbifold point is used only through the coefficientwise boundary values in
Proposition~\ref{prop:aside-chamber-calibration} and Proposition~\ref{prop:parity-even-anchor}; no
central-fiber B-model $R$-matrix is defined.
\end{theorem}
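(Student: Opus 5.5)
The statement is essentially a repackaging of Proposition~\ref{prop:gauge-fixing}, so the plan is to assemble the chain of results of this section in order and check that nothing beyond the listed boundary data is used. First, Proposition~\ref{prop:smooth-chamber-calibration} identifies $\hat R^B$ on $\Omega_B$ with the normalized canonical calibration of $\cM^B|_{\Omega_B}$. This gives $\hat R^B(0)=I$, symplecticity via Proposition~\ref{prop:prym-double-laplace}, and the Dubrovin equation for $S^B=\Psi^B\hat R^Be^{U/z}$. Transporting through the Frobenius isomorphism of Theorem~\ref{thm:frob-iso} and Proposition~\ref{prop:mirror-match}, which match canonical coordinates, idempotent norms, and rotation coefficients, puts $\hat R^B$ and the formal A-side calibration $\hat R_A^\cX$ of Proposition~\ref{prop:aside-chamber-calibration} into the same normalized canonical frame over the localized completed ring $\mathscr K$. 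Both solve the same equation $dR=[dU,R]/z+[\Gamma^B,dU]R$.

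Second, I would invoke Lemma~\ref{lem:formal-canonical-uniqueness} to reduce to $\hat R^B=\hat R_A^\cX D$, with $D$ diagonal, base-constant, and satisfying $D(z)D(-z)=I$. Contracting with the flat-unit covector $v_\beta$ turns this into the columnwise relation $L^B_\alpha=D_\alpha L^A_\alpha$. Applying the arc homomorphism $[\epsilon^0]\widehat\gamma^*$ and comparing with Proposition~\ref{prop:parity-even-anchor} and Lemma~\ref{lem:binary-dihedral-unit-contraction} gives $D_\alpha=1$ for free main labels. For each fixed pair it gives $D_++D_-=2$. Together with $D_\pm(z)D_\pm(-z)=1$ this forces $(D_+-1)^2=0$, and hence $D=I$.

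Third, I would pass from $\mathscr K$ to actual analytic functions. The B-side coefficients lie in $\mathcal O'[\mathfrak s^{-1}]$, and their images are regular in $\widehat{\mathcal O}'$, so Lemma~\ref{lem:completion-removability} shows they lie in $\mathcal O'$. This yields convergence of the pulled-back A-side coefficients. Uniqueness of analytic continuation of Dubrovin solutions on the simply connected chamber $\Omega_B$ then gives equality at every $\kappa\in\Omega_B$, with $\hat R^\cX_{\rm can}(x(\kappa);z)$ defined as this continuation.

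The final clause of the statement is checked by inspection: the only orbifold-point inputs are the closed-point specialization $\operatorname{sp}_0$ in Proposition~\ref{prop:aside-chamber-calibration} and the boundary series in Proposition~\ref{prop:parity-even-anchor}. No B-side object is evaluated at $\kappa^{\rm orb}$ itself.

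The main obstacle is the fixed-node step. The individual smooth columns $L^B_{r,\pm}$ need not have separate limits, and only their parity-even sum is controlled, through Lemma~\ref{lem:coefficientwise-parity-limit} and the half-cover normalization of Lemma~\ref{lem:half-cover-chain}. I would therefore lean on the symplectic constraint to upgrade the single sum relation into $D_\pm=1$. I would also double-check that the regularity of $L^A_{r,\pm}$ in $\widehat{\mathcal O}'$ justifies commuting $[\epsilon^0]\widehat\gamma^*$ with multiplication by the base-constant $D_\pm$.
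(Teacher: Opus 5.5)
Your proposal is correct and is essentially the paper's proof. The theorem is assembled from Proposition~\ref{prop:smooth-chamber-calibration} and Proposition~\ref{prop:aside-chamber-calibration}, and Proposition~\ref{prop:gauge-fixing} carries out exactly your steps: formal uniqueness up to a base-constant diagonal gauge with $D(z)D(-z)=I$; the column scaling $L^B_\alpha=D_\alpha L^A_\alpha$ read off by $[\epsilon^0]\widehat\gamma^*$; the relation $D_++D_-=2$ together with its $z\mapsto-z$ version $D_+^{-1}+D_-^{-1}=2$, giving $(D_+-1)^2=0$; and finally Lemma~\ref{lem:completion-removability} and analytic continuation on the simply connected chamber. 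One small correction to your closing worry: the individual fixed-node columns do converge, because the proof of Lemma~\ref{lem:coefficientwise-parity-limit} shows $\check h^{r,\pm}_k(\epsilon)\to\check h^{r,\rm main}_k$ separately, so the symplectic upgrade is the paper's parity-even choice rather than a necessity.
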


\begin{proof}
Proposition~\ref{prop:smooth-chamber-calibration} gives the B-side normalized canonical calibration,
while Proposition~\ref{prop:aside-chamber-calibration} gives the formal A-side calibration over the
localized completed ring $\mathscr K$ of Definition~\ref{def:ring-K}, on which it can be compared
with the B-side stationary-phase series. The
residual gauge is removed over that ring by Proposition~\ref{prop:gauge-fixing}, and the same
proposition gives the resulting analytic continuation on $\Omega_B$.
\end{proof}

\subsection{Label checks after gauge fixing}

The proof of Theorem~\ref{thm:anchor} did not require a full identification of every limiting
ramification label with a binary-dihedral character. After the comparison is proved, the special
fiber of the mirror isomorphism determines this identification, and the character formula then
supplies the support information. To name the characters, fix the presentation
\[
  \Gamma=\langle a,b\mid a^{2(l-2)}=1,\ b^2=a^{l-2},\ bab^{-1}=a^{-1}\rangle,
\]
in which $a$ acts on the defining representation $V$ with eigenvalues
$e^{\pm\pi\sqrt{-1}/(l-2)}$. For $1\leq s\leq l-3$ let $\rho_s$ be the two-dimensional
irreducible character with $\chi_{\rho_s}(a)=2\cos\bigl(\pi s/(l-2)\bigr)$, so that $\rho_1$ is
the character of $V$; the four remaining irreducible characters are one-dimensional, two with
$\chi(a)=1$ and two with $\chi(a)=-1$. For a fixed node $r=\pm1$, let $\chi_{r,+}$ and
$\chi_{r,-}$ be the two one-dimensional characters with
\[
  \chi_{r,\pm}(a)=-r;
\]
Lemma~\ref{lem:orbifold-label-dictionary} below shows that this is the pair carried by the
fixed-node labels at $\mu=r$. Set
\[
  u_r={\chi_{r,+}+\chi_{r,-}\over\sqrt2},
  \qquad
  \xi_r={\chi_{r,+}-\chi_{r,-}\over\sqrt2}.
\]

\begin{remark}[Geometric fixed-node splitting]\label{rem:fixed-node-thimble-splitting}
The comparison theorem above uses only the parity-even flat-unit limit, not a full specialization map
in rapid-decay homology. For orientation, the expected local picture is the following. We denote by
$\Gamma_r^{\rm main}$ the primitive quotient main chain and by $\Gamma_r^{\rm bub}$ the bubble chain
in this local picture. In the sign local system near a fixed node, the even and odd combinations of
the two smooth thimbles specialize to the main and bubble summands:
\[
  {\Gamma_{r,+}+\Gamma_{r,-}\over\sqrt2}
  \rightsquigarrow
  \sqrt2\,\Gamma_r^{\rm main},
  \qquad
  {\Gamma_{r,+}-\Gamma_{r,-}\over\sqrt2}
  \rightsquigarrow
  \sqrt2\,\Gamma_r^{\rm bub}.
\]
We use this only as a geometric consistency convention for the labels below. The support statement
itself is derived from the already-proved comparison theorem and the A-model character formula.
\end{remark}


\begin{lemma}[The special-fiber label dictionary]\label{lem:orbifold-label-dictionary}
Let $m=l-2$. Post-composing the special-fiber mirror isomorphism
$\Phi_{\rm ext}:\cH_B(\kappa^{\rm orb})\to Z(\bC(\nu)[\Gamma])$ of
Proposition~\ref{prop:bmodel-flat-algebra-family} with the class-algebra identification of
Lemma~\ref{lem:frob}, the class of the invariant coordinate $w=\mu+\mu^{-1}$ satisfies
\begin{equation}\label{eq:anchor:class-sum-dictionary}
  \Phi_{\rm ext}\bigl([\,w\,]\bigr)=-(a+a^{-1}).
\end{equation}
Consequently the limiting label at $w_0$ corresponds to the irreducible characters $\alpha$ with
$2\chi_\alpha(a)/d_\alpha=-w_0$:
\begin{enumerate}
\item the free main orbit through $\mu_0=e^{\pi\sqrt{-1}\,s/m}$, with $w_0=2\cos(\pi s/m)$,
  corresponds to $\rho_{m-s}$, the irreducible representation in which $a$ has eigenvalues
  $-\mu_0$ and $-\mu_0^{-1}$;
\item the two idempotents $e_{r,\pm}=(u_r\pm\xi_r)/2$ over the fixed node $r=\pm1$, with
  $w_0=2r$, correspond to the two one-dimensional characters with $\chi(a)=-r$.
\end{enumerate}
The individual matching within a fixed pair is the choice of square-root branch of the
pole-cancellation coordinate $\varepsilon_r$: the deck transformation
$\varepsilon_r\mapsto-\varepsilon_r$ exchanges $e_{r,+}$ and $e_{r,-}$, and no statement below
depends on this choice.
\end{lemma}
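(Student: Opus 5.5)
The identity \eqref{eq:anchor:class-sum-dictionary} concerns two semisimple algebras of the same rank $l+1$. An element of a semisimple commutative algebra is determined by its values on the primitive idempotents. So the plan is to compare idempotent values of $[w]$ on the B-side with the central characters of $a+a^{-1}$ on the A-side, and to pin down the bijection of idempotents using data that $\Phi_{\rm ext}$ is already known to preserve.

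On the A-side, the class sum $a+a^{-1}$ acts on the idempotent $p_\alpha$ of \eqref{eq:amodel:primitive-idempotents} by the central character $|[a]|\chi_\alpha(a)/d_\alpha=2\chi_\alpha(a)/d_\alpha$, since $[a]=\{a,a^{-1}\}$ has size $2$. This gives $2\cos(\pi s/m)$ on $\rho_s$, $\pm2$ on the one-dimensional characters, and never a value outside $[-2,2]$. On the B-side, Proposition~\ref{prop:bmodel-flat-algebra-family} says the special fiber is semisimple. The class $[w]$ takes the value $w_0$ on the idempotent attached to a limiting root $w_0$. By the Chebyshev computation in Step~1 of that proof, these roots are the $m-1$ residual roots $2\cos(\pi s/m)$, $1\le s\le m-1$, each a simple root, together with $\pm2$, each carrying both idempotents $e_{r,\pm}$. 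The last assertion holds because $w-2r=t$ in the modified summand $\mathcal H_{B,r}$ equals $b_r+\varepsilon_ra_r\xi_r$, which tends to $0$ at the central point.

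Both sides therefore have the same multiset of eigenvalues $\{2\cos(\pi s/m)\}_{s=1}^{m-1}\cup\{2,2,-2,-2\}$, and this multiset is symmetric under negation. This reduces \eqref{eq:anchor:class-sum-dictionary} to showing that $\Phi_{\rm ext}$ matches idempotents so that the eigenvalue of $[w]$ at a B-label is minus the A-side central character at the corresponding character. I expect this matching to be the main obstacle.

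My approach is to use what $\Phi_{\rm ext}$ is known to preserve. It is a Frobenius isomorphism, so it preserves idempotent norms. By Step~6 of the same proof, the norms $h_{\rm int}$ and $h_{\rm fix}$ separate the residual roots (dimension $2$) from the fixed roots (dimension $1$). This alone does not distinguish $w_0=2$ from $-2$, and it does not order the residual roots. To fix both, I would compute $\Phi_{\rm ext}$ on one explicit flat direction. The class $[w]$ is a combination of Kodaira--Spencer classes $2\nu\partial_{\btau_j}\log\lambda$ and $1$, and these are carried by $dx$ to explicit class sums through Hu's matrices $\mathsf H,G$ in Definition~\ref{def:mirror-map}. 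Evaluating at the orbifold point requires only the linear terms of Lemma~\ref{lem:hu-coordinate-check}. Concretely, the $\iota$-anti-invariant relative cycles $\mathcal C_i$ from $\kappa_i^{-1}$ to $\kappa_i$ pair with $[w]$ through $\log\kappa_i^{\rm orb}=-n_i\log\zeta$. Comparing this with $\sqrt{2-\chi_V(g)}$ at $g=a$ should show that the root-of-unity shift by $\zeta^{-n}$ introduces the sign $-1$. The content of that sign is that $\mu$-values on the B-side correspond to $a$-eigenvalues multiplied by $-1$.

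A cheaper consistency check is to test the identity against the unit and against $\chi_V$, using the Frobenius-metric equality together with the trace of $[w]$. One could also avoid explicit coordinates by noting that multiplication by $[w]$ has characteristic polynomial $(w^2-4)^2U_{m-1}(w/2)$ on both sides. The required input is then only that $\Phi_{\rm ext}$ sends the $\rho_1$-type idempotent to the residual root $-2\cos(\pi/m)$. This follows because $\Phi_{\rm ext}^{-1}(\chi_V\text{-class})$ has norm and first-order data fixed by Lemma~\ref{lem:hu-coordinate-check}. The remaining characters are then forced by the multiplicative structure, since $[w]$ generates the algebra together with $\xi_\pm$ and Chebyshev recursion propagates the assignment.

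Items (1) and (2) then follow by reading off eigenvalues. In (1), $-2\cos(\pi s/m)=2\cos(\pi(m-s)/m)$ identifies $\rho_{m-s}$, whose $a$-eigenvalues are $-\mu_0^{\pm1}$. In (2), the value $-2r$ selects the one-dimensional characters with $\chi(a)=-r$. The final remark on the matching within a fixed pair follows from Step~2 of the same proof: the substitution $\varepsilon_r\mapsto-\varepsilon_r$ sends $\xi_r$ to $-\xi_r$, and therefore swaps $e_{r,+}$ and $e_{r,-}$.
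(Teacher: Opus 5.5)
Your eigenvalue bookkeeping is correct. The class sum $a+a^{-1}$ acts on $p_\alpha$ by $2\chi_\alpha(a)/d_\alpha$. The class $[w]$ acts by $w_0$ at each limiting label, including both idempotents of a modified fixed-node summand, since $t=b_r+\varepsilon_ra_r\xi_r\to0$. The two spectra coincide, and you rightly identify the idempotent matching as the real content of the lemma.

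The gap is that the proposal never establishes that matching, and the ``cheaper'' route cannot. A Frobenius-algebra isomorphism between these split semisimple algebras is nothing more than a norm-preserving bijection of primitive idempotents. All $m-1$ two-dimensional labels have the same norm $h_{\rm int}$, and all four fixed labels have $h_{\rm fix}$, so every permutation within these two classes is a Frobenius automorphism. Hence equal characteristic polynomials, equal norms, or the image of one idempotent say nothing about where the others go. The Chebyshev recursion $V\otimes\rho_t=\rho_{t-1}\oplus\rho_{t+1}$ is a relation in the representation ring, not in $Z(\bC(\nu)[\Gamma])$, and $\Phi_{\rm ext}$ is not known to respect it. Finally, propagating via ``$[w]$ generates the algebra'' presupposes the value $\Phi_{\rm ext}([w])$ you are trying to find.

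The information has to come from the specific map $\Phi_{\rm ext}=dx\circ\widetilde{\mathrm{KS}}^{-1}$. Your first idea points there, but it stops at ``should show.'' The mechanism you sketch (pairing the cycles $\mathcal C_i$ with $[w]$, with a sign produced by the $\zeta^{-n}$ shift) is not what does the work; the relative cycles play no role in this lemma.

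The efficient direction is the reverse of yours. Since $x_{[a]}$ is dual to the class sum, $\Phi_{\rm ext}$ sends $[2\nu\,\partial_{x_{[a]}}\log\lambda]$ to $a+a^{-1}$. It therefore suffices to prove that this Kodaira--Spencer class equals $-[w]$ on the special fiber; the label matching is then read off from eigenvalues rather than established first. Concretely you need four steps:
\begin{enumerate}
\item the McKay placement of $\chi_1,\dots,\chi_l$ on the Dynkin nodes ($\chi_1(a)=1$, $\chi_j=\rho_{j-1}$ in the middle, $\chi(a)=-1$ at the fork), which gives $\mathsf L_{j,[a]}=\sin(\pi/(2m))\chi_j(a)/m$;
\item the solution of $G^{\mathsf T}v=\mathsf L_{\bullet,[a]}$, which telescopes by a Dirichlet-kernel identity to $v_1=v_l=0$ and $v_i=-\sin\psi_{i-1}/m$, with $\psi_k=(2k-1)\pi/(2m)$, so the pole-cancellation directions drop out;
\item the partial-fraction collapse at the orbifold values $c_i=-2\cos\psi_{i-1}$ and $\kappa_i^{-1}-\kappa_i=2\sqrt{-1}\sin\psi_{i-1}$:
\[
  2\nu\,\partial_{x_{[a]}}\log\lambda=-w+\frac{(w^2-4)\,Q'(w)}{m\,Q(w)},\qquad Q(w)=2T_m(w/2);
\]
here the two factors of $\sqrt{-1}$ cancel and the overall sign is that of $v_i$;
\item the vanishing of the correction term at every limiting label, including both the even and the odd component in the modified fixed-node summands.
\end{enumerate}
Without this, or an equivalent explicit evaluation of $dx$ on Kodaira--Spencer classes, neither the sign in \eqref{eq:anchor:class-sum-dictionary} nor the ordering of the residual labels is proved.
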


\begin{proof}
By Definition~\ref{def:mirror-map} the flat coordinate $x_{[g]}$ is dual to the age-normalized
sector class $\overline{\mathbf 1}_{[g]}$, and under the isomorphism of Lemma~\ref{lem:frob} this
class is the class sum $\sum_{h\in[g]}h\in Z(\bC(\nu)[\Gamma])$. Since
$\Phi_{\rm ext}=dx\circ\widetilde{\mathrm{KS}}{}^{-1}$, the special-fiber Kodaira--Spencer class
of every sector field is sent to the corresponding class sum,
\[
  \Phi_{\rm ext}\bigl(\bigl[\,2\nu\,\partial_{x_{[g]}}\log\lambda\,\bigr]\bigr)
  =\sum_{h\in[g]}h .
\]
It therefore suffices to prove that, on the special fiber,
\begin{equation}\label{eq:anchor:ks-of-rotation-sector}
  \bigl[\,2\nu\,\partial_{x_{[a]}}\log\lambda\,\bigr]=-[\,w\,]
\end{equation}
for the rotation class $[a]=\{a,a^{-1}\}$.

We first record the McKay placement needed to read off the relevant column of the closed mirror
map. The mark computation in Lemma~\ref{lem:hu-coordinate-check} puts the single mark-one chain
end at the node $j=1$ and the fork at $j=l-1,l$. Tensoring by $V$ gives
$V\otimes\chi'=\rho_1$ for the one-dimensional character $\chi'$ with $\chi'(a)=1$,
$V\otimes\rho_t=\rho_{t-1}\oplus\rho_{t+1}$ in the interior of the chain, and
$V\otimes\rho_{m-1}$ contains both characters with $\chi(a)=-1$. Hence for $l\geq5$ the McKay
correspondence forces $\chi_1=\chi'$, $\chi_j=\rho_{j-1}$ for $2\leq j\leq l-2$, and places the
two characters with $\chi(a)=-1$ at the fork; for $l=4$ we fix the same placement among the
triality-related choices. In particular
\[
  \chi_j(a)=
  \begin{cases}
    1, & j=1,\\
    2\cos\bigl(\pi(j-1)/m\bigr), & 2\leq j\leq l-2,\\
    -1, & j=l-1,\ l .
  \end{cases}
\]

Now invert the closed mirror map on the $[a]$ column. From
\eqref{eq:frob:raw-hu-map} and \eqref{eq:frob:age-normalized-map},
\[
  \partial_{x_{[a]}}
  =-{\sqrt{-1}\over\nu}\sum_{i=1}^{l}v_i\,\partial_{\btau_i},
  \qquad
  G^{\mathsf T}v=\mathsf L_{\bullet,[a]},
  \qquad
  \mathsf L_{j,[a]}={\sin\bigl(\pi/(2m)\bigr)\over m}\,\chi_j(a),
\]
where we used $|[a]|=2$, $|\Gamma|=4m$, and $\sqrt{2-\chi_V(a)}=2\sin(\pi/(2m))$. Write
$\psi_k=(2k-1)\pi/(2m)$ for $1\leq k\leq m$. Since the middle orbifold exponents are
$n_i=2m-2i+3$ for $2\leq i\leq l-1$, the orbifold values of the middle coordinates are
\[
  c_i=2\cos\bigl(n_i\pi/(2m)\bigr)=-2\cos\psi_k,
  \qquad
  \kappa_i^{-1}-\kappa_i=2\sqrt{-1}\,\sin\psi_k,
  \qquad k=i-1,
\]
and $\{2\cos\psi_k\}_{k=1}^{m}$ is the root set of $Q(w)=2T_m(w/2)$. The system
$G^{\mathsf T}v=\mathsf L_{\bullet,[a]}$ telescopes: $v_j-v_{j+1}=\mathsf L_{j,[a]}$ for
$j\leq l-1$ and $v_{l-1}+v_l=\mathsf L_{l,[a]}$. Since
$\mathsf L_{l-1,[a]}=\mathsf L_{l,[a]}$ we get $v_l=0$, and then, using
$\sum_{u=1}^{m-1}2\cos(\pi u/m)=0$ and the Dirichlet kernel identity
\[
  1+\sum_{u=1}^{k-1}2\cos\Bigl({\pi u\over m}\Bigr)
  ={\sin\psi_k\over\sin\bigl(\pi/(2m)\bigr)},
\]
the solution is
\[
  v_1=v_l=0,
  \qquad
  v_i=-{\sin\psi_{i-1}\over m}\qquad(2\leq i\leq l-1).
\]
In particular the two pole-cancellation directions do not enter, and at the orbifold values
\[
  2\nu\,\partial_{x_{[a]}}\log\lambda
  =-\sqrt{-1}\sum_{i=2}^{l-1}v_i\,{2(\kappa_i^{-1}-\kappa_i)\over w-c_i}
  =-{4\over m}\sum_{k=1}^{m}{\sin^2\psi_k\over w+2\cos\psi_k}
  =-{4\over m}\sum_{k=1}^{m}{\sin^2\psi_k\over w-2\cos\psi_k},
\]
the last equality by the substitution $k\mapsto m+1-k$, which fixes $\sin\psi_k$ and reverses
the sign of $\cos\psi_k$. Writing
$4\sin^2\psi_k=(4-w^2)+(w-2\cos\psi_k)(w+2\cos\psi_k)$ and using $\sum_k2\cos\psi_k=0$, the
partial-fraction sum collapses to
\[
  {4\over m}\sum_{k=1}^{m}{\sin^2\psi_k\over w-2\cos\psi_k}
  =w-{(w^2-4)\,Q'(w)\over m\,Q(w)} .
\]
The correction term vanishes at every limiting label: at a residual root because $Q'$ does while
$Q\neq0$ there, and in a modified fixed-node summand because $w^2-4$ vanishes at $w=2r$ while
$Q(2r)=2r^{\,m}\neq0$, so in the summand variable $t=b_r+\varepsilon_ra_r\xi_r$ both the even
and the odd components of its class tend to zero with $b_r,\varepsilon_r$. This proves
\eqref{eq:anchor:ks-of-rotation-sector}.

Finally, we match eigenvalues. The class $[\,w\,]$ acts by $2\cos(\pi s/m)$ on the residual summand
through $\mu_0=e^{\pi\sqrt{-1}\,s/m}$ and by $2r$ on both idempotents of the fixed-node summand
at $r$, while the class sum $a+a^{-1}$ acts on the $\alpha$-isotypic block by
$2\chi_\alpha(a)/d_\alpha$. Matching under \eqref{eq:anchor:class-sum-dictionary} gives
$2\chi_\alpha(a)/d_\alpha=-w_0$. The residual values select
$2\chi_\alpha(a)/d_\alpha=2\cos\bigl(\pi(m-s)/m\bigr)$, hence $\alpha=\rho_{m-s}$; the
fixed-node values select the one-dimensional characters with $\chi(a)=-r$. The within-pair
exchange under $\varepsilon_r\mapsto-\varepsilon_r$ is the sign discussion in Step 2 of
Proposition~\ref{prop:bmodel-flat-algebra-family}.
\end{proof}

\begin{definition}[Ramification labels after gauge fixing]\label{def:postcomparison-ramification-labels}
Set $\omega_{2(l-2)}=\exp(\pi\sqrt{-1}/(l-2))$. For $1\leq s\leq l-3$, the free main quotient
orbit
\[
  \{\mu=\omega_{2(l-2)}^s,\ \mu=\omega_{2(l-2)}^{-s}\}
\]
is matched with the two-dimensional character $\rho_{l-2-s}$, by
Lemma~\ref{lem:orbifold-label-dictionary}. At a fixed node $r$, the even direction
$\sqrt2\,\Gamma_r^{\rm main}$ is labeled by $u_r$, and the odd bubble direction
$\sqrt2\,\Gamma_r^{\rm bub}$ is labeled by $\xi_r$.
\end{definition}

\begin{corollary}[Support constraints after gauge fixing]\label{cor:postcomparison-support}
In the label convention of Definition~\ref{def:postcomparison-ramification-labels}, the boundary
value of the B-model $R$-matrix has the binary-dihedral character support of
$\hat R^{A,\mathrm{surf}}$. The scalar factor $E$ in
$\hat R^\cX_{\rm orb}=E\hat R^{A,\mathrm{surf}}$ does not change this support. In particular, the odd
fixed-node labels are supported on the corresponding reflection sectors, and the other vanishing
entries are those of the surface character formula.
\end{corollary}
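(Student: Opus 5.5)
The plan is to transport the support information from the A-side through Theorem~\ref{thm:anchor}, using Lemma~\ref{lem:orbifold-label-dictionary} to fix labels. The B-side boundary value is never computed directly. By Proposition~\ref{prop:gauge-fixing}, every $z^N$ coefficient of $\hat R^B$ lies in $\mathcal O'$ on the chosen cover. Its closed-point specialization, equivalently $[\epsilon^0]\widehat\gamma^*$ along the arc, equals $\operatorname{sp}_0(\hat R^\cX_A)=\hat R^\cX_{\rm orb}=E\,\hat R^{A,\mathrm{surf}}$ by Proposition~\ref{prop:aside-chamber-calibration}. This specialization is taken in the normalized canonical frame, which over $A^{\rm can}$ specializes to the orbifold frame $\widehat p_\alpha$ of Lemma~\ref{lem:frob}. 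So the boundary value of $\hat R^B$ is $E\hat R^{A,\mathrm{surf}}$, written in character labels.

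Next I would translate character labels into the labels of Definition~\ref{def:postcomparison-ramification-labels}. Lemma~\ref{lem:orbifold-label-dictionary} identifies each free main orbit with $\rho_{m-s}$. It also identifies each fixed-node idempotent pair $e_{r,\pm}$ with the two one-dimensional characters having $\chi(a)=-r$, up to the within-pair exchange $\varepsilon_r\mapsto-\varepsilon_r$. At the fixed nodes the label change is the orthogonal matrix sending $\chi_{r,\pm}$ to $u_r=(\chi_{r,+}+\chi_{r,-})/\sqrt2$ and $\xi_r=(\chi_{r,+}-\chi_{r,-})/\sqrt2$, and it is the identity elsewhere. Conjugating the character formula \eqref{eq:amodel:rmatrix} by this matrix gives the support statement. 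The entry involving $\xi_r$ is $\tfrac1{|\Gamma|}\sum_h(\chi_{r,+}-\chi_{r,-})(h^{-1})\,\chi_\beta(h)\Theta_h/\sqrt2$. Since $\chi_{r,+}$ and $\chi_{r,-}$ agree on $\langle a\rangle$ and differ only on the reflection cosets $b\langle a\rangle$, only reflection sectors $h\in b\langle a\rangle$ contribute. The $u_r$ entries and the remaining vanishing entries follow directly from the surface character formula, and the within-pair exchange only flips the sign of $\xi_r$, so no conclusion depends on it. The factor $E(z)$ is a scalar in $h$ by Lemma~\ref{lem:thirdleg}. It multiplies every entry by the same unit series with $E(0)=1$, so it cannot create or destroy a zero entry.

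The step I expect to be hardest is justifying that the boundary value in the paired $u_r,\xi_r$ frame is meaningful. Individual fixed-node smooth columns have no naive limit, as the proof of Lemma~\ref{lem:coefficientwise-parity-limit} warns about the bubble chart. I would handle this purely algebraically. After Proposition~\ref{prop:gauge-fixing} all entries are regular on the finite cover, and the change to the $u_r,\xi_r$ frame has constant coefficients. So the boundary value is just the closed-point specialization of a regular matrix, and no thimble specialization is needed. Remark~\ref{rem:fixed-node-thimble-splitting} is then used only to name the even and odd directions geometrically, not to prove anything.
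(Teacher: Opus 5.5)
Your proposal is correct and follows the same route as the paper. You transport the support through Theorem~\ref{thm:anchor}, identify the coefficientwise boundary value with $\hat R^\cX_{\rm orb}=E\hat R^{A,\mathrm{surf}}$ via Proposition~\ref{prop:aside-chamber-calibration}, and read the support off the character formula of Proposition~\ref{prop:rmatrix}, with $E$ an $h$-independent unit. Your additional points simply fill in details that the paper's three-line proof leaves implicit: that $\chi_{r,+}-\chi_{r,-}$ is supported on $b\langle a\rangle$, that the within-pair exchange only flips the sign of $\xi_r$, and that regularity after Proposition~\ref{prop:gauge-fixing} makes the constant change to the $u_r,\xi_r$ frame harmless.
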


\begin{proof}
By Theorem~\ref{thm:anchor}, the smooth B-model $R$-matrix is the analytic continuation of the A-side
formal calibration. Taking the A-side coefficientwise boundary value from
Proposition~\ref{prop:aside-chamber-calibration} gives
$\hat R^\cX_{\rm orb}=E\hat R^{A,\mathrm{surf}}$. The scalar $E$ does not affect support, and the support of
$\hat R^{A,\mathrm{surf}}$ is the character sum of Proposition~\ref{prop:rmatrix}. This gives the
stated support constraints after gauge fixing.
\end{proof}


\section{Descendant remodeling and higher-genus free energies}\label{sec:graphsum-remodeling}

In this section we pass from the reduced topological-recursion graph sum to the
descendant remodeling statement.  The ordinary leaf is the part that must be
handled separately: the DOSS graph sum
\cite{duninbarkowski2012identificationgiventalformulaspectral}, recalled in
Theorem~\ref{thm:doss}, is written in the dressed leaves $\theta^k_\alpha$.  For
the descendant theorem we first express these leaves through the undressed
tower, and only then replace the resulting undressed leaves by the ancestor
variables on the A-side.

This is the same procedure as in the equivariant $\bP^1$ case
\cite{fang2016eynardorantinrecursionequivariantmirror}, but all signs
and $R$-matrix arguments below are those fixed in Sections~\ref{sec:bgraphsum}
and~\ref{sec:anchor}.  Section~\ref{sec:anchor} identifies the B-side
calibration with the A-side normalized canonical matrix at the same argument
$z$.  In the Givental graph expansion, however, the A-side half-edges carry
the matrix with the opposite loop argument.  We will therefore compare
B-side factors involving $R(z)$ with A-side factors involving $R(-z)$, and
the missing signs are absorbed by a parity change on the ordinary input
variables.

\subsection{Ancestor and descendant variables}

We work on the fixed simply connected chamber $\Omega_B$ and in the chosen
labeled normalized canonical frame.  Let $x=x(\kappa)$ be the closed mirror
map of Section~\ref{sec:frobenius}.  Section~\ref{sec:anchor} constructs the
A-side CohFT first over the completed local ring $A^{\rm can}$ and compares it
with the B-side over the localized completed ring $\mathscr K$.  Therefore,
throughout this section, the A-model quantities on $\Omega_B$ mean their
coefficientwise analytic continuations from that formal shifted theory.  We do
not assert convergence of the literal infinite primary-insertion sums at an
arbitrary chamber point.

We write $\Omega_B^{\rm cov}\to\Omega_B$ for the finite analytic cover of the
B-model chamber corresponding to the local normal analytic extension
$\mathcal O'$ in Definition~\ref{def:ring-K}.  Over this
cover, the critical points, canonical norm square roots, and Morse-branch
choices are single-valued.

Choose a fixed flat basis $\{T_a\}$ of
$H_{\CR,\bT}^*(\cX)$.  For the $j$-th ordinary input we take a polynomial
descendant input
\[
  \bu_j(z)=\sum_a u_j^a(z)T_a\in H_{\CR,\bT}^*(\cX)[z].
\]
The completed descendant-variable version is obtained by continuity after the
polynomial case has been proved.  If
\[
  \hat\phi_\alpha(x)=\widetilde e_\alpha(x)
  =\sqrt{\Delta_\alpha(x)}\,e_\alpha(x)
\]
is the normalized canonical vector over $\Omega_B^{\rm cov}$, then the canonical
components of any transformed input are measured by the metric pairing with
$\hat\phi_\alpha(x)$.

Let $S^{\rm an}_x(z)$ be the analytic continuation of the genus-zero
ancestor--descendant operator of the shifted theory.  In the fixed flat basis
we use the convention
\begin{equation}
\label{eq:remodeling:s-operator}
  (\varphi,S_x(z)\varphi')
  =
  (\varphi,\varphi')
  +
  \sum_{m\geq0}{1\over m!}
  \left\langle
  \varphi,{\varphi'\over z-\bar\psi},\mathbf x,\ldots,\mathbf x
  \right\rangle^{\cX,\bT}_{0,m+2},
\end{equation}
over the completed local ring $A^{\rm can}$ of Section~\ref{sec:anchor}, with the usual
unstable term included in $(\varphi,\varphi')$.  The
operator $S^{\rm an}_x(z)$ is its coefficientwise continuation to the chosen
chamber.  We set
\begin{equation}
\label{eq:remodeling:transformed-input}
  \widetilde u_j^\alpha(z)
  =
  \left[
  \bigl(\hat\phi_\alpha(x),S^{\rm an}_x(z)\bu_j(z)\bigr)
  \right]_+,
  \qquad
  \widetilde\bu_j(z)=\sum_\alpha
  \widetilde u_j^\alpha(z)\hat\phi_\alpha(x),
\end{equation}
where $[\ ]_+$ denotes projection to nonnegative powers of $z$.  For polynomial
inputs the coefficient of any monomial in the descendant variables is a finite
sum; the completed version uses the product topology in the variables
$u_j^{a,k}$.  Write $u_j^a(z)=\sum_{k\geq0}u_j^{a,k}z^k$; these flat-basis
coefficients are unrelated to the canonical coordinates $u_\alpha$.

The shifted ancestor series which will be compared with the B-model is the
analytic continuation of the finite Givental--Teleman graph reconstruction of the
formal shifted CohFT of Proposition~\ref{prop:aside-chamber-calibration}.  We
denote it by
\begin{equation}
\label{eq:remodeling:ancestor-series}
  \mathcal A^{\cX,{\rm an}}_{g,n}
  (\widetilde\bu_1,\ldots,\widetilde\bu_n;\kappa).
\end{equation}
Near the orbifold boundary, in the Taylor expansion supplied by
Proposition~\ref{prop:aside-chamber-calibration}, this is the Taylor series
obtained from the formal shifted CohFT
\begin{equation}
\label{eq:remodeling:shifted-cohft}
  \Omega^A_{g,n}(\gamma_1,\ldots,\gamma_n)
  =
  \sum_{m\geq0}{1\over m!}
  (\pi_m)_*
  \Omega_{g,n+m}(\gamma_1,\ldots,\gamma_n,\mathbf x,\ldots,\mathbf x).
\end{equation}
Here $\pi_m$ forgets the last $m$ marked points.
The actual descendant series on the chamber is defined by the
ancestor--descendant transformation
\begin{equation}
\label{eq:remodeling:descendant-series}
  \mathcal F^{\cX,{\rm an}}_{g,n}
  (\bu_1,\ldots,\bu_n;\kappa)
  :=
  \mathcal A^{\cX,{\rm an}}_{g,n}
  (\widetilde\bu_1,\ldots,\widetilde\bu_n;\kappa).
\end{equation}
In this Taylor expansion, the series agrees with the usual descendant series with
primary insertions $\mathbf x$ and descendant cotangent classes $\bar\psi_j$ on
the moduli of stable maps.  The cotangent classes in the ancestor series are
the classes $\psi_j$ on $\Mbar_{g,n}$.

\begin{lemma}[Stable ancestor--descendant transformation]
\label{lem:stable-ancestor-descendant}
For $2g-2+n>0$, near the orbifold boundary the connected shifted descendant
series is obtained from the shifted ancestor series \eqref{eq:remodeling:ancestor-series} by
\eqref{eq:remodeling:transformed-input}--\eqref{eq:remodeling:descendant-series}. The
quadratic term in the quantized $S$-operator contributes only to the unstable
genus-zero two-point sector, and hence does not appear in these stable
connected correlators.  After coefficientwise analytic continuation, the same
identity defines $\mathcal F^{\cX,{\rm an}}_{g,n}$ on the chosen chamber.
\end{lemma}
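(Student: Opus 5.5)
The plan is to work first over the completed local ring $A^{\rm can}$ at the orbifold point, where everything is formal and the classical Kontsevich--Manin / Givental ancestor--descendant theorem applies. Only afterwards do we pass to the chamber by the coefficientwise continuation already established in Proposition~\ref{prop:gauge-fixing}.

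Over $A^{\rm can}$, the shifted CohFT \eqref{eq:remodeling:shifted-cohft} is a genuine $A^{\rm can}$-valued CohFT with flat unit. Its descendant potential with primary shift $\mathbf x$ is expressed through the ancestor potential by Givental's formula
\[
  \mathcal D^{\cX}_{\mathbf x}=e^{F_1(\mathbf x)}\,\widehat{S_{\mathbf x}^{-1}}\,\mathcal A_{\mathbf x}.
\]
This formula follows from the comparison $\bar\psi_j=\pi^*\psi_j+D_j$ on $\overline{\cM}_{g,n+m}(B\Gamma)$, where $D_j$ is the boundary divisor on which the $j$-th point lies on a genus-zero bubble carrying only $\mathbf x$-insertions. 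I would cite this statement (Kontsevich--Manin, Givental) rather than reprove it. The only input needed is that the localization formula \eqref{eq:amodel:localization} defines a CohFT with the usual pullback properties, which Definition~\ref{lem:loc-twisted-bg} supplies.

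Next, I would expand the quantized operator $\widehat{S^{-1}_{\mathbf x}}$. Since $S_{\mathbf x}$ is a lower-triangular symplectic operator, its quantization is $\exp$ of a quadratic Hamiltonian. The linear part of that Hamiltonian acts on descendant variables by $\bu\mapsto[S_{\mathbf x}(z)\bu(z)]_+$, which is exactly \eqref{eq:remodeling:transformed-input} after taking components in the normalized canonical frame $\hat\phi_\alpha$. The quadratic part is the genus-zero two-point function $W(\bu,\bu)$, coming from the $[\,\cdot\,]_-$ projection. It contributes only a term $\hbar^{-1}W(\bu,\bu)/2$ in the logarithm of the potential, so it only affects the $(g,n)=(0,2)$ component, together with the genus-one constant $F_1$. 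Taking $\log$, extracting the coefficient of $\hbar^{g-1}$ and of degree $n$ in $\bu$, and using $2g-2+n>0$ therefore yields
\[
  \mathcal F_{g,n}(\bu_1,\ldots,\bu_n)=\mathcal A_{g,n}(\widetilde\bu_1,\ldots,\widetilde\bu_n),
\]
so both claims of the lemma hold near the boundary. For polynomial inputs, each monomial coefficient involves finitely many $S$-coefficients and finitely many ancestor correlators, so no convergence issue arises at this stage.

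Finally, the continuation. The coefficients of $S_{\mathbf x}(z)$ in the flat basis solve the quantum differential equation, so they are determined by the Frobenius structure identified in Theorem~\ref{thm:frob-iso}. Their continuation to $\Omega_B^{\rm cov}$ exists because $S_x$ corresponds to the solution $S^B$ of Proposition~\ref{prop:smooth-chamber-calibration} via the fundamental solution $\Psi\hat R e^{U/z}$; the ancestor side continues coefficientwise by Proposition~\ref{prop:gauge-fixing}. The identity then transports by definition \eqref{eq:remodeling:descendant-series}.

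The main obstacle I expect is making the $S$-operator continuation precise. The flat descendant operator is not a priori given by an $R$-matrix identity, and its continuation must be taken consistently with the chosen normalization of the unstable term. I would handle this by defining $S^{\rm an}$ as the unique solution of $z\,dS=C\,S$ with the given Taylor expansion at the boundary, which is well defined on the simply connected chamber.
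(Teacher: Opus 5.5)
Your proposal is correct and follows essentially the same route as the paper. You cite the Kontsevich--Manin/Givental comparison for the shifted CohFT with flat unit, note that the quadratic part of $\widehat{S^{-1}_{\mathbf x}}$ and the factor $e^{F_1}$ only reach unstable sectors (so each input transforms independently by $[S_{\mathbf x}\bu]_+$), and then continue coefficientwise using finiteness for polynomial inputs. For continuing $S$ itself, the paper (in Proposition~\ref{prop:aside-analytic-graph-theory}) uses exactly your final fix: solve $z\,dS=CS$ on the simply connected chamber with the boundary Taylor normalization. The appeal to the asymptotic solution $\Psi\hat R e^{U/z}$ is therefore unnecessary, and it would not by itself determine the $z^{-1}$-coefficients of $S_{\mathbf x}$.
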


\begin{proof}
This is the standard ancestor--descendant comparison for a CohFT with flat
unit, applied to the shifted CohFT \eqref{eq:remodeling:shifted-cohft}
\cite{kontsevich1997relationscorrelatorstopological,
givental2001semisimplefrobeniusstructureshigher,
teleman2012structure2dsemisimplefield}.  Near the orbifold boundary,
forgetting the extra primary insertions changes the stable-map cotangent
classes $\bar\psi_j$ to the ancestor classes $\psi_j$ and produces the
genus-zero two-point operator $S_x(z)$ in \eqref{eq:remodeling:s-operator}. For connected stable
correlators with $2g-2+n>0$, the only additional quadratic contribution in
Givental's quantization of $S$ is supported on the unstable $(0,2)$ sector;
therefore every ordinary input is transformed independently by
\eqref{eq:remodeling:transformed-input}. The
coefficient of each monomial in polynomial descendant inputs is finite, so the
identity continues coefficientwise in the same Taylor expansion used for
the shifted CohFT data.
\end{proof}

\begin{proposition}[Analytic A-side graph-sum data on the chamber]
\label{prop:aside-analytic-graph-theory}
After passing to $\Omega_B^{\rm cov}$, the A-side objects in
\eqref{eq:remodeling:transformed-input}--\eqref{eq:remodeling:descendant-series} are well-defined
with coefficients in
\[
  \mathcal O_{\rm an}(\Omega_B^{\rm cov})[[u_j^{a,k}]],
\]
where the completion is the product completion in the descendant variables.
\end{proposition}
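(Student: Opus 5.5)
The proof is a bookkeeping of which ingredients are already analytic on the chamber, combined with Proposition~\ref{prop:gauge-fixing}. The A-side objects in \eqref{eq:remodeling:transformed-input}--\eqref{eq:remodeling:descendant-series} are built from four kinds of data: the normalized canonical frame $\hat\phi_\alpha(x)$ and norms $\Delta_\alpha(x)$; the calibration $\hat R^\cX_{\rm can}$; the genus-zero operator $S_x(z)$; and the finite Givental--Teleman graph sum with vertex factors from $\overline{\cM}_{g,n}$ intersection numbers and the unit-translation (dilaton) leaves. The plan is to show each is a coefficientwise convergent element over $\Omega_B^{\rm cov}$. The graph sum for fixed $(g,n)$ and fixed monomial in the $u_j^{a,k}$ is then a finite polynomial expression in these data. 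This gives coefficients in $\mathcal O_{\rm an}(\Omega_B^{\rm cov})[[u_j^{a,k}]]$.

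The steps, in order, are as follows.

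\emph{Frame.} By Theorem~\ref{thm:frob-iso} and Proposition~\ref{prop:mirror-match}, the canonical frame, norms and canonical coordinates pulled back via $x(\kappa)$ coincide with the B-side residue data of Lemma~\ref{lem:quotient-residue-normalization}. These are analytic on $\Omega_B$, and become single-valued on $\Omega_B^{\rm cov}$, where the square roots $\sqrt{\Delta_\alpha}$ are fixed.

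\emph{Calibration.} By Proposition~\ref{prop:gauge-fixing}, each $z^N$-coefficient of $\hat R^\cX_A$ is convergent on the cover and equals $\hat R^B$ there.

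\emph{Unit-translation coefficients.} The dilaton-leaf data are $[z^{k}]\sum_\beta v_\beta\hat R_{\beta\alpha}(z)$. They are analytic by the previous two steps, and match $\check h^\alpha_k$ by Proposition~\ref{prop:prym-unit-translation}.

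\emph{Genus-zero operator.} This is the main step. I would characterize $S_x(z)$ as the unique solution of the quantum differential equation $z\,dS=\ldots$ (in the convention of \eqref{eq:remodeling:s-operator}) with $S=I+O(z^{-1})$ and the string/unit normalization. In a formal neighbourhood of the orbifold point its $z^{-k}$ coefficients are computed recursively from the quantum product by integrating $d S_{k+1}=C\,S_k$-type relations with constants fixed by the homogeneity/string normalization.

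The quantum product $C(x)$ is holomorphic on the whole of $x(\widetilde{\mathcal U}_B)\supset x(\Omega_B)$ by Proposition~\ref{prop:aside-analytic-neighborhood} and Proposition~\ref{prop:bmodel-flat-algebra-family}. So each coefficient $S_k$ is obtained by integrating holomorphic one-forms along paths in the simply connected domain. It therefore extends analytically, with the integration constants fixed at $x=0$, and this continuation agrees with the Taylor series. Alternatively one can avoid integration constants by writing $S^{-1}$ as $\Psi R(z)e^{U/z}$ modulo the $e^{U/z}$ factor. The former route is cleaner. The obstacle is that the analytic continuation of $S$ from the orbifold point must be independent of the path and must match the formal series. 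Simple connectivity of $\Omega_B$, together with the analytic neighbourhood $\mathcal U_A$ containing the closure point, should handle this, using the fixed arc $\gamma(\epsilon)$ as in Section~\ref{sec:anchor}.

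\emph{Transformed inputs.} $\widetilde u_j^\alpha$ is a finite combination, for polynomial $\bu_j$ and each fixed power of $z$, of $S_k$ paired with $\hat\phi_\alpha$. It is therefore analytic on the cover.

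\emph{Graph sum.} For fixed $(g,n)$, the number of stable graphs with bounded dilaton-leaf count contributing to a given monomial is finite. This is because dilaton leaves carry $k\ge2$ and are controlled by the dimension constraint $\sum k\le 3g-3+n$. Hence $\mathcal A^{\cX,\rm an}_{g,n}$ has analytic coefficients. Composition with $\widetilde\bu_j$ preserves this coefficientwise, and the completed descendant ring follows by the product topology. Finally, Lemma~\ref{lem:stable-ancestor-descendant} shows that the result agrees near the boundary with the formal shifted theory. This identifies the analytic objects as the continuations claimed.
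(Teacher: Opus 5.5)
Your proposal is correct and follows essentially the paper's argument: the frame, canonical data and calibration come from Theorems~\ref{thm:frob-iso} and~\ref{thm:anchor}, the ancestor tensors are the finite Givental--Teleman graph sums of these analytic data, and $S_x$ is continued as the flat solution of the quantum differential equation normalized at the orbifold point. Your order-by-order integration, with constants fixed by the value at $x=0$, is the same argument as the paper's uniqueness of a fundamental solution up to a constant $z$-dependent matrix, and polynomial inputs then give finite combinations, as in the paper. Drop the aside that $S^{-1}$ can be written as $\Psi R(z)e^{U/z}$: those two fundamental solutions differ by a nontrivial constant $z$-dependent matrix, which is exactly what the normalization at $x=0$ has to fix, and you rightly do not rely on it.
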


\begin{proof}
First, the Frobenius product, metric, canonical coordinates, normalized
idempotents, and the calibration
$\hat R^\cX_{\rm can}(x(\kappa);z)$ are the coefficientwise analytic
continuations supplied by Theorem~\ref{thm:frob-iso} and
Theorem~\ref{thm:anchor}.  The stable ancestor tensors
$\mathcal A^{\cX,{\rm an}}_{g,n}$ are then defined by the finite
Givental--Teleman graph reconstruction from these analytic data.  This
definition includes the case $n=0$.

Second, $S_x^{\rm an}(z)$ is obtained by analytic continuation from the formal
operator in \eqref{eq:remodeling:s-operator}. The Taylor expansion at the
orbifold boundary fixes the normalization: two flat fundamental solutions
differ by right multiplication by a $z$-dependent constant matrix, and matching
the formal operator in \eqref{eq:remodeling:s-operator} there forces this
matrix to be the identity.  Since the chosen chamber is simply connected, the continued
coefficients are single-valued.  For polynomial inputs $\bu_j(z)$, the
nonnegative part in \eqref{eq:remodeling:transformed-input} is therefore a finite linear combination in
every descendant coefficient; the completed statement is its continuous extension.
The descendant tensors
$\mathcal F^{\cX,{\rm an}}_{g,n}$ are defined from the analytic ancestor
tensors by \eqref{eq:remodeling:descendant-series}, in accordance with
Lemma~\ref{lem:stable-ancestor-descendant}.

Near the orbifold boundary, these analytic objects have Taylor expansions equal
to the formal shifted CohFT and its formal
ancestor--descendant transformation.
\end{proof}

\subsection{The undressed B-model leaves}

We first record the undressed tower $\mathsf W^k_\alpha$ and its principal
parts.  Unlike the bare forms $\vartheta^k_\alpha$, this tower can contain
subleading even-order poles; the dressing identity of
Proposition~\ref{prop:leaf-dressing} cancels these terms in the DOSS leaves.
For $k\geq0$ define the anti-invariant forms of the undressed tower
\begin{equation}
\label{eq:remodeling:undressed-tower}
  \mathsf W_\alpha^0=\vartheta_\alpha^0,\qquad
  \mathsf W_\alpha^k=
  d\left((-1)^k
  \left({d\over d\hat x}\right)^{k-1}
  {\vartheta_\alpha^0\over d\hat x}\right)\quad(k\geq1).
\end{equation}
Here $\vartheta_\alpha^0/d\hat x$ is the meromorphic function obtained by
dividing one local one-form by the other.  Since $\hat x$ is invariant under
$\iota$ and $\vartheta_\alpha^0$ is anti-invariant, each $\mathsf W_\alpha^k$
is anti-invariant and hence descends to the sign local system on the quotient
labels.

\begin{lemma}[Principal parts of the undressed tower]\label{lem:undressed-leaf-principal-part}
On the chosen local disk at $p_\beta$,
\begin{equation}
\label{eq:remodeling:principal-parts}
  \mathsf W_\alpha^k
  =
  -\delta_{\alpha\beta}{(2k+1)!!\over 2^k}
  {ds_\beta\over s_\beta^{2k+2}}
  +O(s_\beta^{-2k})\,ds_\beta .
\end{equation}
Only even polar orders occur, and in particular the residue is zero at every
critical point.  Moreover the forms $\mathsf W_\alpha^k$ have no additional
poles at the zero or pole punctures of $\lambda$.
\end{lemma}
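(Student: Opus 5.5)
The plan is to compute the principal parts by a direct local calculation in the Morse coordinate $s_\beta$. We then handle the punctures separately, using the same order estimates as in the proof of Proposition~\ref{prop:smooth-chamber-calibration}.

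First we record the local form of $\vartheta^0_\alpha$ near $p_\beta$. By \eqref{eq:bgraphsum:bare-second-kind} with $k=0$ and the diagonal singularity in \eqref{eq:bgraphsum:kernel-expansion}, we have
\[
\vartheta^0_\alpha=-\delta_{\alpha\beta}\,{ds_\beta\over s_\beta^2}+g_{\alpha\beta}(s_\beta)\,ds_\beta ,
\]
with $g_{\alpha\beta}$ holomorphic at $s_\beta=0$. In that disk $d\hat x=2s_\beta\,ds_\beta$, so $d/d\hat x=(2s_\beta)^{-1}d/ds_\beta$. This gives
\[
{\vartheta^0_\alpha\over d\hat x}=-{\delta_{\alpha\beta}\over 2s_\beta^{3}}+{g_{\alpha\beta}(s_\beta)\over 2s_\beta}.
\]

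Next we apply $(d/d\hat x)^{k-1}$ and then $d$ to the leading monomial. For an odd power, $(2s)^{-1}d/ds\,(s^{-2j-1})=-\tfrac{2j+1}{2}s^{-2j-3}$. Iterating $k-1$ times from $s^{-3}$ produces the factor $(-1)^{k-1}\,3\cdot5\cdots(2k-1)/2^{k-1}$. Applying $d$ then contributes $-(2k+1)s^{-2k-2}ds$. Together with the prefactor $(-1)^k$ and the $-\tfrac12$, the leading term is
\[
-\delta_{\alpha\beta}\,{(2k+1)!!\over 2^k}\,{ds_\beta\over s_\beta^{2k+2}} .
\]
We will double-check the sign bookkeeping at $k=0,1$ against the definition.

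For the remaining terms we use parity. Write $g_{\alpha\beta}(s)/(2s)$ as an odd Laurent part $c/s$ plus a holomorphic part. The operator $(2s)^{-1}d/ds$ maps odd Laurent monomials to odd monomials and lowers the degree by $2$. Hence the function $(d/d\hat x)^{k-1}(\vartheta^0_\alpha/d\hat x)$ has, beyond the leading term, only poles of order at most $2k-1$, and these appear with odd exponents in the polar part. This holds because the even part of $g$ contributes odd exponents and its odd part contributes holomorphic or even terms. The even holomorphic pieces stay holomorphic, since $(2s)^{-1}d/ds$ of an even power series is again an even power series.

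Applying $d$ to an odd exponent yields an even polar order. So all polar terms of $\mathsf W^k_\alpha$ have even order, the subleading ones are $O(s_\beta^{-2k})ds_\beta$, and the residue vanishes. The residue vanishing is also immediate for $k\ge1$, since $\mathsf W^k_\alpha$ is exact on the punctured disk. For $k=0$ it is the zero-residue property of $\vartheta^0_\alpha$.

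Finally, at a zero or pole $p_0$ of $\lambda$ we use $1/d\hat x=O(\tau_{p_0}/d\tau_{p_0})$. The forms $\vartheta^0_\alpha$ are holomorphic at $p_0$ because $B^-(\cdot,q')$ is. Then
\[
d/d\hat x=O(\tau_{p_0})\,d/d\tau_{p_0}
\]
preserves holomorphy, so every $\mathsf W^k_\alpha$ is holomorphic at $p_0$. The only care needed is the anti-invariant image points $\iota p_0$, which are handled identically.

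We expect the main obstacle to be the parity bookkeeping for the subleading terms rather than the leading coefficient. In particular, one must confirm that the holomorphic remainder $g_{\alpha\beta}$ can produce no odd-order poles after the iterated division by $2s_\beta$. This relies on $\hat x-\widetilde u_\beta$ being exactly $s_\beta^2$, so that the Morse coordinate makes $d/d\hat x$ parity-compatible.
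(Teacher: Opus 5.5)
Your proof is correct and follows essentially the same route as the paper. In both, you expand $\vartheta^0_\alpha$ in the Morse coordinate from the local Prym-kernel expansion and iterate $d/d\hat x=(2s_\beta)^{-1}d/ds_\beta$ to get the leading coefficient and the even parity of the subleading poles; the paper phrases the parity step as the odd-$m$ terms $B^{\beta\alpha}_{m,0}s_\beta^m$ becoming constants before the final $d$, and both treat the punctures through $1/d\hat x=O(\tau_{p_0}/d\tau_{p_0})$.
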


\begin{proof}
Near $p_\beta$ we have $\hat x=-u_\beta+s_\beta^2$.  From the definition of
$\vartheta_\alpha^0$ and the local expansion of the Prym kernel,
\begin{equation}
\label{eq:remodeling:bare-expansion}
  \vartheta_\alpha^0
  =
  -\delta_{\alpha\beta}{ds_\beta\over s_\beta^2}
  -\sum_{m\geq0}B^{\beta\alpha}_{m,0}s_\beta^m\,ds_\beta .
\end{equation}
Dividing \eqref{eq:remodeling:bare-expansion} by $d\hat x=2s_\beta ds_\beta$ gives a function with leading term
$-\delta_{\alpha\beta}/(2s_\beta^3)$.  Repeated application of
\[
  {d\over d\hat x}={1\over2s_\beta}{d\over ds_\beta}
\]
and the factor $(-1)^k$ in \eqref{eq:remodeling:undressed-tower} gives the leading term in
\eqref{eq:remodeling:principal-parts}.  The term
with $m$ odd becomes a constant at the step where its exponent reaches zero,
before the final differential can produce a pole.  Hence only even polar
orders occur.

For example, if near $p_\alpha$
\[
  \vartheta_\alpha^0
  =
  -{ds_\alpha\over s_\alpha^2}
  -B^{\alpha\alpha}_{0,0}\,ds_\alpha
  +O(s_\alpha)\,ds_\alpha,
\]
then
\[
  \mathsf W_\alpha^1
  =
  -{3\over2}{ds_\alpha\over s_\alpha^4}
  -{1\over2}B^{\alpha\alpha}_{0,0}
  {ds_\alpha\over s_\alpha^2}
  +O(1)\,ds_\alpha .
\]
The lower double pole is the term canceled by the dressing identity of
Proposition~\ref{prop:leaf-dressing}.

At a zero or pole $p_0$ of $\lambda$, write a local coordinate $\tau_{p_0}$ so that
\[
  d\hat x=-2\nu\,{d\lambda\over\lambda}
  =
  c\,{d\tau_{p_0}\over\tau_{p_0}}+O(1)d\tau_{p_0},
  \qquad c\neq0 .
\]
The form $\vartheta_\alpha^0$ is holomorphic there, and division by $d\hat x$
therefore multiplies a holomorphic differential by $O(\tau_{p_0})$.  The operator
$d/d\hat x$ and the final differential $d$ do not create a puncture pole.
\end{proof}

The \emph{$R$-undressed ordinary leaves} are the undressed tower in the
ordinary-leaf normalization of \eqref{eq:bgraphsum:ordinary-leaf}:
\begin{equation}
\label{eq:remodeling:undressed-leaves}
  \widehat\theta_\alpha^0=\theta_\alpha^0,
  \qquad
  \widehat\theta_\alpha^k={\mathsf W_\alpha^k\over\sqrt{-2}}
  \quad(k\geq1).
\end{equation}
Equivalently, $\widehat\theta_\alpha^k=\mathsf W_\alpha^k/\sqrt{-2}$ for every
$k\geq0$.  This is the tower denoted $\hat\theta$ in the local toric-remodeling formula
\cite[Prop.~6.6]{fang2019remodelingconjecturetoriccalabiyau},
after passing from the cover tower to the normalized anti-invariant sign sector.  We next record the projected recurrence
which relates the tower \eqref{eq:remodeling:undressed-leaves} to the DOSS leaves.

\begin{lemma}[Projected leaf recurrence]\label{lem:projected-leaf-recurrence}
For every $k\geq0$,
\[
  \vartheta_\alpha^k
  =
  \mathsf W_\alpha^k
  -
  \sum_\beta\sum_{i=0}^{k-1}
  \check B^{\alpha\beta}_{k-1-i,0}\,
  \mathsf W_\beta^i .
\]
\end{lemma}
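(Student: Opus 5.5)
The plan is to prove the recurrence by induction on $k$, starting from a one-step identity for the bare forms, and to prove both the one-step identity and the unrolled formula by comparing principal parts. The principle behind the comparison is the following. Every form involved is, after pullback to the smooth rational $\mu$-cover, an $\iota$-anti-invariant meromorphic differential. By construction (from $B^-$ and the residue definition \eqref{eq:bgraphsum:bare-second-kind}, or by Lemma~\ref{lem:undressed-leaf-principal-part}) its poles lie only at the lifted critical points $p_{\beta,\pm}$, with zero residues, and it has no poles at the zero/pole punctures of $\lambda$. Since $H^0(\bP^1,K)=0$, two such forms with the same principal parts at every $p_{\beta,+}$ coincide. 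Anti-invariance then fixes the principal parts at $p_{\beta,-}=\iota p_{\beta,+}$. So it suffices to match principal parts at one lift of each quotient label, in the Morse coordinate $s_\beta$. The $k=0$ case is trivial, since $\mathsf W^0_\alpha=\vartheta^0_\alpha$ and the sum is empty.

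\emph{Step 1 (one-step identity).} I would first establish
\[
  \vartheta^{k+1}_\alpha
  =
  -d\Bigl({\vartheta^k_\alpha\over d\hat x}\Bigr)
  -\sum_\beta \check B^{\alpha\beta}_{k,0}\,\vartheta^0_\beta ,
  \qquad k\ge0.
\]
This is the sign-sector analogue of the standard toric identity. To prove it, expand $\vartheta^k_\alpha$ at $p_\beta$ from \eqref{eq:bgraphsum:kernel-expansion}. The polar part is $-\delta_{\alpha\beta}\frac{(2k+1)!!}{2^{k+1}}\cdot(\text{const})\,s_\beta^{-2k-2}ds_\beta$, fixed by the residue definition. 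The regular part is $-\frac{(2k-1)!!}{2^k}\sum_m B^{\beta\alpha}_{m,2k}s_\beta^m\,ds_\beta$.

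Next apply $\varphi\mapsto -d(\varphi/d\hat x)$ with $d\hat x=2s_\beta\,ds_\beta$. A monomial $s^m\,ds$ goes to $-\tfrac{m-1}{2}s^{m-2}\,ds$, which is polar only for $m=0$, where it produces $\tfrac12 s^{-2}\,ds$. On the polar part, the map raises the pole order by two with exactly the constant in $\vartheta^{k+1}_\alpha$.

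Finally, identify the extra double pole at $p_\beta$ coming from the $m=0$ regular term, after the Laplace normalization \eqref{eq:bgraphsum:edge-coefficients}, with the principal part of $-\check B^{\alpha\beta}_{k,0}\vartheta^0_\beta$. Here one uses the symmetry $B^{\beta\alpha}_{0,2k}=B^{\alpha\beta}_{2k,0}$ of the Prym kernel. The puncture check is the same estimate as in Lemma~\ref{lem:undressed-leaf-principal-part}.

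\emph{Step 2 (unrolling).} Assume the stated formula for all indices up to $k$. Apply $-d(\cdot/d\hat x)$ to it and note that $-d(\mathsf W^i_\beta/d\hat x)=\mathsf W^{i+1}_\beta$, which is immediate from \eqref{eq:remodeling:undressed-tower}. Step 1 then gives
\[
  \vartheta^{k+1}_\alpha
  =\mathsf W^{k+1}_\alpha
  -\sum_\beta\sum_{i=0}^{k-1}\check B^{\alpha\beta}_{k-1-i,0}\mathsf W^{i+1}_\beta
  -\sum_\beta\check B^{\alpha\beta}_{k,0}\mathsf W^0_\beta .
\]
Reindexing $i+1\mapsto i$ gives the formula at $k+1$.

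\emph{Main obstacle.} The hard part is the constant bookkeeping in Step 1, especially at the lifted pair. The residue in \eqref{eq:bgraphsum:bare-second-kind} is taken only at $p_\alpha$ on the quotient label. On the cover, however, $B^-$ contributes the image pole at $\iota p_\alpha$ and cross terms $(\mu_1\mu_2-1)^{-2}$ to the regular expansion. I would handle this by working with $B^-$ throughout rather than $B^{\rm std}$, so that the coefficients $B^{\alpha\beta}_{m,l}$ already include the projected cross terms, as in Lemma~\ref{lem:equivariant-cauchy-projection}. I would then verify the matching only at $p_{\beta,+}$ and rely on anti-invariance for $p_{\beta,-}$. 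I would check the normalization and sign $-\check B$ first on the $k=0\to1$ case against the explicit example in the proof of Lemma~\ref{lem:undressed-leaf-principal-part}. That example shows the lower double pole $-\tfrac12B^{\alpha\alpha}_{0,0}s^{-2}ds$, which must match $\check B^{\alpha\alpha}_{0,0}=\tfrac12B^{\alpha\alpha}_{0,0}$.
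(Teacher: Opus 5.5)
Your proof is correct, and it takes a different route from the paper.

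\textbf{The paper's route.} It does not prove a one-step identity. It imports the cover-level dressing relation $\vartheta^{\rm cov}(z)=R^{\rm cov}(z)^{\mathsf T}\mathsf W^{\rm cov}(z)$ from \cite[Prop.~6.6]{fang2019remodelingconjecturetoriccalabiyau}. It combines this with the one-point specialization $[z^m]R^{\rm cov}_{\tilde\beta\tilde\alpha}=-\check B^{{\rm cov},\tilde\alpha\tilde\beta}_{m-1,0}$ of the cover double-Laplace identity and reads off the cover recurrence as a $z^k$ coefficient. It then projects with $P^-_1P^-_2$ (Lemma~\ref{lem:equivariant-cauchy-projection}) and descends through the $\sqrt2$ pullback factors.

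\textbf{Your route.} You work with $B^-$ directly on the rational $\mu$-cover. You prove $\vartheta^{k+1}_\alpha=-d(\vartheta^k_\alpha/d\hat x)-\sum_\beta\check B^{\alpha\beta}_{k,0}\vartheta^0_\beta$ by matching principal parts, using three facts: there are no holomorphic differentials on $\bP^1$, anti-invariance controls the second lift, and $1/d\hat x$ vanishes at the punctures. You then unroll using $-d(\mathsf W^i_\beta/d\hat x)=\mathsf W^{i+1}_\beta$. I checked the constants. Half the $s_\beta^0$-coefficient of $\vartheta^k_\alpha$ at $p_\beta$ equals $-\check B^{\alpha\beta}_{k,0}$ precisely because $B^{\beta\alpha}_{0,2k}=B^{\alpha\beta}_{2k,0}$.

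\textbf{What each buys.} Your argument is self-contained. It needs no Laplace transforms, no cover $R$-matrix, no cover labels or projection, and no care about the two lifts sharing a critical value, because the Prym coefficients appear from the start. It does use the global rationality of the $\mu$-cover, which is available here. The paper's route yields the generating-function form $\theta_\alpha(z)=\sum_\beta\hat R^B_{\beta\alpha}(z)\widehat\theta_\beta(z)$ immediately. However, your recurrence together with \eqref{eq:remodeling:one-leg-edge} gives the same relation, so nothing is lost.

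\textbf{Two bookkeeping corrections.} First, the polar part of $\vartheta^k_\alpha$ at $p_\beta$ is exactly $-\delta_{\alpha\beta}\frac{(2k+1)!!}{2^k}s_\beta^{-2k-2}ds_\beta$, not the $2^{k+1}$-normalized expression you wrote. Second, the extra double pole $-\check B^{\alpha\beta}_{k,0}s_\beta^{-2}ds_\beta$ coming from the $m=0$ term is \emph{cancelled by} the principal part of $-\check B^{\alpha\beta}_{k,0}\vartheta^0_\beta$, rather than ``identified with'' it. Your displayed identity already carries the correct sign.
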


\begin{proof}
We first derive the formula on the cover.
For the rest of this proof we put a superscript ${\rm cov}$ on the temporary cover
objects
\[
  R^{\rm cov},\qquad
  \check B^{\rm cov},\qquad
  \vartheta_{\tilde\alpha}^{{\rm cov},k},\qquad
  \mathsf W_{\tilde\alpha}^{{\rm cov},k}.
\]
The two lifted critical points $p_{\alpha,+}$ and $p_{\alpha,-}$ lying over the
same reduced $\iota$-orbit label $\alpha$ have the same critical value, so the
argument cannot separate them by critical value.  The formula used here is local
on the chosen disks: it comes from the Cauchy contour and the regularized
Laplace expansion in each local disk.  Therefore
Lemma~\ref{lem:equivariant-cauchy-projection} applies, and this cover-level
formula may be projected in the labeled tensor factors by the idempotents $P^-_i$ of that lemma.  In the
notation of
\cite[Prop.~6.6]{fang2019remodelingconjecturetoriccalabiyau}, translated to
the present coordinate $\hat x$, the bare second-kind series is obtained from
the undressed-tower series by the cover $R$-matrix:
\[
  \vartheta_{\tilde\alpha}^{\rm cov}(z)
  =
  \sum_{\tilde\beta} R^{\rm cov}_{\tilde\beta\tilde\alpha}(z)\,
  \mathsf W_{\tilde\beta}^{\rm cov}(z),
  \qquad
  \mathsf W_{\tilde\beta}^{\rm cov}(z)
  =\sum_{i\geq0}\mathsf W_{\tilde\beta}^{\rm cov,i} z^i .
\]
The one-point specialization of the cover-level double-Laplace formula used in
the proof of Proposition~\ref{prop:prym-double-laplace} gives
\[
  \sum_{\ell\geq0}
  \check B^{{\rm cov},\tilde\alpha\tilde\beta}_{\ell,0}z^\ell
  =
  {\delta_{\tilde\alpha\tilde\beta}
  -R^{\rm cov}_{\tilde\beta\tilde\alpha}(z)\over z}.
\]
Equivalently, for every positive coefficient of $R^{\rm cov}$,
\[
  [z^m]R^{\rm cov}_{\tilde\beta\tilde\alpha}(z)
  =
  -\check B^{{\rm cov},\tilde\alpha\tilde\beta}_{m-1,0}
  \qquad(m\geq1).
\]
Taking the coefficient of $z^k$ in this cover-level formula therefore gives
\begin{equation}
\label{eq:remodeling:cover-leaf-recurrence}
  \vartheta_{\tilde\alpha}^{{\rm cov},k}
  =
  \mathsf W_{\tilde\alpha}^{{\rm cov},k}
  -
  \sum_{\tilde\beta}\sum_{i=0}^{k-1}
  \check B^{{\rm cov},\tilde\alpha\tilde\beta}_{k-1-i,0}\,
  \mathsf W_{\tilde\beta}^{{\rm cov},i}.
\end{equation}
Here $\tilde\beta$ ranges over the lifted cover labels.

We now subtract the two lifted formulas and divide by $\sqrt2$, matching the
anti-invariant combinations used for the sign local system.  If the cover
labels above an orbit $\alpha$ are denoted by $\alpha,+$ and $\alpha,-$, then
\[
  \vartheta_{\alpha}^{-,k}
  =
  {\vartheta_{\alpha,+}^{{\rm cov},k}
  -\vartheta_{\alpha,-}^{{\rm cov},k}\over\sqrt2}.
\]
The operator $d/d\hat x$ and the final differential $d$ commute with
$\iota$, so the same anti-invariant combination applies to the undressed tower:
\[
  \mathsf W_{\alpha}^{-,k}
  =
  {\mathsf W_{\alpha,+}^{{\rm cov},k}
  -\mathsf W_{\alpha,-}^{{\rm cov},k}\over\sqrt2}.
\]
Applying $P^-_1P^-_2$, as in Lemma~\ref{lem:equivariant-cauchy-projection}, gives the Prym one-point coefficients
\[
  {1\over2}\left(
  \check B^{{\rm cov},\alpha+,\beta+}_{r,0}
  -\check B^{{\rm cov},\alpha+,\beta-}_{r,0}
  -\check B^{{\rm cov},\alpha-,\beta+}_{r,0}
  +\check B^{{\rm cov},\alpha-,\beta-}_{r,0}
  \right)
  =
  \check B^{\alpha\beta}_{r,0}.
\]
Equivalently, by $\iota$-invariance this is
\[
  \check B^{{\rm cov},\alpha+,\beta+}_{r,0}
  -
  \check B^{{\rm cov},\alpha+,\beta-}_{r,0}
  =
  \check B^{\alpha\beta}_{r,0},
\]
which is the coefficient of the reduced kernel $B^-$.  Subtracting the two
lifted recurrences \eqref{eq:remodeling:cover-leaf-recurrence}, dividing by $\sqrt2$, and using this coefficient
identity gives
\[
  \vartheta_\alpha^{-,k}
  =
  \mathsf W_\alpha^{-,k}
  -
  \sum_\beta\sum_{i=0}^{k-1}
  \check B^{\alpha\beta}_{k-1-i,0}\,
  \mathsf W_\beta^{-,i}.
\]

It remains to descend this formula to the sign-local-system quotient forms.
The quotient form used in the sign local system satisfies
\[
  \pi^*\vartheta_{\alpha}^{{\rm quot},k}
  =
  \vartheta_{\alpha,+}^{{\rm cov},k}
  -\vartheta_{\alpha,-}^{{\rm cov},k}
  =
  \sqrt2\,\vartheta_{\alpha}^{-,k}
  \qquad(k\geq0).
\]
The same pullback factor occurs for every form of the undressed tower:
\[
  \pi^*\mathsf W_{\alpha}^{{\rm quot},k}
  =
  \sqrt2\,\mathsf W_{\alpha}^{-,k}.
\]
Pulling back the quotient formula to the cover gives $\sqrt2$ times the right-hand side of this display,
and pulling back the left-hand side gives
$\sqrt2\,\vartheta_\alpha^{-,k}$. Since pullback to the cover is faithful on sign-local-system forms,
this display descends to the quotient forms.  The forms in the statement are
these sign-local-system quotient forms, so the descended formula is the stated identity.

Finally, the first two degrees spell out the cancellation of the lower poles:
\[
  \vartheta_\alpha^1
  =
  \mathsf W_\alpha^1
  -
  \sum_\beta
  \check B^{\alpha\beta}_{0,0}\mathsf W_\beta^0,
\]
and
\[
  \vartheta_\alpha^2
  =
  \mathsf W_\alpha^2
  -
  \sum_\beta
  \check B^{\alpha\beta}_{1,0}\mathsf W_\beta^0
  -
  \sum_\beta
  \check B^{\alpha\beta}_{0,0}\mathsf W_\beta^1 .
\]
The displayed low degrees are the $k=1,2$ cases of the recurrence already proved.
\end{proof}

The one-point specialization of Proposition~\ref{prop:prym-double-laplace} gives
\begin{equation}
\label{eq:remodeling:one-leg-edge}
  \sum_{\ell\geq0}\check B^{\alpha\beta}_{\ell,0}z^\ell
  =
  {\delta_{\alpha\beta}-\hat R^B_{\beta\alpha}(z)\over z}.
\end{equation}
Combining this with Lemma~\ref{lem:projected-leaf-recurrence} gives the
generating-series dressing relation
\begin{equation}
\label{eq:remodeling:dressing-relation}
  \theta_\alpha(z)
  =
  \sum_\beta\hat R^B_{\beta\alpha}(z)\widehat\theta_\beta(z),
  \qquad
  \theta_\alpha(z)=\sum_{k\geq0}\theta_\alpha^kz^k,\quad
  \widehat\theta_\alpha(z)=
  \sum_{k\geq0}\widehat\theta_\alpha^kz^k .
\end{equation}

\begin{proposition}[Dressed and undressed leaves]\label{prop:leaf-dressing}
For every $k\geq0$,
\[
  \theta^k_\alpha
  =
  \sum_{\beta}\sum_{i=0}^{k}
  [z^{k-i}]\hat R^B_{\beta\alpha}(z)\,
  \widehat\theta_\beta^i .
\]
\end{proposition}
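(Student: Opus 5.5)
The plan is to read off the coefficient of $z^k$ in the generating-series dressing relation \eqref{eq:remodeling:dressing-relation}. That relation is itself obtained by combining the projected leaf recurrence of Lemma~\ref{lem:projected-leaf-recurrence} with the one-leg edge identity \eqref{eq:remodeling:one-leg-edge}. For a self-contained argument, I would begin by dividing the recurrence of Lemma~\ref{lem:projected-leaf-recurrence} by $\sqrt{-2}$. Using \eqref{eq:bgraphsum:ordinary-leaf} and \eqref{eq:remodeling:undressed-leaves}, this gives
\[
  \theta^k_\alpha=\widehat\theta^k_\alpha-\sum_\beta\sum_{i=0}^{k-1}\check B^{\alpha\beta}_{k-1-i,0}\,\widehat\theta^i_\beta .
\]
The division is harmless because the recurrence is linear in the leaves and the normalization $1/\sqrt{-2}$ is the same for every $k$ and every label.

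Next I would convert the edge coefficients into $R$-matrix coefficients. Equation \eqref{eq:remodeling:one-leg-edge} is the $w=0$ specialization of Proposition~\ref{prop:prym-double-laplace}, using $\hat R^B(0)=I$. Reading off the coefficient of $z^{m}$ in it gives
\[
  [z^{m}]\hat R^B_{\beta\alpha}(z)=-\check B^{\alpha\beta}_{m-1,0}\quad (m\ge1),
  \qquad
  [z^0]\hat R^B_{\beta\alpha}=\delta_{\alpha\beta}.
\]
I would check the index order carefully here: the row of $\hat R^B$ is $\beta$ and the first edge index is $\alpha$, matching the convention in Definition~\ref{def:b-rmatrix}. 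Substituting $m=k-i$ for $0\le i\le k-1$ turns $-\check B^{\alpha\beta}_{k-1-i,0}$ into $[z^{k-i}]\hat R^B_{\beta\alpha}$. The term $\widehat\theta^k_\alpha$ is then exactly the $i=k$ summand $\sum_\beta\delta_{\alpha\beta}\widehat\theta^k_\beta$. Putting these together gives the stated formula, which is the coefficient of $z^k$ in the Cauchy product $\sum_\beta\hat R^B_{\beta\alpha}(z)\widehat\theta_\beta(z)$.

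The only point I expect to need care is the specialization $w=0$ in Proposition~\ref{prop:prym-double-laplace}. The right-hand side there has the apparent denominator $z+w$. Setting $w=0$ replaces $\hat R^B_{\gamma\beta}(w)$ by $\delta_{\gamma\beta}$, so the numerator becomes $\delta_{\alpha\beta}-\hat R^B_{\beta\alpha}(z)$, which is divisible by $z$ because $\hat R^B(0)=I$. The rest is bookkeeping of summation ranges, done with the sign-sector normalizations already fixed in Lemma~\ref{lem:projected-leaf-recurrence}.
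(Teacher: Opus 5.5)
Your proposal is correct and follows the paper's route. The paper's proof reads off the coefficient of $z^k$ in the dressing relation \eqref{eq:remodeling:dressing-relation}, which it obtains as you do: Lemma~\ref{lem:projected-leaf-recurrence}, rescaled by $1/\sqrt{-2}$, combined with the $w=0$ specialization \eqref{eq:remodeling:one-leg-edge} of Proposition~\ref{prop:prym-double-laplace}. Your explicit index bookkeeping, including absorbing $\widehat\theta^k_\alpha$ as the $i=k$ term via $\hat R^B(0)=I$, just writes out that derivation in full.
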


\begin{proof}
Taking the coefficient of $z^k$ in the dressing relation gives the formula. The identity is
triangular because $\hat R^B(0)=I$.

For $k=0$, the formula says
$\theta^0_\alpha=\widehat\theta^0_\alpha$, because
$\hat R^B(0)=I$.  For $k=1$, it says
\[
  \theta^1_\alpha
  =
  \widehat\theta^1_\alpha
  +
  \sum_\beta [z]\hat R^B_{\beta\alpha}(z)\widehat\theta^0_\beta ,
\]
which fixes both the sign and the order of the indices in the general formula.  For $k=2$,
the same formula reads
\[
  \theta^2_\alpha
  =
  \widehat\theta^2_\alpha
  +
  \sum_\beta [z]\hat R^B_{\beta\alpha}(z)\widehat\theta^1_\beta
  +
  \sum_\beta [z^2]\hat R^B_{\beta\alpha}(z)\widehat\theta^0_\beta .
\]
Together with \eqref{eq:remodeling:one-leg-edge}, these two cases show explicitly that the same-$z$
coefficients of $\hat R^B$ are the coefficients used in the ordinary leaf.
\end{proof}

\subsection{The graph comparison}

We now define the B-side formal series by first rewriting each DOSS ordinary
leaf with Proposition~\ref{prop:leaf-dressing} and then replacing only the
undressed leaves by the transformed A-side inputs.

\begin{definition}[Parity-twisted B-side leaf-replacement series]
\label{def:b-formal-descendant-series}
For $n>0$ and $2g-2+n>0$, let
\[
  \cW^{B,\Pi}_{g,n}(\widetilde\bu_1,\ldots,\widetilde\bu_n;\kappa)
\]
be the result of applying the DOSS graph sum of Theorem~\ref{thm:doss}, using
Proposition~\ref{prop:leaf-dressing} on every ordinary leaf, and then making
the substitution
\begin{equation}
\label{eq:remodeling:parity-substitution}
  \widehat\theta_\alpha^k(q_j)
  \longmapsto
  (-1)^{k+1}\widetilde u_j^{k,\alpha}.
\end{equation}
Here
\[
  \widetilde u_j^{k,\alpha}:=[z^k]\widetilde u_j^\alpha(z).
\]
If in \eqref{eq:remodeling:parity-substitution} we substitute back
$\widetilde u_j^{k,\alpha}=(-1)^{k+1}\widehat\theta_\alpha^k(q_j)$, then
Proposition~\ref{prop:leaf-dressing} recovers the original DOSS expansion of
$\omega^\iota_{g,n}(q_1,\ldots,q_n)$.
\end{definition}

From now on we write
\begin{equation}
\label{eq:remodeling:shared-rmatrix}
  R(z):=\hat R^B(\kappa;z)=\hat R^\cX_{\rm can}(x(\kappa);z),
\end{equation}
using Theorem~\ref{thm:anchor}.  Here Section~\ref{sec:bgraphsum} supplies the
B-side matrix and its same-$z$ graph identities, Theorem~\ref{thm:anchor}
identifies it with the A-side normalized canonical matrix, and
Section~\ref{sec:amodel} fixes that A-side frame.  The A-side quantized
operator is $\widehat{R(z)}$, and the standard graph
formula places $R(-z)$ on its half-edges.  The B-side DOSS graph sum uses
$R(z)$ through Proposition~\ref{prop:prym-double-laplace} and the dressing relation
\eqref{eq:remodeling:dressing-relation}.

\begin{proposition}[A-side graph weights in the canonical frame]\label{prop:aside-graph-weights}
The analytic ancestor series
$\mathcal A^{\cX,{\rm an}}_{g,n}$ is computed by the stable Givental--Teleman
graph sum with the following local weights.  Put
\[
  v_\alpha={1\over\sqrt{\Delta^\cX_\alpha}}.
\]
At a vertex of genus $g_v$ and valence $m$ labeled by $\alpha$, the normalized
canonical topological field theory factor is
\[
  v_\alpha^{2-2g_v-m}
  \left\langle\prod_{j=1}^m\tau_{k_j}\right\rangle_{g_v} ,
\]
where the bracket is the tautological integral on $\Mbar_{g_v,m}$.  The edge
between labels $\alpha$ and $\beta$ has coefficients
\[
  [z^kw^\ell]\,
  {\delta_{\alpha\beta}
  -\sum_\gamma
  \hat R^\cX_{{\rm can},\gamma\alpha}(x;-z)
  \hat R^\cX_{{\rm can},\gamma\beta}(x;-w)
  \over z+w}.
\]
The ordinary leaf attached to the $j$-th input and to a vertex labeled by
$\alpha$ is
\[
  [z^k]\sum_\beta
  \hat R^\cX_{{\rm can},\beta\alpha}(x;-z)\,
  \widetilde u_j^\beta(z),
\]
and the flat-unit dilaton translation is
\[
  T^A_\alpha(z)
  =
  z\,v_\alpha
  -
  z\sum_\beta v_\beta
  \hat R^\cX_{{\rm can},\beta\alpha}(x;-z).
\]
Its stable coefficients are
\[
  [z^k]T^A_\alpha(z)
  =
  -[z^k]\sum_\beta z\,v_\beta\,
  \hat R^\cX_{{\rm can},\beta\alpha}(x;-z),
  \qquad k\geq2 .
\]
\end{proposition}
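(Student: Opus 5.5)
The plan is to derive the statement from the Givental--Teleman reconstruction of the shifted CohFT in the form used by Fang--Liu--Zong and by the equivariant $\bP^1$ paper, rewritten in the normalized canonical frame $\hat\phi_\alpha(x)=\sqrt{\Delta_\alpha(x)}e_\alpha(x)$. I will work first over the completed ring $A^{\rm can}$ of Proposition~\ref{prop:aside-chamber-calibration}. There the shifted CohFT $\Omega^A$ is semisimple with flat unit, so Teleman's theorem writes it as $\Omega^A=R\,T\,\omega$. Here $\omega$ is the topological field theory with idempotent norms $\Delta_\alpha^{-1}$, $T$ is the flat-unit translation, and $R$ is the normalized canonical calibration $\hat R_A^\cX$. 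Expanding this action gives the stable graph sum. Its vertices carry $\omega_{g_v,m}$ times $\psi$-monomials, its edges carry the propagator of $R^{-1}$, its ordinary leaves carry $R^{-1}$ applied to the input, and its dilaton leaves carry $T$. I will then transport everything to $\Omega_B^{\rm cov}$ by the coefficientwise analytic continuation furnished by Theorem~\ref{thm:anchor} and Proposition~\ref{prop:aside-analytic-graph-theory}. The graph sum is finite in each coefficient, so its local weights continue individually.

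The vertex factor is computed first. In the normalized frame, $\omega_{g,m}(\hat\phi_\alpha,\ldots,\hat\phi_\alpha)=\Delta_\alpha^{g-1}\Delta_\alpha^{-m/2}\cdot$ (the integral) $=v_\alpha^{2-2g-m}$ times the integral. This follows from $\eta(e_\alpha,e_\alpha)=\Delta_\alpha^{-1}$ in \eqref{eq:frob:idempotent-metric} and the standard semisimple TFT formula. Next come the edge and leaf weights. The symplectic relation $R(z)R(-z)^{\mathsf T}=I$ from Proposition~\ref{prop:prym-double-laplace}, transported by Theorem~\ref{thm:anchor}, gives $R(z)^{-1}=R(-z)^{\mathsf T}$. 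Under this relation Givental's propagator $\bigl(\eta^{-1}-R^{-1}(z)\eta^{-1}R^{-1}(w)^{\mathsf T}\bigr)/(z+w)$ becomes the displayed edge expression with $\hat R(-z),\hat R(-w)$. The row/column convention (rows indexed by the output idempotent) matches Proposition~\ref{prop:rmatrix}, so the ordinary leaf $R^{-1}(z)\widetilde\bu_j$ has canonical components $\sum_\beta\hat R_{\beta\alpha}(-z)\widetilde u_j^\beta(z)$. Only nonnegative $z$-powers enter, because $\widetilde\bu_j$ was already projected by $[\ ]_+$ in \eqref{eq:remodeling:transformed-input}.

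For the dilaton translation, I will use Teleman's formula $T(z)=z\bigl(\mathbf 1-R^{-1}(z)\mathbf 1\bigr)$. The unit has canonical components $\eta(\mathbf 1,\hat\phi_\alpha)=\sqrt{\Delta_\alpha}\,\Delta_\alpha^{-1}=v_\alpha$, which by Proposition~\ref{prop:prym-unit-translation} is also the covector used on the B-side. Applying $R^{-1}(z)=R(-z)^{\mathsf T}$ then yields $T^A_\alpha$ as displayed. The $z^1$ coefficient cancels because $R(0)=I$, so the stable coefficients are those with $k\geq2$.

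The main obstacle is bookkeeping of conventions rather than any new estimate. I must fix the transpose/sign conventions consistently with Section~\ref{sec:amodel} and Lemma~\ref{lem:unitarity}: whether the quantized operator is $\widehat{R}$ or $\widehat{R^{-1}}$, and whether columns or rows are contracted with the unit. I must also check that the genus rescaling $\hbar\mapsto e_1\hbar$ and the metric $e_1^{-1}\eta_{B\Gamma}$ are absorbed entirely into $\Delta_\alpha$, so that no extra scalar survives at vertices. I would settle this first by checking the $(g,n)=(1,1)$ and $(0,3)$ graphs at the orbifold point against the character formula \eqref{eq:amodel:rmatrix}, and then argue identically for general graphs.
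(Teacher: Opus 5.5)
Your proposal follows essentially the same route as the paper. You expand the Givental--Teleman action in the normalized canonical frame, use the symplectic relation to rewrite $R^{-1}(z)$ as $R(-z)^{\mathsf T}$ on edges and leaves, and read off the vertex, edge, leaf and dilaton weights from $T(z)=z(\mathbf 1-R^{-1}(z)\mathbf 1)$ with $\mathbf 1=\sum_\alpha v_\alpha\hat\phi_\alpha$.

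One small slip: since $\hat\phi_\alpha=\sqrt{\Delta_\alpha}\,e_\alpha$, the vertex value is $\omega_{g,m}(\hat\phi_\alpha^{\otimes m})=\Delta_\alpha^{g-1}\Delta_\alpha^{+m/2}$, not $\Delta_\alpha^{g-1}\Delta_\alpha^{-m/2}$. Your final vertex factor $v_\alpha^{2-2g-m}$ is nonetheless correct.
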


\begin{proof}
We use the quantized Givental--Teleman action in the normalized canonical
frame fixed in Section~\ref{sec:amodel}.  The quantized operator is
$\widehat{R(z)}$ with $R(z)=\hat R^\cX_{\rm can}(x;z)$, and its standard
stable graph expansion places $R(-z)$ on each half-edge
\cite{givental2001semisimplefrobeniusstructureshigher,
teleman2012structure2dsemisimplefield}.  The symplectic form is built from
the normalized canonical metric.  In this frame the topological field theory
is diagonal.  A vertex labeled by
$\alpha$ with genus $g_v$ and valence $m$ therefore contributes one metric
factor for each incident half-edge and the Frobenius trace of
$e_\alpha^{\,m}$.  In the test case
$(g_v,m)=(0,3)$ this is $v_\alpha^{-1}$, the normalized canonical three-point
tensor.

The quantized action gives the propagator
\[
  {I-R(-z)^{\mathsf T}R(-w)\over z+w}
  =
  {I-\hat R^\cX_{\rm can}(x;-z)^{\mathsf T}
  \hat R^\cX_{\rm can}(x;-w)\over z+w}
\]
in the normalized canonical metric.  Taking coefficients with rows indexed by
the summed intermediate label gives the edge coefficients.

For an ordinary input, the convention \eqref{eq:remodeling:s-operator} first converts actual
descendants to ancestor components $\widetilde u_j^\beta(z)$.  The positive loop-group
action then attaches
\[
  [z^k]\sum_\beta
  \hat R^\cX_{{\rm can},\beta\alpha}(x;-z)\widetilde u_j^\beta(z)
\]
to a vertex labeled by $\alpha$.  Finally the flat-unit dilaton translation is
\[
  T^A_\alpha(z)
  =
  z\,v_\alpha
  -
  z\sum_\beta v_\beta R_{\beta\alpha}(-z).
\]
Its coefficient of $z^1$ is zero.  For the stable dilaton leaves only
$k\geq2$ occurs, so the first term contributes nothing.
\end{proof}

We use $E^A_{\alpha\beta;k,\ell}$, $L^A_{\alpha,k}$, and
$D^A_{\alpha,k}$ for the A-side edge, ordinary-leaf, and stable dilaton
coefficients, and $E^B_{\alpha\beta;k,\ell}$, $L^B_{\alpha,k}$, and
$D^B_{\alpha,k}$ for the corresponding B-side DOSS coefficients.

\begin{lemma}[Parity of the graph weights]\label{lem:graph-parity}
Let $\Pi U(z):=-U(-z)$.  With $R(z)$ as in \eqref{eq:remodeling:shared-rmatrix}, the B-side edge,
ordinary leaf, and stable dilaton factors are related to the A-side factors by
\[
  E^B_{\alpha\beta;k,\ell}
  =
  (-1)^{k+\ell+1}E^A_{\alpha\beta;k,\ell},
\]
\[
  L^B_{\alpha,k}(\Pi U)
  =
  (-1)^{k+1}L^A_{\alpha,k}(U),
\]
and
\[
  D^B_{\alpha,k}
  =
  (-1)^{k+1}D^A_{\alpha,k}.
\]
For every stable connected decorated graph with nonzero vertex contribution,
the product of these local parity signs is $(-1)^{g-1}$.
\end{lemma}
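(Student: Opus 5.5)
The plan is to compare the three kinds of local factors one at a time, using only the shared matrix $R(z)$ of \eqref{eq:remodeling:shared-rmatrix}. Afterwards we do a sign count over the graph.

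\emph{Edges.} Proposition~\ref{prop:prym-double-laplace} gives the B-side generating series
\[
  \sum_{k,\ell}E^B_{\alpha\beta;k,\ell}z^kw^\ell
  =\frac{\delta_{\alpha\beta}-\sum_\gamma R_{\gamma\alpha}(z)R_{\gamma\beta}(w)}{z+w}.
\]
Proposition~\ref{prop:aside-graph-weights} gives the same expression on the A-side, but with $(z,w)$ replaced by $(-z,-w)$. Substituting $z\mapsto-z$, $w\mapsto-w$ in the B-side series turns it into minus the A-side series, because the denominator changes sign. Comparing the coefficient of $z^kw^\ell$ then gives $E^B=(-1)^{k+\ell+1}E^A$.

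\emph{Ordinary leaves.} By Proposition~\ref{prop:leaf-dressing} and the substitution \eqref{eq:remodeling:parity-substitution}, the B-side leaf is $[z^k]\sum_\beta R_{\beta\alpha}(z)\,U^\beta(z)$. Here $U^\beta(z)$ denotes the series whose $i$-th coefficient is the substituted undressed leaf. If we feed in $\Pi U(z)=-U(-z)$, the leaf becomes $-[z^k]\sum_\beta R_{\beta\alpha}(z)U^\beta(-z)$. Replacing $z$ by $-z$ inside the coefficient extraction multiplies it by $(-1)^k$. This yields $(-1)^{k+1}[z^k]\sum_\beta R_{\beta\alpha}(-z)U^\beta(z)$, which is $(-1)^{k+1}L^A_{\alpha,k}(U)$. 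I would also check that the sign $(-1)^{k+1}$ in the substitution is exactly this parity operator $\Pi$ written out in coefficients.

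\emph{Dilaton leaves.} The B-side coefficient is $-\check h^\alpha_k/\sqrt{-2}$. By Proposition~\ref{prop:prym-unit-translation}, this equals $-[z^{k-1}]\sum_\beta v_\beta R_{\beta\alpha}(z)$, which is the same as $-[z^k]\,z\sum_\beta v_\beta R_{\beta\alpha}(z)$. The A-side coefficient for $k\ge2$ is $-[z^k]\,z\sum_\beta v_\beta R_{\beta\alpha}(-z)$. Since the $z^k$ coefficient of $zf(-z)$ equals $(-1)^{k-1}$ times the $z^k$ coefficient of $zf(z)$, we get $D^B=(-1)^{k+1}D^A$. One point needs care here: the vertex factors $v_\alpha$ agree on both sides by \eqref{eq:bgraphsum:gaussian-normalization}, so no sign enters from the vertices.

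\emph{Global sign count.} Multiplying the local signs over a graph gives $(-1)^{\sum_h k(h)+|E|+|L^\circ|+|L^{\rm dil}|}$. In this sum every half-edge, ordinary leaf and dilaton leaf appears exactly once with its own index $k(h)$. Every edge contributes one extra $+1$ in the exponent, and so does every leaf. A vertex of genus $g_v$ and valence $m_v$ is nonzero only if $\sum_{h\in H(v)}k(h)=3g_v-3+m_v$. Summing over vertices gives
\[
  \sum_h k(h)=3\sum_v g_v-3|V|+2|E|+|L|.
\]
Adding $|E|+|L|$ brings the exponent to $3\sum_v g_v-3|V|+3|E|+2|L|$. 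Modulo $2$ this is $\sum_v g_v-|V|+|E|$, which equals $\sum_v g_v+b_1-1=g-1$. This parity count is the main place to be careful: one has to track that each edge receives a single extra $+1$ rather than one per half-edge, and that $g=\sum_v g_v+b_1$.
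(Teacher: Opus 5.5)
Your proposal is correct and follows essentially the same route as the paper. The edge, ordinary-leaf, and dilaton signs all come from the same $z\mapsto -z$ parity substitutions in the one- and two-variable generating series, with the denominator $z+w$ supplying the extra sign for edges. The global count likewise uses the vertex dimension constraint $\sum_{h\in H(v)}k(h)=3g(v)-3+\val(v)$ together with $g=\sum_v g(v)+|E|-|V|+1$. The only cosmetic difference is that the paper first solves for $k_{\rm tot}=3g-3-|E|+n+m$ and then reduces modulo $2$, whereas you reduce $3\sum_v g_v-3|V|+3|E|+2|L|$ directly.
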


\begin{proof}
The edge identity follows from
\[
  {I-R(z)^{\mathsf T}R(w)\over z+w}
  =
  -
  \left.
  {I-R(-z')^{\mathsf T}R(-w')\over z'+w'}
  \right|_{z'=-z,\ w'=-w}.
\]
Taking the coefficient of $z^kw^\ell$ gives the edge sign. The ordinary leaf
identity is the same parity calculation in one variable:
\[
  R(z)(-U(-z))=-\bigl(R(-z)U(z)\bigr)\big|_{z\mapsto -z}.
\]
This gives the ordinary leaf sign. For the dilaton leaf, compare
\[
  -[z^k]\,z\sum_\beta v_\beta R_{\beta\alpha}(z)
  \quad\text{and}\quad
  -[z^k]\,z\sum_\beta v_\beta R_{\beta\alpha}(-z),
\]
which gives the dilaton sign.

It remains to count signs.  Let a connected stable graph have $|E(\Gamma)|$ edges, $n$
ordinary leaves, $m$ dilaton leaves, and total cotangent height
\[
  k_{\rm tot}=\sum_h k(h)
\]
where the sum is over all half-edges and leaves; these are the toric-remodeling
heights assigned to half-edges and leaves
\cite{fang2019remodelingconjecturetoriccalabiyau}.  A nonzero tautological
integral at a vertex $v$ satisfies
\[
  \sum_{h\in H(v)}k(h)=3g(v)-3+\val(v).
\]
Summing over vertices and using
$g=\sum_v g(v)+|E(\Gamma)|-|V(\Gamma)|+1$, we get
\[
  k_{\rm tot}=3g-3-|E(\Gamma)|+n+m.
\]
The exponent contributed by the three parity factors is
$k_{\rm tot}+|E(\Gamma)|+n+m$, hence it is
congruent to $g-1$ modulo $2$.  Thus the product of the local parity signs is
$(-1)^{g-1}$.
\end{proof}

\begin{proposition}[Ancestor graph comparison]\label{prop:ancestor-graph-comparison}
For $n>0$, $2g-2+n>0$, and $\kappa\in\Omega_B$,
\[
  \cW^{B,\Pi}_{g,n}(\widetilde\bu_1,\ldots,\widetilde\bu_n;\kappa)
  =
  \mathcal A^{\cX,{\rm an}}_{g,n}
  (\widetilde\bu_1,\ldots,\widetilde\bu_n;\kappa).
\]
\end{proposition}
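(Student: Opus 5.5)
The plan is a term-by-term comparison of the two stable graph sums. Both sides range over the same index set: connected stable graphs of genus $g$ with $n$ ordinary leaves and finitely many dilaton leaves, with labels in $I$ and heights on half-edges and leaves. On the B-side, $\cW^{B,\Pi}_{g,n}$ is the DOSS expansion of Theorem~\ref{thm:doss}, after rewriting each ordinary leaf via Proposition~\ref{prop:leaf-dressing} and substituting \eqref{eq:remodeling:parity-substitution}. On the A-side, $\mathcal A^{\cX,{\rm an}}_{g,n}$ is the Givental--Teleman expansion of Proposition~\ref{prop:aside-graph-weights}. Both sums carry the same automorphism factors $1/|\Aut\Gamma|$. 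It therefore suffices to show that, for each decorated graph, the B-weight after substitution equals the A-weight.

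\emph{Vertices.} The B-vertex factor is $(\check h^\alpha_1/\sqrt{-2})^{2-2g(v)-\val(v)}\langle\prod\tau_{k(h)}\rangle$. By the Gaussian normalization \eqref{eq:bgraphsum:gaussian-normalization} and Proposition~\ref{prop:prym-unit-translation}, $\check h^\alpha_1/\sqrt{-2}=v_\alpha=1/\sqrt{\Delta^\cX_\alpha}$. By Theorem~\ref{thm:frob-iso} and Proposition~\ref{prop:mirror-match}, the B-side norms are the A-side norms under the mirror map, so the vertex factors coincide with those of Proposition~\ref{prop:aside-graph-weights}.

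\emph{Edges.} By Proposition~\ref{prop:prym-double-laplace}, the B-edge generating series is $(\delta-\sum_\gamma R_{\gamma\alpha}(z)R_{\gamma\beta}(w))/(z+w)$ with $R=\hat R^B$. Theorem~\ref{thm:anchor} identifies $R$ with $\hat R^\cX_{\rm can}$, i.e.\ \eqref{eq:remodeling:shared-rmatrix}, and Lemma~\ref{lem:graph-parity} then gives $E^B=(-1)^{k+\ell+1}E^A$.

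\emph{Dilaton leaves.} By Proposition~\ref{prop:prym-unit-translation}, $-\check h^\alpha_k/\sqrt{-2}=-[z^{k-1}]\sum_\beta v_\beta R_{\beta\alpha}(z)=-[z^k]\,z\sum_\beta v_\beta R_{\beta\alpha}(z)$. This is the same-$z$ analogue of $D^A$, and Lemma~\ref{lem:graph-parity} gives $D^B=(-1)^{k+1}D^A$ for $k\ge2$.

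\emph{Ordinary leaves.} After the dressing of Proposition~\ref{prop:leaf-dressing} and the substitution \eqref{eq:remodeling:parity-substitution}, the B-leaf becomes $\sum_\beta\sum_i[z^{k-i}]R_{\beta\alpha}(z)(-1)^{i+1}\widetilde u^{i,\beta}_j$. This is exactly $[z^k]\sum_\beta R_{\beta\alpha}(z)(\Pi\widetilde u_j^\beta)(z)$, i.e.\ $L^B_{\alpha,k}(\Pi\widetilde\bu_j)$. Lemma~\ref{lem:graph-parity} turns it into $(-1)^{k+1}L^A_{\alpha,k}(\widetilde\bu_j)$.

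\emph{Sign bookkeeping.} Multiplying over the graph, the B-weight equals the A-weight times the DOSS prefactor $(-1)^{g-1}$ times the product of local parity signs. By the counting in Lemma~\ref{lem:graph-parity}, that product is $(-1)^{g-1}$ for every graph with nonzero vertex integrals, and graphs with zero vertex integrals contribute zero on both sides. The total sign is therefore $(-1)^{2(g-1)}=1$, and summing over graphs gives the equality.

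The main obstacle I expect is the ordinary-leaf step. One must check that the sign $(-1)^{k+1}$ in \eqref{eq:remodeling:parity-substitution} produces exactly $\Pi U$ with the index conventions of Proposition~\ref{prop:leaf-dressing}: rows are the summed label $\beta$ and the same-$z$ coefficients are used. It must also be consistent with the transposition convention for $R(-z)$ on the A-side half-edges in Proposition~\ref{prop:aside-graph-weights}. I would verify this first on the $k=0,1$ leaves listed after Proposition~\ref{prop:leaf-dressing}, and only then trust the generating-series version.

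A secondary point is that the graph sums are formal in the descendant variables. The comparison should first be made for polynomial inputs coefficientwise, where only finitely many graphs contribute to each monomial. On $\Omega_B^{\rm cov}$ all data are analytic by Proposition~\ref{prop:aside-analytic-graph-theory}, so the identity holds pointwise on the chamber.
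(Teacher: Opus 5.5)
Your proposal is correct and follows the paper's proof essentially step for step. The vertex factors match via $\check h^\alpha_1/\sqrt{-2}=1/\sqrt{\Delta^\cX_\alpha}$. The edges, the $\Pi$-twisted ordinary leaves and the dilaton leaves are matched through Theorem~\ref{thm:anchor}, Proposition~\ref{prop:prym-unit-translation} and Lemma~\ref{lem:graph-parity}, and the resulting local parity product $(-1)^{g-1}$ cancels the explicit DOSS prefactor. Your explicit check that the substituted dressed leaf equals $L^B_{\alpha,k}(\Pi\widetilde\bu_j)$ just spells out a step the paper states in one line.
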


\begin{proof}
We compare the stable graph weights.  The Frobenius algebra, the canonical
metric, and the canonical coordinates agree under the mirror map by
Theorem~\ref{thm:frob-iso}.  The vertex factor agrees because
\[
  {\check h^\alpha_1\over\sqrt{-2}}={1\over\sqrt{\Delta^\cX_\alpha}}
\]
in the normalized canonical frame.

The edge terms are determined by the normalized canonical $R$-matrix.
Theorem~\ref{thm:anchor} gives
\[
  \hat R^B(\kappa;z)=
  \hat R^\cX_{\rm can}(x(\kappa);z)
\]
on the chosen chamber.  Thus the B-side graph uses $R(z)$, while the A-side
graph uses $R(-z)$ on half-edges.  The parity-twisted substitution in
Definition~\ref{def:b-formal-descendant-series} applies the operator
$\Pi U(z)=-U(-z)$ to every ordinary input.  Lemma~\ref{lem:graph-parity} then
shows that the product of all local B-side factors differs from the
corresponding A-side product by $(-1)^{g-1}$.

The remaining input is the built-in dilaton leaf.  By
Theorem~\ref{thm:anchor},
Proposition~\ref{prop:prym-unit-translation} gives
\[
  \sum_\beta {1\over\sqrt{\Delta^\cX_\beta}}\,
  \hat R^B_{\beta\alpha}(z)
  =
  {1\over\sqrt{-2}}
  \sum_{k\geq1}\check h^\alpha_k z^{k-1}.
\]
Up to the DOSS dilaton-leaf sign $-1$ in Theorem~\ref{thm:doss}, these are
the B-side stable dilaton coefficients.  The explicit DOSS prefactor in
Theorem~\ref{thm:doss} is also $(-1)^{g-1}$, and it cancels the
local parity product.  Hence each labeled stable graph has the same total
weight on the two sides.  Summing over stable graphs gives the ancestor comparison.
\end{proof}

\begin{theorem}[Stable descendant remodeling]\label{thm:formal-graphsum-remodeling}
For $n>0$, $2g-2+n>0$, and $\kappa\in\Omega_B$,
\[
  \cW^{B,\Pi}_{g,n}(\widetilde\bu_1,\ldots,\widetilde\bu_n;\kappa)
  =
  \mathcal F^{\cX,{\rm an}}_{g,n}(\bu_1,\ldots,\bu_n;\kappa),
\]
where the transformed components are defined by \eqref{eq:remodeling:transformed-input}.  The
equality is first for polynomial descendant inputs and then, by continuity, in the completed
descendant-variable ring.  It is written in the chosen labeled normalized
canonical frame on $\Omega_B$.
\end{theorem}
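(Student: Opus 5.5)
The plan is to obtain the theorem by composing two results already established, and then to pass from polynomial inputs to the completed descendant ring. Fix $n>0$ with $2g-2+n>0$ and $\kappa\in\Omega_B$, and first take polynomial descendant inputs $\bu_1,\ldots,\bu_n$.

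The first step uses Proposition~\ref{prop:ancestor-graph-comparison}. It identifies the parity-twisted B-side series $\cW^{B,\Pi}_{g,n}(\widetilde\bu_1,\ldots,\widetilde\bu_n;\kappa)$ with the analytic ancestor series $\mathcal A^{\cX,{\rm an}}_{g,n}(\widetilde\bu_1,\ldots,\widetilde\bu_n;\kappa)$, where the transformed inputs are those of \eqref{eq:remodeling:transformed-input}.

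The second step uses the definition \eqref{eq:remodeling:descendant-series}, which sets $\mathcal F^{\cX,{\rm an}}_{g,n}(\bu_1,\ldots,\bu_n;\kappa)$ equal to that same ancestor series evaluated at the same transformed inputs. Lemma~\ref{lem:stable-ancestor-descendant} justifies this definition in the stable range: the quadratic part of the quantized $S$-operator contributes only in the unstable $(0,2)$ sector, so each ordinary input is transformed independently. Composing the two equalities gives the theorem for polynomial inputs.

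I will check one compatibility point. The variables $\widetilde u_j^{k,\alpha}$ substituted in Definition~\ref{def:b-formal-descendant-series} must be exactly the coefficients $[z^k]\widetilde u_j^\alpha(z)$ computed in the same labeled normalized canonical frame $\hat\phi_\alpha(x)$ on $\Omega_B^{\rm cov}$. This frame must also be the one in which Theorem~\ref{thm:anchor} and Proposition~\ref{prop:aside-graph-weights} are stated. Both sides are invariant under the diagonal sign changes of lifts, as recorded in Section~\ref{sec:bgraphsum}, so the identity is frame-consistent. The statement is then made in the chosen frame.

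The extension to the completed ring is the step needing the most care, though it should still be routine. By Proposition~\ref{prop:aside-analytic-graph-theory}, both sides lie in $\mathcal O_{\rm an}(\Omega_B^{\rm cov})[[u_j^{a,k}]]$ with the product topology. For fixed $(g,n)$ each side is multilinear in the $n$ inputs, since every graph has exactly one ordinary leaf per input. Each coefficient of a monomial in the $u_j^{a,k}$ is therefore given by the polynomial case, which we have already established. The vertex degree constraint bounds the heights at each vertex, so only finitely many graphs contribute to any coefficient. The map $\bu_j\mapsto\widetilde\bu_j$ is continuous coefficientwise, because the $[\ ]_+$ projection of $S^{\rm an}_x(z)\bu_j(z)$ is finite in each descendant coefficient. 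Polynomial inputs are dense, so the identity extends by continuity to the completed descendant-variable ring.
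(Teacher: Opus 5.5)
Your proposal is correct and follows the paper's own proof: you compose Proposition~\ref{prop:ancestor-graph-comparison} with the defining identity \eqref{eq:remodeling:descendant-series}, which Lemma~\ref{lem:stable-ancestor-descendant} justifies in the stable range. Your frame-consistency check and your coefficientwise multilinearity argument for passing to the completed descendant ring are sound elaborations of what the paper leaves to the phrase ``by continuity.''
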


\begin{proof}
Proposition~\ref{prop:ancestor-graph-comparison} gives the B-side graph sum
as the analytically continued shifted-CohFT ancestor series.
Equation \eqref{eq:remodeling:descendant-series} is the definition of the corresponding
analytically continued descendant series through the standard ancestor--descendant transformation.
After this transformation, the B-side graph sum is therefore the analytically
continued descendant series.  The restriction $n>0$ is part
of the statement: the case with no ordinary leaves is the free-energy statement below, not a
leaf-replacement theorem.
\end{proof}

\subsection{The free-energy specialization}

We finally extract the closed higher-genus free energies.  This argument uses
the ancestor graph comparison with one ordinary external leaf; it does not use
the ordinary-descendant dilaton equation at a nonzero primary point.

Let $\Phi_\kappa$ be a local primitive of the one-form with values in the sign local system
$\widetilde y\,d\hat x$ on the reduced local curve $\cS^\iota_\kappa$:
\[
  d\Phi_\kappa=\widetilde y\,d\hat x .
\]
Changing the local sign trivialization changes both $\Phi_\kappa$ and
$\omega^\iota_{g,1}$ by a sign, so their product is well-defined.  Adding a
constant to $\Phi_\kappa$ does not change the residues below, because stable
topological-recursion one-forms have zero residues at the ramification points.  For
$g\geq2$ we use the standard topological-recursion normalization
\cite{eynard2007invariantsalgebraiccurvestopological,
eynard2009geometricalinterpretationtopologicalrecursion}
\[
  F_g(\cS^\iota_\kappa)
  ={1\over 2-2g}\sum_{\alpha}\Res_{p\to p_\alpha}
  \Phi_\kappa(p)\,\omega^\iota_{g,1}(p),
\]
and write
\begin{equation}
\label{eq:remodeling:tr-dilaton}
  -\sum_{\alpha}\Res_{p\to p_\alpha}
  \Phi_\kappa(p)\,\omega^\iota_{g,1}(p)
  =(2g-2)F_g(\cS^\iota_\kappa).
\end{equation}
We also use the residue functional
\[
  \ell_{\rm dil}(\eta)
  :=
  -\sum_\beta\Res_{p\to p_\beta}\Phi_\kappa(p)\,\eta(p).
\]

\begin{lemma}[Residue pairing with dressed ordinary leaves]\label{lem:residue-unit-leaf}
For the dressed ordinary leaf $\theta^k_\alpha$,
\[
  -\sum_\beta\Res_{p\to p_\beta}\Phi_\kappa(p)\,\theta^k_\alpha(p)
  =
  \begin{cases}
  0,& k=0,\\[2mm]
  \dfrac{\check h^\alpha_k}{\sqrt{-2}},& k\geq1.
  \end{cases}
\]
\end{lemma}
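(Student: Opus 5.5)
Since $\ell_{\rm dil}$ is linear, the plan is to compute it first on the bare second-kind forms $\vartheta^k_\alpha$ and then transfer to the dressed leaves $\theta^k_\alpha=\vartheta^k_\alpha/\sqrt{-2}$ of \eqref{eq:bgraphsum:ordinary-leaf}. The direct route avoids the undressed tower: $\vartheta^k_\alpha$ is defined in \eqref{eq:bgraphsum:bare-second-kind} as a residue of the Prym kernel, so the pairing with $\Phi_\kappa$ becomes a double residue. Concretely, $-\sum_\beta\Res_{p\to p_\beta}\Phi_\kappa(p)\vartheta^k_\alpha(p)$ equals
\[
  {(2k-1)!!\over 2^k}\sum_\beta\Res_{p\to p_\beta}\Res_{q'\to p_\alpha}\Phi_\kappa(p)B^-(p,q')s_\alpha(q')^{-2k-1}.
\]
Exchanging the order of the two local residues introduces the usual correction from the diagonal pole of $B^-$ when $p$ and $q'$ lie in the same disk $U_\alpha$.

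In that same disk, $\Res_{p\to p_\alpha}\Phi_\kappa(p)B^-(p,q')$, taken with $q'$ outside a smaller circle, is computed by Cauchy's formula as $-d\Phi_\kappa(q')$ plus a holomorphic remainder. That remainder contributes nothing, because it is regular at $q'$ and $s_\alpha^{-2k-1}$ is then paired with a regular form. Only the local odd/even symmetry matters: $B^-$ restricted to $U_\alpha\times U_\alpha$ has the standard principal part $ds\,ds'/(s-s')^2$ plus a regular term. The regular term integrated against $\Phi_\kappa$ yields a function of $q'$ that is regular at $p_\alpha$, and it is killed after the outer residue against $s_\alpha^{-2k-1}$ only in the sense that it produces no term with $d\Phi$. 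I need to check this carefully: the regular part of $B^-$ paired with $\Phi_\kappa$ and the residue at $p_\beta$ is zero because $\Phi_\kappa$ is holomorphic at $p_\beta$ and $B^-(p,q')$ is holomorphic in $p$ there for $q'$ away. Hence all off-diagonal inner residues vanish, and the whole expression reduces to
\[
  {(2k-1)!!\over 2^k}\Res_{q'\to p_\alpha}s_\alpha(q')^{-2k-1}\,\widetilde y_\alpha\,d\hat x(q'),
\]
with the sign fixed by the Cauchy orientation.

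Next, insert $d\hat x=2s\,ds$ and keep only the odd part of $\widetilde y_\alpha$, using the expansion \eqref{eq:bgraphsum:dilaton-coefficients}. The residue picks the coefficient of $s^{2k-1}$ in $\widetilde y_\alpha(s)$, namely ${2^{k-1}\over(2k-1)!!}\check h^\alpha_k$. The combinatorial factors then cancel to give $\check h^\alpha_k$, and dividing by $\sqrt{-2}$ yields the claim for $k\ge1$. For $k=0$ the integrand is $s^{-1}\widetilde y\,2s\,ds$, which is holomorphic, so the result is $0$. This agrees with $\check h_0$ being absent, and it uses the centering $\widetilde y_\alpha(p_\alpha)=0$ from \eqref{eq:bgraphsum:centered-y}.

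The main obstacle is the sign bookkeeping in exchanging the residues, where I must track the Cauchy orientation and the overall sign in \eqref{eq:bgraphsum:bare-second-kind}. As a cross-check I would compare with Lemma~\ref{lem:undressed-leaf-principal-part}: computing $\ell_{\rm dil}(\mathsf W^k_\alpha)$ by integration by parts, then passing through Proposition~\ref{prop:leaf-dressing} and Proposition~\ref{prop:prym-unit-translation}, should reproduce the same answer.
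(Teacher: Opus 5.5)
Your route is correct but differs from the paper's. The paper never unfolds $\vartheta^k_\alpha$. It uses only that $\vartheta^k_\alpha$ is holomorphic at $p_\beta$ for $\beta\neq\alpha$ and has principal part $-\frac{(2k+1)!!}{2^k}s^{-2k-2}\,ds$ at $p_\alpha$. It then integrates the odd part of $\widetilde y\,d\hat x=2s\,\widetilde y\,ds$ to get $\Phi_{\rm odd}=\sum_{m\geq1}\frac{2^m}{(2m+1)!!}\check h^\alpha_m s^{2m+1}$ and reads off the coefficient of $s^{2k+1}$, which is absent for $k=0$. The holomorphic terms of $\vartheta^k_\alpha$ die against the holomorphic $\Phi_\kappa$.

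You instead unfold $\vartheta^k_\alpha$ as a residue of $B^-$ and commute the two residues, so that the Cauchy kernel differentiates $\Phi_\kappa$. This lands you on $\frac{(2k-1)!!}{2^k}\Res_{q'\to p_\alpha}s_\alpha^{-2k-1}\widetilde y_\alpha\,d\hat x$, which is exactly the paper's residue after one integration by parts, since $d(s^{-2k-1})=-(2k+1)s^{-2k-2}\,ds$. Your version reads $\check h^\alpha_k$ straight off the expansion of $\widetilde y_\alpha$ without integrating. The paper's version avoids residue-exchange bookkeeping and is shorter.

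Your displayed formula and final answer are right, but two steps in your write-up need repair.

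\begin{itemize}
\item With $q'$ on the inner circle, as in the definition of $\vartheta^k_\alpha$, the exchange gives $\Res_{q'}\Res_p$ with $p$ on the inner circle, plus a diagonal term.
  \begin{itemize}
  \item The swapped term vanishes identically. On $U_\alpha\times U_\alpha$ the Prym kernel is $ds\,ds'/(s-s')^2$ plus a holomorphic term, because the image-pole locus $\mu_1\mu_2=1$ does not meet $U_\alpha\times U_\alpha$ (no critical point is $\iota$-fixed on $\Omega_B$). Also $\Phi_\kappa$ is holomorphic at $p_\alpha$.
  \item The diagonal term is exactly $\Res_{p=q'}\Phi_\kappa(p)B^-(p,q')=+d\Phi_\kappa(q')$.
  \end{itemize}
  So the correction is $+d\Phi_\kappa$, not $-d\Phi_\kappa$, which would produce $-\check h^\alpha_k/\sqrt{-2}$. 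There is also no remainder at all.
\item Your stated reason for discarding a ``holomorphic remainder'' would not work anyway. A regular form paired with $s_\alpha^{-2k-1}$ generally has a nonzero residue; take $s^{2k}\,ds$. What actually disposes of it is that it is identically zero, which is what your subsequent sentence really shows.
\end{itemize}

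Finally, the $k=0$ case does not use the centering of $\widetilde y_\alpha$: $s^{-1}\cdot 2s\,\widetilde y_\alpha\,ds$ is holomorphic regardless.
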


\begin{proof}
Only the residue at $p_\alpha$ can contribute.  In the local coordinate
$s=s_\alpha$, the odd combination $\widetilde y(s)-\widetilde y(-s)$ is
\[
  \widetilde y(s)-\widetilde y(-s)
  =
  2\sum_{m\geq1}
  {2^{m-1}\over(2m-1)!!}\check h^\alpha_m s^{2m-1}.
\]
Since $d\hat x=2s\,ds$, an odd primitive has expansion
\[
  \Phi_{\rm odd}(s)
  =
  \sum_{m\geq1}
  {2^m\over(2m+1)!!}\check h^\alpha_m s^{2m+1}.
\]
The principal part of the bare second-kind form is
\[
  \vartheta^k_\alpha
  =
  -{(2k+1)!!\over 2^k}
  {ds\over s^{2k+2}}
  +\text{\rm holomorphic terms}.
\]
Thus $\Res_{p\to p_\alpha}\Phi_\kappa\,\vartheta^k_\alpha=0$ for $k=0$, and it is
$-\check h^\alpha_k$ for $k\geq1$.  Using the ordinary-leaf convention
$\theta^k_\alpha=\vartheta^k_\alpha/\sqrt{-2}$ from
\eqref{eq:bgraphsum:ordinary-leaf} and the minus sign in the
definition of the residue functional gives the two cases in the statement.
\end{proof}

The residue functional in Lemma~\ref{lem:residue-unit-leaf} is not the built-in
DOSS dilaton leaf.  It is an ordinary external leaf corresponding to the
shifted-CohFT insertion $z\mathbf 1$.  Indeed, if
\[
  \mathbf 1=\sum_\beta v_\beta\hat\phi_\beta,
  \qquad
  v_\beta={1\over\sqrt{\Delta^\cX_\beta}},
\]
then Proposition~\ref{prop:prym-unit-translation} gives
\begin{equation}
\label{eq:remodeling:unit-leaf-rmatrix}
  [z^k]\sum_\beta
  z\,v_\beta\,\hat R^B_{\beta\alpha}(z)
  =
  \begin{cases}
  0,& k=0,\\[2mm]
  \dfrac{\check h^\alpha_k}{\sqrt{-2}},& k\geq1.
  \end{cases}
\end{equation}
By Theorem~\ref{thm:anchor}, the same formula \eqref{eq:remodeling:unit-leaf-rmatrix} holds with
$\hat R^\cX_{\rm can}$ in place of $\hat R^B$.

\begin{lemma}[Residue pairing with undressed ordinary leaves]\label{lem:undressed-unit-specialization}
For the residue functional $\ell_{\rm dil}$ defined above,
\[
  \ell_{\rm dil}(\widehat\theta_\beta^i)=
  \begin{cases}
  v_\beta,& i=1,\\
  0,& i\neq1.
  \end{cases}
\]
\end{lemma}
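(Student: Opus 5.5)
We compute $\ell_{\rm dil}(\widehat\theta^i_\beta)$ by inverting the dressing relation, not by direct residue calculation. Write $\ell_{\rm dil}$ applied to generating series coefficientwise. Lemma~\ref{lem:residue-unit-leaf} gives $\ell_{\rm dil}(\theta_\alpha(z))=\sum_{k\ge1}\frac{\check h^\alpha_k}{\sqrt{-2}}z^k$. By \eqref{eq:remodeling:unit-leaf-rmatrix} this equals $z\sum_\beta v_\beta\hat R^B_{\beta\alpha}(z)$; for $k=0$ both sides vanish. On the other hand, the dressing relation \eqref{eq:remodeling:dressing-relation} gives $\theta_\alpha(z)=\sum_\beta\hat R^B_{\beta\alpha}(z)\widehat\theta_\beta(z)$. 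Since $\ell_{\rm dil}$ is linear and $\hat R^B$ has scalar coefficients, we get
\[
  \sum_\beta \hat R^B_{\beta\alpha}(z)\,\ell_{\rm dil}(\widehat\theta_\beta(z))
  =\sum_\beta \hat R^B_{\beta\alpha}(z)\,z\,v_\beta .
\]

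Because $\hat R^B(0)=I$, the matrix $\hat R^B(z)$ is invertible in the formal matrix algebra. Multiplying on the right by its inverse (indeed by $\hat R^B(-z)^{\mathsf T}$, using Proposition~\ref{prop:prym-double-laplace}) gives $\ell_{\rm dil}(\widehat\theta_\beta(z))=z\,v_\beta$ for every $\beta$. Reading off the coefficient of $z^i$ yields $v_\beta$ for $i=1$ and $0$ otherwise.

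The one point requiring care is that $\ell_{\rm dil}$ really acts termwise through the triangular identity of Proposition~\ref{prop:leaf-dressing}. This is clear because each coefficient identity is a finite linear relation among forms. We must also check that the residues of $\Phi_\kappa\widehat\theta^i_\beta$ are well defined, that is, independent of the additive constant in $\Phi_\kappa$. Lemma~\ref{lem:undressed-leaf-principal-part} settles this: every $\mathsf W^i_\beta$ has zero residue at each $p_\alpha$. Sign-trivialization changes multiply $\Phi_\kappa$ and the leaf by the same $\varepsilon_\beta$, and Lemma~\ref{lem:residue-unit-leaf} is stated in the same local frames, so signs are consistent.

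As a sanity check, one can compute directly with the principal part of Lemma~\ref{lem:undressed-leaf-principal-part} and the odd primitive $\Phi_{\rm odd}$. For $i=0$ the residue at $p_\beta$ vanishes. For $i=1$ it gives $\check h^\beta_1/\sqrt{-2}=v_\beta$ once the subleading double pole pairs against the $s^3$ term. The main obstacle in a direct approach would be the cross-label residues at $p_\alpha\neq p_\beta$ coming from the holomorphic and subleading polar parts. The inversion argument above avoids these, so I would present that argument and use the direct computation only for $i\le1$ as a consistency check.
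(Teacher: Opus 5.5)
Your argument is correct and is the paper's proof: apply $\ell_{\rm dil}$ to the dressing relation, evaluate $\ell_{\rm dil}(\theta_\alpha(z))=z\sum_\beta v_\beta\hat R^B_{\beta\alpha}(z)$ via Lemma~\ref{lem:residue-unit-leaf} and Proposition~\ref{prop:prym-unit-translation}, and invert using $\hat R^B(0)=I$. One slip in your optional sanity check: for $i=1$ the residue $-\check h^\beta_1$ comes from the leading $s^{-4}$ pole of $\mathsf W^1_\beta$ paired with the $s^3$ term of $\Phi_{\rm odd}$, whereas the subleading double pole times $s^3$ has no residue.
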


\begin{proof}
By Lemma~\ref{lem:residue-unit-leaf},
\[
  \ell_{\rm dil}(\theta_\alpha(z))
  =
  \sum_{k\geq1}{\check h^\alpha_k\over\sqrt{-2}}z^k
  =
  z\sum_\beta v_\beta\hat R^B_{\beta\alpha}(z),
\]
where the last equality is Proposition~\ref{prop:prym-unit-translation}.  On
the other hand, Proposition~\ref{prop:leaf-dressing} gives
\[
  \theta_\alpha(z)
  =
  \sum_\beta\hat R^B_{\beta\alpha}(z)\widehat\theta_\beta(z).
\]
Since $\hat R^B(0)=I$, the coefficient vector
$\bigl(\ell_{\rm dil}(\widehat\theta_\beta(z))\bigr)_\beta$ is determined
uniquely.  Comparing the two displays gives
\[
  \ell_{\rm dil}(\widehat\theta_\beta(z))=z\,v_\beta.
\]
\end{proof}

\begin{remark}[Low-order consistency checks]\label{rem:remodeling-consistency-checks}
The graph weights used above pass the following low-order checks.
\begin{enumerate}
\item If $R(z)=I+R_1z+O(z^2)$, then the actual A-side edge constant is
$+R_{1,\alpha\beta}$, while the B-side edge constant is
$-R_{1,\alpha\beta}$.  The edge parity contributes the sign
$(-1)^{0+0+1}$.
\item On a primary $(0,3)$ graph there are no edges or dilaton leaves.  The
three ordinary leaves have height $0$, so their parity signs multiply to
$-1$, which cancels the explicit genus-zero DOSS sign.
\item On a primary one-edge boundary graph in type $(0,4)$, the edge parity
sign is $-1$ and the four height-zero ordinary leaf signs multiply to $+1$.
Again the resulting $-1$ cancels the DOSS genus-zero sign.
\item For the first stable dilaton coefficient,
\[
  D^A_{\alpha,2}
  =
  \sum_\beta v_\beta R_{1,\beta\alpha},
  \qquad
  D^B_{\alpha,2}
  =
  -\sum_\beta v_\beta R_{1,\beta\alpha}.
\]
The dilaton parity contributes the sign $(-1)^3$.  The full flat-unit dilaton translation
$z\mathbf 1-zR(-z)\mathbf 1$ has zero coefficient at $z^1$.
\item The ordinary external leaf used for free energies has height $1$:
\[
  \ell_{\rm dil}(\widehat\theta^i_\beta)=\delta_{i1}v_\beta .
\]
Its parity sign is $(-1)^{1+1}=1$, so it is still the canonical component of
the A-side input $z\mathbf 1$.
\item Substituting
\[
  \widetilde u_j^{k,\alpha}
  =
  (-1)^{k+1}\widehat\theta_\alpha^k(q_j)
\]
in Definition~\ref{def:b-formal-descendant-series} recovers the actual reduced
topological-recursion form $\omega^\iota_{g,n}$.  Near the orbifold boundary,
Lemma~\ref{lem:stable-ancestor-descendant} identifies the analytic descendant
statement with the formal shifted Gromov--Witten descendant statement.
\end{enumerate}
\end{remark}

Let $\Omega^{\rm an}_{g,n}$ denote the coefficientwise analytic continuation
of $\Omega^A_{g,n}$ supplied by
Proposition~\ref{prop:aside-analytic-graph-theory}.  Define the genus-$g$
analytically continued shifted free energy by
\begin{equation}
\label{eq:remodeling:shifted-free-energy}
  F_g^{\mathrm{GW},{\rm an}}(\cX;\kappa)
  =
  \int_{\Mbar_g}\Omega^{\rm an}_{g,0}(\kappa).
\end{equation}
Near the orbifold boundary, \eqref{eq:remodeling:shifted-free-energy} is the Taylor series of the
formal shifted free energy.  For the shifted CohFT, the flat-unit dilaton equation continues
coefficientwise to
\begin{equation}
\label{eq:remodeling:shifted-dilaton}
  \int_{\Mbar_{g,1}}\psi_1\,\Omega^{\rm an}_{g,1}(\mathbf 1;\kappa)
  =
  (2g-2)F_g^{\mathrm{GW},{\rm an}}(\cX;\kappa),
  \qquad g\geq2,
\end{equation}
with no correction from the shift insertions; equivalently, the flat-unit
dilaton equation holds in this shifted CohFT
\cite[Eq.~(6.16)]{buryak2018taustructuredoubleramificationhierarchies}.

\begin{theorem}[Higher-genus free energies]\label{thm:formal-free-energy-remodeling}
For every $g\geq2$ and every $\kappa\in\Omega_B$,
\[
  F_g(\cS^\iota_\kappa)
  =
  F_g^{\mathrm{GW},{\rm an}}(\cX;\kappa).
\]
\end{theorem}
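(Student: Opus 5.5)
The plan is to reduce both free energies to one-point genus-$g$ objects and compare them through the ancestor graph comparison of Proposition~\ref{prop:ancestor-graph-comparison} in the case $n=1$, avoiding any leaf-free graph identity.

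\emph{B-side reduction.} By \eqref{eq:remodeling:tr-dilaton},
\[
  (2g-2)F_g(\cS^\iota_\kappa)=\ell_{\rm dil}(\omega^\iota_{g,1}).
\]
We expand $\omega^\iota_{g,1}$ by the DOSS graph sum of Theorem~\ref{thm:doss}, rewrite the single ordinary leaf through Proposition~\ref{prop:leaf-dressing}, and apply the linear functional $\ell_{\rm dil}$ leafwise. All other graph factors are constants in the external variable. By Lemma~\ref{lem:undressed-unit-specialization}, $\ell_{\rm dil}$ sends $\widehat\theta^i_\beta$ to $\delta_{i1}v_\beta$. This equals the parity-twisted substitution \eqref{eq:remodeling:parity-substitution} with $\widetilde u^{i,\beta}=(-1)^{i+1}\delta_{i1}v_\beta=\delta_{i1}v_\beta$, that is, with canonical input $\widetilde\bu(z)=z\sum_\beta v_\beta\hat\phi_\beta=z\mathbf 1$. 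Hence
\[
  (2g-2)F_g(\cS^\iota_\kappa)=\cW^{B,\Pi}_{g,1}(z\mathbf 1;\kappa).
\]
Here we use that the residue at each $p_\beta$ commutes with the finite graph sum at fixed $(g,1)$. We also need that the sign-trivialization independence of the product $\Phi_\kappa\omega^\iota_{g,1}$ makes the functional well defined; Lemma~\ref{lem:residue-unit-leaf} is checked in one trivialization.

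\emph{Comparison.} Proposition~\ref{prop:ancestor-graph-comparison} with $n=1$ (valid since $2g-1>0$) gives
\[
  \cW^{B,\Pi}_{g,1}(z\mathbf 1;\kappa)=\mathcal A^{\cX,{\rm an}}_{g,1}(z\mathbf 1;\kappa).
\]
This is the ancestor correlator $\int_{\Mbar_{g,1}}\psi_1\,\Omega^{\rm an}_{g,1}(\mathbf 1;\kappa)$ of the analytically continued shifted CohFT. The point to check is that the ancestor series of Proposition~\ref{prop:aside-graph-weights} is defined with input already in ancestor (canonical) components, so no $S$-operator is applied here. Since the input is inserted directly as $\widetilde\bu$, this holds by construction. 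We also need the graph-sum weights for $\mathcal A^{\cX,{\rm an}}_{g,1}$ to agree with Teleman's reconstruction of $\Omega^{\rm an}_{g,1}$. By Proposition~\ref{prop:aside-analytic-graph-theory}, this holds near the boundary as Taylor series and then on $\Omega_B$ by analytic continuation.

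\emph{Conclusion.} The shifted dilaton equation \eqref{eq:remodeling:shifted-dilaton} gives
\[
  \int_{\Mbar_{g,1}}\psi_1\,\Omega^{\rm an}_{g,1}(\mathbf 1)=(2g-2)F^{\mathrm{GW},{\rm an}}_g.
\]
Dividing by $2g-2\neq0$ for $g\ge2$ proves the statement.

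The main obstacle is the first step. We must justify that $\ell_{\rm dil}$ on a dressed leaf $\theta^k_\alpha$ produces exactly the coefficients used in the leaf replacement, including the $k=0$ vanishing and the signs $(-1)^{k+1}$. Lemma~\ref{lem:residue-unit-leaf} only treats residues at $p_\alpha$, so we must also rule out contributions from the other critical points $p_\beta$, $\beta\ne\alpha$. These vanish because $\theta^k_\alpha$ is holomorphic there and $\Phi_\kappa$ is holomorphic, with constant ambiguity irrelevant by zero residues. Lemma~\ref{lem:undressed-unit-specialization} then absorbs the dressing through $\hat R^B(0)=I$. I would verify the low-order case $g=2$ against Remark~\ref{rem:remodeling-consistency-checks}(5).
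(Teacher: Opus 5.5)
Your proposal is correct and follows the paper's proof step by step. You apply $\ell_{\rm dil}$ to the single ordinary leaf of the DOSS expansion of $\omega^\iota_{g,1}$ and use Lemma~\ref{lem:undressed-unit-specialization} to recognize this as the parity-twisted substitution of the ancestor input $z\mathbf 1$. You then invoke Proposition~\ref{prop:ancestor-graph-comparison} with $n=1$ and conclude with the shifted dilaton equation \eqref{eq:remodeling:shifted-dilaton}. Your additional checks are consistent with what the paper uses implicitly: residues of $\theta^k_\alpha$ vanish at $p_\beta\neq p_\alpha$, the result is independent of the sign trivialization, and no $S$-operator enters because the comparison is made at the ancestor level.
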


\begin{proof}
We apply the residue functional $\ell_{\rm dil}$ graph by graph to the one
ordinary external leaf of the DOSS expansion of $\omega^\iota_{g,1}$.  By
Lemma~\ref{lem:undressed-unit-specialization}, this is the same specialization
of the parity-twisted undressed variables as the A-side ordinary input
$z\mathbf 1$, because the height-one parity factor is $+1$.  Hence
Proposition~\ref{prop:ancestor-graph-comparison}, with this specific ordinary
input, and the identity \eqref{eq:remodeling:tr-dilaton} give
\[
  (2g-2)F_g(\cS^\iota_\kappa)
  =
  \int_{\Mbar_{g,1}}\psi_1\,
  \Omega^{\rm an}_{g,1}(\mathbf 1;\kappa).
\]
Using the shifted-CohFT dilaton equation \eqref{eq:remodeling:shifted-dilaton} on the A-side, we get
\[
  (2g-2)F_g(\cS^\iota_\kappa)
  =
  (2g-2)F_g^{\mathrm{GW},{\rm an}}(\cX;\kappa).
\]
Since $2g-2\neq0$, dividing by $2g-2$ gives the equality.  No statement about the
unstable genera $0$ and $1$ is included here.
\end{proof}

\bibliographystyle{amsalpha-arxiv}
\bibliography{references}

@article{chen2001newcohomologytheoryorbifold,
      title={A New Cohomology Theory of Orbifold},
      author={Weimin Chen and Yongbin Ruan},
      journal={Communications in Mathematical Physics},
      volume={248},
      number={1},
      pages={1--31},
      year={2004},
      doi={10.1007/s00220-004-1089-4},
      url={https://doi.org/10.1007/s00220-004-1089-4},
}

@incollection{chen2001orbifoldgromovwittentheory,
      title={Orbifold Gromov--Witten theory},
      author={Weimin Chen and Yongbin Ruan},
      booktitle={Orbifolds in Mathematics and Physics},
      series={Contemporary Mathematics},
      volume={310},
      pages={25--85},
      publisher={American Mathematical Society},
      address={Providence, RI},
      year={2002},
      doi={10.1090/conm/310/05398},
      url={https://doi.org/10.1090/conm/310/05398},
}

@article{abramovich2006gromovwittentheorydelignemumford,
      title={Gromov--Witten theory of Deligne--Mumford stacks},
      author={Dan Abramovich and Tom Graber and Angelo Vistoli},
      journal={American Journal of Mathematics},
      volume={130},
      number={5},
      pages={1337--1398},
      year={2008},
      doi={10.1353/ajm.0.0017},
      url={https://doi.org/10.1353/ajm.0.0017},
}

@article{graber1997localizationvirtualclasses,
      title={Localization of virtual classes},
      author={Tom Graber and Rahul Pandharipande},
      journal={Inventiones mathematicae},
      volume={135},
      number={2},
      pages={487--518},
      year={1999},
      doi={10.1007/s002220050293},
      url={https://doi.org/10.1007/s002220050293},
}

@article{coates2001quantumriemannrochlefschetz,
      title={Quantum Riemann--Roch, Lefschetz and Serre},
      author={Tom Coates and Alexander Givental},
      journal={Annals of Mathematics},
      volume={165},
      number={1},
      pages={15--53},
      year={2007},
      doi={10.4007/annals.2007.165.15},
      url={https://doi.org/10.4007/annals.2007.165.15},
}

@article{kontsevich1997relationscorrelatorstopological,
      title={Relations between the correlators of the topological sigma-model coupled to gravity},
      author={Maxim Kontsevich and Yuri I. Manin},
      journal={Communications in Mathematical Physics},
      volume={196},
      number={2},
      pages={385--398},
      year={1998},
      doi={10.1007/s002200050426},
      url={https://doi.org/10.1007/s002200050426},
}

@article{givental2001semisimplefrobeniusstructureshigher,
      title={Semisimple Frobenius structures at higher genus},
      author={Alexander B. Givental},
      journal={International Mathematics Research Notices},
      volume={2001},
      number={23},
      pages={1265--1286},
      year={2001},
      doi={10.1155/S1073792801000605},
      url={https://doi.org/10.1155/S1073792801000605},
}

@incollection{jarvis2002orbifoldquantumcohomologyclassifying,
      title={Orbifold quantum cohomology of the classifying space of a finite group}, 
      author={Tyler J. Jarvis and Takashi Kimura},
      booktitle={Orbifolds in Mathematics and Physics},
      series={Contemporary Mathematics},
      volume={310},
      pages={123--134},
      publisher={American Mathematical Society},
      address={Providence, RI},
      year={2002},
      doi={10.1090/conm/310/05401},
      url={https://doi.org/10.1090/conm/310/05401},
}

@article{tseng2009orbifoldquantumriemannrochlefschetz,
      title={Orbifold Quantum Riemann--Roch, Lefschetz and Serre},
      author={Hsian-Hua Tseng},
      journal={Geometry \& Topology},
      volume={14},
      number={1},
      pages={1--81},
      year={2010},
      doi={10.2140/gt.2010.14.1},
      url={https://doi.org/10.2140/gt.2010.14.1},
}

@article{bryan2007rootsystemsquantumcohomology,
      title={Root systems and the quantum cohomology of ADE resolutions},
      author={Jim Bryan and Amin Gholampour},
      journal={Algebra \& Number Theory},
      volume={2},
      number={4},
      pages={369--390},
      year={2008},
      doi={10.2140/ant.2008.2.369},
      url={https://doi.org/10.2140/ant.2008.2.369},
}

@article{bryan2008quantummckaycorrespondencepolyhedral,
      title={The quantum McKay correspondence for polyhedral singularities},
      author={Jim Bryan and Amin Gholampour},
      journal={Inventiones mathematicae},
      volume={178},
      number={3},
      pages={655--681},
      year={2009},
      doi={10.1007/s00222-009-0212-8},
      url={https://doi.org/10.1007/s00222-009-0212-8},
}

@article{eynard2007invariantsalgebraiccurvestopological,
      title={Invariants of algebraic curves and topological expansion}, 
      author={Bertrand Eynard and Nicolas Orantin},
      journal={Communications in Number Theory and Physics},
      volume={1},
      number={2},
      pages={347--452},
      year={2007},
      doi={10.4310/CNTP.2007.v1.n2.a4},
      url={https://doi.org/10.4310/CNTP.2007.v1.n2.a4},
}

@article{teleman2012structure2dsemisimplefield,
      title={The structure of 2D semi-simple field theories}, 
      author={Constantin Teleman},
      journal={Inventiones mathematicae},
      volume={188},
      number={3},
      pages={525--588},
      year={2012},
      doi={10.1007/s00222-011-0352-5},
      url={https://doi.org/10.1007/s00222-011-0352-5},
}

@article{duninbarkowski2012identificationgiventalformulaspectral,
      title={Identification of the Givental Formula with the Spectral Curve Topological Recursion Procedure},
      author={Petr Dunin-Barkowski and Nicolas Orantin and Sergey Shadrin and Loek Spitz},
      journal={Communications in Mathematical Physics},
      volume={328},
      number={2},
      pages={669--700},
      year={2014},
      doi={10.1007/s00220-014-1887-2},
      url={https://doi.org/10.1007/s00220-014-1887-2},
}

@article{hu2012quantummckaycorrespondencesingularities,
      title={The quantum McKay correspondence for singularities of type D},
      author={Xiaowen Hu},
      journal={Advances in Mathematics},
      volume={247},
      pages={266--308},
      year={2013},
      doi={10.1016/j.aim.2013.07.013},
      url={https://doi.org/10.1016/j.aim.2013.07.013},
}

@article{eynard2012computationopengromovwitten,
      title={Computation of Open Gromov--Witten Invariants for Toric Calabi--Yau 3-Folds by Topological Recursion, a Proof of the BKMP Conjecture},
      author={Bertrand Eynard and Nicolas Orantin},
      journal={Communications in Mathematical Physics},
      volume={337},
      number={2},
      pages={483--567},
      year={2015},
      doi={10.1007/s00220-015-2361-5},
      url={https://doi.org/10.1007/s00220-015-2361-5},
}

@article{fang2013allgenusopenclosedmirror,
      title={All-genus open-closed mirror symmetry for affine toric Calabi--Yau 3-orbifolds},
      author={Bohan Fang and Chiu-Chu Melissa Liu and Zhengyu Zong},
      journal={Algebraic Geometry},
      volume={7},
      number={2},
      pages={192--239},
      year={2020},
      doi={10.14231/ag-2020-007},
      url={https://doi.org/10.14231/ag-2020-007},
}

@article{fang2016eynardorantinrecursionequivariantmirror,
      title={The Eynard--Orantin recursion and equivariant mirror symmetry for the projective line},
      author={Bohan Fang and Chiu-Chu Melissa Liu and Zhengyu Zong},
      journal={Geometry \& Topology},
      volume={21},
      number={4},
      pages={2049--2092},
      year={2017},
      doi={10.2140/gt.2017.21.2049},
      url={https://doi.org/10.2140/gt.2017.21.2049},
}

@article{fang2019remodelingconjecturetoriccalabiyau,
      title={On the remodeling conjecture for toric Calabi--Yau 3-orbifolds},
      author={Bohan Fang and Chiu-Chu Melissa Liu and Zhengyu Zong},
      journal={Journal of the American Mathematical Society},
      volume={33},
      number={1},
      pages={135--222},
      year={2020},
      doi={10.1090/jams/934},
      url={https://doi.org/10.1090/jams/934},
}

@misc{lan2023twistedequivariantgromovwittentheory,
      title={Twisted Equivariant Gromov-Witten Theory of the Classifying Space of a Finite Group}, 
      author={Zhuoming Lan and Zhengyu Zong},
      year={2023},
      eprint={2309.01473},
      archivePrefix={arXiv},
      primaryClass={math.AG},
      url={https://arxiv.org/abs/2309.01473}, 
}

@article{brini2025dubrovindualitymirrorsymmetry,
      title={Dubrovin duality and mirror symmetry for ADE resolutions}, 
      author={Andrea Brini and Jingxiang Ma and Ian A. B. Strachan},
      journal={Proceedings of the Royal Society A: Mathematical, Physical and Engineering Sciences},
      volume={481},
      number={2325},
      pages={20250047},
      year={2025},
      doi={10.1098/rspa.2025.0047},
      url={https://doi.org/10.1098/rspa.2025.0047},
}

@article{li2001degenerationstablemorphismsrelative,
      title={Stable morphisms to singular schemes and relative stable morphisms},
      author={Jun Li},
      journal={Journal of Differential Geometry},
      volume={57},
      number={3},
      pages={509--578},
      year={2001},
      doi={10.4310/jdg/1090348132},
      url={https://doi.org/10.4310/jdg/1090348132},
}

@misc{eynard2009geometricalinterpretationtopologicalrecursion,
      title={Geometrical interpretation of the topological recursion, and integrable string theories},
      author={Bertrand Eynard and Nicolas Orantin},
      year={2009},
      eprint={0911.5096},
      archivePrefix={arXiv},
      primaryClass={math-ph},
      url={https://arxiv.org/abs/0911.5096},
}

@article{buryak2018taustructuredoubleramificationhierarchies,
      title={Tau-structure for the Double Ramification Hierarchies},
      author={Alexandr Buryak and Boris Dubrovin and J{\'e}r{\'e}my Gu{\'e}r{\'e} and Paolo Rossi},
      journal={Communications in Mathematical Physics},
      volume={363},
      number={1},
      pages={191--260},
      year={2018},
      doi={10.1007/s00220-018-3235-4},
      url={https://doi.org/10.1007/s00220-018-3235-4},
}

@incollection{duninbarkowski2016primaryinvariantshurwitz,
      title={Primary invariants of Hurwitz Frobenius manifolds},
      author={Petr Dunin-Barkowski and Paul Norbury and Nicolas Orantin and Alexandr Popolitov and Sergey Shadrin},
      booktitle={Topological recursion and its influence in analysis, geometry, and topology},
      series={Proceedings of Symposia in Pure Mathematics},
      volume={100},
      pages={297--331},
      publisher={American Mathematical Society},
      address={Providence, RI},
      year={2018},
      doi={10.1090/pspum/100/01768},
      url={https://doi.org/10.1090/pspum/100/01768},
}

@article{marino2006openstringamplitudeslarge,
      title={Open string amplitudes and large order behavior in topological string theory},
      author={Marcos Mari{\~n}o},
      journal={Journal of High Energy Physics},
      volume={2008},
      number={3},
      pages={060},
      year={2008},
      doi={10.1088/1126-6708/2008/03/060},
      url={https://doi.org/10.1088/1126-6708/2008/03/060},
}

@article{bouchard2007remodelingbmodel,
      title={Remodeling the B-Model},
      author={Vincent Bouchard and Albrecht Klemm and Marcos Mari{\~n}o and Sara Pasquetti},
      journal={Communications in Mathematical Physics},
      volume={287},
      number={1},
      pages={117--178},
      year={2009},
      doi={10.1007/s00220-008-0620-4},
      url={https://doi.org/10.1007/s00220-008-0620-4},
}

@article{bouchard2008topologicalopenstringsorbifolds,
      title={Topological Open Strings on Orbifolds},
      author={Vincent Bouchard and Albrecht Klemm and Marcos Mari{\~n}o and Sara Pasquetti},
      journal={Communications in Mathematical Physics},
      volume={296},
      number={3},
      pages={589--623},
      year={2010},
      doi={10.1007/s00220-010-1020-0},
      url={https://doi.org/10.1007/s00220-010-1020-0},
}

@article{chekhov2006freeenergytopologicalexpansion,
      title={Free energy topological expansion for the 2-matrix model},
      author={Leonid Chekhov and Bertrand Eynard and Nicolas Orantin},
      journal={Journal of High Energy Physics},
      volume={2006},
      number={12},
      pages={053},
      year={2006},
      doi={10.1088/1126-6708/2006/12/053},
      url={https://doi.org/10.1088/1126-6708/2006/12/053},
}

@misc{fang2025remodelingconjecturedescendants,
      title={Remodeling Conjecture with Descendants},
      author={Bohan Fang and Chiu-Chu Melissa Liu and Song Yu and Zhengyu Zong},
      year={2025},
      eprint={2504.15696},
      archivePrefix={arXiv},
      primaryClass={math.AG},
      url={https://arxiv.org/abs/2504.15696},
}

@article{borot2015chernsimonstheorysphericalseifert,
      title={Chern-Simons theory on spherical Seifert manifolds, topological strings and integrable systems},
      author={Ga{\"e}tan Borot and Andrea Brini},
      journal={Advances in Theoretical and Mathematical Physics},
      volume={22},
      number={2},
      pages={305--394},
      year={2018},
      doi={10.4310/ATMP.2018.v22.n2.a2},
      url={https://doi.org/10.4310/ATMP.2018.v22.n2.a2},
}

@article{giacchetto2024newspinhurwitztheory,
      title={A new spin on Hurwitz theory and ELSV via theta characteristics},
      author={Alessandro Giacchetto and Reinier Kramer and Danilo Lewa{\'n}ski},
      journal={Selecta Mathematica},
      volume={31},
      number={5},
      pages={90},
      year={2025},
      doi={10.1007/s00029-025-01077-y},
      url={https://doi.org/10.1007/s00029-025-01077-y},
}

@incollection{dubrovin1994geometry2dtopologicalfield,
      title={Geometry of 2D topological field theories},
      author={Boris Dubrovin},
      booktitle={Integrable Systems and Quantum Groups},
      series={Lecture Notes in Mathematics},
      volume={1620},
      pages={120--348},
      publisher={Springer},
      address={Berlin, Heidelberg},
      year={1996},
      doi={10.1007/BFb0094793},
      url={https://doi.org/10.1007/BFb0094793},
}

@article{dubrovin1996extendedaffineweylgroups,
      title={Extended affine Weyl groups and Frobenius manifolds},
      author={Boris Dubrovin and Youjin Zhang},
      journal={Compositio Mathematica},
      volume={111},
      number={2},
      pages={167--219},
      year={1998},
      doi={10.1023/A:1000258122329},
      url={https://doi.org/10.1023/A:1000258122329},
}

@article{dubrovin2015extendedaffineweylbcd,
      title={Extended affine Weyl groups of BCD-type: their Frobenius manifolds and Landau--Ginzburg superpotentials},
      author={Boris Dubrovin and Ian A. B. Strachan and Youjin Zhang and Dafeng Zuo},
      journal={Advances in Mathematics},
      volume={351},
      pages={897--946},
      year={2019},
      doi={10.1016/j.aim.2019.05.030},
      url={https://doi.org/10.1016/j.aim.2019.05.030},
}

@article{brini2023mirrorsymmetryextendedaffine,
      title={Mirror symmetry for extended affine Weyl groups},
      author={Andrea Brini and Karoline van Gemst},
      journal={Journal de l'{\'E}cole polytechnique -- Math{\'e}matiques},
      volume={9},
      pages={907--957},
      year={2022},
      doi={10.5802/jep.197},
      url={https://doi.org/10.5802/jep.197},
}

@misc{ju2026automatedconjectureresolutionformal,
      title={Automated Conjecture Resolution with Formal Verification},
      author={Haocheng Ju and Guoxiong Gao and Jiedong Jiang and Bin Wu and Zeming Sun and Shurui Liu and Leheng Chen and Yutong Wang and Yuefeng Wang and Zichen Wang and Wanyi He and Peihao Wu and Liang Xiao and Ruochuan Liu and Bryan Dai and Bin Dong},
      year={2026},
      eprint={2604.03789},
      archivePrefix={arXiv},
      primaryClass={cs.LG},
      url={https://arxiv.org/abs/2604.03789},
}

@misc{liu2026danusorchestratingmathematicalreasoning,
      title={Danus: Orchestrating Mathematical Reasoning Agents with Fact-Graph Memory},
      author={Jihao Liu and Guoxiong Gao and Zeming Sun and Bin Wu and Shurui Liu and Jiedong Jiang and Haocheng Ju and Leheng Chen and Ronnie Cheng and Xiping Zhang and Bin Dong},
      year={2026},
      eprint={2607.06447},
      archivePrefix={arXiv},
      primaryClass={cs.AI},
      url={https://arxiv.org/abs/2607.06447},
}

\end{document}